\numberwithin{equation}{section}  
\renewcommand{\a }{\alpha }
\renewcommand{\b }{\beta }
\renewcommand{\d}{\delta }
\renewcommand{\l}{\lambda }
\newcommand{\N}{\mathbb{N}}
\newcommand{\R}{\mathbb{R}}
\renewcommand*\env@matrix[1][*\c@MaxMatrixCols c]{%
  \hskip -\arraycolsep
  \let\@ifnextchar\new@ifnextchar
  \array{#1}} 
\newtheorem{remark}{Remark}[section]
\newtheorem{lemma}{Lemma}[section]
\newtheorem{proposition}{Proposition}[section] 
\newtheorem{thm}{Theorem}
\newtheorem{definition}{Definition}[section]
\title{Prescribing Morse scalar curvatures: blow-up analysis} 
\author{Andrea Malchiodi and Martin Mayer \\ \\
To appear on International Mathematical Research Notes\\ \ \\
\small Scuola Normale Superiore,
Piazza dei Cavalieri 7,
50126 Pisa, ITALY \\ 
\small andrea.malchiodi@sns.it, martin.mayer@sns.it}
\begin{document}  
 
\maketitle

 
 
{\footnotesize 
\begin{abstract} 
  
\noindent We study finite-energy blow-ups 
for  prescribed Morse scalar curvatures in both the subcritical and the critical regime. After general considerations on Palais-Smale sequences we determine precise blow up rates for subcritical solutions: in particular the possibility of tower bubbles is excluded in all dimensions. 
In subsequent papers  we aim to establish the sharpness of this result, proving a converse existence statement, together with a one to one correspondence of blowing-up subcritical solutions and {\em critical 
points at infinity}.
This analysis will be then applied 
to deduce new existence results for the geometric problem.

\

\noindent{\it Key Words:}
Conformal geometry, sub-critical approximation, blow-up analysis. 
\end{abstract}
}

\tableofcontents

  \section{Introduction}

  \noindent
  The problem of  prescribing the scalar curvature of a manifold conformally
 has a long history, starting from \cite{kw}, see also \cite{kw1}, \cite{kw2}. In case of the round sphere, this is known as {\em Nirenberg's problem}.

  Given a closed manifold $(M,g_{0})$ of dimension $n \geq 3$  and a conformal 
  metric $g = u^{\frac{4}{n-2}} g_{0}$ for a positive function $u > 0$ on $M$,  the conformal change of the scalar curvature 
 is given by 
 $$
 R_{g_u} u^{\frac{n+2}{n-2}} = L_{g_{0}}u,
 $$
  where by definition 
  $$
  L_{g_0} u = - c_{n} \Delta_{g_{0}} u + R_{g_{0}} u,
 \quad 
  c_n = \frac{4(n-1)}{n-2}
  $$
is the \emph{conformal Laplacian}, while  $\Delta_{g_{0}}$ is the Laplace-Beltrami operator with respect to $g_0$. Thus,  in order to prescribe a function $K$ on $M$ as the scalar curvature with respect to $g$,
one needs to solve 
\begin{equation}\label{eq:scin}
L_{g_0} u = K u^{\frac{n+2}{n-2}}, \quad u > 0
\end{equation}
pointwise on $M$, see \cite{aul}. The exponent on the right-hand side is critical with respect to Sobolev's embedding, 
  which makes the problem particularly challenging. In contrast to the {\em Yamabe problem},
  which amounts to finding a constant scalar curvature metric, 
 for $K$ varying on $M$ there are 
  obstructions to the existence for  
  \eqref{eq:scin}. 
  For example Kazdan and Warner proved in \cite{kw}  that on the round sphere 
 $(S^n,g_{S^n})$ 
  every solution $u$ of \eqref{eq:scin} must satisfy  
  \begin{equation*}
  \int_{S^n}  \langle \nabla K, \nabla f \rangle_{g_{S^n}} u^{\frac{2n}{n-2}} \, d\mu_{g_{S^{n}}} = 0
  \end{equation*}
  for any restriction $f$ to $S^n$ of an affine function on $\R^{n+1}$. 
  In particular, since $u$ is positive, a necessary condition for the existence of 
  solutions is that the function $\langle \nabla K, \nabla f \rangle_{g_{S^n}}$ changes sign.

  One of the first answers to Nirenberg's problem was given by J. Moser in \cite{mo}
  for two dimensions, where the counterpart of \eqref{eq:scin} has an exponential form. He proved 
  that for $K$ being an even function on $S^2$
 a  solution always exists. A related result was given by J. Escobar and R.
  Schoen in \cite{es}, showing existence of solutions when $K$
  is invariant under some group $G$ acting without fixed points, under  
  suitable \emph{flatness assumptions} of order $n-2$. In the same paper some results 
  were also found for non-spherical manifolds using positivity of the mass. 
  Other sufficient conditions for the existence in case of $G$-invariant
  functions were  given by E. Hebey and M. Vaugon in \cite{hv}, \cite{hv93}, allowing the 
  possibility of fixed points.

Other existence results were obtained by A. Chang and P. Yang, see \cite{[ACPY1]}, \cite{[ACPY2]}, for the case  $n = 2$ without requiring any symmetry of $K$. One condition, for which they obtained existence, is the following. First they assumed, that  $ K $ is a positive Morse function satisfying
\begin{equation}\label{eq:nd}
\{ \nabla K  = 0 \} \cap \{ \Delta K = 0 \} = \emptyset,
\end{equation}
where here and in the following 
$\nabla = \nabla_{g_0}$ and $\Delta = \Delta_{g_0}$, cf. \eqref{conformal_normal_metric} and below. Secondly, they supposed that $K$ possesses $p$ local maxima and $q$ saddle points with negative Laplacian and $p \neq q + 1$. 
The latter condition was used to prove the result via a Leray-Schauder degree-theoretical argument. 
In the same papers other results were given, requiring conditions only at some prescribed levels of $K$. Typically $K$ must possess two maxima $x_0$ and $x_1$, $K(x_1) \leq K(x_0)$, which are connected by some path $x(t)$ for which 
\begin{equation*}
x \mbox{ saddle point for } K\; \wedge \; \inf_t K(x(t))\! \leq K(x)\! <\!
K(x_0) \; \Rightarrow \; \Delta K(x) > 0.
\end{equation*}
Statements of this last kind have been obtained in \cite{[CD]} for $n = 2$ and in \cite{[BI]} for $n \geq 3$. 
Another existence result was given by A. Bahri and J.M. Coron in \cite{bc} for $n = 3$ and a Morse function $K$  satisfying \eqref{eq:nd} and 
\begin{equation}\label{eq:bcin}
\sum_{ x \in \{\nabla K = 0\} \cap \{ \Delta K < 0\}}(-1)^{m(x,K)} \neq -1. 
\end{equation}
  Here $m(x,K)$ denotes the Morse index of $K$ at $x$, cf. also \cite{[ACGPY]}. The result of Bahri and
  Coron, which relies on a topological argument, has been extended in several
  directions. 
  
An extension of condition \eqref{eq:bcin}, based
  on Morse's inequalities, was given by Schoen and Zhang in \cite{[SZ]}
  for the case $n = 3$. For a Morse function $K$ 
  satisfying \eqref{eq:nd} and setting 
  $$c_q = \sharp\{ x \in M\; : \; \nabla K (x) = 0,\;  \Delta K(x) < 0\; \text{ and } \;
  m(K,x) = 3 - q \}$$
  they required that  either 
  $c_0 - c_1 + c_2 \neq 1$ or $c_0 - c_1 > 1$.
  Note that the first condition is equivalent to
  (\ref{eq:bcin}) and the second one for $n = 2$  corresponds to the  condition $p + 1 > q$ in \cite{[ACPY1]}.

  Other results of perturbative type and relying on finite-dimensional reductions were given by A. Chang and P. Yang in
  \cite{[ACPY3]} and by A. Ambrosetti, J. Garcia-Azorero and I. Peral in
  \cite{agp}, see also \cite{mal}. The authors considered the case in which $K$ is close to a constant
  and satisfies an analogue of  \eqref{eq:bcin}, i.e.
  \begin{equation*}
  \sum_{ x \in \{\nabla K = 0\} \cap \{ \Delta K < 0\}}(-1)^{m(x,K)} \neq (-1)^n.  
  \end{equation*}

  In \cite{yy} Y.Y. Li proved existence of solutions for every dimension, 
  if the function $K$ near each  critical point has a Morse-type 
  structure, but with a flatness of order $\beta \in (n-2,n)$. His proof relied on a homotopy argument: 
  considering  $K_t  = t \, K + (1 - t)$, $t \in [0,1]$ the author used the degree-counting formula 
  of \cite{[ACPY3]} for $t$ small, and then a refined 
  blow-up analysis of equation \eqref{eq:scin}, when $t$ tends to 
  $1$.  A different degree formula under more general flatness conditions 
  was introduced in \cite{cl4}. 
  Other results obtained by different approaches can also be found in \cite{[BCCHhigh]}, \cite{[BIEG]}, \cite{[DN]}.
  
  \
  
  A useful tool for  the above results is a subcritical approximation of \eqref{eq:scin}, namely 
  \begin{equation}\label{eq:scin-tau}
  - c_{n} \Delta_{g_{0}} u + R_{g_{0}} u = K\, u^{\frac{n+2}{n-2}-\tau}, \quad
  0<\tau \ll 1.
  \end{equation}
  The advantage of \eqref{eq:scin-tau}, compared to \eqref{eq:scin},  is that the lower exponent makes the 
  problem compact, so it is easier to construct solutions. 
 However, the interesting point is passing to the limit of solutions for $\tau \longrightarrow 0$ and in 
  general one expects some of them to diverge with zero weak limit.  The approach in 
  \cite{[ACGPY]}, \cite{[SZ]}, \cite{yy} was to understand in detail the 
  behaviour of blowing-up solutions and then to use degree- or Morse-theoretical arguments 
  to show that some  solutions stay bounded.

  Consider now a Morse function $K$ on the sphere satisfying \eqref{eq:nd}. In dimension $n=3$ or
  under a flatness condition in higher dimensions,
  it turns out that blowing-up solutions to \eqref{eq:scin-tau} develop a single bubble 
  at critical points of $K$ with negative Laplacian. 
  {\em Bubbles} correspond to  solutions of \eqref{eq:scin} on $S^n$ with $K\equiv 1$ and
  were classified in \cite{cgs}, see also \cite{auw}, \cite{tal}, and after proper dilation represent 
  the profiles of diverging solutions, cf.  Section \ref{s:set-up}  for  precise formulas.

  The single-bubble phenomenon  can be qualitatively explained exploiting the variational 
  features of the problem, which admits the Euler-Lagrange energy $J = J_K$ given by 
  $$
  J(u)
  =
  \frac
  {\int_{M} \left( c_{n}\vert \nabla u\vert_{g_{0}}^{2}+R_{g_{0}}u^{2} \right) d\mu_{g_{0}}}
  {(\int Ku^{\frac{2n}{n-2}}d\mu_{g_{0}})^{\frac{n-2}{n}}}, 
  $$
  see also 
  \eqref{eq:JKt} regarding \eqref{eq:scin-tau}. Denote by 
  $\delta_{a,\lambda}$ a bubble centered at $a \in S^n$ with dilation 
  parameter $\lambda$. Then for distinct and fixed points 
  $a_1, a_2$ and $\lambda$ large one has the expansions  
  \begin{equation}\label{eq:interactions-K-b}
  \int_{S^n} K (\delta_{a_{1},\lambda} + \delta_{a_{2},\lambda})^{\frac{2n}{n-2}} d \mu_{g_{S^n}} \simeq 
 K(a_1) + K(a_2) + \frac{c_1}{\l^{n-2}},\;  \int_{S^n} K \d_{a_{i},\l}^{\frac{2n}{n-2}} d \mu_{g_{S^n}}
 \simeq c_2 K(a_{i}) - \frac{c_3}{\l^2} \Delta K (a_{i})
  \end{equation}
  with constants $c_{i}>0$, 
  where $c_1$ depends  on $a_1$ and $a_2$. 
We refer to Section \ref{s:funct-infty}
  for more accurate results. 
  Terms similar to the above ones appear in the expression of $J_{\tau}$. 
 By the latter formulas and for $\l \longrightarrow \infty$ and $n = 3$ the {\em interaction} of the bubbles 
  with $K$ is \emph{dominated} by the mutual interactions among bubbles. This causes multiple bubbles to suppress 
  each other allowing only one blow-up point at a time, which has to be close to at critical 
  points of $K$ with negative Laplacian due to a Pohozaev  identity. 
  
  This analysis was carried over in \cite{yy2} also on $S^4$. In this case the above interactions 
  are of the same order and multiple blow-ups occur. It was also shown there 
  that multiple bubbles cannot accumulate at a single point. Using a terminology from \cite{Sch1}, \cite{s}
  such blow-ups are called {\em isolated simple}. In four dimensions  a different 
  constraint on multiple blow-up points replaces $\Delta K < 0$, depending on the least eigenvalue of a matrix 
  constructed out of  $K$ and the location of the blow-up points, cf.  
  (0.8) in \cite{yy2}. On general four-dimensional manifolds there is an extra term 
  due to the {\em mass} of the manifold leading to similar phenomena, but with modified formulas, see 
  \cite{[BCCH4]}. 
  
  \
  
  The goal of this paper is to investigate the blow-up behaviour in an opposite regime, when 
  the dimension $n\geq 5$ and the function $K$ is  Morse. In this case 
  the second term in \eqref{eq:interactions-K-b} dominates the first one, so it is drastically 
  different from situation of low-dimensions or with flat curvatures.  
  However we can still show that blow-ups are isolated simple, 
  which is important in understanding the Morse-theoretical structure of the energy functional. 
   Here is our main result. 
  
  \medskip
  
  \begin{thm} \label{t:main}
  Let $(M^n,g_0)$, $n \geq 5$ be a closed manifold of positive Yamabe invariant and  $K : M \longrightarrow \R$ a smooth positive Morse function satisfying \eqref{eq:nd}. 
  Then positive sequences of solutions to \eqref{eq:scin-tau} for $\tau_m \searrow 0$ with uniformly bounded 
  $W^{1,2}$-energy and zero weak limit have only isolated simple blow-ups at critical points of $K$ with 
  negative Laplacian. 
  \end{thm}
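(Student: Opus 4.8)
The plan is to follow the classical blow-up scheme of Schoen, Li, and Yanyan Li, adapted to the subcritical equation \eqref{eq:scin-tau} on a general manifold of positive Yamabe invariant. First I would set up notation for bubbles $\delta_{a,\lambda}$ in the conformal normal coordinates associated to $g_0$, and recall that a sequence $u_m$ with bounded $W^{1,2}$-energy and vanishing weak limit, being a Palais--Smale-type sequence for $J_{\tau_m}$, decomposes (after extracting a subsequence) into a finite sum $\sum_{i=1}^N \delta_{a_i^m,\lambda_i^m} + v_m$ with $v_m \to 0$ in $W^{1,2}$, $\lambda_i^m \to \infty$, and the concentration points $a_i^m$ converging to points $a_i \in M$. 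This is the content of the general considerations on Palais--Smale sequences promised in the abstract, so I would invoke it directly. The first substantive step is then to show that each $a_i$ is a critical point of $K$ with $\Delta K(a_i) < 0$: one tests the equation against the conformal Killing-type vector fields / the Pohozaev identity localized near $a_i^m$, and uses the second expansion in \eqref{eq:interactions-K-b} — that the self-interaction energy picks up a term $-c_3 \lambda^{-2}\Delta K(a_i)$ — together with the fact that along a solution these quantities must balance, forcing $\nabla K(a_i)=0$ and $\Delta K(a_i) \le 0$; non-degeneracy \eqref{eq:nd} upgrades this to strict inequality.

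The heart of the matter is proving the blow-ups are \emph{isolated} and \emph{simple}. For isolatedness I would argue by contradiction: if two concentration points $a_i^m, a_j^m$ had $\mathrm{dist}(a_i^m,a_j^m) \to 0$ while $\lambda_i^m \, \mathrm{dist}(a_i^m,a_j^m) \to \infty$, one rescales around the slower-blowing-up bubble and obtains in the limit a solution of the critical equation on $\R^n$ with an extra singularity, which contradicts the classification in \cite{cgs} of entire solutions; alternatively one derives a contradiction from a Harnack-type inequality along spheres separating the two points. For simplicity (no tower bubbles, i.e. no two bubbles with comparable centers but wildly different scales $\lambda_i^m/\lambda_j^m \to \infty$) the key new input in dimension $n\ge 5$ is precisely that the $K$-interaction term $-c_3\lambda^{-2}\Delta K$ now \emph{dominates} the bubble-to-bubble interaction term $c_1\lambda^{-(n-2)}$, which for $n\ge 5$ decays faster. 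I would write the refined energy expansion of $J_{\tau_m}$ evaluated on a tower configuration, isolate the sign of the dominant coupling term between the two concentrating profiles, and show it is incompatible with $u_m$ being a critical point — essentially, the gradient of $J_{\tau_m}$ in the direction that separates the scales cannot vanish. This is where I expect the main obstacle: getting sharp enough remainder estimates on a manifold (as opposed to $S^n$), where the mass term and the non-constancy of $R_{g_0}$ enter, and where interactions between a bubble and the geometry must be controlled to an order finer than the leading $\lambda^{-2}\Delta K$ term; one needs the conformal normal coordinate expansion of $g_0$ and careful bookkeeping of all terms of size $\lambda^{-2}$, $\lambda^{-(n-2)}$, $\tau_m$, and $\tau_m \log\lambda$.

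Once isolated simplicity is established, I would close the argument by combining a local Pohozaev identity around each isolated simple blow-up point with the refined pointwise asymptotics $u_m(x) \sim \lambda_i^m{}^{-(n-2)/2}\,\mathrm{dist}(x,a_i^m)^{-(n-2)}$ valid in the neck region (this follows from the isolated-simple condition via a standard ODE/comparison argument on spherical averages). The Pohozaev identity then reads, schematically, that a boundary integral — which the asymptotics evaluate to a negative multiple of $\Delta K(a_i^m)/(\lambda_i^m)^2$ plus, when $n\ge 6$, a positive multiple of the mass, plus a $\tau_m$-contribution that is positive — must equal zero; this simultaneously re-confirms $\Delta K(a_i)<0$ and pins down the blow-up \emph{rate} relating $\tau_m$, $\lambda_i^m$ and $\Delta K(a_i)$, completing the proof. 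The subtlety to watch is the dimensional split: for $n\ge 6$ the mass term is lower order than $\lambda^{-2}$ and the local analysis at each point decouples cleanly, while for $n=5$ one must check that the relevant error terms still do not interfere, which is exactly the regime where the hypothesis $n\ge 5$ (rather than $n\ge 3$) is being used.
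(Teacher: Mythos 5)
Your architecture is the classical pointwise blow-up scheme of \cite{[ACGPY]}, \cite{[SZ]}, \cite{yy}, \cite{yy2} (Harnack inequalities along necks, the asymptotics $u_m\sim (\lambda_i^m)^{-(n-2)/2}\,\mathrm{dist}(x,a_i^m)^{-(n-2)}$ in the neck region, local Pohozaev identities), which is not the route the paper takes: the paper stays entirely at the $W^{1,2}$ level, decomposing $u=\sum_i\alpha_i\varphi_{a_i,\lambda_i}+v$ via Proposition \ref{prop_optimal_choice} and proving a quantitative lower bound on $\vert\partial J_\tau\vert$ on $V(q,\varepsilon)$ by a top-down cascade of test vector fields (Theorem \ref{lem_top_down_cascade}). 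Beyond the difference of route, your key step has a genuine gap: the mechanism you invoke to exclude towers is not the right one. For two bubbles concentrated at the \emph{same} point with $\lambda_i/\lambda_j\to\infty$, the mutual interaction is $\varepsilon_{i,j}\simeq(\lambda_j/\lambda_i)^{(n-2)/2}$ by \eqref{eq:eij}; it is governed by the \emph{ratio} of the scales, is not of size $\lambda^{-(n-2)}$, and is not dominated by $\lambda^{-2}\Delta K$ for any a priori reason (it can be anywhere between $o(1)$ and $O(1)$). The comparison ``$\lambda^{-2}$ beats $\lambda^{-(n-2)}$ when $n\geq5$'' is relevant only for bubbles at distinct points a fixed distance apart. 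What actually kills towers is a sign structure: in the expansion of $\partial J_\tau(u)\phi_{2,j}$ (Lemma \ref{lem_lambda_derivatives_at_infinity}) the interaction terms enter, for the most concentrated bubble, with a positive coefficient of the same sign as $\tilde c_1\tau$, so they cannot cancel against $\tilde c_2\Delta K_j/(K_j\lambda_j^2)<0$; iterating this from the largest $\lambda$ downward forces all scales comparable and all $\varepsilon_{i,j}=o(\lambda^{-2})$. That cascade is the crucial content of the paper, and your proposal does not supply a substitute for it.

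The closing step is also problematic. The neck asymptotics and the local Pohozaev identity presuppose pointwise a priori control near each concentration point, and for $n\geq5$ with a Morse (non-flat) $K$ this is exactly what is unavailable: as the paper recalls, \cite{cl1}, \cite{cl2}, \cite{cl3} construct blowing-up solutions of the critical equation with towering bubbles and diverging energy, so isolated-simpleness cannot be obtained ``for arbitrary sequences'' by the classical route; the uniform $W^{1,2}$ bound must enter as an input from the start, which is why the paper phrases everything variationally. Your isolatedness argument also does not close as stated: rescaling around the slower bubble and obtaining a limit with an extra singular point does not contradict \cite{cgs}, which classifies solutions regular on all of $\R^n$ (singular Fowler-type solutions do exist). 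The paper's replacement is elementary but different: the $a$-derivative of $J_\tau$ forces $\vert\nabla K(a_i)\vert\lesssim1/\lambda_i$, the Morse condition then forces colliding centers to satisfy $d(a_i,a_j)\lesssim1/\lambda$, and this gives $\varepsilon_{i,j}\simeq1$, contradicting $\varepsilon_{i,j}\to0$ from Proposition \ref{blow_up_analysis}.
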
 
  
  \noindent The above theorem follows from Proposition \ref{blow_up_analysis}, where a general characterization of 
  blowing-up Palais-Smale sequences for \eqref{eq:scin-tau} as $\tau \longrightarrow 0$ is given, and from Theorem 
  \ref{lem_top_down_cascade}, where a lower bound on the norm of the gradient of the Euler-Lagrange functional 
  $J_\tau$ for \eqref{eq:scin-tau} is proved, see \eqref{eq:JKt}. 

  \medskip
  
\begin{remark}\label{r:precise} 
Solutions of \eqref{eq:scin-tau} can be found as suitably normalized critical points  of 
the scaling-invariant energy $J_\tau$ in \eqref{eq:JKt}.  For a  sequence of critical points $(u_m)$ 
of $J_{\tau_m}$, with $\tau_{m}$ as in Theorem \ref{t:main}, there exist 
up to  subsequences  $q \in \N$ and \underline{distinct points} $x_1, \dots, x_q \in M$ with $\nabla K(x_j) = 0$ and $\Delta K (x_j) < 0$ 
  such that  
  \begin{equation*}
  \bigg\| u_{m} - \sum_{j=1}^q \alpha_{j,m} \delta_{\lambda_{j,m}, a_{j,m}} \bigg\|_{W^{1,2}(M,g_0)}
  \longrightarrow 0 \quad \text{ as } \quad m \longrightarrow  \infty
  \end{equation*} 
  for some  
  $$
  \alpha_{j,m}=\frac{\Theta}{K(x_{j})^{\frac{n-2}{4}}}+o(1)
  ,\quad  a_{j,m} \longrightarrow x_j
  \quad \text{ and }\quad 
  \lambda_{j,m} \simeq \lambda_{\tau_{m}} = \tau^{- \frac 12}_{m},$$
  where the multiplicative constant $\Theta$ 
reflects the scaling invariance of $J_{\tau_{m}}$, see \eqref{eq:JKt}, and can be fixed for instance by prescribing  the conformal volume, cf. Remark \ref{r:precision}. In Theorem \ref{lem_top_down_cascade} we will show much more precise estimates, that will be crucial for 
  \cite{MM2}.  For example, if $n\geq 6$,  we find
  \begin{equation*}
\lambda_{j,m}=c_{1}\sqrt{\frac{\Delta K(x_{j})}{K(x_{j})\tau}}
,
\quad 
a_{j,m}=c_{2}(\nabla^{2} K(x_{j}))^{-1}\frac{\nabla \Delta K(x_{j})}{\lambda_{j,m}^{3}},
\quad
\alpha_{j}=\Theta\cdot
 \sqrt[p-1]{\frac{\lambda_{j}^{\theta}}{K(a_{j,m})}}
  \end{equation*}
up to errors of order $o(\lambda_{\tau_{m}}^{-3})$, where $c_{1},c_{2}$ are dimensional constants and we  identify by a slight abuse of notation $a_{j,m}$ with its image in conformal normal coordinates at $x_{j}$, cf. \cite{lp}. 
Hence all the finite dimensional variables, i.e. $\alpha_{j,m}, a_{j,m}$ and $\lambda_{j,m}$ are determined to a precision of order $o(\lambda_{\tau_{m}}^{-3})$.
\end{remark}

  \begin{remark}  We next compare Theorem \ref{t:main} to some existing literature and add further comments. 
  \begin{enumerate}
 \item[(a)]  
On $S^3$ and $S^4$ the isolated-simpleness of solutions was proved in \cite{[ACGPY]}, \cite{yy}, \cite{yy2}, \cite{[SZ]}  for arbitrary sequences of solutions by a refined blow-up analysis. The  uniform $W^{1,2}$-bound  is then derived a-posteriori.  In  dimension $n \geq 5$  the latter bound may not hold true in general - we refer the reader to \cite{cl1}, \cite{cl2}, \cite{cl3}, where in some cases it is shown that  blowing-up solutions for the purely critical equation \eqref{eq:scin} must have diverging energy 
  and blow-ups of diverging energies and towering bubbles are also constructed, cf.  also \cite{leung}, \cite{rv}, \cite{wy}. 
  However, in the forthcoming paper \cite{MM4} we will construct solutions to \eqref{eq:scin-tau} via 
  min-max or Morse theory with the purpose of finding a non-zero weak limit. These 
  will indeed satisfy the required energy bound. 
  This will allow us to obtain existence results 
  under  less stringent conditions  compared to some others in the literature, 
  as in \cite{[BI]} and \cite{cx12}. 
\item[(b)]
On  manifolds not conformally equivalent to $S^n$
 a-priori estimates were proved in \cite{lizhu} for $n=3$ in both critical and subcritical cases.
 Our analysis carries over for $n=4$ as well, where the matrix in Definition \ref{def_none-degeneracy},
 introduced in \cite{[BCCH4]}, \cite{yy2} and also involving the {\em mass}, gives constraints on the 
 location of multiple blow-up points. 
  The main new aspect of our result is the isolated simple blow-up behaviour 
  in dimension $n \geq 5$, so we chose to state Theorem \ref{t:main} in a simple form only for 
  this case. We refer to Theorem \ref{lem_top_down_cascade} for a more precise version of the 
  result: here we derive indeed estimates on solutions with high precision as $\tau \longrightarrow 0$, 
  as well as estimates that are uniform in this parameter.

\item[(c)]  
In \cite{MM2} we will show a converse statement.  Given any distinct points 
  $p_1, \dots, p_k$ in $\{\nabla K = 0\} \cap \{ \Delta K < 0 \}$
  and  $\tau_i \searrow 0$ there exist solutions 
  $(u_i)_i$ to \eqref{eq:scin-tau} blowing-up at $p_1, \dots, p_k$ exactly as described above. Thence the characterization of Theorem \ref{t:main} is optimal. We refer to \cite{yy}, \cite{yy2} for the counterparts 
  on three- and four-spheres. 
Proposition A2 in \cite{bab1} regards the construction of a {\em pseudo gradient flow} for problem \eqref{eq:scin} ruling out 
multiple bubble formation at the same point for any $n$, although we believe the proof there is not complete. 
  We refer to \cite{MM3} for details and for the proof of 
  a one-to-one correspondence of 
 blowing-up sequences and {\em critical points at infinity},  cf.  \cite{bab}. See also 
 \cite{MM5}  for some delicate relations between $L^2$- and pseudo gradient flows. 
\item[(d)] 
We expect the same conclusion of Theorem \ref{t:main} should hold true replacing the 
  energy bound with a Morse index bound. It would also be  interesting to understand the 
  case of non-zero weak limits. 
  \end{enumerate}
\end{remark}

  We discuss next some heuristics about the proof of Theorem \ref{t:main}. 
  First we show a quantization result for Palais-Smale sequences of solutions to 
  \eqref{eq:scin-tau} as $\tau \longrightarrow 0$. We are inspired in this step from 
  a result by M.  Struwe in  \cite{str84}, where the same was proved for $\tau = 0$: in 
  our case we need extra work in the limiting process, due to a different 
  dilation covariance of subcritical equations.

  We then prove that we are in a perturbative regime and every solution to \eqref{eq:scin-tau} for $\tau$ sufficiently small
  can be written as a finite sum of highly peaked bubbles and an error term small in $W^{1,2}$-norm, which 
  we prove to have a minor effect in the  expansions. Performing a careful analysis 
  of the interactions of the bubbles among themselves and with  $K$, it is not difficult to see 
  that for $n \geq 5$ blow-ups should occur at critical points of $K$ with negative Laplacian only, cf.  also Theorem 1.1 
  in \cite{cl2}, and we are left with 
  excluding multiple bubbles towering at the same limit point, which is the crucial result in our paper. 
  
  We give  an idea of this fact in some \underline{particular cases},  that are easy to describe. 
  Let $J_\tau$ be the Euler-Lagrange energy of \eqref{eq:scin-tau}, see \eqref{eq:JKt}. 
  For  a critical point $a$ of $K$,  the following expansion holds for $J_{\tau}$ on a \emph{bubble}  
  concentrated at $a$ 
  \begin{equation}\label{eq:heur-exp}
  J_{\tau}(\d_{a,\lambda}) \simeq \frac{1}{K^{\frac{n-2}{n}}(a)}( \lambda^\tau- \frac{ \Delta K(a)}{K(a) \lambda^{2}}),
  \end{equation}
  cf. Proposition \ref{prop_functional_at_infinity}. By elementary considerations one checks that for 
  $\Delta K(a)<0$ the  function in the right-hand side has a 
  non-degenerate minimum point at $\lambda = \lambda_\tau \simeq \tau^{- \frac{1}{2}}$, see 
  also Proposition 2.1 in \cite{[SZ]}. Since 
  bubbles have an \emph{attractive interaction }, cf. the first equation in \eqref{eq:interactions-K-b}, 
  even in terms of dilations
 centering more bubbles at the point $a$ would make all dilation parameters {\em collapse} at $\lambda = \lambda_\tau$, 
 see Figure \ref{fig:awesome_image1}. 
  For the same reason, still by \eqref{eq:heur-exp}, one would get collapse with respect to the 
  center points of multiple bubbles distributed along the unstable directions from a 
  critical point of  $K$, since points with lager values of $K$ have smaller energy, due to \eqref{eq:heur-exp}, 
  see Figure \ref{fig:awesome_image2}. 
  \begin{figure}[!htb]
  \minipage{0.28\textwidth}
  \includegraphics[width=\linewidth]{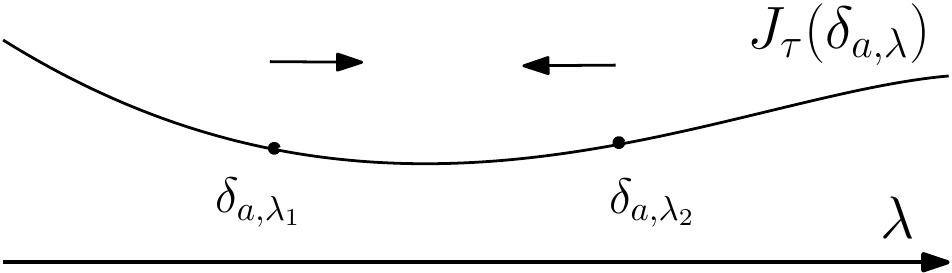}
  \caption{\small two bubbles with same center, different $\lambda$'s}\label{fig:awesome_image1}
  \endminipage\hfill
  \minipage{0.28\textwidth}
  \includegraphics[width=\linewidth]{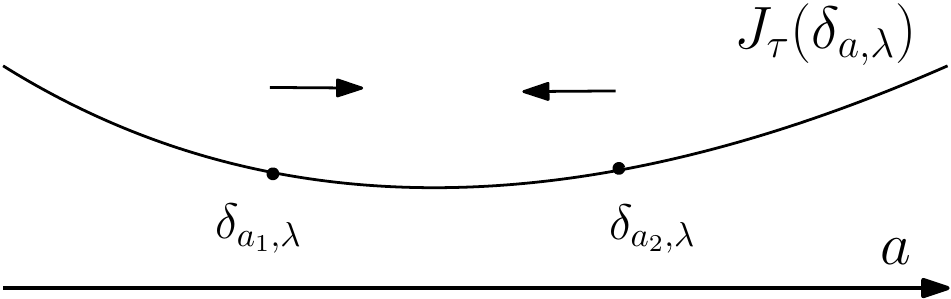}
  \caption{two bubbles  along unstable direction of $K$,  same $\lambda$}\label{fig:awesome_image2}
  \endminipage\hfill
  \minipage{0.28\textwidth}%
  \includegraphics[width=\linewidth]{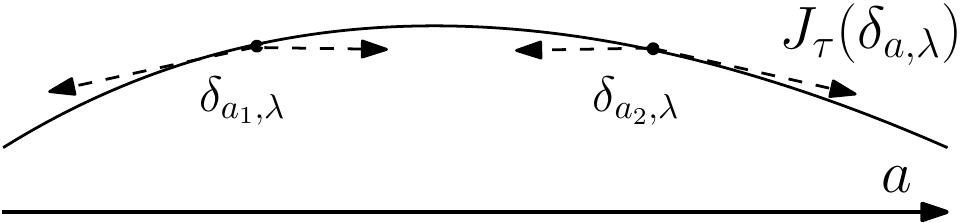}
  \caption{two bubbles  along stable direction of $K$, same $\lambda$}\label{fig:awesome_image3}
  \endminipage
  \end{figure}
  We consider then the case of  bubbles centered at two points $a_1, a_2$ symmetrically located at distance $d$ 
  from a  critical point $\bar{p}$ such that $\Delta K(\bar{p}) < 0$, and along a  stable direction of $K$, with the same $\l$'s. Here in principle the attractive force among bubbles could 
  compensate the {\em repulsive} interaction from the critical point $\bar{p}$ of  $K$, see Figure \ref{fig:awesome_image3}. 
  For this configuration one gets an energy expansion of the form 
  $$
  J_{\tau}(\d_{a_1,\lambda} + \d_{a_2,\lambda}) \simeq  
 \frac{c_0}{K^{\frac{n-2}{n}}(a_1)}( \lambda^\tau- \frac{ \Delta K(a_1)}{K(a_1) \lambda^{2}})
  - c_1 \frac{1}{d^{n-2} \lambda^{n-2}} 
  \simeq (c_2 - c_3 d^2) \left(  \lambda^\tau + c_4 \lambda^{-2}  \right) - c_1 \frac{1}{d^{n-2} \lambda^{n-2}}
%
  $$
  with $c_i > 0$. From the analysis in Proposition \
  \ref{blow_up_analysis} it turns out that 
 $\lambda^\tau \simeq 1$, so imposing criticality in both  $\l$ and $d$  one finds the relations
  $$
  \frac{1}{\lambda^2} \simeq \tau + \frac{1}{(\l d)^{n-2}} \quad \text{ and } \quad d \simeq \frac{1}{\l^{n-2} d^{n-1}}.  
  $$
  These asymptotics imply that $\lambda^{-2} \simeq \tau + \lambda^{-\frac{2(n-2)}{n}}$, 
  which is impossible for $\lambda$ large. 
  The general case is rather 
  involved to study and will be treated by a top-down cascade of estimates in Section \ref{s:lower-bds}.

  \

  The plan of the paper is the following. In Section \ref{s:set-up} we introduce the variational setting of the 
  problem and list some preliminary results. We then study some approximate 
 solutions of \eqref{eq:scin}, highly concentrated at arbitrary points of $M$. 
 From these one can carry out a reduction procedure of the problem, which is 
 done later in the paper. 
  In Section \ref{s:bu} we prove a general quantization result for Palais-Smale 
  sequences of \eqref{eq:scin-tau} with uniformly bounded $W^{1,2}$-energy. 
  In Section \ref{s:red} we reduce the problem to a finite-dimensional one, while in Section \ref{s:funct-infty} we derive some precise asymptotic expansions of the Euler-Lagrange energy. Section \ref{s:lower-bds} is then devoted to 
  proving suitable bounds on the gradient to exclude tower bubbles and prove our main result. 
  We finally collect in the appendix the proofs of some useful technical estimates as well as a list of relevant constants appearing.

  \
  
  \noindent {\bf Acknowledgments.}
  A.M. has been supported by the project {\em Geometric Variational Problems} and {\em Finanziamento a supporto della ricerca di base} from Scuola Normale Superiore and by MIUR Bando PRIN 2015 2015KB9WPT$_{001}$.  He is also member of GNAMPA as part of INdAM.

\section{Variational setting and preliminaries}\label{s:set-up}

In this section we collect some background and preliminary material, concerning the 
variational properties of the problem and some estimates on highly-concentrated 
approximate solutions of bubble type. 


We consider a smooth, closed Riemannian manifold 
$
M=(M^{n},g_{0}) 
$
with volume measure $\mu_{g_{0}}$ and scalar curvature $R_{g_{0}}$.
Letting 
$
\mathcal{A}=
\{
u\in W^{1,2}(M,g_{0})\mid u\geq 0,u\not \equiv 0
\}
$
the {\em Yamabe invariant}  is defined as 
\begin{equation*}\begin{split}
Y(M,g_{0})
= &
\inf_{\mathcal{A}}
\frac
{\int \left( c_{n}\vert \nabla u \vert_{g_{0}}^{2}+R_{g_{0}}u^{2} \right) d\mu_{g_{0}}}
{(\int u^{\frac{2n}{n-2}}d\mu_{g_{0}})^{\frac{n-2}{n}}}, \quad c_{n}=4\frac{n-1}{n-2}. 
\end{split}\end{equation*}
We will assume from now on that the invariant is positive.
As a consequence the {\em conformal Laplacian} 
\begin{equation*}\begin{split} 
L_{g_{0}}=-c_{n}\Delta_{g_{0}}+R_{g_{0}}
\end{split}\end{equation*}
 is a positive and self-adjoint operator. Without loss of generality 
we assume $R_{g_{0}}>0$ and denote by
$$
G_{g_{0}}:M\times M \setminus \Delta \longrightarrow\R_{+}
$$
the Green's function of $L_{g_0}$. 
Considering a conformal metric $g=g_{u}=u^{\frac{4}{n-2}}g_{0}$ there holds 
\begin{equation*}\begin{split} d\mu_{g_{u}}=u^{\frac{2n}{n-2}}d\mu_{g_{0}}
\; \quad \text{ and }\; \quad 
R=R_{g_{u}}=u^{-\frac{n+2}{n-2}}(-c_{n} \Delta_{g_{0}} u+R_{g_{0}}u) =
u^{-\frac{n+2}{n-2}}L_{g_{0}}u.
\end{split}\end{equation*}
Note that 
\begin{align*}
c\Vert u \Vert^{2}_{W^{1,2}(M,g_0)}\leq\int u \, L_{g_{0}}u \, d\mu_{g_{0}}=\int \left( c_{n}\vert \nabla u \vert^{2}_{g_{0}}+R_{g_{0}}u^{2}
\right) d\mu_{g_{0}}\leq C\Vert u \Vert^{2}_{W^{1,2}(M,g_0)}.
\end{align*}
In particular we may define and use
$\Vert u \Vert^2 = \Vert u \Vert_{L_{g_0}}^2 = \int u \, L_{g_{0}}u \, d\mu_{g_{0}}$ 
as an equivalent norm on $W^{1,2}$. For
\begin{equation*}
p=\frac{n+2}{n-2}-\tau\; \text{ and }\; 0\leq \tau \longrightarrow 0
\end{equation*}
we want to study the scaling-invariant functionals
\begin{equation}\label{eq:JKt}
J_{\tau}(u)
=
\frac
{\int_{M} \left( c_{n}\vert \nabla u\vert_{g_{0}}^{2}+R_{g_{0}}u^{2} \right) d\mu_{g_{0}}}
{(\int Ku^{p+1}d\mu_{g_{0}})^{\frac{2}{p+1}}},
 \quad  u \in \mathcal{A}.
\end{equation}
Since the conformal scalar curvature $R=R_{u}$ for $g=g_{u}=u^{\frac{4}{n-2}}g_{0}$ satisfies
\begin{equation}\label{eq:r}
r=r_{u}=\int R d\mu_{g_{u}}=\int u L_{g_{0}} ud\mu_{g_{0}},
\end{equation}
we have
\begin{equation}\label{eq:kp}
J_{\tau}(u)=\frac{r}{k_{\tau}^{\frac{2}{p+1}}}
\; \quad \text{ with } \; \quad k_{\tau} = \int K \, u^{p+1} d \mu_{g_{0}}. 
\end{equation}
The first- and second-order derivatives of the functional are given by 
\begin{equation*}
\partial J_{\tau}(u)v
= 
\frac{2}{k_{\tau}^{\frac{2}{p+1}}}
\big[\int L_{g_{0}}uvd\mu_{g_{0}}-\frac{r}{k_{\tau}}\int Ku^{p}vd\mu_{g_{0}}\big];
\end{equation*} 
\begin{equation*}
\begin{split}
\partial^{2} J_{\tau}(u) vw
= &
\frac{2}{k_{\tau}^{\frac{2}{p+1}}}
\big[\int L_{g_{0}}vwd\mu_{g_{0}}-p\frac{r}{k_{\tau}}\int Ku^{p-1}vwd\mu_{g_{0}}\big] \\
& -
\frac{4}{k_{\tau}^{\frac{2}{p+1}+1}}
\big[
\int L_{g_{0}}uvd\mu_{g_{0}}\int Ku^{p}wd\mu_{g_{0}}
+
\int L_{g_{0}}uwd\mu_{g_{0}}\int Ku^{p}vd\mu_{g_{0}}
\big] \\
& +
\frac{2(p+3)r}{k_{\tau}^{\frac{2}{p+1}+2}}
\int Ku^{p}vd\mu_{g_{0}}\int Ku^{p}wd\mu_{g_{0}}.
\end{split}
\end{equation*}
In particular $J_{\tau}$ is of class $C^{2, \alpha}_{loc}(\mathcal{A})$ and uniformly H\"older continuous on each
set of the form 
\begin{align*}
U_{\epsilon}=\{u\in  \mathcal{A} \mid  \epsilon<\Vert u \Vert,\,J_{\tau}(u)\leq \epsilon^{-1}\}.
\end{align*}
Indeed
$u\in U_{\epsilon}$ implies 
\begin{align*}
\epsilon^{2}\leq r\leq \epsilon^{-2}
\;\text{ and }\;
c \, \epsilon^{3}\leq k_{\tau}^{\frac{1}{p+1}}=J_{\tau}(u)^{-1}r_{u}\leq C\epsilon^{-3}.
\end{align*}
Thus uniform H\"older continuity on $U_{\epsilon}$ follows from the standard pointwise estimates
\begin{equation} \label{eq:ineq-p-2} 
\left\{
\begin{matrix}[cl]
\vert \vert a\vert^{p}-\vert b \vert^{p}\vert
\leq 
C_{p}\vert a-b\vert^{p}  
& 
\text{in case } \;0<p<1
\\
\vert \vert a \vert^{p}-\vert b \vert^{p}\vert
\leq 
C_{p}\max\{\vert a\vert^{p-1},\vert b \vert^{p-1}\}\vert a-b\vert 
&
\text{in case }\; p\geq 1
\end{matrix}
\right.
\end{equation}

We consider next some approximate solutions to \eqref{eq:scin}, highly concentrated at arbitrary points 
of $M$. As we will see, for suitable values of $\lambda$ these are also approximate solutions 
of \eqref{eq:scin-tau}. 
Let us recall the construction of {\em conformal normal coordinates} from \cite{lp}.
Given $a \in M$, one chooses a special conformal metric 
\begin{equation}\label{conformal_normal_metric}
\begin{split}
g_{a}=u_{a}^{\frac{4}{n-2}}g_{0}
\quad \text{ with } \quad 
u_{a}=1+O(d^{2}_{g_{0}}(a,\cdot)),
\end{split}
\end{equation}
whose volume element in $ g_{a}$-geodesic normal coordinates coincides with the Euclidean one, see also \cite{gunther}. In particular
\begin{equation*}
\begin{split} 
(exp_{a}^{g_{0}})^{-}\circ \exp_{a}^{g_{a}}(x)
=
x+O(\vert x \vert^{3})
\end{split}
\end{equation*}
for the exponential maps centered at $ a $, which e.g. implies
\begin{equation*}
\begin{split}
\nabla_{g_{0}}K(a)=\nabla_{g_{a}}K(a),\;
\nabla^{2}_{g_{0}}K(a)=\nabla^{2}_{g_{a}}K(a), 
\end{split}
\end{equation*}
and in case $ \nabla K(a)=0 $  also 
\begin{equation*}
\begin{split}
\nabla^{3}_{g_{0}}K(a)=\nabla^{3}_{g_{a}}K(a).  
\end{split}
\end{equation*}
Moreover by smoothness of the  exponential map $\exp_{g_{a}}=\exp^{g_{a}}_{a}$ with respect to 
$a$ there holds 
\begin{equation}\label{conformal_normal_coordinates_expansion}
\nabla_{a}\exp_{g_{a}}(x)=id+O(\vert x \vert^{2})
\end{equation}
in a $ g_{a}$-normal chart, as seen from the corresponding geodesic equation. 
We then denote by $r_a$ the geodesic distance from $a$ with respect to the metric $g_a$ 
just introduced. With this choice  the expression of the 
Green's function $G_{g_{ a }}$  with pole at $a \in M$, denoted by 
$G_{a}=G_{g_{a}}(a,\cdot)$,  for the conformal 
Laplacian $L_{g_{a}}$ simplifies considerably. From Section 6 in \cite{lp}
one may expand
\begin{equation}\begin{split} \label{eq:exp-G}
G_{ a }=\frac{1}{4n(n-1)\omega _{n}}(r^{2-n}_{a}+H_{ a }), \; r_{a}=d_{g_{a}}(a, \cdot)
, \; 
H_{ a }=H_{r,a }+H_{s, a }\; \text{ for } \; g_{a}=u_{a}^{\frac{4}{n-2}}g_{0}, 
\end{split}\end{equation}
where $\omega_{n} = |S^{n-1}|$. Here $H_{r,a }\in C^{2, \alpha}_{loc}$, while the {\em singular} error term satisfies
\begin{equation*}
\begin{split}
H_{s,a}
=
O
\begin{pmatrix}
0 & \text{ for }\, n=3
\\
r_{a}^{2}\ln r_{a} & \text{ for }\, n=4 
\\
r_{a}& \text{ for }\, n=5
\\
\ln r_{a} & \text{ for }\, n=6
\\
r_{a}^{6-n} & \text{ for }\, n\geq 7
\end{pmatrix}.
\end{split}
\end{equation*} 
Precisely  the leading term in $H_{s,a}$ for $n=6$ is 
$-\frac{\vert \mathbb{W}(a)\vert^{2}}{288 c_{n}}\ln r$,
where $\mathbb{W}$ denotes the Weyl tensor.
Let 
\begin{equation}\begin{split}\label{eq:bubbles}
\varphi_{a, \lambda }
= &
u_{ a }\left(\frac{\lambda}{1+\lambda^{2} \gamma_{n}G^{\frac{2}{2-n}}_{ a }}\right)^{\frac{n-2}{2}}, \quad 
G_{ a }=G_{g_{ a }}( a, \cdot), \quad
\gamma_{n}=(4n(n-1)\omega _{n})^{\frac{2}{2-n}}
\quad \text{ for }\quad  \lambda>0.
\end{split}\end{equation}
We notice that the constant $\gamma_{n}$ is chosen so that 
$$\gamma_{n}G^{\frac{2}{2-n}}_{ a }(x) = 
d_{g_a}^{2}(a,x) + o(d_{g_a}^{2}(a,x)) \quad \hbox { as } \quad x \longrightarrow a.
$$ 
Evaluating the conformal Laplacian on such functions shows that they are approximate solutions. 
\begin{lemma}\label{lem_emergence_of_the_regular_part}
There holds 
$
L_{g_{0}}\varphi_{a, \lambda}=
O
(
\varphi_{a, \lambda}^{\frac{n+2}{n-2}}
).
$
More precisely on a geodesic ball $B_{\alpha}( a )$ for $\alpha>0$ small 
\begin{equation*}
L_{g_{0}}\varphi_{a, \lambda}
= 
4n(n-1)\varphi_{a, \lambda}^{\frac{n+2}{n-2}}
-
2nc_{n}
r_{a}^{n-2}((n-1)H_{a}+r_{a}\partial_{r_{a}}H_{a}) \varphi_{a, \lambda}^{\frac{n+2}{n-2}} 
+
\frac{u_{a}^{\frac{2}{n-2}}R_{g_{a}}}{\lambda}\varphi_{a, \lambda}^{\frac{n}{n-2}}
+
o(r_{a}^{n-2})\varphi_{a, \lambda}^{\frac{n+2}{n-2}},
\end{equation*}
where  $r_{a}=d_{g_{a}}(a, \cdot)$. Since $R_{g_{a}}=O(r_{a}^{2})$ in conformal normal coordinates, cf. \cite{lp}, we obtain
\begin{enumerate}[label=(\roman*)]
 \item \quad 
$
L_{g_{0}}\varphi_{a, \lambda}
= 
4n(n-1)
[1
-
\frac{c_{n}}{2}r_{a}^{n-2}(
H_{a}(a) + n \nabla H_{a}(a)x
)
]
\varphi_{a, \lambda}^{\frac{n+2}{n-2}} 
 +
O
\begin{pmatrix}
\lambda^{-\frac{3}{2}}\varphi_{a,\lambda}^{\frac{n-1}{n-2}}& \text{ for }\, n=3
\\
\frac{\ln r}{\lambda^{2}}\varphi_{a,\lambda}^{\frac{n-1}{n-2}}& \text{ for }\, n=4
\\
\lambda^{-2}\varphi_{a,\lambda}& \text{ for }\, n=5
\end{pmatrix};
$
\item \quad 
$L_{g_{0}}\varphi_{a,\lambda}=
4n(n-1)\varphi_{a,\lambda}^{\frac{n+2}{n-2}}
=
4n(n-1)[1+\frac{c_{n}}{2}W(a)\ln r]\varphi_{a,\lambda}^{\frac{n+2}{n-2}}
+
O
(\lambda^{-2}\varphi_{a,\lambda}) \quad \hbox{ for } n = 6$;
\item \quad 
$L_{g_{0}}\varphi_{a,\lambda}=
4n(n-1)\varphi_{a,\lambda}^{\frac{n+2}{n-2}}
+
O
(\lambda^{-2}\varphi_{a,\lambda}) \quad \hbox{ for } n \geq 7. 
$
\end{enumerate}
The expansions stated above persist upon taking $\lambda \partial \lambda$ and $\frac{\nabla_{a}}{\lambda}$ derivatives.
\end{lemma}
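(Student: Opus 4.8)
\medskip

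\noindent\emph{Proof proposal.}
The plan is to exploit the conformal covariance of the conformal Laplacian together with the fact that $\varphi_{a,\lambda}$ is built out of the Green's function $G_a$ of $L_{g_a}$. Write $\varphi_{a,\lambda}=u_a\psi_{a,\lambda}$ with $\psi_{a,\lambda}=\lambda^{\frac{n-2}{2}}(1+\lambda^{2}v)^{-\frac{n-2}{2}}$ and $v=\gamma_n G_a^{\frac{2}{2-n}}$. By the transformation law $L_{g_0}(u_a w)=u_a^{\frac{n+2}{n-2}}L_{g_a}w$ for $g_a=u_a^{\frac{4}{n-2}}g_0$, everything reduces to computing $L_{g_a}\psi_{a,\lambda}$ on $M\setminus\{a\}$. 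Since $\psi_{a,\lambda}=\phi(v)$ depends on $v$ alone, the chain rule gives $\Delta_{g_a}\psi_{a,\lambda}=\phi'(v)\Delta_{g_a}v+\phi''(v)|\nabla v|_{g_a}^{2}$, so the task is to control $\Delta_{g_a}v$ and $|\nabla v|_{g_a}^{2}$.

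First I would record the key identity for $v$: since $L_{g_a}G_a=0$ away from $a$, one has $\Delta_{g_a}G_a=\tfrac{R_{g_a}}{c_n}G_a$, and substituting $v=\gamma_n G_a^{\frac{2}{2-n}}$ a direct computation gives $\Delta_{g_a}v=\tfrac{n}{2}\tfrac{|\nabla v|_{g_a}^{2}}{v}-\tfrac{R_{g_a}}{2(n-1)}v$. Plugging this into the chain rule and using the explicit form of $\phi$ — in particular that $\tfrac{n}{2v}\phi'(v)+\phi''(v)=-\tfrac{n(n-2)}{4v}\psi_{a,\lambda}^{\frac{n+2}{n-2}}$ and that the curvature contributions telescope via $\psi_{a,\lambda}-\lambda v\,\psi_{a,\lambda}^{\frac{n}{n-2}}=\tfrac1\lambda\psi_{a,\lambda}^{\frac{n}{n-2}}$ — the whole expression collapses to the clean exact formula
\begin{equation*}
L_{g_0}\varphi_{a,\lambda}=n(n-1)\,\frac{|\nabla v|_{g_a}^{2}}{v}\,\varphi_{a,\lambda}^{\frac{n+2}{n-2}}+\frac{u_a^{\frac{2}{n-2}}R_{g_a}}{\lambda}\,\varphi_{a,\lambda}^{\frac{n}{n-2}}\qquad\text{on }M\setminus\{a\}.
\end{equation*}
The rough bound $L_{g_0}\varphi_{a,\lambda}=O(\varphi_{a,\lambda}^{\frac{n+2}{n-2}})$ is then immediate, since $\tfrac{|\nabla v|_{g_a}^{2}}{v}$ is bounded on $M$ (it tends to $4$ at $a$, as $\gamma_n G_a^{\frac{2}{2-n}}=d_{g_a}^{2}(a,\cdot)+o(d_{g_a}^{2})$, and is bounded away from $a$) and $\tfrac1\lambda\psi_{a,\lambda}^{\frac{n}{n-2}}=\tfrac{\psi_{a,\lambda}}{1+\lambda^{2}v}=O(\psi_{a,\lambda}^{\frac{n+2}{n-2}})$.

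It then remains to expand $\tfrac{|\nabla v|_{g_a}^{2}}{v}$ near $a$. Inserting the expansion \eqref{eq:exp-G} gives $v=r_a^{2}(1+r_a^{n-2}H_a)^{\frac{2}{2-n}}=r_a^{2}-\tfrac{2}{n-2}r_a^{n}H_a+(\text{higher order})$ and, differentiating and using $|\nabla r_a|_{g_a}=1$, $\langle\nabla r_a,\nabla H_a\rangle_{g_a}=\partial_{r_a}H_a$, $|\nabla v|_{g_a}^{2}=4r_a^{2}-\tfrac{8n}{n-2}r_a^{n}H_a-\tfrac{8}{n-2}r_a^{n+1}\partial_{r_a}H_a+(\text{higher order})$; dividing, $\tfrac{|\nabla v|_{g_a}^{2}}{v}=4-\tfrac{8}{n-2}r_a^{n-2}\big((n-1)H_a+r_a\partial_{r_a}H_a\big)+o(r_a^{n-2})$, and multiplying by $n(n-1)$ produces exactly the coefficient $4n(n-1)-2nc_n r_a^{n-2}((n-1)H_a+r_a\partial_{r_a}H_a)+o(r_a^{n-2})$ of the stated expansion. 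The refinements (i)--(iii) follow by feeding in $R_{g_a}=O(r_a^{2})$ in conformal normal coordinates \cite{lp} together with the decay/blow-up rates of $H_{s,a}$ from \eqref{eq:exp-G}: for $n\geq7$ one has $r_a^{n-2}H_a=O(r_a^{4})$ and $\tfrac{r_a^{2}}{\lambda}\varphi_{a,\lambda}^{\frac{n}{n-2}}=O(\lambda^{-2}\varphi_{a,\lambda})$, giving (iii); for $n=6$ the leading logarithm $-\tfrac{|\mathbb{W}(a)|^{2}}{288c_n}\ln r_a$ in $H_{s,a}$ produces the Weyl correction in (ii); for $n\leq5$ one Taylor-expands the regular part $H_{r,a}$ at $a$. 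Persistence under $\lambda\partial_\lambda$ and $\tfrac{\nabla_a}{\lambda}$ follows by differentiating the exact formula above: $\lambda\partial_\lambda\varphi_{a,\lambda}=\tfrac{n-2}{2}\tfrac{1-\lambda^{2}v}{1+\lambda^{2}v}\varphi_{a,\lambda}$ and, via \eqref{conformal_normal_coordinates_expansion}, the $a$-derivatives of $\varphi_{a,\lambda}$ carry the same concentration profile, so differentiating the coefficients only generates remainders of the same orders.

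I expect the main obstacle to be twofold. The algebraic step — using $L_{g_a}G_a=0$ to reduce the chain-rule expression to the two-term formula — is the engine of the proof and must be set up carefully, though it is a finite computation once the identity for $\Delta_{g_a}v$ is isolated. The more delicate part is the bookkeeping behind (i)--(iii): one must verify that each remainder really is of the claimed order relative to $\varphi_{a,\lambda}$, uniformly both in $\lambda$ and across the transition region $r_a\sim\lambda^{-1}$, where the competing powers of $r_a$ and of $(1+\lambda^{2}v)$ have to be balanced against each other.
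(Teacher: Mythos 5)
Your proposal is correct and follows essentially the same route as the paper: reduce via conformal covariance to $L_{g_a}$ acting on the bubble written in terms of $G_a$, use the Green's function equation $c_n\Delta_{g_a}G_a=-\delta_a+R_{g_a}G_a$ to collapse the computation to the exact two-term formula (your $n(n-1)\frac{|\nabla v|_{g_a}^2}{v}=4n(n-1)\gamma_n|\nabla G_a^{\frac{1}{2-n}}|_{g_a}^2$ is precisely the paper's coefficient), and then expand $G_a$ by \eqref{eq:exp-G}. The only difference is organizational — you phrase the Laplacian computation as a chain rule in $v=\gamma_n G_a^{\frac{2}{2-n}}$ with the two packaged identities, while the paper differentiates the power of $G_a$ directly — and your expansion of $\frac{|\nabla v|^2}{v}$ reproduces the stated coefficient $-2nc_n r_a^{n-2}((n-1)H_a+r_a\partial_{r_a}H_a)$ exactly.
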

\begin{proof}
A straightforward calculation shows that 
\begin{equation*}
\Delta _{g_{ a }}\big(\frac{\lambda}{1+\lambda^{2}\gamma_{n}G_{ a }^{\frac{2}{2-n}}}\big)^{\frac{n-2}{2}}
= 
\frac{n}{2-n}\gamma_{n}\big(\frac{\varphi_{a, \lambda}}{u_{a}}\big)^{\frac{n+2}{n-2}}
\vert \nabla G_{ a } \vert^{2}_{g_{a}} 
G_{a}^{2\frac{n-1}{2-n}}
 +
\gamma_{n}\lambda\big(\frac{\varphi_{a, \lambda}}{u_{a}}\big)^{\frac{n}{n-2}}G_{ a }^{\frac{n}{2-n}}\Delta _{g_{a}} G_{ a },
\end{equation*}  
which is due to
$
\vert \nabla G_{ a } \vert^{2}_{g_{a}} 
G_{a}^{2\frac{n-1}{2-n}}
=
(n-2)^{2}\vert \nabla G_{ a }^{\frac{1}{2-n}} \vert^{2}_{g_{a}}
\;
\text{ and }\;
c_{n}\Delta _{g_{a}}G_{a}=-\delta_{a}+R_{g_{a}}G_{a}
$
with $\delta_{a}$ denoting the Dirac measure at $a$. This is equivalent to
\begin{equation*}
\Delta _{g_{ a }}\big(\frac{\lambda}{1+\lambda^{2}\gamma_{n}G_{ a }^{\frac{2}{2-n}}}\big)^{\frac{n-2}{2}}
= 
n(2-n)\gamma_{n}(\frac{\varphi_{a, \lambda}}{u_{a}})^{\frac{n+2}{n-2}}
\vert \nabla G_{ a }^{\frac{1}{2-n}} \vert^{2}_{g_{a}}
 +
\frac{R_{g_{ a }}\gamma_{n}}{c_{n}}\lambda\big(\frac{\varphi_{a, \lambda}}{u_{a}}\big)^{\frac{n}{n-2}}G_{ a }^{\frac{2}{2-n}}
.
\end{equation*}
Since $L_{g_{a}}=-c_{n}\Delta _{g_{a}}+R_{g_{a}}$ with $c_{n}=4\frac{n-1}{n-2}$, we obtain
\begin{equation*}\begin{split}
L_{g_{ a }}\frac{\varphi_{a, \lambda}}{u_{a}}
= &
4n(n-1)\big(\frac{\varphi_{a, \lambda}}{u_{a}}\big)^{\frac{n+2}{n-2}}\gamma_{n}
\vert \nabla G_{ a }^{\frac{1}{2-n}} \vert^{2}_{g_{a}} 
+
\frac{R_{g_{ a }}}{\lambda}\big(\frac{\varphi_{a, \lambda}}{u_{a}}\big)^{\frac{n}{n-2}}.
\end{split}\end{equation*}
By conformal covariance we also get  
\begin{equation*}
L_{g_{0}} \varphi_{a, \lambda}
= 
4n(n-1)\varphi_{a, \lambda}^{\frac{n+2}{n-2}}\gamma_{n}\vert \nabla G_{ a }^{\frac{1}{2-n}} \vert^{2}_{g_{a}}
+
\frac{u_{a}^{\frac{2}{n-2}}R_{g_{a}}}{\lambda}\varphi_{a, \lambda}^{\frac{n}{n-2}},
\end{equation*}
in particular $L_{g_{0}}\varphi_{a, \lambda}=O(\varphi_{a, \lambda}^{\frac{n+2}{n-2}})$.
Expanding $G_a$ as 
$
G_{ a }=\frac{1}{4n(n-1)\omega _{n}}(r_{a}^{2-n}+H_{ a }), \, r_{ a }=d_{g_{a}}(a, \cdot)
$
we find
\begin{equation*}\begin{split}
\gamma_{n} \vert \nabla G_{ a }^{\frac{1}{2-n}} \vert^{2}_{g_{a}}
= &
\vert \nabla(r_{a}(1+r_{a}^{n-2} H_{ a })^{\frac{1}{2-n}})\vert^{2}_{g_{a}} 
= 
1
-
\frac{2}{n-2}
( (n-1)H_{ a }+r_{a}\partial_{r_{a}}H_{ a }) r_{a}^{n-2}
+
o(r_{a}^{n-2}), 
\end{split}\end{equation*}
and conclude that 
\begin{equation*}
L_{g_{0}} \varphi_{a, \lambda}
= 
4n(n-1)\varphi_{a, \lambda}^{\frac{n+2}{n-2}}
-
2nc_{n}
((n-1)H_{ a }+r_{a}\partial_{r_{a}}H_{ a }) r_{a}^{n-2}\varphi_{a, \lambda}^{\frac{n+2}{n-2}} 
 +
o(r_{a}^{n-2}\varphi_{a, \lambda}^{\frac{n+2}{n-2}})
+
\frac{u_{a}^{\frac{2}{n-2}}R_{g_{a}}}{\lambda}\varphi_{a, \lambda}^{\frac{n}{n-2}}.
\end{equation*}
Clearly these calculations transcend to the $\lambda$ and $a$ derivatives. Then the claim follows from  the 
above expansion of the Green's function.
\end{proof}

After introducing some notation we  state a useful lemma, which will be proved in the first appendix. 

\bigskip

\noindent {\bf Notation.} Given an exponent $p \geq 1$ we will denote by $L^p_{g_0}$ 
the set of functions of class $L^p$ with 
respect to the measure $d \mu_{g_{0}}$. 
Recall also that for $u \in W^{1,2}(M,g_0)$ we set $r_u = \int u L_{g_0} u d \mu_{g_{0}}$, while for a
point $a \in M$ we  denote by $r_a$ the geodesic distance from $a$ with respect to the metric $g_a$ 
introduced above.
For a  set of points $\{a_i\}_i$ of $M$  we will denote  by $K_{i},\nabla K_i$ and $\Delta K_i$ for instance  
$$
K(a_{i}),\;
\nabla K(a_{i}) = \nabla_{g_0} K(a_{i}) 
\quad \text{ and } \quad 
\Delta K(a_{i}) = \Delta_{g_0} K(a_{i}).$$
\noindent
For $k,l=1,2,3$ and $ \lambda_{i} >0, \, a _{i}\in M, \,i= 1, \ldots,q$ let
\begin{enumerate}[label=(\roman*)]
 \item \quad 
$\varphi_{i}=\varphi_{a_{i}, \lambda_{i}}$ and $(d_{1,i},d_{2,i},d_{3,i})=(1,-\lambda_{i}\partial_{\lambda_{i}}, \frac{1}{\lambda_{i}}\nabla_{a_{i}});$
 \item \quad
$\phi_{1,i}=\varphi_{i}, \;\phi_{2,i}=-\lambda_{i} \partial_{\lambda_{i}}\varphi_{i}, \;\phi_{3,i}= \frac{1}{\lambda_{i}} \nabla_{ a _{i}}\varphi_{i}$, so
$
\phi_{k,i}=d_{k,i}\varphi_{i}. 
$
\end{enumerate}
Note  that with the above definitions  the $\phi_{k,i}$'s are uniformly bounded in $W^{1,2}(M,g_0)$. 
\
\begin{lemma}\label{lem_interactions}$_{}$
Let $\theta=\frac{n-2}{2}\tau$ and $k,l=1,2,3$ and $i,j = 1, \ldots,q$. Then for
\begin{equation}\label{eq:eij}
\varepsilon_{i,j}
=
(
\frac{\lambda_{j}}{\lambda_{i}}
+
\frac{\lambda_{i}}{\lambda_{j}}
+
\lambda_{i}\lambda_{j}\gamma_{n}G_{g_{0}}^{\frac{2}{2-n}}(a _{i},a _{j})
)^{\frac{2-n}{2}}
\end{equation}
there holds uniformly as $0\leq \tau\longrightarrow 0$
\begin{enumerate}[label=(\roman*)]
 \item \quad
$ \vert \phi_{k,i}\vert, 
  \vert \lambda_{i}\partial_{\lambda_{i}}\phi_{k,i}\vert,
  \vert \frac{1}{\lambda_{i}}\nabla_{a_{i}} \phi_{k,i}\vert
  \leq 
  C \varphi_{i};$
 \item \quad
$ 
\lambda_{i}^{\theta}\int \varphi_{i}^{\frac{4}{n-2}-\tau} \phi_{k,i}\phi_{k,i}d\mu_{g_{0}}
=
c_{k}\cdot id
+
O(\tau +\frac{1}{\lambda_{i}^{n-2+\theta}}+\frac{1}{\lambda_{i}^{2+\theta}}), \;c_{k}>0;$
\item \quad 
for  $i\neq j$ up to some error of order 
$O(\tau^{2}+\sum_{i\neq j}(\frac{1}{\lambda_{i}^{4}}+\frac{1}{\lambda_{i}^{2(n-2)}}+\varepsilon_{i,j}^{\frac{n+2}{n}}))$
\begin{equation*}
\lambda_{i}^{\theta}\int \varphi_{i}^{\frac{n+2}{n-2}-\tau}\phi_{k,j}d\mu_{g_{0}}
= 
b_{k}d_{k,i}\varepsilon_{i,j}
=
\int \varphi_{i}^{1-\tau}d_{k,j}\varphi_{j}^{\frac{n+2}{n-2}}  d\mu_{g_{0}} ;  \end{equation*}
 \item \quad 
$
\lambda_{i}^{\theta}\int \varphi_{i}^{\frac{4}{n-2}-\tau} \phi_{k,i}\phi_{l,i}d\mu_{g_{0}}
= 
O(\frac{1}{\lambda_{i}^{n-2}}+\frac{1}{\lambda_{i}^{2}})$
for $k\neq l$ and for $k=2,3$
$$\textstyle  \quad 
\lambda_{i}^{\theta}\int \varphi_{i}^{\frac{n+2}{n-2}-\tau} \phi_{k,i}d\mu_{g_{0}}
=
O\left(
\tau
+
\begin{pmatrix}
\lambda_{i}^{2-n} & \text{for } n\leq 5 \\
\frac{\ln \lambda_{i}}{\lambda_{i}^{4}} & \text{for } n=6 \\
\lambda_{i}^{4} &  \text{for } n\geq 7
\end{pmatrix}
\right);
$$
 \item \quad
$
\lambda_{i}^{\theta}\int \varphi_{i}^{\alpha-\tau }\varphi_{j}^{\beta} d\mu_{g_{0}}
=
O(\varepsilon_{i,j}^{\beta})
$
for $i\neq j,\;\alpha +\beta=\frac{2n}{n-2}, \; \alpha-\tau>\frac{n}{n-2}>\beta\geq 1;$
\item \quad
$
\int \varphi_{i}^{\frac{n}{n-2}}\varphi_{j}^{\frac{n}{n-2}} d\mu_{g_{0}}
=
O(\varepsilon^{\frac{n}{n-2}}_{i,j}\ln \varepsilon_{i,j}), \,i\neq j;
$
 \item \quad 
$
(1, \lambda_{i}\partial_{\lambda_{i}}, \frac{1}{\lambda_{i}}\nabla_{a_{i}})\varepsilon_{i,j}=O(\varepsilon_{i,j})
, \,i\neq j$. 
\end{enumerate}  
with constants $ b_{k}=\underset{\R^{n}}{\int}\frac{dx}{(1+r^{2})^{\frac{n+2}{2}}}$ for $k = 1, 2, 3$ and 
$$
c_{1}=\underset{\R^{n}}{\int}\frac{dx}{(1+r^{2})^{n}},\;\;
c_{2}=\frac{(n-2)^{2}}{4}\underset{\R^{n}}{\int}\frac{( r^{2}-1)^{2}dx}{(1+r^{2})^{n+2}} ,\;\;
c_{3}=\frac{(n-2)^{2}}{n}\underset{\R^{n}}{\int}\frac{r^{2}dx}{(1+r^{2})^{n+2}}.
$$
\end{lemma}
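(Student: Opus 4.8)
# Proof Proposal for Lemma \ref{lem_interactions}

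\textbf{Overall strategy.} The plan is to reduce every quantity to an explicit Euclidean integral over $\R^n$ via the conformal normal coordinates and the expansion \eqref{eq:exp-G} of the Green's function. In $g_a$-normal coordinates centred at $a_i$, the building block $\varphi_{a_i,\lambda_i}/u_{a_i}$ becomes, up to controlled errors, the standard bubble $\left(\frac{\lambda_i}{1+\lambda_i^2 r^2}\right)^{\frac{n-2}{2}}$, because $\gamma_n G_{a_i}^{\frac{2}{2-n}}(x) = r_{a_i}^2 + o(r_{a_i}^2)$ by the remark following \eqref{eq:bubbles}. The factor $\lambda_i^\theta$ with $\theta = \frac{n-2}{2}\tau$ is precisely the normalization that makes $\lambda_i^\theta \int \varphi_i^{p+1}\,d\mu_{g_0}$ converge to a positive constant as $\tau \to 0$: after the change of variables $x \mapsto \lambda_i x$ one has $\lambda_i^\theta \lambda_i^{-\frac{n-2}{2}(p+1)+n} = 1 + O(\tau \ln\lambda_i)$, and the remaining integral is a convergent Euclidean one. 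So each item is obtained by: (a) passing to conformal normal coordinates, (b) rescaling by $\lambda_i$, (c) identifying the leading Euclidean integral and (d) bounding the tail and the coordinate-distortion errors.

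\textbf{Items (i), (ii), (iv).} For (i), the pointwise bounds on $\phi_{k,i}$ and its derivatives follow directly from differentiating the explicit formula \eqref{eq:bubbles}: each application of $\lambda_i\partial_{\lambda_i}$ or $\frac{1}{\lambda_i}\nabla_{a_i}$ produces a bounded rational function of $\lambda_i r_{a_i}$ times $\varphi_i$, using \eqref{conformal_normal_coordinates_expansion} to control the $a_i$-dependence of the exponential map. For (ii) one expands $\varphi_i^{4/(n-2)-\tau} = \varphi_i^{4/(n-2)}(1 + O(\tau\ln\varphi_i))$, writes $\phi_{k,i} = d_{k,i}\varphi_i$ explicitly, rescales, and recognizes the constants $c_1, c_2, c_3$ as the stated Euclidean integrals (for $k=1$ directly; for $k=2,3$ after computing $\lambda\partial_\lambda$ and $\frac{1}{\lambda}\nabla_a$ of the standard bubble). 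The error $O(\tau + \lambda_i^{-(n-2+\theta)} + \lambda_i^{-(2+\theta)})$ comes from the $\tau$-expansion of the power, the Green's-function regular part $H_{a_i}$ (contributing $\lambda_i^{2-n}$), and the conformal factor $u_{a_i} = 1 + O(r_{a_i}^2)$ together with $R_{g_a} = O(r_a^2)$ (contributing $\lambda_i^{-2}$). Item (iv) is the same computation but now exploiting a parity cancellation: $\phi_{2,i}$ is radial and $\phi_{3,i}$ is odd in the rescaled variable, so the leading Euclidean integrals vanish and one is left only with the error terms, whose size is dictated by the first non-vanishing correction — this is where the $n=6$ logarithm and the dimensional split appear, mirroring Lemma \ref{lem_emergence_of_the_regular_part}(i)--(iii).

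\textbf{Items (iii), (v), (vi), (vii) — the interaction estimates.} These concern two distinct bubbles and are governed by $\varepsilon_{i,j}$ in \eqref{eq:eij}, which is comparable to $\min\{\lambda_i/\lambda_j,\lambda_j/\lambda_i\}^{\frac{n-2}{2}}$ when the centres coincide and to $(\lambda_i\lambda_j d_{g_0}(a_i,a_j)^2)^{\frac{2-n}{2}}$ when they are well-separated. For (iii), the key identity $\int \varphi_i^{p}\phi_{k,j} = \int \varphi_i^{1-\tau} d_{k,j}\varphi_j^{\frac{n+2}{n-2}}$ follows from the fact that $\varphi_j^{\frac{n+2}{n-2}}$ solves the bubble equation up to lower order (Lemma \ref{lem_emergence_of_the_regular_part}), so integrating $\varphi_i^{p} L_{g_0}$-tested against $\varphi_j$ can be moved onto $\varphi_j^{\frac{n+2}{n-2}}$; the leading term $b_k d_{k,i}\varepsilon_{i,j}$ is then the standard bubble-interaction integral, computed by concentrating near whichever bubble is more peaked. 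Items (v) and (vi) are pure interaction-integral estimates: one splits $M$ into the region near $a_i$, near $a_j$, and the complement, and on each piece bounds $\varphi_i^\alpha\varphi_j^\beta$ by the appropriate power of $\varepsilon_{i,j}$ using the elementary inequality $\varphi_i(x)\varphi_j(x) \le C\,\varepsilon_{i,j}\,(\varphi_i^{2}(x) + \varphi_j^{2}(x))^{1/2}\cdots$ — more precisely the sharp estimate $\int \varphi_i^\alpha \varphi_j^\beta \le C\varepsilon_{i,j}^{\min(\alpha,\beta)}$ (with a logarithmic loss exactly when $\alpha = \beta = \frac{n}{n-2}$, giving (vi)). Item (vii) follows by differentiating \eqref{eq:eij} directly: $\lambda_i\partial_{\lambda_i}$ and $\frac{1}{\lambda_i}\nabla_{a_i}$ each act on the bracketed expression producing terms of the same homogeneity, hence $O(\varepsilon_{i,j})$, using $\nabla_{a_i} G_{g_0}(a_i,a_j) = O(G_{g_0}(a_i,a_j) \cdot \text{dist}^{-1})$ and the rescaling.

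\textbf{Main obstacle.} The routine but delicate part is bookkeeping the error terms uniformly in $\tau$ as $\tau \to 0$: the factor $\lambda_i^\theta = \lambda_i^{\frac{n-2}{2}\tau}$ must be expanded as $1 + O(\tau\ln\lambda_i)$ and one must check that all $\ln\lambda_i$ factors are absorbed into the stated $O(\tau)$ (this is legitimate only in the regime $\tau\ln\lambda_i = O(1)$, which is where the lemma is applied, cf. Proposition \ref{blow_up_analysis}), and that the conformal-normal-coordinate distortion $O(|x|^3)$ in $\exp$, the regular part $H_a$, and the scalar curvature $R_{g_a} = O(r_a^2)$ all contribute errors no worse than advertised. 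The genuinely substantive point is item (iii): establishing the double identity requires carefully exploiting that $\varphi_j$ is an approximate solution (Lemma \ref{lem_emergence_of_the_regular_part}) and that the error in that approximation, when integrated against $\varphi_i^{p}$, is lower order than $\varepsilon_{i,j}$ — this uses item (v) with suitable exponents. I would prove (v) and (vi) first as stand-alone interaction estimates, then bootstrap (ii), (iii), (iv) from them together with Lemma \ref{lem_emergence_of_the_regular_part}, and finish with the trivial (i) and (vii).
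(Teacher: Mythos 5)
Your proposal follows essentially the same route as the paper: pass to conformal normal coordinates and rescale to explicit Euclidean integrals for (i), (ii), (iv) (with the parity cancellation giving (iv)), obtain the double identity in (iii) from Lemma \ref{lem_emergence_of_the_regular_part} together with the self-adjointness of $L_{g_0}$ so that one may evaluate whichever side concentrates at the more peaked bubble, control the resulting errors via (v), and treat (vii) by direct differentiation. The only deviation is that you prove the interaction estimates (v), (vi) from scratch by splitting $M$ into regions, whereas the paper reduces them to the known case $\tau=0$ (citing Lemma 3.4 of \cite{may-cv}) and bounds the $\tau$-perturbation $\varphi_i^{\alpha-\tau}-\lambda_i^{-\theta}\varphi_i^{\alpha}$; both are standard and the difference is inessential.
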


 \section{Blow-up analysis}\label{s:bu}

 In this section we prove a result related to a well-known one in \cite{str84}. We obtain 
 indeed similar conclusions, but allowing the exponent in the equation to vary 
 along a sequence of approximate solutions.

 \begin{proposition}\label{blow_up_analysis}$_{}$
 Let $(u_{m})_{m} \subset W^{1,2}(M,g_0)$ be a sequence with $u_m \geq 0$ and $k_{\tau_{m}}=1$ 
 satisfying 
 \begin{equation*}
 J_{\tau_{m}}(u_{m})=r_{u_{m}}\longrightarrow r_{\infty} \; \text{ and }\;  \partial J_{\tau_{m}}(u_{m})\longrightarrow 0
 \hbox{ in } W^{-1,2}(M,g_0).  
 \end{equation*}
 Then  up to a subsequence there exist $u_\infty : M \longrightarrow [0,\infty)$ smooth, 
 $q \in \mathbb{N}_{0}$ and for $i=1, \ldots,q$ sequences
 \begin{equation*}\begin{split}
 M \supset (a_{i,m})\longrightarrow a_{i_{\infty}}
 \; \text{ and }\;
 \R_{+}\supset \lambda_{i,m} \longrightarrow \infty
 \; \text{ as }\; m\longrightarrow  \infty
 \end{split}\end{equation*}
 such that 
 $u_{m}
 =
 u_{\infty }
 +
 \sum_{i=1}^{q}\alpha_{i}\varphi_{a_{i,m},\lambda_{i,m}}+v_{m}$
 with 
 \begin{equation*}
 \partial J_{0}(u_{\infty})=0,\; \quad 
 \Vert v_{m} \Vert \longrightarrow 0 
 , \quad \; \lambda_{i,m}^{\tau_{m}}\longrightarrow  1 \quad 
 \; \text{ and }\; \quad 
 \frac{r_{\infty}K(a_{i_{\infty}})\alpha_{i}^{\frac{4}{n-2}}}{4n(n-1)}= 1 
 \end{equation*}
 and $(\varepsilon_{i,j})_{m} \longrightarrow 0$ as $m \longrightarrow \infty$ for each pair $1\leq i<j\leq q$  . 
 \end{proposition}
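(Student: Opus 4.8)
The plan is to adapt Struwe's classical profile decomposition for critical equations (as in \cite{str84}), with the extra care needed because the exponent $p_m = \frac{n+2}{n-2} - \tau_m$ varies along the sequence and the bubbles therefore carry a different dilation covariance. First I would establish that $(u_m)$ is bounded in $W^{1,2}(M,g_0)$: the energy normalization $k_{\tau_m} = 1$ together with $r_{u_m} \to r_\infty$ gives $\|u_m\|^2 = r_{u_m}$ bounded, using the equivalence of $\|\cdot\|_{L_{g_0}}$ with the $W^{1,2}$-norm recorded in Section \ref{s:set-up}. Hence up to a subsequence $u_m \rightharpoonup u_\infty$ weakly in $W^{1,2}$, strongly in $L^2$ and pointwise a.e., with $u_\infty \geq 0$. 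The condition $\partial J_{\tau_m}(u_m) \to 0$ in $W^{-1,2}$ reads, after multiplying by $k_{\tau_m}^{\frac{2}{p+1}}/2 \to$ const, as $L_{g_0} u_m - r_{u_m} K u_m^{p_m} = o(1)$ in $W^{-1,2}$; testing against $\varphi \in C^\infty(M)$ and passing to the limit (the subcritical-in-the-limit issue: $p_m \to \frac{n+2}{n-2}$, so one must be careful, but since $p_m \le \frac{n+2}{n-2}$ one still has weak $L^1$-type convergence of $K u_m^{p_m}$ on compact sets where no concentration occurs) yields $L_{g_0} u_\infty = r_\infty K u_\infty^{\frac{n+2}{n-2}}$ in the distributional sense, and elliptic regularity bootstraps $u_\infty$ to a smooth nonnegative solution, i.e. $\partial J_0(u_\infty) = 0$. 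This is the statement $\partial J_0(u_\infty)=0$.

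Next I would run the standard concentration-compactness induction on the defect measure $|\nabla(u_m - u_\infty)|^2 \, d\mu_{g_0}$. If $w_m^{(0)} := u_m - u_\infty \to 0$ strongly we are done with $q = 0$. Otherwise there is a point where energy concentrates; one selects a concentration point $a_{1,m}$ and a scale $\lambda_{1,m} \to \infty$ via the usual Lions-type concentration function $Q_m(r) = \sup_{x} \int_{B_r(x)} |\nabla w_m^{(0)}|^2$, choosing $r_m, x_m$ so $Q_m(r_m) = \varepsilon_0$ for a fixed small $\varepsilon_0$. Rescaling by $\lambda_{1,m} \sim r_m^{-1}$ in conformal normal coordinates centered at $a_{1,m}$ and using that the equation is $L_{g_0} u_m = r_{u_m} K u_m^{p_m} + o(1)$, the rescaled functions $\hat w_m(y) = \lambda_{1,m}^{-\frac{n-2}{2} + \frac{\theta_m}{\cdots}} w_m^{(0)}(\exp(y/\lambda_{1,m}))$ — here one must choose the rescaling exponent to match $p_m$ rather than the critical one, which is the novelty — converge to a nonzero entire solution of $-c_n \Delta U = 4n(n-1) U^{\frac{n+2}{n-2}}$ on $\mathbb{R}^n$, positive (by the maximum principle / Harnack, after checking the limit is nonnegative and nontrivial), hence by \cite{cgs} a standard bubble $U = \gamma_n^{-(n-2)/2}(1 + \gamma_n |y|^2)^{-(n-2)/2}$ up to translation and dilation. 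Translating back gives $\alpha_1 \varphi_{a_{1,m},\lambda_{1,m}}$ with $\alpha_1$ recording the $L^\infty$-height; because the limiting equation has coefficient $r_\infty K(a_{1_\infty})$ in front of the nonlinearity, matching with the normalization $L_{g_0}\varphi \sim 4n(n-1)\varphi^{\frac{n+2}{n-2}}$ from Lemma \ref{lem_emergence_of_the_regular_part} forces precisely $\frac{r_\infty K(a_{1_\infty}) \alpha_1^{4/(n-2)}}{4n(n-1)} = 1$. The fact $\lambda_{1,m}^{\tau_m} \to 1$ follows because otherwise the subcritical nonlinearity $u_m^{p_m}$ would see a nontrivial factor $\lambda^{-\theta_m}$ in the rescaling and the bubble limit would be killed or the energy would not quantize — concretely, $\lambda_{1,m}^{\tau_m}$ bounded away from $1$ is incompatible with $\hat w_m$ converging to a nonzero solution of the limit equation with the correct constant.

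Then I would subtract $\alpha_1 \varphi_{a_{1,m},\lambda_{1,m}}$ from $w_m^{(0)}$, set $w_m^{(1)} = w_m^{(0)} - \alpha_1 \varphi_{a_{1,m},\lambda_{1,m}}$, verify the Brezis--Lieb / Struwe-type energy splitting $r_{u_m} = r_{u_\infty} + \alpha_1^2 \|\varphi\|^2 / (\text{norm}) + r(w_m^{(1)}) + o(1)$, so that $w_m^{(1)}$ still satisfies the hypotheses with strictly smaller limiting energy, and iterate. Because each bubble consumes at least a fixed amount $\eta_n > 0$ of energy (the Sobolev constant threshold), the process terminates after finitely many steps $q \in \mathbb{N}_0$, producing $u_m = u_\infty + \sum_{i=1}^q \alpha_i \varphi_{a_{i,m},\lambda_{i,m}} + v_m$ with $\|v_m\| \to 0$. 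That distinct bubbles separate, i.e. $(\varepsilon_{i,j})_m \to 0$ for $i < j$, is exactly the statement that for each pair either the centers separate faster than the scales or the scales decouple, which is forced by the orthogonality built into the iterative extraction (one always extracts the "most concentrated" remaining bump, so two extracted bubbles cannot be asymptotically glued). The main obstacle, and where the real work beyond \cite{str84} lies, is handling the varying exponent in the rescaling step: one must verify that the correct dilation-covariant rescaling involves $\lambda_{i,m}^{\theta_m/2}$ factors with $\theta_m = \frac{n-2}{2}\tau_m$ (cf. Lemma \ref{lem_interactions}), prove $\lambda_{i,m}^{\tau_m} \to 1$ simultaneously (so that these extra factors are harmless to leading order but still tracked), and make sure that the energy-splitting identity still holds with $o(1)$ errors uniformly in $m$ despite $p_m \ne \frac{n+2}{n-2}$; the elementary inequalities \eqref{eq:ineq-p-2} and the interaction estimates of Lemma \ref{lem_interactions} are the tools for controlling these remainders.
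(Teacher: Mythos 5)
Your overall strategy coincides with the paper's: Struwe-type profile decomposition, extraction of concentration points and scales via the concentration function, rescaling, identification of the limit profile by \cite{cgs}, energy quantization to terminate the induction. Most of those steps are carried out correctly in outline and match the paper's proof.

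There is, however, a genuine gap in your treatment of the statement $\lambda_{i,m}^{\tau_{m}}\longrightarrow 1$, and it propagates into your derivation of the normalization $\frac{r_{\infty}K(a_{i_{\infty}})\alpha_{i}^{4/(n-2)}}{4n(n-1)}=1$. After the standard critical rescaling $w_{1,m}=\lambda_{1,m}^{\frac{2-n}{2}}u_{1,m}(\exp_{g_{a_{1,m}}}(\cdot/\lambda_{1,m}))$, the nonlinearity acquires the factor $\lambda_{1,m}^{-\theta_{m}}$, and along a subsequence this converges to some $\sigma_{1}=\lim_{m}\lambda_{1,m}^{-\theta_{m}}\in[0,1]$. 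Only $\sigma_{1}=0$ is excluded by the soft argument (a nonzero finite-energy harmonic limit is impossible); for \emph{any} $\sigma_{1}\in(0,1]$ the rescaled sequence converges to a perfectly good nonzero bubble solving $-c_{n}\Delta w_{1,\infty}=\sigma_{1}r_{\infty}\kappa_{1}w_{1,\infty}^{\frac{n+2}{n-2}}$, with the height $\alpha_{1}$ adjusting so that $\sigma_{1}r_{\infty}\kappa_{1}\alpha_{1}^{4/(n-2)}=4n(n-1)$. So your claim that $\lambda_{1,m}^{\tau_{m}}$ bounded away from $1$ would ``kill the bubble limit'' or destroy energy quantization is false: the decomposition and the quantization go through verbatim for $\sigma_{1}<1$, and the local rescaled analysis alone cannot distinguish $\sigma_{1}=1$ from $\sigma_{1}=\tfrac12$. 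Consequently your derivation of the exact relation $r_{\infty}K(a_{1_{\infty}})\alpha_{1}^{4/(n-2)}=4n(n-1)$ silently assumes $\sigma_{1}=1$, which is precisely what remains to be proved.

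The paper closes this gap with a separate, genuinely macroscopic argument at the end of the proof: after ordering the scales and isolating a half-ball $B_{\delta}^{+}(a_{k,m})$ free of other concentration points at comparable scale, the rescaled solution equals a single standard bubble plus $o(1)$ on the large region $B_{c\bar\lambda_{k,m}}(0)\cap\{x_{1}>1\}$, while simultaneously solving the rescaled equation with coefficient $r_{\infty}\kappa_{k}\bar\lambda_{k,m}^{-\theta_{m}}$ and exponent $p_{m}$. Writing $w^{p_{m}}=w^{\frac{n+2}{n-2}}\cdot w^{-\tau_{m}}$ and noting $w\simeq\alpha(1+r^{2})^{-\frac{n-2}{2}}$, consistency of the equation over the whole range $1\lesssim r\lesssim\bar\lambda_{k,m}$ forces $(1+r^{2})^{\theta_{m}}$ to be nearly constant there, and evaluating at $r\sim\bar\lambda_{k,m}$ versus $r\sim 1$ yields $\bar\lambda_{k,m}^{\theta_{m}}\to 1$. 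Some such argument exploiting the solution on scales of order one (not just on the blow-up scale) is indispensable, and your proposal does not contain it. A smaller but related soft spot: your justification of $(\varepsilon_{i,j})_{m}\to 0$ by appeal to ``extracting the most concentrated bump'' only controls the ratios $\bar\lambda_{k,m}/\bar\lambda_{l,m}$; the case of comparable scales with approaching centers still has to be excluded by inspecting the rescaled limit profile, as the paper does.
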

 
 \begin{proof} Setting  $J=J_{\tau_{m}}$, by our assumptions we have 
 \begin{equation*}
 J(u_{m})=\int u_{m} L_{g_{0}}u_{m}d\mu_{g_{0}}\longrightarrow r_{\infty}
 \;  \text{ and } \; 
 \partial J(u_{m})
 =
 L_{g_{0}}u_{m}-r_{\infty}Ku_{m}^{p_{m}}=o(1) \hbox{ in } W^{-1,2}(M,g_0).
 \end{equation*}
 In particular $(u_{m})\subset W^{1,2}(M,g_0)$ is bounded, hence 
 $ 
 u_{m}\rightharpoonup u_{\infty}$  weakly  in $W^{1,2}(M,g_0) $ and strongly in  $L^{q}(M,g_0)$, $q<\frac{2n}{n-2}$.  
 Notice that $u_\infty \geq 0$ is a critical point of $J_{0}$ and therefore it is a smooth function. 
 We may then write $u_{m}=u_{\infty}+u_{1,m}$ with $u_{1,m}\rightharpoonup 0\text{ weakly, and strongly in  } L^{q}(M,g_0).$ Thus
 \begin{equation*}
 r_{\infty}\longleftarrow J(u_{m})=\int u_{\infty} L_{g_{0}}u_{\infty}d\mu_{g_{0}}+\int u_{1,m} L_{g_{0}}u_{1,m}d\mu_{g_{0}}+o(1),
 \end{equation*}
 whence 
 $\int u_{1,m} L_{g_{0}}u_{1,m} d\mu_{g_{0}} \longrightarrow  r_{1,\infty}\geq 0$ and secondly, due to \eqref{eq:ineq-p-2}, that 
 \begin{equation}\label{eq:Eu1m}
 E(u_{1,m}):
 =
 L_{g_{0}}u_{1,m}-r_{\infty}Ku_{1,m}^{p_{m}}=o(1)  \quad \hbox{ in } W^{-1,2}(M,g_0). 
 \end{equation} 
 We may assume $r_{1,\infty}>0$, since otherwise we are done. We now claim the concentration behavior  
 \begin{equation}\label{eq:conc-grad}
 \forall \; 0<\varepsilon \ll 1 \;\exists\; \lambda_{m}\longrightarrow \infty
 \;:\; \sup_{x\in M}\int_{B_{\frac{1}{\lambda_{m}}}(x)} \vert \nabla u_{1,m}\vert_{g_{0}}^{2}d\mu_{g_{0}} \geq \varepsilon.
 \end{equation}
 Indeed we have for a fixed cut-off function 
 \begin{equation*}
 \begin{split}
 o(1)
 = &
 \langle E(u_{1,m}),u_{1,m}\eta^{2}\rangle 
 = 
 \int \left[ (\eta u_{1,m}) L_{g_{0}}(\eta u_{1,m})-r_{\infty}K\vert \eta u_{1,m}\vert^{2}u_{1,m}^{p_{m}-1} \right] d\mu_{g_{0}} +o(1) \\
 \geq &
 \Vert \nabla (\eta u_{1,m})\Vert_{}^{2} 
 -
 r_{\infty}K_{\min}\Vert \eta u_{1,m}\Vert_{L^{p_{m}+1}_{\mu_{g_{0}}}}^{2}\Vert u_{1,m}\Vert_{L^{p_{m}+1}_{\mu_{g_{0}}}(supp(\eta))}^{p_{m}-1} +o(1).
 \end{split}
 \end{equation*}
 Using H\"older's inequality and  Sobolev's embedding we obtain
 \begin{equation*}
 \begin{split}
 o(1)
 \geq &
 \Vert \nabla (\eta u_{1,m})\Vert_{}^{2} 
 (
 1-C\Vert u_{1,m}\Vert_{L^{p_{m}+1}_{\mu_{g_{0}}}(supp(\eta))}^{p_{m}-1}
 )
 +
 o(1).
 \end{split}
 \end{equation*}
 Thus, if $u_{1,m}$ does not concentrate in $L^{p_{m}+1}(M,g_0)$ similarly to \eqref{eq:conc-grad}, then by a covering argument
 \begin{equation*}
 \int \vert \nabla u_{1,m}\vert_{g_{0}}^{2}d\mu_{g_{0}}\longrightarrow  0
 \end{equation*}
 contradicting $r_{1,\infty}>0$. By \eqref{eq:Eu1m}
 concentration in $L^{p_{m}+1}(M,g_0)$ is equivalent to concentration in 
 $L^{2}$-norm for the gradient, which had to be shown. 
 Fixing $\varepsilon>0$ small, we measure the rate of concentration via
 \begin{equation*}
 \Lambda_{1,m}
 =
 \sup \bigg\{\lambda>0 \;\mid \; \max _{x\in M}\int_{B_{\frac{1}{\lambda}}(x)}\vert \nabla u_{1,m}\vert_{g_{0}}^{2}d\mu_{g_{0}}=\varepsilon\bigg\} \longrightarrow \infty, 
 \end{equation*}
 and choose for any 
 $\lambda_{1,m}\nearrow \infty$ 
 with 
 $1\leq \lim_{m\to \infty }\frac{\Lambda_{1,m}}{\lambda_{1,m}}=\delta<\infty$
 up to a subsequence 
 \begin{equation*}
 (a_{1,m})\subset M \;:\; \int_{B_{\frac{1}{\lambda_{1,m}}}(a_{1,m})}\vert \nabla u_{1,m}\vert_{g_{0}}^{2}d\mu_{g_{0}}
 =
 \sup _{x\in M}\int_{B_{\frac{1}{\lambda_{1,m}}}(x)}\vert \nabla u_{1,m}\vert_{g_{0}}^{2}d\mu_{g_{0}}\geq c
 \end{equation*}
 for some positive  $ c=c(\varepsilon, \delta)$ to be specified later. On a suitably small ball $B_{\rho}(a_{1,m})$ we then rescale
 \begin{equation*}
 w_{1,m}
 = 
 \lambda_{1,m}^{\frac{2-n}{2}}u_{1,m}\bigg(\exp_{g_{a_{1,m}}}\frac{\cdot}{\lambda_{1,m}}\bigg).
 \end{equation*}
 The function $w_{1,m }$ is well defined on $B_{\rho\lambda_{1,m}}(0)$ and satisfies, with $\theta_{m}=\frac{n-2}{2}\tau_{m}$,
 \begin{equation*}
 -c_{n}\Delta w_{1,m}-\frac{r_{\infty}K(a_{i,m})}{\lambda_{1,m}^{\theta_{m}}}w_{1,m}^{p_{m}}=o(1)
 \quad \hbox{ in }  W^{-1,2}_{loc}(\R^n), \quad  \Delta=\Delta_{\R^{n}}. 
 \end{equation*}
 Since $\int \vert \nabla u_{1,m}\vert^{2}d\mu_{g_{0}}$ is bounded, so it is $\int_{B_{\rho\lambda_{1,m}}(0)}\vert \nabla w_{1,m}\vert^{2}dx$ for any $\rho > 0$. Hence 
 \begin{equation*}
 w_{1,m}\rightharpoonup w_{1,\infty}\text{ weakly in } W^{-1,2}_{loc}(\R^n)
 \; \text{ with } \;
 -\Delta w_{1,\infty}=\sigma_{1} r_{\infty}\kappa_{1}w_{1,\infty}^{\frac{n+2}{n-2}},
 \end{equation*}
 where 
 \begin{equation*}
 \kappa_1=\lim_{m\to \infty }K(a_{1,m}) \; \text{ and } \;\sigma_{1} =\lim_{m\to \infty }\lambda_{1,m}^{-\theta_{m}}\in [0,1]. 
 \end{equation*}
 Given a compactly supported cut-off  $\eta$, we calculate
 \begin{equation}\label{blow_up_control_critical_estimate}
 \begin{split}
 0\longleftarrow
 &
 \underset{\R^{n}}{\int} 
 (w_{1,m}-w_{1,\infty})\eta^{2}\big(\Delta w_{1,m}
 +
 \frac{r_{\infty}K}{\lambda_{1,m}^{\theta_{m}}}w_{1,m}^{p_{m}}\big) dx \\
 = &
 \underset{\R^{n}}{\int} 
 (w_{1,m}-w_{1,\infty})\eta^{2}\big(\Delta (w_{1,m}-w_{1,\infty}) 
 +
 \sigma_{1} r_{\infty}K (w_{1,m}^{p_{m}}-w_{1,\infty}^{\frac{n+2}{n-2}}) \big) dx
 +
 o(1) 
 \\
 \leq  & 
 -\underset{\R^{n}}{\int} \vert \nabla ((w_{1,m}-w_{1,\infty})\eta)\vert^{2} dx
 +
 \sigma_{1} r_{\infty}\underset{\R^{n}}{\int} K \eta^{2} \vert w_{1,m}-w_{1,\infty}\vert^{p_{m}+1}dx
 +
 o(1) \\
 = &
 -\underset{\R^{n}}{\int} \vert \nabla ((w_{1,m}-w_{1,\infty})\eta)\vert^{2}dx
 +
 \sigma_{1} r_{\infty}\underset{\R^{n}}{\int} K \eta^{2} \vert w_{1,m}-w_{1,\infty}\vert^{p_{m}+1}dx
 +
 o(1). 
 \end{split}
 \end{equation}
 The main step here is the inequality in the above formula.  Passing from 
 $\frac{n+2}{n-2}$ to $p_{m}=\frac{n+2}{n-2} - \tau_{m}$
 in the exponent  is easy, as $w_{1,\infty}$ is fixed. Since $w_{1,m}\rightarrow w_{1,\infty}$ in $L^{p}(supp(\eta)), \; p<\frac{2n}{n-2}$,
 we have 
 \begin{equation*}
 \begin{split}
 \underset{\R^{n}}{\int} K \eta^{2} & ( w_{1,m}  -w_{1,\infty}) ( w_{1,m}^{p_{m}}-w_{1,\infty}^{p_{m}})dx
 = 
 \underset{\R^{n}}{\int} K \eta^{2}(w_{1,m}^{p_{m}+1}-w_{1,\infty}^{p_{m+1}}) dx \\
 = &
 \underset{\R^{n}}{\int} K\eta^{2}
 \bigg[
 -
 \int^{1}_{0}\partial_{s}\vert w_{1,m} 
 -
 s w_{1,\infty}\vert^{p_{m}+1}d s-w_{1,\infty}^{p_{m}+1} 
 +
 \vert w_{1,m}-w_{1,\infty}\vert^{p_{m+1}}
 \bigg] dx.
 \end{split}
 \end{equation*}
 Therefore the main inequality follows from observing that
 \begin{equation*}
 \begin{split}
 \bigg\vert \underset{\R^{n}}{\int} K & \eta^{2}
 \bigg[
 -
 \int^{1}_{0}\partial_{s}\vert w_{1,m}
 -
 s w_{1,\infty}\vert^{p_{m}+1}ds-w_{1,\infty}^{p_{m}+1}
 \bigg]dx
 \bigg\vert \\
 \leq &
 \int^{1}_{0} ds \underset{\R^{n}}{\int} K\eta^{2}
 [
 (p_{m+1})(w_{1,m}-s w_{1,\infty})\vert w_{1,m}-s w_{1,\infty} \vert^{p_{m}-1}w_{1,\infty}
 -
 w_{1,\infty}^{p_{m+1}}] dx \\
 &  \;\longrightarrow
 \int^{1}_{0} ds \underset{\R^{n}}{\int} K\eta^{2}
 [
 (p_{m+1})(1-s)^{p_{m}}w_{1,\infty}^{p_{m}+1}
 -
 w_{1,\infty}^{p_{m+1}}] dx =0.
 \end{split}
 \end{equation*}
 Hence \eqref{blow_up_control_critical_estimate} is justified and we obtain as before
 \begin{equation*}
 \underset{\R^{n}}{\int} \vert \nabla ((w_{1,m}-w_{1,\infty})\eta)\vert^{2}
 (1-C\Vert w_{1,m}-w_{1,\infty}\Vert_{L^{p_{m+1}}(supp(\eta))}^{p_{m}-1})dx
 \leq 
 o(1).
 \end{equation*}
 Thus $w_{1,m}\longrightarrow  w_{1,\infty}$ locally strongly, unless $w_{1,m}$ concentrates in $L^{p_{m}+1}$, but
 by our choice of $\Lambda_{1,m}$
 \begin{equation*}
 \varepsilon
 = 
 \sup_{x\in M}\int_{B_{\frac{1}{\Lambda_{1,m}}}(x)}\vert \nabla u_{1,m}\vert_{g_{0}}^{2}d\mu_{g_{0}}
 \geq
 \sup_{x\in B_{c\lambda_{1,m}}(0)\subset \R^{n}}\int_{B_{\frac{\lambda_{1,m}}{\Lambda_{1,m}}}(x)}\vert \nabla w_{1,m}\vert^{2}dx
 \end{equation*}
 and 
 $1\geq \frac{\lambda_{1,m}}{\Lambda_{1,m}}\not \hspace{-3pt}\longrightarrow 0$,
 so the $L^{2}$-gradient norm does not concentrate beyond $\varepsilon$ and, since
 \begin{equation*}
 -c_{n}\Delta_{\R^{n}}w_{1,m}-\frac{r_{\infty}K(a_{1,m})}{\lambda_{1,m}^{\theta_{m}}}w_{1,m}^{p_{m}}=o(1) 
 \; \text{ locally strongly in }  W^{-1,2}_{loc}(\R^n),
 \end{equation*}
 neither the $L^{p_{m}+1}$-norm does.
 Thus $w_{1,m}\longrightarrow w_{1,\infty}$ locally strongly. In particular
 \begin{equation*}
 \int_{B_{1}(0)}\vert \nabla w_{1,\infty}\vert^{2}dx\longleftarrow\int_{B_{\frac{1}{\lambda_{1,m}}}(a_{1,m})}\vert \nabla u_{1,m}\vert_{g_{0}}^{2}d\mu_{g_{0}}
 \geq 
 c=c(\varepsilon,\delta).
 \end{equation*}
 But $\sigma_{1}=0$ implies $w_{1,\infty}=0$ by harmonicity, so $\sigma_{1}\in (0,1]$, cf. \eqref{blow_up_control_critical_estimate}, and we easily show $w_{1,\infty} > 0$ and
 \begin{equation*}
 w_{1,\infty}=\alpha_{1} \bigg(\frac{\tilde \lambda_{1} }{1+\tilde \lambda_{1}^{2}r_{a}^{2}}\bigg)^{\frac{n-2}{2}} \; \text{ with }\;
 \alpha_{1}>0,\; r_{a}=\vert x-a\vert,\; a\in \R^{n}\; \text{ and }\; \tilde \lambda_{1}>0.
 \end{equation*}
Note that $-\Delta_{\R^{n}}w_{1,\infty}=\sigma_{1} r_{\infty}\kappa_{1} w_{1,\infty}^{\frac{n+2}{n-2}}$ implies
 $
 \sigma_{1} r_{\infty}\kappa_{1}\alpha_{1}^{\frac{4}{n-2}}=4n(n-1).
 $
 Moreover by construction 
 \begin{equation*}
 \int_{B_{1}(0)}\vert \nabla w_{1,m}\vert^{2}dx
 \geq \sup_{x\in B_{c\lambda_{1,m}}(0)}\int_{B_{1}(x)}\vert \nabla w_{1,m}\vert^{2}dx,
 \end{equation*}
 which transfers to $w_{1,\infty}$ by locally strong convergence. This implies  $a=0$ and
 \begin{equation*}
 \frac{\tilde \lambda_{1}^{n}}{1+\tilde \lambda_{1}^{n}}
 \sim
 \int_{B_{1}(0)} \bigg\vert \nabla \bigg(\frac{\tilde \lambda_{1}}{1+\tilde \lambda_{1}^{2}r^{2}} \bigg)^{\frac{n-2}{2}}\bigg\vert^{2}
 dx =
 \varepsilon\alpha_{1}^{-2}
 =
 \varepsilon(\sigma_{1} r_{\infty}\kappa_{1})^{\frac{n-2}{2}}.
 \end{equation*}
 By  $\underset{m\to \infty }{\lim}\lambda_{1,m}^{-\theta_{m}}=\sigma_{1} \in (0,1]$ and $0<\varepsilon \ll 1$ we get
 $\tilde \lambda_{1} \sim\underset{m\to \infty }{\lim}\lambda_{1,m}^{\frac{2-n}{2n}\theta_{m}}$. 
 Dilating back we may then write
 \begin{equation*}
 u_{m}
 = 
 u_{\infty}+\alpha_{1}\varphi_{1,m}+u_{2,m}
 , \quad 
 \; \varphi_{1,m}=\varphi_{a_{1,m},\bar\lambda_{1,m}}, \quad \bar \lambda_{1,m}=\tilde \lambda_{1}\lambda_{1,m}. 
 \end{equation*}
 Moreover we know that $u_{2,m}\rightharpoonup 0$ weakly in $W^{1,2}(M,g_0)$ and
 \begin{equation*}
 w_{2,m}=(\bar \lambda_{1,m})^{\frac{2-n}{2}}u_{2,m}\bigg(\exp_{g_{a_{1,m}}}\frac{\cdot}{\bar \lambda_{1,m}}\bigg) \longrightarrow 0
 \; \text{ locally strongly in } \; W^{1,2}(\R^{n}).
 \end{equation*}
 Since the initial sequence $(u_m)$ was non-negative, it follows that $u_\infty \geq 0$ and  the negative part 
 of $u_{2,m}$ tends to zero as $m \longrightarrow \infty$ in $W^{1,2}$-norm.  
 Using a dilation argument, the latter property and the above formula,
 it is easy to show that, if $\a, \b \geq 1$ with $\a + \b = \frac{2n}{n-2}$, then 
 \begin{equation}\label{eq:ineq-a-b}
 \int \varphi_{1,m}^\a |u_{2,m}|^\beta d \mu_{g_{0}} \longrightarrow 0 \quad \hbox{ as  } 
 m \longrightarrow \infty, 
 \end{equation}
 and that also 
 $\int u_{2,m} L_{g_{0}}\varphi_{1,m}d\mu_{g_{0}}=o(1)$. Thence as before for $u_{1,m}$
 \begin{equation*}
 r_{\infty}
 \longleftarrow 
 J_{\tau_{m}}(u_{m})
 =
 \int u_{\infty} L_{g_{0}}u_{\infty}d\mu_{g_{0}}
  +
 \alpha_{1}^{2}\int \varphi_{1,m} L_{g_{0}}\varphi_{1,m}d\mu_{g_{0}} 
  +
 \int u_{2,m} L_{g_{0}}u_{2,m}d\mu_{g_{0}}
 \end{equation*}
 and therefore $ \int u_{2,m} L_{g_{0}}u_{2,m} d \mu_{g_{0}} \longrightarrow r_{2,\infty}\geq 0$.  Likewise
 \begin{equation*}
 E(u_{2,m})=L_{g_{0}}u_{2,m}-r_{\infty}Ku_{2,m}^{p_{m}}=o(1) \quad \hbox{ in }  W^{-1,2}_{loc}(\R^n)
 \end{equation*}
 since by expansion of the non-linear term of $\partial J_{\tau_{m}}(u_{m})$ we find
 \begin{equation*}
 \begin{split}
 o(1)
 = &
 L_{g_{0}}(u_{\infty}+\alpha_{1}\varphi_{1,m}+u_{2,m})
 -
 r_{\infty}K(u_{\infty}+\alpha_{1}\varphi_{1,m}+u_{2,m})^{p_{m}} \\
 = &
 L_{g_{0}}u_{\infty} - r_{\infty}Ku_{\infty}^{p_{m}}
 +
 \alpha_{1} L_{g_{0}}\varphi_{1,m} - r_{\infty}K\alpha_{1}^{p_{m}}\varphi_{1,m}^{p_{m}} \\
 & +
 L_{g_{0}}u_{2,m} -r_{\infty}Ku_{2,m}^{p_{m}}+o(1)
 = 
 L_{g_{0}}u_{2,m} -r_{\infty}Ku_{2,m}^{p_{m}}+o(1) \quad \hbox{ in } W^{-1,2}(M,g_0). 
 \end{split}
 \end{equation*}
 The second equality follows from applying the latter formulas to any test function 
 in $W^{1,2}(M,g_0)$ and then applying Sobolev's and H\"older's inequalities together 
 with \eqref{eq:ineq-a-b}. 
 We may therefore iterate the afore going and find for a finite sum
 $ 
 u_{m}=\sum_{i}\alpha_{i}\varphi_{i,m}+v_{m},
 $ 
 with energy
 \begin{equation*}
 r_{\infty}\longleftarrow J(u_{m})
 \geq 
 \int u_{\infty} L_{g_{0}}u_{\infty}d\mu_{g_{0}}
 +
 \sum_{i}\alpha_{i}^{2}\int \varphi_{i,m} L_{g_{0}}\varphi_{i,m}d\mu_{g_{0}}. 
 \end{equation*}
But all $\alpha_{i}$ are uniformly lower bounded due to

 \begin{equation*}
 \sigma_{i} r_{\infty}\kappa_{i}\alpha_{i}^{\frac{4}{n-2}}=1,\sigma_{i}=\lim_{m\to \infty }\lambda_{i,m}^{-\theta_{m}}\in (0,1]
 \;\text{ and }\;
 \kappa_{i}=\lim_{m\to \infty }K(a_{i,m}),
 \end{equation*}
 thence the iteration has to stop after finitely-many steps. In particular 
 $v_{m}$ does not concentrate locally and consequently vanishes strongly as $m\longrightarrow \infty$. Now take any fixed index $j$ and recall that 
 \begin{equation*}
 w_{j,m}=\bar \lambda_{j,m}^{\frac{2-n}{2}}u_{j,m} \bigg(\exp_{g_{a_{j,m}}}\frac{\cdot}{\bar \lambda_{j,m}}\bigg)
 \end{equation*}
 and that by construction $\frac{\bar \lambda_{k,m}}{\bar \lambda_{l,m}}\not \hspace{-3pt} \longrightarrow 0 $ for $k<l$. We had seen
 \begin{equation*}
 w_{j,m}\longrightarrow w_{j,\infty} 
 \; \text{ weakly and locally strongly, where }\;
 -c_{n}\Delta w_{j,\infty}- \sigma_{j} r_{\infty}\kappa_{j}w_{j,\infty}^{\frac{n+2}{n-2}}=0.
 \end{equation*}
 On the other hand 
 \begin{equation*}
 w_{j,m}=\alpha_{j}\bigg(\frac{1}{1+r^{2}}\bigg)^{\frac{n-2}{2}}
 +
 \sum_{i>j}u_{a_{i,m}}(a_{j})\alpha_{i}
 \bigg(
 \frac
 {\frac{\bar \lambda_{i,m}}{\bar \lambda_{j,m}}}
 {1+\bar \lambda_{i,m}^{2}\gamma_{n}G_{a_{i,m}}^{\frac{2}{2-n}}\big(\exp_{g_{a_{j,m}}}\frac{\cdot}{\bar \lambda_{j,m}}\big)}
 \bigg)^{\frac{n-2}{2}}
 \end{equation*}
 up to some error of order $o(1)$ locally in $W^{1,2}$,
and the latter sum has to vanish, which is equivalent to
 \begin{equation*}
 \frac{\bar \lambda_{j,m}}{\bar \lambda_{i,m}}\longrightarrow \infty
 \; \text{ or }
 \
 \bar \lambda_{i,m}\bar \lambda_{j,m}G_{a_{i,m}}(a_{j,m})\longrightarrow \infty.
 \end{equation*}
 Recalling \eqref{eq:eij}, this shows  that $(\varepsilon_{i,j})_m \longrightarrow 0$ for all $i \neq j$. We are left with proving
 $\bar \lambda_{i,m}^{\tau_{m}}\longrightarrow 1$. Ordering 
 \begin{equation*}
  \bar \lambda_{1,m}\geq \ldots \geq \bar \lambda_{q,m} 
 \end{equation*}
 up to a subsequence, let
 \begin{equation*}
 1\leq \bar q=\sharp \bigg\{l=1,\ldots, q\mid \lim_{m\to \infty }\frac{\bar \lambda_{1,m}}{\bar\lambda_{l,m}} <\infty\bigg\}.
 \end{equation*}
 Then $\frac{\bar \lambda_{k,m}}{\bar \lambda_{l,m}}\longrightarrow \infty$ for $k\leq \bar q <l$ and 
 $c\leq \lim_{m\to \infty }\frac{\bar \lambda_{k,m}}{\bar \lambda_{l,m}}\leq C$ for $k,l\leq \bar q$. Select 
 a half-ball $B^{+}_{\delta}(a_{k,m})$ with
 \begin{equation*}
 1\leq k\leq \bar q \text{ and } 0<\delta\ll 1 
 \; \hbox{ such that } \;
 B^{+}_{\delta}(a_{k,m})\cap \{a_{l,m}\mid 1\leq l \leq \bar q,  l\neq k\} = \emptyset
 \end{equation*}
 up to a subsequence, where for some affine function  $\nu_{k,m}$ with unit gradient we have set 
 \begin{equation*}
 B^{+}_{\delta}(a_{k,m})=B_{\delta}(a_{k,m}) \cap \{ \nu_{k,m}  >0\}  
 \end{equation*}
 in a local coordinate system. Then rescaling $u_{m}$ on $B_{\delta}^{a_{k,m}}\cap \{\nu_{k,m} >\frac{1}{\bar \lambda_{k,m}} \}$ we find 
 \begin{equation*}
 w_{k,m}=\bar \lambda_{k,m}^{\frac{2-n}{2}}u_{m}\bigg(\exp_{g_{a_{i}}}\frac{\cdot}{\bar \lambda_{k,m}}\bigg)
 =
 \alpha_{l}\bigg(\frac{1}{1+r^{2}}\bigg)^{\frac{n-2}{2}}
 +
 o(1)
 \; \text{ on }\; B_{c\bar \lambda_{k,m}}(0)\cap \{x_{1}>1\}.
 \end{equation*}
 On the other hand side, $w_{k,m}$ solves
 \begin{equation*}
 -c_{n}\Delta w_{k,m}-\frac{r_{\infty}\kappa_{k}}{\bar \lambda_{k,m}^{\theta_{m}}}w_{k,m}^{p_{m}}=o(1),\;\kappa_{k}=\lim_{m\to \infty } K(a_{k,m})
 \; \text{ on }\; B_{c\bar \lambda_{k,m}}(0).
 \end{equation*}
 Recalling that $p_{m}=\frac{n+2}{n-2}-\tau_{m}$ and $\theta_{m} =\frac{n-2}{2}\tau_{m}$, this implies, that  up to rotating coordinates
 \begin{equation*}
 (1+r^{2})^{\theta_{m}}  \;\text{ is nearly constant on }\;B^{+}_{c\bar \lambda_{k,m}}(0)\cap \{x_{1}>1\}. 
 \end{equation*}
 Thus $\bar \lambda_{k,m}^{\theta_{m}}\longrightarrow 1$. The claim follows, since
 $\lim_{m\to \infty }\frac{\bar \lambda_{k,m}}{\bar \lambda_{l,m}}\geq c$ for all $l=1,\ldots,q.$
 \end{proof}

 \section{Reduction and v-part estimates}\label{s:red}

 In this section we will consider a sequence $u_m$ as in Proposition \ref{blow_up_analysis},  
 with zero weak limit. We will recall some well-known facts about finite-dimensional 
 reductions and derive preliminary error estimates and on suitable components 
 of the gradient of $J_{\tau}$.

 For $\varepsilon>0,\; q\in \N,\; u\in W^{1,2}(M,g_{0})$ and $( \alpha^{i}, \lambda_{i}, a_{i})\in (\R^{q}_{+}, \R^{q}_{+},M^{q})$ we  define 
 \begin{enumerate}[label=(\roman*)]
  \item \quad 
  $
 A_{u}(q, \varepsilon)
 =
 \{ 
 ( \alpha^{i}, \lambda_{i}, a_{i}) \mid
  \;
 \underset{i\neq j}{\forall\;}\;
  \lambda_{i}^{-1}, \lambda_{j}^{-1}, \varepsilon_{i,j}, \bigg\vert 1-\frac{r\alpha_{i}^{\frac{4}{n-2}}K(a_{i})}{4n(n-1)k_{\tau}}\bigg\vert,
 \Vert u-\alpha^{i}\varphi_{a_{i}, \lambda_{i}}\Vert
 <\varepsilon, \, \lambda_{i}^\tau < 1 + \varepsilon
 \};
  $
 \item \quad
 $ 
 V(q, \varepsilon)
 = 
 \{
 u\in W^{1,2}(M,g_{0})  
 \mid
 A_{u}(q, \varepsilon)\neq \emptyset
 \},
 $
 \end{enumerate}
 cf.  \eqref{eq:r}, \eqref{eq:kp} and \eqref{eq:bubbles}. For both  conditions $\lambda_{i} > \varepsilon^{-1}, \lambda_{i}^\tau < 1 + \varepsilon$ to hold, we 
 will always assume that $\tau \ll \varepsilon$ and this is  consistent with the 
 statement of Proposition \ref{blow_up_analysis}. 
 Under the above conditions on the parameters $\alpha_{i}, a_i$ and $\lambda_{i}$ the functions 
 $\sum_{i=1}^q \alpha^{i}\varphi_{a_{i}, \lambda_{i}}$ form a smooth manifold in $W^{1,2}(M,g_0)$, 
 which implies the following well known result, cf. \cite{bab}.

 \begin{proposition} 
 \label{prop_optimal_choice} 
 For every $\varepsilon_{0}>0$  there exists $\varepsilon_{1}>0$ such that  for $u\in V( q, \varepsilon)$ 
 with $\varepsilon<\varepsilon_{1}$
 \begin{equation*}\begin{split}
 \inf_
 {
 (\tilde\alpha_{i}, \tilde a_{i}, \tilde\lambda_{i})\in A_{u}(q,2\varepsilon_{0}) 
 }
 \int 
 (
 u
 -
 \tilde\alpha^{i}\varphi_{\tilde a_{i}, \tilde \lambda_{i}}
 ) L_{g_{0}}
 (
 u
 -
 \tilde\alpha^{i}\varphi_{\tilde a_{i}, \tilde \lambda_{i}}
 )
 d\mu_{g_{0}}
 \end{split}\end{equation*}
 admits an unique minimizer $(\alpha_{i},a_{i}, \lambda_{i})\in A_{u}(q, \varepsilon_{0})$ 
 depending smoothly on $u$
 and we set 
 \begin{equation}\begin{split} \label{eq:v}
 \varphi_{i}=\varphi_{a_{i}, \lambda_{i}}, \quad v=u-\alpha^{i}\varphi_{i}, \quad K_{i}=K(a_{i}).
 \end{split}\end{equation}
 \end{proposition}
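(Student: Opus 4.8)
The plan is to treat this as a standard finite-dimensional reduction in the spirit of Bahri. I would write $\Phi(\alpha,a,\lambda)=\sum_{i=1}^q\alpha^i\varphi_{a_i,\lambda_i}$ and study the squared distance $F_u(\alpha,a,\lambda)=\|u-\Phi(\alpha,a,\lambda)\|^2$, with $\|\cdot\|=\|\cdot\|_{L_{g_0}}$; this is a smooth function of the parameters on the open region carrying $A_u$, since $\varphi_{a,\lambda}$ depends smoothly on $(a,\lambda)$ in $W^{1,2}$ and $\Phi$ is linear in $\alpha$. Its critical points are exactly the solutions of the orthogonality system
\begin{equation*}
\langle v,\phi_{k,i}\rangle=0,\qquad k=1,2,3,\ i=1,\dots,q,\qquad v=u-\Phi(\alpha,a,\lambda),
\end{equation*}
a square system of $q(n+2)$ scalar equations, since $\partial_{\alpha^i}\Phi=\phi_{1,i}$ while $\partial_{\lambda_i}\Phi$ and $\partial_{a_i}\Phi$ are, up to the positive factors $-\lambda_i^{-1}$ and $\lambda_i$ built into the Notation, $\phi_{2,i}$ and $\phi_{3,i}$.

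I would first establish a confinement estimate. Since $u\in V(q,\varepsilon)$ with $\varepsilon<\varepsilon_1$, there is a reference tuple $p_0=(\tilde\alpha,\tilde a,\tilde\lambda)\in A_u(q,\varepsilon)$ with $F_u(p_0)<\varepsilon^2$, hence $\inf_{A_u(q,2\varepsilon_0)}F_u<\varepsilon^2$ and every near-minimizer $p$ satisfies $\|\Phi(p)-\Phi(p_0)\|<2\varepsilon$. The point — and this is where Lemma~\ref{lem_interactions}, together with Lemma~\ref{lem_emergence_of_the_regular_part}, does the real work — is that in the concentrated regime $\Phi$ is a \emph{uniform} immersion: after the intrinsic rescaling encoded in the $\phi_{k,i}$, the Gram matrix $\big(\langle\phi_{k,i},\phi_{l,j}\rangle\big)$ is, uniformly as $\lambda_i\to\infty$ and $\tau\to0$, a small perturbation of the positive-definite block-diagonal matrix $\mathrm{diag}_i(c_1,c_2,c_3)$, the $i\neq j$ blocks being $O(\varepsilon_{i,j})=o(1)$. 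Consequently two bubble configurations both $2\varepsilon$-close to $u$ in $W^{1,2}$ have parameters $O(\varepsilon)$-close to one another, and choosing $\varepsilon_1$ small in terms of $\varepsilon_0$ forces every near-minimizer to lie well inside $A_u(q,\varepsilon_0)$ — in particular away from the boundary of $A_u(q,2\varepsilon_0)$, with the $\lambda$'s trapped in a fixed compact set. A minimizing sequence then subconverges, so the infimum is attained at an interior point, which solves the orthogonality system.

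To single out that point and obtain smooth dependence on $u$, I would apply the implicit function theorem to $G(u,p)=\big(\langle u-\Phi(p),\phi_{k,i}\rangle\big)_{k,i}$ at $(\Phi(p_0),p_0)$, where $G$ vanishes; our actual $u$ lies in the admissible $\varepsilon_1$-neighborhood since $\|u-\Phi(p_0)\|<\varepsilon<\varepsilon_1$. There $\partial_pG$ equals $-\big(\langle\partial_p\Phi,\phi_{k,i}\rangle\big)+\big(\langle v,\partial_p\phi_{k,i}\rangle\big)$: the first term is, up to the positive rescaling factors, the Gram matrix just discussed, hence uniformly invertible, while the second is controlled by $\|v\|=O(\varepsilon_1)$ times bounded quantities via Lemma~\ref{lem_interactions}(i),(vii). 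So $\partial_pG$ is uniformly invertible for $\varepsilon_1$ small, and the implicit function theorem produces a unique $p=p(u)$ near $p_0$ solving $G(u,p(u))=0$, depending smoothly on $u$. Since $\partial_pG$ is to leading order the positive-definite Gram matrix, $p(u)$ is a nondegenerate local minimum of $F_u$; combined with the confinement step — which shows every critical point of $F_u$ in $A_u(q,2\varepsilon_0)$ is $O(\varepsilon_1)$-close to $p_0$, hence equals $p(u)$ — this identifies $p(u)$ as the unique minimizer over $A_u(q,2\varepsilon_0)$, lying in $A_u(q,\varepsilon_0)$. Setting $(\alpha_i,a_i,\lambda_i)=p(u)$, $\varphi_i=\varphi_{a_i,\lambda_i}$, $v=u-\alpha^i\varphi_i$, $K_i=K(a_i)$ then gives the assertion. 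The hard part is precisely the uniform invertibility of the Gram matrix of the $\phi_{k,i}$, i.e.\ the uniform (in $\lambda_i\to\infty$, $\tau\to0$) non-degeneracy of the parametrization; once this is in hand everything else is formal. Proving it amounts to converting the $L_{g_0}$-pairings into the explicit integrals of Lemma~\ref{lem_interactions}: using $L_{g_0}\varphi_i=4n(n-1)\varphi_i^{\frac{n+2}{n-2}}+(\text{lower order})$ from Lemma~\ref{lem_emergence_of_the_regular_part} one rewrites $\langle\phi_{k,i},\phi_{l,j}\rangle$ as $4n(n-1)\int \phi_{k,i}\,d_{l,j}\varphi_j^{\frac{n+2}{n-2}}\,d\mu_{g_0}$ up to negligible terms, and then invokes Lemma~\ref{lem_interactions}(ii) for the diagonal blocks $i=j$, part (iv) for the $k\neq l$ entries, and parts (iii),(v),(vii) for $i\neq j$; the bookkeeping of the error terms (powers of $\lambda_i^{-1}$, $\varepsilon_{i,j}$, $\tau$) is routine but must be done carefully enough to see that the perturbation of $\mathrm{diag}_i(c_1,c_2,c_3)$ is uniformly $o(1)$.
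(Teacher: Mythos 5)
Your argument is correct and is precisely the standard Bahri-type reduction that the paper itself does not reprove: Proposition \ref{prop_optimal_choice} is stated there as a well-known consequence of the fact that the sums $\sum_{i}\alpha^{i}\varphi_{a_{i},\lambda_{i}}$ form a smooth, uniformly non-degenerate manifold in $W^{1,2}(M,g_0)$, with only a citation to \cite{bab}. Your implicit-function-theorem scheme --- confinement of near-minimizers, then uniqueness and smooth dependence from the uniform invertibility of the Gram matrix of the $\phi_{k,i}$, which you correctly reduce to Lemmata \ref{lem_emergence_of_the_regular_part} and \ref{lem_interactions} --- is exactly the argument that citation points to, so you have simply supplied the details the authors chose to omit.
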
 
\noindent
 The  term  
 $v=u-\alpha^{i}\varphi_{i}$ is orthogonal to all  
 $ 
 \varphi_{i} ,-\lambda_{i}\partial_{\lambda_{i}}\varphi_{i}, \frac{1}{\lambda_{i}}\nabla_{a_{i}}\varphi_{i}, 
 $ with respect to the product 
 \begin{equation*}\begin{split}
 \langle \cdot, \cdot\rangle_{L_{g_{0}}}
 =
 \langle L_{g_{0}}\cdot,\cdot\rangle_{L^{2}_{g_{0}}}. 
 \end{split}\end{equation*}
For $u\in V( q, \varepsilon)$ let
 \begin{equation} 
 \label{eq:Hu}
 H_{u}( q, \varepsilon)
 =
 \langle 
  \varphi_{i},\lambda_{i}\partial_{\lambda_{i}}\varphi_{i}, \frac{1}{\lambda_{i}}\nabla_{a_{i}}\varphi_{i}
 \rangle
 ^{\perp_{L_{g_{0}}}}.
 \end{equation}
 We next have an estimate on the projection of the gradient of $J_{\tau}$ onto $H_u$.

 \begin{lemma} 
 \label{lem_testing_with_v} 
 For $u\in V(q,\varepsilon)$ with $k_{\tau}=1$, cf. \eqref{eq:kp},and $\nu\in H_{u}(q,\varepsilon)$ there holds
 \begin{equation*}
 \partial J_{\tau}(\alpha^{i}\varphi_{i})\nu
 = 
 O\bigg(
 \bigg[
 \sum_{r}\frac{\tau}{\lambda_{r}^{\theta}}
 +
 \sum_{r}\frac{\vert \nabla K_{r}\vert}{\lambda_{r}^{1+\theta}}
 +
 \sum_{r}\frac{1}{\lambda_{r}^{2+\theta}}
 +
 \sum_{r}\frac{1}{\lambda_{r}^{n-2+\theta}}
 +
 \sum_{r\neq s}\frac{\varepsilon_{r,s}^{\frac{n+2}{2n}}}{\lambda_{r}^{\theta}}
 \bigg]
 \Vert \nu \Vert \bigg).  
 \end{equation*}
 \end{lemma}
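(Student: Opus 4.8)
The plan is to test the equation $\partial J_\tau(\alpha^i\varphi_i)\nu$ directly using the explicit first-order expression for $\partial J_\tau$ and the orthogonality of $\nu \in H_u(q,\varepsilon)$ to the bubbles and their derivatives. Writing $w = \sum_i \alpha^i\varphi_i$ and using $k_\tau = 1$, we have $\partial J_\tau(w)\nu = 2\big[\int L_{g_0} w\,\nu\,d\mu_{g_0} - r_w\int K w^{p}\nu\,d\mu_{g_0}\big]$. First I would use Lemma \ref{lem_emergence_of_the_regular_part} to replace $L_{g_0}\varphi_i$ by $4n(n-1)\varphi_i^{\frac{n+2}{n-2}}$ plus controlled error terms (of size $O(\lambda_i^{-2}\varphi_i)$ for $n\ge 7$, logarithmic for $n=6$, and $O(\lambda_i^{-3/2})$ etc. in low dimensions), together with the regular-part contribution $-\tfrac{c_n}{2}r_a^{n-2}(H_a(a)+n\nabla H_a(a)x)\varphi_i^{\frac{n+2}{n-2}}$, which generates the $\lambda_r^{-(n-2)}$ and $\lambda_r^{-1}|\nabla K_r|$ — actually the $\lambda_r^{-(n-2)}$ — terms after pairing with $\nu$ and using $|\nu|$ bounds together with $L^{p+1}$ estimates on $\varphi_i^{\frac{n+2}{n-2}}$.

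Next I would expand the nonlinear term $\int K w^p \nu$. Here the key algebraic step is the pointwise expansion of $w^p = \big(\sum_i \alpha^i\varphi_i\big)^p$ into the sum $\sum_i (\alpha^i)^p\varphi_i^p$ plus cross terms, which are controlled by the interaction estimates in Lemma \ref{lem_interactions}, parts (v)--(vi), giving the $\varepsilon_{r,s}^{\frac{n+2}{2n}}$ contributions once integrated against $\nu$ via H\"older. For each diagonal term $\int K\varphi_i^p\nu$ I would Taylor-expand $K$ around $a_i$: the constant term $K_i\int\varphi_i^p\nu$ is then compared with $\int\varphi_i^{\frac{n+2}{n-2}}\nu$, and by the normalization $\frac{r_w K_i(\alpha^i)^{\frac{4}{n-2}}}{4n(n-1)} = 1 + O(\varepsilon)$ combined with the orthogonality $\langle \varphi_i,\nu\rangle_{L_{g_0}} = 0$, the leading pieces cancel up to the error $\int (\varphi_i^{\frac{n+2}{n-2}} - \varphi_i^{\frac{n+2}{n-2}-\tau})\nu$, which is the source of the $\tau/\lambda_r^\theta$ term (the factor $\lambda_r^\theta \approx 1$, but carrying it explicitly keeps uniformity); the linear term $\nabla K_i\cdot x\,\varphi_i^p$ produces $\lambda_r^{-(1+\theta)}|\nabla K_r|$ by an odd-symmetry cancellation against the even part of $\nu$ paired with $\phi_{1,i}$ — more precisely by pairing against $\phi_{3,i}$-orthogonality; and the quadratic term in the Taylor expansion of $K$ gives $\lambda_r^{-(2+\theta)}$.

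The remaining care is bookkeeping: one must check that every integral is paired with $\|\nu\|$ via Cauchy--Schwarz in $L_{g_0}$ or H\"older plus Sobolev, so that the final bound is genuinely of the form $[\,\cdots\,]\|\nu\|$, and that the constants are uniform as $\tau\to 0$ — this is where one uses that $u \in V(q,\varepsilon)$ forces $\lambda_i^\tau < 1+\varepsilon$ and $\lambda_i^{-1} < \varepsilon$, so $\lambda_i^\theta$ stays bounded and all the error exponents are controlled simultaneously. The main obstacle I anticipate is not any single estimate but the coupling between the cancellation mechanism (orthogonality of $\nu$ to bubbles and to $\lambda\partial_\lambda\varphi_i$, $\lambda^{-1}\nabla_a\varphi_i$) and the subcritical perturbation: one has to be careful that replacing $\varphi_i^{\frac{n+2}{n-2}}$ by $\varphi_i^{\frac{n+2}{n-2}-\tau}$ does not spoil the orthogonality-based cancellations except at the order $\tau$, and that the $n=3,4,5,6$ cases — where the error terms in Lemma \ref{lem_emergence_of_the_regular_part} are worse than $\lambda^{-2}\varphi$ — are absorbed into the stated $\lambda_r^{-(n-2+\theta)}$ and $\lambda_r^{-(2+\theta)}$ terms; in dimension $n=4$ this uses the Weyl-tensor/logarithmic structure and the non-degeneracy hypothesis \eqref{eq:nd} only indirectly, through the finiteness of $q$ established in Proposition \ref{blow_up_analysis}.
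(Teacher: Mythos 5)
Your plan reproduces the paper's argument: the linear term vanishes exactly by the $L_{g_{0}}$-orthogonality defining $H_{u}(q,\varepsilon)$ (so your step of expanding $L_{g_{0}}\varphi_{i}$ via Lemma \ref{lem_emergence_of_the_regular_part} and pairing with $\nu$ is precisely the paper's estimate \eqref{non_linear_v_part_interaction} for $\int\varphi_{i}^{\frac{n+2}{n-2}}\nu\,d\mu_{g_{0}}$, which supplies the $\lambda_{r}^{-(n-2+\theta)}$ term), the cross terms are handled by the same iterative decomposition of $M$ plus Lemma \ref{lem_interactions} $(v)$ and H\"older, and the Taylor expansion of $K$ together with the comparison \eqref{bubble_critical_minus_subcritical} of $\varphi_{i}^{p}$ with $\lambda_{i}^{-\theta}\varphi_{i}^{\frac{n+2}{n-2}}$ produces the remaining $\tau$, $\vert\nabla K_{r}\vert\lambda_{r}^{-(1+\theta)}$ and $\lambda_{r}^{-(2+\theta)}$ contributions. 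The one inaccuracy is your claim that the term $\nabla K_{i}\int x\,\varphi_{i}^{p}\nu\,d\mu_{g_{0}}$ needs an odd-symmetry or $\phi_{3,i}$-orthogonality cancellation: since $\nu$ is an arbitrary element of $H_{u}(q,\varepsilon)$ no parity is available, but none is needed, as H\"older with $\Vert\,\vert x\vert\,\varphi_{i}^{p}\Vert_{L_{g_{0}}^{\frac{2n}{n+2}}}=O(\lambda_{i}^{-(1+\theta)})$ already yields the stated bound, exactly as in \eqref{non_linearity_expansion}.
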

 \begin{proof}
 Due to the fact that $k_{\tau}=1$ and $\nu\in H_{u}(q,\varepsilon)$ we have
 \begin{equation*}
 \begin{split}
 -\frac{1}{2}\partial J_{\tau}(\alpha^{i}\varphi_{i})\nu
 = &
 r_{\alpha^{i}\varphi_{i}} \int K(\alpha^{i}\varphi_{i})^{p}\nu d\mu_{g_{0}},
 \end{split}
 \end{equation*}
 and therefore
 \begin{equation*}
 \partial J_{\tau}(\alpha^{i}\varphi_{i})\nu
 \simeq 
 \int K(\alpha^{i}\varphi_{i})^{p}\nu d\mu_{g_{0}}.
 \end{equation*}
 Decomposing iteratively $M$ as  
 $ 
 \big\{ \alpha_{j}\varphi_{j}>\sum_{i>j}\alpha_{i}\varphi_{i} \big\} 
 \cup 
 \big\{ \alpha_{j}\varphi_{j}\leq \sum_{i>j}\alpha_{i}\varphi_{i}\big\},
 $ 
 we find
 \begin{equation*}
 \begin{split}
 \int K(\alpha^{i}\varphi_{i})^{p}\nu d\mu_{g_{0}}
 = &
 \sum_{i}\int K(\alpha_{i}\varphi_{i})^{p}\nu d\mu_{g_{0}} 
 +
 O(\sum_{r\neq s}
 \int_{\{\alpha_{s}\varphi_{s}\leq \alpha_{r}\varphi_{r}\}}
 (\alpha^{r}\varphi_{r})^{p-1}\alpha^{s}\varphi_{s}\vert \nu \vert d\mu_{g_{0}}).
 \end{split}
 \end{equation*}
 Using H\"older's inequality  with exponents 
 $ 
 1=\frac{1}{p}+\frac{1}{q}=\frac{n+2}{2n}+\frac{n-2}{2n} 
 $ 
 and Lemma \ref{lem_interactions} $(v)$ applied to the latter error term, where 
 the inequality $\varphi_{s} \lesssim \varphi_{r}$ can be used to apply it with $\beta \geq 1$,
  we get
 \begin{equation*}
 \int K(\alpha^{i}\varphi_{i})^{p}\nu d\mu_{g_{0}}
 =
 \sum_{i}\int K(\alpha^{i}\varphi_{i})^{p}\nu d\mu_{g_{0}}
 +
 O\bigg(\sum_{r\neq s}\frac{\varepsilon_{r,s}^{\frac{n+2}{2n}}}{\lambda_{r}^{\theta}}\Vert \nu \Vert\bigg),
 \end{equation*}
 and by a simple expansion we also obtain 
 \begin{equation}\label{non_linearity_expansion}
 \int K(\alpha^{i}\varphi_{i})^{p}\nu d\mu_{g_{0}}
 = 
 \sum_{i}K_{i}\alpha_{i}^{p}\int \varphi_{i}^{p}\nu d\mu_{g_{0}} 
 +
 O\bigg(
 \bigg[
 \sum_{r}\frac{\vert \nabla K_{r}\vert}{\lambda_{r}^{1+\theta}}
 +
 \sum_{r}\frac{1}{\lambda_{r}^{2+\theta}}
 +
 \sum_{r\neq s}\frac{\varepsilon_{r,s}^{\frac{n+2}{2n}}}{\lambda_{r}^{\theta}}
 \bigg]
 \Vert \nu \Vert\bigg).
 \end{equation}
Note that 
 \begin{equation*}
 \begin{split}
 \Vert \lambda_{i}^{-\theta}\varphi_{i}^{\frac{n+2}{n-2}} & -\varphi_{i}^{p}\Vert_{L_{g_{0}}^{\frac{2n}{n+2}}}^{\frac{2n}{n+2}}
 = 
 \int 
 \varphi_{i}^{\frac{2n}{n-2}-\frac{2n}{n+2}\tau}\vert 1-\lambda_{i}^{-\theta}\varphi_{i}^{\tau}\vert^{\frac{2n}{n+2}}d\mu_{g_{0}}\\
 \lesssim &
 \int_{B_{c}(0)}
 \bigg(\frac{\lambda_{i}}{1+\lambda_{i}^{2}r^{2}}\bigg)^{n-\frac{2n}{n+2}\theta}
 \bigg\vert 1-\bigg(\frac{1}{1+\lambda_{i}^{2}O(r^{2})}\bigg)^{\theta}\bigg\vert^{\frac{2n}{n+2}} dx
 +
 O\bigg(\frac{1}{\lambda_{i}^{n-\frac{2n}{n+2}\theta}}\bigg)
 \\
 = &
 \lambda_{i}^{-\frac{2n}{n+2}\theta}
 \int_{B_{c\lambda_{i}}(0) }
 \bigg(\frac{1}{1+r^{2}}\bigg)^{n-\frac{2n}{n+2}\theta}\bigg\vert 1-\bigg(\frac{1}{1+O(r^{2})}\bigg)^{\theta}\bigg\vert^{\frac{2n}{n+2}} dx
 +
 O\bigg(\frac{1}{\lambda_{i}^{n-\frac{2n}{n+2}\theta}}\bigg),
 \end{split}
 \end{equation*}
 whence 
 \begin{equation}\label{bubble_critical_minus_subcritical}
 \begin{split}
 \Vert \lambda_{i}^{-\theta}\varphi_{i}^{\frac{n+2}{n-2}} & -\varphi_{i}^{p}\Vert_{L_{g_{0}}^{\frac{2n}{n+2}}}
 = 
 O\bigg(\frac{\theta}{\lambda_{i}^{\theta}} + \frac{1}{\lambda_{i}^{n-\frac{2n}{n+2}\theta}}\bigg).
 \end{split}
 \end{equation}
Thus up to some 
$
 O(
[
 \sum_{r}\frac{\tau}{\lambda_{r}^{\theta}}
 +
 \sum_{r}\frac{\vert \nabla K_{r}\vert}{\lambda_{r}^{1+\theta}}
 +
 \sum_{r}\frac{1}{\lambda_{r}^{2+\theta}}
 +
 \sum_{r\neq s}\frac{\varepsilon_{r,s}^{\frac{n+2}{2n}}}{\lambda_{r}^{\theta}}
]
 \Vert \nu \Vert)
$
we arrive at 
 \begin{equation*}
 \begin{split}
 \int K(\alpha^{i}\varphi_{i})^{p}\nu d\mu_{g_{0}}
 = &
 \sum_{i}K_{i}\lambda_{i}^{-\theta}\alpha_{i}^{\frac{n+2}{n-2}}\int \varphi_{i}^{\frac{n+2}{n-2}}\nu d\mu_{g_{0}}.
 \end{split}
 \end{equation*}
 Finally from Lemma \ref{lem_emergence_of_the_regular_part} and the fact that $\nu \in H_{u}(q,\varepsilon)$ (hence $\int \nu L_{g_0} \varphi_i 
 d \mu_{g_{0}} = 0$) we obtain 
 \begin{equation}\label{non_linear_v_part_interaction}
 \bigg\vert \int \varphi_{i}^{\frac{n+2}{n-2}}\nu d\mu_{g_{0}}\bigg\vert 
 \leq 
 \Vert v \Vert \bigg\Vert \frac{L_{g_{0}}\varphi_{i}}{4n(n-1)}-\varphi_{i}^{\frac{n+2}{n-2}}\bigg\Vert_{L_{g_{0}}^{\frac{2n}{n+2}}} 
 = 
 O
 \begin{pmatrix}
 \lambda_{i}^{-1}& \;\text{ for }\; n=3  \\
 \lambda_{i}^{-2}&\;\text{ for }\; n=4  \\
 \lambda_{i}^{-3} & \;\text{ for }\; n=5  \\
 \ln^{\frac{2}{3}}\lambda_{i}\lambda_{i}^{-\frac{10}{3}} & \;\text{ for }\; n=6  \\
 \lambda_{i}^{-4} &\;\text{ for }\; n\geq 7  
 \end{pmatrix}
 \Vert v \Vert, 
 \end{equation}
 so the claim follows. 
 \end{proof}
 
 \begin{lemma} 
 \label{lem_v_part_gradient} 
 For $u\in V(q,\varepsilon)$ with $k_{\tau}=1$ and $v$ is as in \eqref{eq:v} there holds
 \begin{equation*}
 \Vert v \Vert
=
 O\bigg(
 \sum_{r}\frac{\tau}{\lambda_{r}^{\theta}}
 +
 \sum_{r}\frac{\vert \nabla K_{r}\vert}{\lambda_{r}^{1+\theta}}
 +
 \sum_{r}\frac{1}{\lambda_{r}^{2+\theta}}
 +
 \sum_{r}\frac{1}{\lambda_{r}^{n-2+\theta}}
 +
 \sum_{r\neq s}\frac{\varepsilon_{r,s}^{\frac{n+2}{2n}}}{\lambda_{r}^{\theta}}
 +
 \vert \partial J_{\tau}(u)\vert
 \bigg).
 \end{equation*}
  \end{lemma}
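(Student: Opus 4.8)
The plan is to test the differential $\partial J_{\tau}(u)$ against $v$ itself, Taylor expand around the bubble sum $\varphi = \alpha^{i}\varphi_{i}$, and exploit the coercivity of $\partial^{2}J_{\tau}(\varphi)$ on the space $H_{u}(q,\varepsilon)$. Denote by $\mathcal{R}$ the quantity in brackets on the right-hand side of Lemma \ref{lem_testing_with_v}, so the goal becomes $\Vert v \Vert = O(\mathcal{R} + \vert \partial J_{\tau}(u)\vert)$. By Proposition \ref{prop_optimal_choice} and the orthogonality recorded after it, $v = u - \varphi \in H_{u}(q,\varepsilon)$ with $\Vert v \Vert < \varepsilon$; moreover $\varphi$, $u$ and the whole segment joining them lie in a fixed set $U_{\epsilon_{0}}$, since $k_{\tau}=1$, the $\alpha_{i}$ are bounded above and below and the $\varphi_{i}$ are normalized bubbles. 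Applying the fundamental theorem of calculus to $s \mapsto \partial J_{\tau}(\varphi + sv)v$ I would write
\begin{equation*}
\partial J_{\tau}(u)v
=
\partial J_{\tau}(\varphi)v
+
\partial^{2}J_{\tau}(\varphi)vv
+
R,
\qquad
R = \int_{0}^{1}\big(\partial^{2}J_{\tau}(\varphi + sv) - \partial^{2}J_{\tau}(\varphi)\big)vv\,ds.
\end{equation*}
Since $\partial^{2}J_{\tau}$ is uniformly $\alpha$-H\"older continuous on $U_{\epsilon_{0}}$ (cf. the discussion after \eqref{eq:kp}) and $\Vert sv \Vert \leq \Vert v \Vert < \varepsilon$, one gets $\vert R \vert \leq C\varepsilon^{\alpha}\Vert v \Vert^{2}$, which will be harmless for $\varepsilon$ small.

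The crucial ingredient is the coercivity estimate $\partial^{2}J_{\tau}(\varphi)vv \geq c_{0}\Vert v \Vert^{2}$ for $v \in H_{u}(q,\varepsilon)$, with $c_{0} > 0$ independent of small $\varepsilon$ and of $\tau \ll \varepsilon$. I would obtain it by expanding $\partial^{2}J_{\tau}(\varphi)$ and noting that, up to a remainder of size $O(\tau + \sum_{i\neq j}\varepsilon_{i,j})$ --- negligible since $\varepsilon_{i,j} \to 0$ and $\tau \ll \varepsilon$ --- it decouples into a sum over $i$ of the single-bubble second variations of the limiting ($\tau = 0$) functional; each of these is, by the classical non-degeneracy of the standard bubble (Bianchi--Egnell type), bounded below by a positive multiple of the squared norm on the orthogonal complement of $\mathrm{span}\{\varphi_{i},\,\lambda_{i}\partial_{\lambda_{i}}\varphi_{i},\,\tfrac{1}{\lambda_{i}}\nabla_{a_{i}}\varphi_{i}\}$ --- exactly the constraint defining $H_{u}(q,\varepsilon)$, the $\varphi_{i}$-direction being a null direction by the $0$-homogeneity of $J_{\tau}$. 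This is the step I expect to be the main obstacle, since it requires the precise expansion of $\partial^{2}J_{\tau}$ at a multi-bubble together with the uniformity, as $\tau \to 0$ and $\lambda_{i}\to\infty$, of the spectral gap of the linearized bubble operator.

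Granting the coercivity, the conclusion is immediate. Since $v \in H_{u}(q,\varepsilon)$, Lemma \ref{lem_testing_with_v} gives $\vert \partial J_{\tau}(\varphi)v \vert \leq C\mathcal{R}\Vert v \Vert$, while $\vert \partial J_{\tau}(u)v \vert \leq \vert \partial J_{\tau}(u) \vert\,\Vert v \Vert$. Combining these with the Taylor expansion and the bound on $R$,
\begin{equation*}
c_{0}\Vert v \Vert^{2}
\leq
\partial^{2}J_{\tau}(\varphi)vv
=
\partial J_{\tau}(u)v - \partial J_{\tau}(\varphi)v - R
\leq
\big(\vert \partial J_{\tau}(u)\vert + C\mathcal{R}\big)\Vert v \Vert + C\varepsilon^{\alpha}\Vert v \Vert^{2}.
\end{equation*}
For $\varepsilon$ small the term $C\varepsilon^{\alpha}\Vert v \Vert^{2}$ is absorbed on the left, and dividing by $\Vert v \Vert$ yields $\Vert v \Vert = O(\mathcal{R} + \vert \partial J_{\tau}(u)\vert)$, as claimed.
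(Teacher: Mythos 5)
Your proposal is correct and follows essentially the same route as the paper: Taylor expansion of $\partial J_{\tau}(u)v$ around $\alpha^{i}\varphi_{i}$ using the uniform H\"older continuity of the Hessian, coercivity of $\partial^{2}J_{\tau}$ on $H_{u}(q,\varepsilon)$ (which the paper reduces to the quadratic form $2[\int vL_{g_{0}}v\,d\mu_{g_{0}}-\tfrac{n+2}{n-2}\int\varphi_{i}^{\frac{4}{n-2}}v^{2}d\mu_{g_{0}}]$ and then cites from \cite{bab}, exactly the decoupling-plus-bubble-non-degeneracy argument you sketch), and finally Lemma \ref{lem_testing_with_v} to bound $\partial J_{\tau}(\alpha^{i}\varphi_{i})v$. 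No substantive differences.
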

 \begin{proof}
 Since the Hessian of $J_\tau$ is uniformly H\"older continuous on bounded 
 sets of $W^{1,2}$, we have 
 \begin{equation*}
 \partial J_{\tau}(u)v=\partial J_{\tau}(\alpha^{i}\varphi_{i})v+\partial^2 J_{\tau}(\alpha^{i}\varphi_{i})v^{2}+o(\Vert v \Vert^{2})
 =
 \partial J_{\tau}(\alpha^{i}\varphi_{i})v+\partial^2 J_{\tau}(u)v^{2}+o(\Vert v \Vert^{2});
 \end{equation*}
 \begin{equation}  \label{eq:d2Jtau}
 \begin{split}
 \partial^{2} J_{\tau}(u) v^{2}
 = & 
 2
 \bigg[\int v L_{g_{0}}v d\mu_{g_{0}}-p r_u Ku^{p-1}v^2d\mu_{g_{0}}\bigg] 
 -
 8
 \int u L_{g_{0}}vd\mu_{g_{0}}\int Ku^{p}vd\mu_{g_{0}} \\
 & + 
 2(p+3)r
 \int Ku^{p}vd\mu_{g_{0}}\int Ku^{p}vd\mu_{g_{0}}. 
 \end{split}
 \end{equation}
 Since $v\in H_{u}(q,\varepsilon)$, by similar expansions we then find (also  replacing $p$ with $\frac{n+2}{n-2}$ 
 with an error $o(1)$)
 \begin{equation*}
 \begin{split}
 \partial^{2} & J_{\tau}(u) v^{2}
 = 
 2
 \bigg[\int v L_{g_{0}}vd\mu_{g_{0}}-pr_u\int Ku^{p-1}v^2d\mu_{g_{0}}\bigg] 
 \\
 = &
 2
 \bigg[\int vL_{g_{0}}vd\mu_{g_{0}}-\frac{n+2}{n-2}\big(\int (\alpha^{i}\varphi_{i}) L_{g_{0}}(\alpha^{j}\varphi_{j})d\mu_{g_{0}}\big)\int K(\alpha^{i}\varphi_{i})^{p-1}v^2d\mu_{g_{0}}\bigg] 
 \\
 = &
 2
 \bigg[\int v L_{g_{0}}vd\mu_{g_{0}}-\frac{n+2}{n-2}\sum_{i,j}
 \frac{K_{i}\alpha_{i}^{\frac{4}{n-2}}\alpha_{j}^{2}\int \varphi_{j} L_{g_{0}}\varphi_{j}d\mu_{g_{0}}}{\lambda_{i}^{\theta}}\int \varphi_{i}^{\frac{4}{n-2}}v^2d\mu_{g_{0}}\bigg]
 \end{split}
 \end{equation*}
 up to some $o(\Vert v \Vert^{2})$. 
 Furthermore by definition of $V(q,\varepsilon)$ there holds $\lambda_{i}^{\theta}=1+o(1)$ and 
 \begin{equation*}
 K_{i}\alpha_{i}^{\frac{4}{n-2}}=\frac{1}{r_{\alpha^{i}\varphi_{i}}}+o(1)
 =
 \frac{1}{\int \sum_{j}\alpha_{j}^{2}\varphi_{j}L_{g_{0}}\varphi_{j}d\mu_{g_{0}}}+o(1).
 \end{equation*}
 Thus 
 \begin{equation*}
 \begin{split}
 \partial^{2} J_{\tau}(u) v^{2}
 = &
 2
 \bigg[\int vL_{g_{0}}vd\mu_{g_{0}}-\frac{n+2}{n-2}
 \int \varphi_{i}^{\frac{4}{n-2}}v^2d\mu_{g_{0}}\bigg]
 +
 o(\Vert v \Vert^{2}).
 \end{split}
 \end{equation*}
 This quadratic form is positive definite for $\varepsilon$ sufficiently small on the subspace 
 $v$ belongs to, cf. \cite{bab},  so
 \begin{equation*}
 \Vert v \Vert^{2}(1+o(1))
 \leq 
 C \partial^{2} J_{\tau}(u)v^{2}
 \leq 
 C[\partial J_{\tau}(\alpha^{i}\varphi_{i})v+\vert \partial J_{\tau}(u)\vert^{2}].
 \end{equation*}
 Therefore the claim follows from Lemma \ref{lem_testing_with_v}.
 \end{proof}

 We now establish cancellations testing the gradient of $J_{\tau}$ orthogonally to 
 $H_u(q,\varepsilon)$. 
 

 \begin{lemma} 
 \label{lem_v_part_interactions} 
 For $u\in V(q,\varepsilon)$ with $k_{\tau}=1$ the quantity $\partial J_{\tau}(u)\phi_{k,i}$ expands as 
 \begin{equation*}
 %
 \partial J_{\tau}(\alpha^{j}\varphi_{j})\phi_{k,i}
 +
 O\bigg(
 \sum_{r}
 \frac{\tau^{2}}{\lambda_{r}^{2\theta}}
 +
 \sum_{r}\frac{\vert \nabla K_{r}\vert^{2}}{\lambda_{r}^{2+2\theta}}
 +
 \sum_{r}\frac{1}{\lambda_{r}^{4+2\theta}} 
 +
 \sum_{r}\frac{1}{\lambda_{r}^{2(n-2)+2\theta}} 
 +
 \sum_{r\neq s}\frac{\varepsilon_{r,s}^{\frac{n+2}{n}}}{\lambda_{r}^{2\theta}}
 +
 \vert \partial J_{\tau}(u)\vert^{2}
 \bigg).
 \end{equation*}
 \end{lemma}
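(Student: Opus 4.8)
\emph{Strategy.} The plan is to expand $\partial J_{\tau}(u)\phi_{k,i}$ around the approximate solution $\sigma:=\alpha^{j}\varphi_{j}$, writing $u=\sigma+v$ with $v$ as in \eqref{eq:v}, and to use throughout that $v\in H_{u}(q,\varepsilon)$, so that $\langle v,\phi_{\ell,j}\rangle_{L_{g_{0}}}=0$ for all $\ell\in\{1,2,3\}$ and all $j$; in particular $\langle v,\phi_{k,i}\rangle_{L_{g_{0}}}=0$ and $\langle v,\sigma\rangle_{L_{g_{0}}}=0$. Recalling the first-derivative formula for $J_{\tau}$ from Section~\ref{s:set-up} and that $k_{\tau}=\int Ku^{p+1}d\mu_{g_{0}}=1$, the first orthogonality gives $\int L_{g_{0}}u\,\phi_{k,i}\,d\mu_{g_{0}}=\int L_{g_{0}}\sigma\,\phi_{k,i}\,d\mu_{g_{0}}$, so the ``kinetic'' contributions to $\partial J_{\tau}(u)\phi_{k,i}$ and to $\partial J_{\tau}(\sigma)\phi_{k,i}$ coincide; while $\langle v,\sigma\rangle_{L_{g_{0}}}=0$ yields $r_{\sigma}=r_{u}+O(\Vert v\Vert^{2})$, $\int K\sigma^{p+1}d\mu_{g_{0}}=1-(p+1)\int K\sigma^{p}v\,d\mu_{g_{0}}+O(\Vert v\Vert^{2})$, and, by Lemma~\ref{lem_testing_with_v}, $|\int K\sigma^{p}v\,d\mu_{g_{0}}|\lesssim\Sigma\Vert v\Vert$, where $\Sigma$ denotes the coefficient occurring there (so that also $\Vert v\Vert\lesssim\Sigma+|\partial J_{\tau}(u)|$ by Lemma~\ref{lem_v_part_gradient}). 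Since $\int L_{g_{0}}\sigma\,\phi_{k,i}\,d\mu_{g_{0}}$ and $\int K\sigma^{p}\phi_{k,i}\,d\mu_{g_{0}}$ are $O(1)$, the discrepancies between the normalizing factors of $u$ and of $\sigma$ contribute only $O(\Sigma\Vert v\Vert+\Vert v\Vert^{2})$ to $\partial J_{\tau}(u)\phi_{k,i}-\partial J_{\tau}(\sigma)\phi_{k,i}$, and one is left with $-2r_{u}\int K[(\sigma+v)^{p}-\sigma^{p}]\phi_{k,i}\,d\mu_{g_{0}}$.

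\emph{The main cross term.} Writing $(\sigma+v)^{p}-\sigma^{p}=p\,\sigma^{p-1}v+R$ pointwise, the leading part is $-2p\,r_{u}\,T$ with $T:=\int K\sigma^{p-1}\phi_{k,i}\,v\,d\mu_{g_{0}}=\langle K\sigma^{p-1}\phi_{k,i},v\rangle_{L^{2}_{g_{0}}}$, which is the heart of the argument. Since $\langle L_{g_{0}}\phi_{\ell,i},v\rangle_{L^{2}_{g_{0}}}=0$ for $\ell=1,2,3$, one may subtract from $K\sigma^{p-1}\phi_{k,i}$ the scalar multiple $c\,L_{g_{0}}\phi_{k,i}$ dictated by $\varphi_{i}^{p-1}\phi_{k,i}\propto d_{k,i}(\varphi_{i}^{p})$ together with $\varphi_{i}^{p}\approx\lambda_{i}^{-\theta}\varphi_{i}^{\frac{n+2}{n-2}}$ and $4n(n-1)\varphi_{i}^{\frac{n+2}{n-2}}\approx L_{g_{0}}\varphi_{i}$, and then apply H\"older's inequality with exponents $\frac{2n}{n+2}$ and $\frac{2n}{n-2}$:
\begin{equation*}
|T|\ \le\ \Vert K\sigma^{p-1}\phi_{k,i}-c\,L_{g_{0}}\phi_{k,i}\Vert_{L_{g_{0}}^{2n/(n+2)}}\,\Vert v\Vert_{L_{g_{0}}^{2n/(n-2)}}\ \lesssim\ \Vert K\sigma^{p-1}\phi_{k,i}-c\,L_{g_{0}}\phi_{k,i}\Vert_{L_{g_{0}}^{2n/(n+2)}}\,\Vert v\Vert.
\end{equation*}
It then remains to show $\Vert K\sigma^{p-1}\phi_{k,i}-c\,L_{g_{0}}\phi_{k,i}\Vert_{L_{g_{0}}^{2n/(n+2)}}\lesssim\Sigma_{i}$, the $i$-th summand of $\Sigma$, by matching the leftover pieces to its five scales: expanding $K$ in conformal normal coordinates at $a_{i}$ produces the $|\nabla K_{i}|\lambda_{i}^{-1-\theta}$ and $\lambda_{i}^{-2-\theta}$ contributions; passing from $\varphi_{i}^{p}$ to $\lambda_{i}^{-\theta}\varphi_{i}^{\frac{n+2}{n-2}}$, and likewise for the $d_{k,i}$-derivatives, via \eqref{bubble_critical_minus_subcritical} produces $\tau\lambda_{i}^{-\theta}$ and $\lambda_{i}^{-(n-2+\theta)}$; passing from $4n(n-1)\varphi_{i}^{\frac{n+2}{n-2}}$ and its $d_{k,i}$-derivatives to $L_{g_{0}}\phi_{k,i}$ --- legitimate because the expansions of Lemma~\ref{lem_emergence_of_the_regular_part} persist under $\lambda\partial_{\lambda}$ and $\frac{\nabla_{a}}{\lambda}$ derivatives --- produces again $\lambda_{i}^{-2-\theta}$ and $\lambda_{i}^{-(n-2+\theta)}$; and the bubble interactions in $\sigma^{p-1}=(\sum_{j}\alpha_{j}\varphi_{j})^{p-1}$, estimated by Lemma~\ref{lem_interactions}~$(v)$ (applied with exponent $\geq1$ after $\varphi_{s}\lesssim\varphi_{i}$ on the relevant region), produce $\sum_{s\neq i}\varepsilon_{i,s}^{\frac{n+2}{2n}}\lambda_{i}^{-\theta}$. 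Hence $|T|\lesssim\Sigma_{i}\Vert v\Vert$.

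\emph{Remainder and conclusion.} For the pointwise remainder $R$ I would split $M$ into $\{|v|\le\frac{1}{2}\sigma\}$, where a second-order Taylor estimate gives $|R|\lesssim\sigma^{p-2}v^{2}$, hence $\int K|R|\phi_{k,i}\,d\mu_{g_{0}}\lesssim\Vert\sigma^{p-1}\Vert_{L_{g_{0}}^{n/2}}\Vert v\Vert^{2}\lesssim\Vert v\Vert^{2}$, and $\{|v|>\frac{1}{2}\sigma\}$, where $\phi_{k,i}\lesssim\varphi_{i}\lesssim\sigma\lesssim|v|$, so $\int_{\{|v|>\sigma/2\}}K|R|\phi_{k,i}\,d\mu_{g_{0}}\lesssim\int|v|^{p+1}d\mu_{g_{0}}\lesssim\Vert v\Vert^{p+1}\lesssim\Vert v\Vert^{2}$ since $p+1>2$. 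Collecting the three steps, $\partial J_{\tau}(u)\phi_{k,i}-\partial J_{\tau}(\alpha^{j}\varphi_{j})\phi_{k,i}=O(\Sigma\Vert v\Vert+\Vert v\Vert^{2})$; inserting $\Vert v\Vert\lesssim\Sigma+|\partial J_{\tau}(u)|$ from Lemma~\ref{lem_v_part_gradient} and treating the mixed term by Young's inequality turns this into $O(\Sigma^{2}+|\partial J_{\tau}(u)|^{2})$, and since by the arithmetic--geometric mean inequality $\Sigma^{2}\lesssim\sum_{r}\frac{\tau^{2}}{\lambda_{r}^{2\theta}}+\sum_{r}\frac{|\nabla K_{r}|^{2}}{\lambda_{r}^{2+2\theta}}+\sum_{r}\frac{1}{\lambda_{r}^{4+2\theta}}+\sum_{r}\frac{1}{\lambda_{r}^{2(n-2)+2\theta}}+\sum_{r\neq s}\frac{\varepsilon_{r,s}^{\frac{n+2}{n}}}{\lambda_{r}^{2\theta}}$, this is exactly the asserted error.

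\emph{Main difficulty.} The genuinely delicate point is the $L^{2n/(n+2)}$-bookkeeping of the middle step: one has to verify that, after removing the correct multiple of $L_{g_{0}}\phi_{k,i}$, every leftover term of $K\sigma^{p-1}\phi_{k,i}$ lands in one of the five scales comprising $\Sigma_{i}$, keeping careful track of the spurious $\lambda_{i}^{-\theta}$ factors created by switching between the subcritical exponent $p$ and $\frac{n+2}{n-2}$, and of the error terms of Lemma~\ref{lem_emergence_of_the_regular_part} under the $d_{k,i}$-derivatives. The normalization corrections, the interaction terms for $i\neq j$, and the control of $R$ in high dimensions are routine once this splitting is in place.
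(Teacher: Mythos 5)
Your proof is correct and follows essentially the same route as the paper's: both reduce the difference to the cross term $\int K(\alpha^{j}\varphi_{j})^{p-1}\phi_{k,i}\,v\,d\mu_{g_{0}}$ plus an $O(\Vert v\Vert^{2})$ remainder, estimate that cross term by exploiting the orthogonality of $v$ to $L_{g_{0}}\phi_{k,i}$ together with the mechanisms of \eqref{bubble_critical_minus_subcritical} and \eqref{non_linear_v_part_interaction}, and conclude via Lemma \ref{lem_v_part_gradient}. The only differences are cosmetic: the paper applies the mean value theorem to $\partial^{2}J_{\tau}$ and splits $M$ by comparing $\alpha^{j}\varphi_{j}$ with the constant $2\Vert v\Vert$, whereas you Taylor-expand directly and split by comparing $|v|$ with $\tfrac{1}{2}\alpha^{j}\varphi_{j}$ pointwise.
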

 \begin{proof}
 By the mean value theorem and \eqref{eq:d2Jtau} we have, with some $\sigma \in [0,1]$
 \begin{equation*}
 \begin{split}
 \partial J_{\tau}(u) & \phi_{k,i}  -\partial J_{\tau}(\alpha^{j}\varphi_{j})\phi_{k,i}
 = 
 \partial^{2} J_{\tau}(\alpha^{j}\varphi_{j}+\sigma v)\phi_{k,i}v \\
 = &
 2(1+O(\Vert v \Vert))
 \bigg[\int vL_{g_{0}}\phi_{k,i}d\mu_{g_{0}}
  -
 pr_{\alpha^{i}\varphi_{i}}(1+O(\Vert v \Vert))\int K(\alpha^{j}\varphi_{j}+\sigma v)^{p-1}v\phi_{k,i}d\mu_{g_{0}}\bigg] \\
 & -
 4(1+O(\Vert v \Vert))
 \bigg[
 \int (\alpha^{j}\varphi_{j}+\sigma v) L_{g_{0}}vd\mu_{g_{0}}\int K(\alpha^{j}\varphi_{j}+\sigma v)^{p}\phi_{k,i} d\mu_{g_{0}} \\
 & \quad\quad\quad\quad\quad\quad\quad\quad +
 \int (\alpha^{j}\varphi_{j}+\sigma v) L_{g_{0}}\phi_{k,i}d\mu_{g_{0}}\int K(\alpha^{j}\varphi_{j}+\sigma v)^{p}vd\mu_{g_{0}}
 \bigg] \\
 & +
 2(p+3)r_{\alpha^{i}\varphi_{i}}(1+O(\Vert v \Vert))
 \int K(\alpha^{j}\varphi_{j}+\sigma v)^{p}vd\mu_{g_{0}}
 \int K(\alpha^{j}\varphi_{j}+\sigma v)^{p}\phi_{k,i}d\mu_{g_{0}}. 
 \end{split}
 \end{equation*}
 Therefore, since $v\in H_{u}(q,\varepsilon)$, up to some $O(\Vert v \Vert^{2})$ we also get 
 \begin{equation*}
 \begin{split}
 & \partial J_{\tau}(u)\phi_{k,i}  -\partial J_{\tau}(\alpha^{j}\varphi_{j})\phi_{k,i} 
 = 
 -2
 pr_{\alpha^{i}\varphi_{i}}\int K(\alpha^{j}\varphi_{j}+\sigma v)^{p-1}v\phi_{k,i}d\mu_{g_{0}} \\
 & -
 4
 \int (\alpha^{j}\varphi_{j})L_{g_{0}}\phi_{k,i}d\mu_{g_{0}}\int K(\alpha^{j}\varphi_{j}+\sigma v)^{p}vd\mu_{g_{0}}
 \\
 & +
 2(p+3)r_{\alpha^{i}\varphi_{i}}
 \int K(\alpha^{j}\varphi_{j}+\sigma v)^{p}vd\mu_{g_{0}}
 \int K(\alpha^{j}\varphi_{j}+\sigma v)^{p}\phi_{k,i}d\mu_{g_{0}}.
 \end{split}
 \end{equation*}
 Decomposing now $M$ as 
 $ 
\{\alpha^{j}\varphi_{j}\leq 2\| v \|\}\cup\{\alpha^{j}\varphi_{j}\geq 2\| v \|\}, 
 $ 
 and using  $\vert \phi_{k,i}\vert \leq C \alpha_{i} \varphi_i \leq C\alpha^{j}\varphi_{j}$, we find
 \begin{equation*}
 \begin{split}
 & \partial J_{\tau}(u)\phi_{k,i}  -\partial J_{\tau}(\alpha^{j}\varphi_{j})\phi_{k,i} 
 = 
 -2
 pr_{\alpha^{i}\varphi_{i}}\int K(\alpha^{j}\varphi_{j})^{p-1}v\phi_{k,i}d\mu_{g_{0}} \\
 & -
 4
 \int (\alpha^{j}\varphi_{j}) L_{g_{0}}\phi_{k,i}d\mu_{g_{0}}\int K(\alpha^{j}\varphi_{j})^{p}vd\mu_{g_{0}}
 \\
 & +
 2(p+3)r_{\alpha^{i}\varphi_{i}}
 \int K(\alpha^{j}\varphi_{j})^{p}vd\mu_{g_{0}}
 \int K(\alpha^{j}\varphi_{j})^{p}\phi_{k,i}d\mu_{g_{0}} + O(\|v\|^2).
 \end{split}
 \end{equation*}
 Now, arguing as for \eqref{non_linearity_expansion} and using Lemma \ref{lem_interactions} $(iv)$, we have 
 \begin{equation*}
 \int K(\alpha^{j}\varphi_{j})^{p}v d\mu_{g_{0}}
 = 
 \sum_{j}K_{j}\alpha_{j}^{p}\int \varphi_{j}^{p}v d\mu_{g_{0}} 
  +
 O\bigg(
 \bigg[
 \sum_{r}\frac{\vert \nabla K_{r}\vert}{\lambda_{r}^{1+\theta}}
 +
 \sum_{r}\frac{1}{\lambda_{r}^{2+\theta}}
 +
 \sum_{r\neq s}\frac{\varepsilon_{r,s}^{\frac{n+2}{2n}}}{\lambda_{r}^{\theta}}
 \bigg]
 \Vert v \Vert\bigg);
 \end{equation*}
 \begin{equation*}
 \int K(\alpha^{j}\varphi_{j})^{p-1}\phi_{k,i}v d\mu_{g_{0}}
 = 
 K_{i}\alpha_{i}^{p-1}\int \varphi_{i}^{p-1}\phi_{k,i}v d\mu_{g_{0}} 
 +
 O\bigg(
 \bigg[
 \sum_{r}\frac{\vert \nabla K_{r}\vert}{\lambda_{r}^{1+\theta}}
 +
 \sum_{r}\frac{1}{\lambda_{r}^{2+\theta}}
 +
 \sum_{r\neq s}\frac{\varepsilon_{r,s}^{\frac{n+2}{2n}}}{\lambda_{r}^{\theta}}
 \bigg]
 \Vert v \Vert\bigg),
 \end{equation*}
 whence
 \begin{equation*}
 \begin{split}
 \partial J_{\tau}(u)\phi_{k,i}  -\partial J_{\tau}(\alpha^{j}\varphi_{j})\phi_{k,i} 
 = & 
 -2
 pr_{\alpha^{i}\varphi_{i}}K_{i}\alpha_{i}^{p-1}\int \varphi_{i}^{p-1}\phi_{k,i}vd\mu_{g_{0}} \\
 & -
 4\alpha_{i}
 \int \varphi_{i} L_{g_{0}}\phi_{k,i}d\mu_{g_{0}}
 \sum_{j}K_{j}\alpha_{j}^{p}
 \int \varphi_{j}^{p}vd\mu_{g_{0}}
 \\
 & +
 2(p+3)r_{\alpha^{i}\varphi_{i}}K_{i}\alpha_{i}^{p}\int \varphi_{i}^{p}\phi_{k,i}d\mu_{g_{0}}
 \sum_{j}K_{j}\alpha_{j}^{p}\int \varphi_{j}^{p}vd\mu_{g_{0}}
 \end{split}
 \end{equation*}
up to some 
$
 O\bigg(
 \sum_{r}\frac{\vert \nabla K_{r}\vert^{2}}{\lambda_{r}^{2+2\theta}}
 +
 \sum_{r}\frac{1}{\lambda_{r}^{4+2\theta}}
 +
 \sum_{r\neq s}\frac{\varepsilon_{r,s}^{\frac{n+2}{n}}}{\lambda_{r}^{2\theta}}
 +
 \Vert v \Vert^{2}
 \bigg).
$
Using \eqref{bubble_critical_minus_subcritical} and \eqref{non_linear_v_part_interaction} we arrive at
 \begin{equation*}
 \begin{split}
\partial  J_{\tau}(u) \phi_{k,i} & - \partial J_{\tau}(\alpha^{j}\varphi_{j})\phi_{k,i} 
 = 
 -2
 pr_{\alpha^{i}\varphi_{i}}K_{i}\alpha_{i}^{p-1}\int \varphi_{i}^{p-1}\phi_{k,i}vd\mu_{g_{0}} \\
 & +
 O\bigg(
 \sum_{r}\frac{\tau^{2}}{\lambda_{r}^{2\theta}}
 +
 \sum_{r}\frac{\vert \nabla K_{r}\vert^{2}}{\lambda_{r}^{2+2\theta}}
 +
 \sum_{r}\frac{1}{\lambda_{r}^{4+2\theta}}
 +
 \sum_{r}\frac{1}{\lambda_{r}^{2(n-2)+2\theta}} 
 +
 \sum_{r\neq s}\frac{\varepsilon_{r,s}^{\frac{n+2}{n}}}{\lambda_{r}^{2\theta}}
 +
 \Vert v \Vert^{2}
 \bigg).
 \end{split}
 \end{equation*}
 Yet also the first summand on the right hand side is of the same order as the second one, arguing as for 
 \eqref{bubble_critical_minus_subcritical} and \eqref{non_linear_v_part_interaction}.
 Combining this with Lemma \ref{lem_v_part_gradient}, we obtain the conclusion. 
 \end{proof}

  \section{The functional and its derivatives}\label{s:funct-infty}
  For $u\in V(q,\varepsilon)$ and $\varepsilon>0$ sufficiently small let
  \begin{equation}\label{eq:akt}
  \alpha^{2}=\sum_{i}\alpha_{i}^{2}, \quad  \alpha_{K,\tau}^{s}=\sum_{i}\frac{K_{i}}{\lambda_{i}^{\theta}}\alpha_{i}^{s}, 
 \quad \theta=\frac{n-2}{2}\tau.
  \end{equation}
  Recalling the notation from the 
  previous section we may 
expand the Euler-Lagrange energy as follows.  
  
  \begin{proposition}\label{prop_functional_at_infinity} 
  For $u=\alpha^{i}\varphi_{i}+v\in V(q,\varepsilon)$ and $\varepsilon>0$, 
  both $J_{\tau}(u)$ and $J_{\tau} ( \alpha^{i}  \varphi_{i}) $ can be written as 
  \begin{equation*}
  \frac
  {\hat c_{0}\alpha^{2}}
  {(\alpha_{K,\tau}^{p+1})^{\frac{2}{p+1}}}
  \Bigg(
  1
 -
  \hat c_{1}\tau 
  -
  \hat c_{2}\sum_{i}\frac{\Delta K_{i}}{K_{i}\lambda_{i}^{2}} \frac{\alpha_{i}^{2} }{\alpha^{2}} 
  -
  \hat b_{1}\sum_{i\neq j}\frac{\alpha_{i}\alpha_{j}}{\alpha^{2}}
  \varepsilon_{i,j}
  -
  \hat d_{1} \sum_{i}
  \frac{\alpha_{i}^{2}}{\alpha^{2}}
  \begin{pmatrix}
  \frac{H_{i}}{\lambda_{i}}  & \text{for }\; n=3\\
  \frac{H_{i}+O(\frac{\ln \lambda_{i}}{\lambda_{i}^{2}})}{\lambda_{i}^{2 }} & \text{for }\; n=4\\
  \frac{H_{i}}{\lambda_{i}^{3}} & \text{for }\; n=5\\
  \frac{W_{i}\ln \lambda_{i}}{\lambda_{i}^{4}} & \text{for }\; n=6\\
  0 & \text{for }\; n\geq 7
  \end{pmatrix}
  \Bigg) 
  \end{equation*}
  with positive constants $\hat c_{0},\hat c_{1},\hat c_{2},\hat b_{1},\hat d_{1}$ up to  errors of the form 
  \begin{equation*}
  O
  (
  \tau^{2}
  +
  \sum_{r}\frac{\vert \nabla K_{r}\vert^{2}}{\lambda_{r}^{2}}
  +
  \frac{1}{\lambda_{r}^{4}}+\frac{1}{\lambda_{r}^{2(n-2)}}
  +
  \sum_{r\neq s}\varepsilon_{r,s}^{\frac{n+2}{n}}
  +
  \vert \partial J_{\tau}(u)\vert^{2}
  ).
  \end{equation*}
  \end{proposition}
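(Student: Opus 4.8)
The plan is to reduce everything to an expansion of $J_\tau(\alpha^i\varphi_i)=r_{\alpha^i\varphi_i}/(k_\tau(\alpha^i\varphi_i))^{2/(p+1)}$, cf. \eqref{eq:r} and \eqref{eq:kp}, and then to expand its numerator and denominator separately. For the reduction, since $J_\tau$ is $C^{2,\alpha}_{loc}$ and uniformly H\"older on bounded subsets of $W^{1,2}$, Taylor's formula gives $J_\tau(u)=J_\tau(\alpha^i\varphi_i)+\partial J_\tau(\alpha^i\varphi_i)v+O(\Vert v\Vert^2)$. As $v\in H_u(q,\varepsilon)$, Lemma \ref{lem_testing_with_v} bounds the linear term by the bracket displayed there times $\Vert v\Vert$, while Lemma \ref{lem_v_part_gradient} bounds $\Vert v\Vert$ by essentially the same bracket plus $|\partial J_\tau(u)|$; since $\lambda_i^\theta=1+o(1)$ on $V(q,\varepsilon)$ (because $1\le\lambda_i^\tau<1+\varepsilon$), Young's inequality turns the squares and products of the bracket terms into precisely $\tau^2$, $|\nabla K_r|^2/\lambda_r^2$, $1/\lambda_r^4$, $1/\lambda_r^{2(n-2)}$, $\varepsilon_{r,s}^{(n+2)/n}$ and $|\partial J_\tau(u)|^2$. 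Hence $J_\tau(u)$ and $J_\tau(\alpha^i\varphi_i)$ coincide up to the error claimed, and it suffices to treat $J_\tau(\alpha^i\varphi_i)$.

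For the numerator I would write $r_{\alpha^i\varphi_i}=\sum_{i,j}\alpha_i\alpha_j\int\varphi_iL_{g_0}\varphi_j\,d\mu_{g_0}$ and apply Lemma \ref{lem_emergence_of_the_regular_part}. On the diagonal this gives $\int\varphi_iL_{g_0}\varphi_i\,d\mu_{g_0}=4n(n-1)\int\varphi_i^{2n/(n-2)}d\mu_{g_0}$ minus the contribution of the regular part of the Green's function and an error $O(\lambda_i^{-2})$; rescaling at $a_i$ in $g_{a_i}$-normal coordinates — where the $u_{a_i}$ factors in $\varphi_i$ and in $d\mu_{g_0}$ cancel and $d\mu_{g_{a_i}}$ is Euclidean — the odd part of that correction integrates to zero and one is left with an explicit positive multiple of $H_i/\lambda_i^{n-2}$, resp. of $W_i\ln\lambda_i/\lambda_i^4$ for $n=6$, which for $n\ge7$ is subsumed in the error. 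For $i\ne j$, using $L_{g_0}\varphi_j=4n(n-1)\varphi_j^{(n+2)/(n-2)}+O(\lambda_j^{-2}\varphi_j)$ and Lemma \ref{lem_interactions}$(iii)$ with $k=1$ — whose two equal expressions handle both orderings — one gets $\int\varphi_iL_{g_0}\varphi_j\,d\mu_{g_0}=4n(n-1)b_1\varepsilon_{i,j}$ up to the admissible error. Thus $r_{\alpha^i\varphi_i}$ equals a positive constant times $\alpha^2$ times $1$ minus the dimension-dependent regular-part sum $\sum_i\tfrac{\alpha_i^2}{\alpha^2}(\cdots)_i$ plus a positive multiple of $\sum_{i\ne j}\tfrac{\alpha_i\alpha_j}{\alpha^2}\varepsilon_{i,j}$, modulo the error.

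For the denominator I would first decompose $M$ iteratively into the regions $\{\alpha_j\varphi_j>\sum_{i>j}\alpha_i\varphi_i\}$, as in the proof of Lemma \ref{lem_testing_with_v}, to write $k_\tau(\alpha^i\varphi_i)=\sum_i\alpha_i^{p+1}\int K\varphi_i^{p+1}d\mu_{g_0}+(p+1)\sum_{i\ne j}\alpha_i^p\alpha_j\int K\varphi_i^p\varphi_j\,d\mu_{g_0}$ modulo errors controlled by Lemma \ref{lem_interactions}$(v)$. In each diagonal integral I would replace $\varphi_i^{p+1}=\varphi_i^{2n/(n-2)-\tau}$ by $\lambda_i^{-\theta}\varphi_i^{2n/(n-2)}$; the discrepancy reduces, after rescaling at $a_i$, to $\theta$ times $K_i\int_{\R^n}(1+|y|^2)^{-n}\ln(1+|y|^2)\,dy$ up to admissible errors, and upon dividing through this produces the $-\hat c_1\tau$ term. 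Then I would Taylor-expand $K$ at $a_i$ in conformal normal coordinates: the constant term contributes $K_i\lambda_i^{-\theta}\int\varphi_i^{2n/(n-2)}d\mu_{g_0}$, the linear term $\nabla K_i\cdot\int x\,\varphi_i^{2n/(n-2)}d\mu_{g_0}$ vanishes to leading order by oddness and its remainder is absorbed by Young into $|\nabla K_r|^2/\lambda_r^2+1/\lambda_r^4$, and the quadratic term — using $\int x_kx_l\varphi_i^{2n/(n-2)}d\mu_{g_0}=\tfrac{\delta_{kl}}{n}\int|x|^2\varphi_i^{2n/(n-2)}d\mu_{g_0}$ — yields a negative multiple of $\Delta K_i/\lambda_i^{2+\theta}$. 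The cross integrals give $\int K\varphi_i^p\varphi_j\,d\mu_{g_0}=K_i\lambda_i^{-\theta}b_1\varepsilon_{i,j}$ by Lemma \ref{lem_interactions}$(iii)$ up to admissible error. Collecting, $k_\tau(\alpha^i\varphi_i)$ equals a positive constant times $\alpha_{K,\tau}^{p+1}$ — the weights $\lambda_i^{-\theta}$ being absorbed exactly into $\alpha_{K,\tau}^{p+1}$, cf. \eqref{eq:akt} — times $1$ plus corrections built from $\tau$, the $\Delta K_i/(K_i\lambda_i^2)$-weighted sum and the $\varepsilon_{i,j}$-weighted sum, modulo the error.

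Finally I would raise the denominator to the power $-2/(p+1)$, multiply by the numerator, use $\tfrac{2}{p+1}=\tfrac{n-2}{n}+O(\tau)$, and note that on $V(q,\varepsilon)$ one has $\lambda_i^\theta=1+o(1)$ and $K_i\alpha_i^{4/(n-2)}=r_{\alpha^i\varphi_i}^{-1}+o(1)$, so that the weights $K_i\alpha_i^{p+1}\lambda_i^{-\theta}/\alpha_{K,\tau}^{p+1}$ and $K_i\alpha_i^p\alpha_j\lambda_i^{-\theta}/\alpha_{K,\tau}^{p+1}$ may be replaced by $\alpha_i^2/\alpha^2$ and $\alpha_i\alpha_j/\alpha^2$ modulo admissible errors; the prefactor is then $\hat c_0\alpha^2/(\alpha_{K,\tau}^{p+1})^{2/(p+1)}$ with $\hat c_0>0$, and all corrections collect into the single factor $1-\hat c_1\tau-\hat c_2\sum_i\tfrac{\Delta K_i}{K_i\lambda_i^2}\tfrac{\alpha_i^2}{\alpha^2}-\hat b_1\sum_{i\ne j}\tfrac{\alpha_i\alpha_j}{\alpha^2}\varepsilon_{i,j}-\hat d_1\sum_i\tfrac{\alpha_i^2}{\alpha^2}(\cdots)_i$ with $\hat c_1,\hat c_2,\hat b_1,\hat d_1>0$. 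I expect the denominator to be the main obstacle: one must cleanly separate the genuinely $O(\tau)$ contribution from the $\lambda^{-\theta}$ rescaling factor (which belongs in $\alpha_{K,\tau}^{p+1}$, not in the $(1-\cdots)$ bracket), extract the $\Delta K$ term from the second-order expansion of $K$ with the right sign and constant, and check that the linear $\nabla K$ term together with all bubble-overlap remainders is genuinely of lower order; the remainder is careful bookkeeping against the error terms already provided by Lemmas \ref{lem_emergence_of_the_regular_part} and \ref{lem_interactions}.
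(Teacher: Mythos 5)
Your overall strategy coincides with the paper's: reduce to $J_\tau(\alpha^i\varphi_i)$ via Lemmata \ref{lem_testing_with_v} and \ref{lem_v_part_gradient}, expand numerator and denominator separately with the iterative decomposition of $M$, Taylor-expand $K$ in conformal normal coordinates, and feed the interactions through Lemmata \ref{lem_emergence_of_the_regular_part} and \ref{lem_interactions}. The reduction step, the $\tau$-term, the $\Delta K$-term and the $\varepsilon_{i,j}$-terms are all handled as in the paper.

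There is, however, a genuine gap in your accounting of the mass terms $H_i$. You extract the $H$-contribution only from the discrepancy between $L_{g_0}\varphi_i$ and $4n(n-1)\varphi_i^{\frac{n+2}{n-2}}$ in Lemma \ref{lem_emergence_of_the_regular_part}, and you treat $\int\varphi_i^{\frac{2n}{n-2}}d\mu_{g_0}$ and the leading diagonal piece of $\int K\varphi_i^{p+1}d\mu_{g_0}$ as constants up to admissible errors. But the bubble \eqref{eq:bubbles} is built from the full Green's function $G_{a_i}=\frac{1}{4n(n-1)\omega_n}(r_{a_i}^{2-n}+H_{a_i})$, cf. \eqref{eq:exp-G}, so expanding $\gamma_nG_{a_i}^{\frac{2}{2-n}}=r^2(1+r^{n-2}H_{a_i})^{\frac{2}{2-n}}$ produces a term $\bar d_1 H_i/\lambda_i^{n-2}$ (resp. $W_i\ln\lambda_i/\lambda_i^4$ for $n=6$) in \emph{both} of these integrals, cf. \eqref{mass_integral_expansion} --- i.e.\ at exactly the order you are tracking for $n\le 6$, well above the error $\lambda_r^{-4}+\lambda_r^{-2(n-2)}$. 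In the paper's computation these two effects cancel in the numerator because $\bar d_1=\tilde d_1$, cf. \eqref{single_bubble_L_g_0_integral_expansion_exact} and \eqref{r_alpha_delta_expansion}, so that $\alpha^i\alpha^j\int\varphi_iL_{g_0}\varphi_j\,d\mu_{g_0}$ carries \emph{no} $H$-term at all, and the surviving $\hat d_1$-term comes entirely from the denominator. Your bookkeeping instead places the whole $H$-term in the numerator and none in the denominator; the resulting expansion has the right shape but the wrong coefficient (the two routes differ by the factor $\tfrac{2}{p+1}$ coming from raising the denominator to the power $-\tfrac{2}{p+1}$), so the identity you would prove is false as stated. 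A secondary point: in the final step you replace the weights $\frac{K_i\alpha_i^{2n/(n-2)}/\lambda_i^\theta}{\alpha_{K,\tau}^{2n/(n-2)}}$ by $\frac{\alpha_i^2}{\alpha^2}$ using only $K_i\alpha_i^{4/(n-2)}=r^{-1}+o(1)$ from the definition of $V(q,\varepsilon)$; this yields an error $o(\tau+\lambda_r^{-2}+\varepsilon_{r,s})$, which is not $O(\tau^2+\lambda_r^{-4}+\varepsilon_{r,s}^{\frac{n+2}{n}}+\dots)$. One needs the sharper statement $\frac{\alpha^2}{\alpha_{K,\tau}^{p+1}}\frac{K_j}{\lambda_j^\theta}\alpha_j^{p-1}=1+O(\tau+\sum_r\lambda_r^{-2}+\lambda_r^{-(n-2)}+\sum_{r\neq s}\varepsilon_{r,s}+\vert\partial J_\tau(u)\vert)$ from Lemma \ref{lem_alpha_derivatives_at_infinity}, which is exactly what the paper invokes to close the argument.
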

  
  \begin{proof}
 The above expansion  for $J_\tau(\alpha^i \varphi_i)$ implies the one for $J_\tau(u)$ via  Lemmata \ref{lem_testing_with_v} and \ref{lem_v_part_gradient} 
 expanding
  \begin{equation*}
  J_{\tau}(u)=J_{\tau}(\alpha^{i}\varphi_{i})+\partial J_{\tau}(\alpha^{i}\varphi_{i})v+O(\Vert v \Vert^{2}).
  \end{equation*}
  We next start analyzing $J_\tau(\alpha^i \varphi_i)$ from the denominator. Decomposing iteratively $M$ as 
  \begin{equation*}
  M
  =
  \{\alpha_{j}\varphi_{j}>\sum_{i>j}\alpha_{i}\varphi_{i}\}
  +  
  \{\alpha_{j}\varphi_{j}\leq \sum_{i>j}\alpha_{i}\varphi_{i}\}
  \end{equation*}
  we may expand
  \begin{equation*}
  \begin{split}
  \int K  (\alpha^{i}\varphi_{i})^{p+1} d\mu_{g_{0}}
  = &
  \sum_{i}\alpha_{i}^{p+1}\int K\varphi_{i}^{p+1} d\mu_{g_{0}}
  +
  (p+1)\sum_{i\neq j}\alpha_{i}^{p}\alpha_{j}\int K\varphi_{i}^{p}\varphi_{j} d\mu_{g_{0}}\\
  & +
  O
  \bigg(
  \sum_{r\neq s}\int_{\{\alpha_{r}\varphi_{r}\geq \alpha_{s}\varphi_{s}\}}(\alpha_{r}\varphi_{r})^{p}\alpha_{s}\varphi_{s}d\mu_{g_{0}}
  \bigg).
  \end{split}
  \end{equation*}
Recalling $\lambda_{r}^{\theta}\sim 1$ and the boundedness of $\alpha_{r}$ by  the definition of $V(q,\varepsilon)$, using Lemma \ref{lem_interactions} and reasoning as for the proof of Lemma \ref{lem_testing_with_v}, the latter term is of order 
  $ 
  O(\sum_{r\neq s}\varepsilon_{i,j}^{\frac{n+2}{n}}), 
  $ 
 and also  
  \begin{equation*}
  \begin{split}
  \int K\varphi_{i}^{p}\varphi_{j}d\mu_{g_{0}}
  = &
  K_{i}\int \varphi_{i}^{p}\varphi_{j}d\mu_{g_{0}}
  +
  O\bigg(\int_{B_{c}(a_{i})} (\vert \nabla K_{i}\vert r_{a_{i}}+r_{a_{i}}^{2})\varphi_i^{p}\varphi_{j}d\mu_{g_{0}}\bigg) 
 +
  O\bigg(\frac{1}{\lambda_{i}^{\frac{n+2}{n-2}-\theta}\lambda_{j}^{\frac{n-2}{2}}}\bigg) \\
  = &
  K_{i}\int \varphi_{i}^{p}\varphi_{j}d\mu_{g_{0}}
  +
  O
  \bigg(
  \sum_{r\neq s} \frac{\vert \nabla K_{r}\vert^{2}}{\lambda_{r}^{2}}
  +
  \frac{1}{\lambda_{r}^{4}}
  +
  \varepsilon_{r,s}^{\frac{n+2}{n}}
  \bigg).
  \end{split}
  \end{equation*}
  Indeed we for example have 
  \begin{equation*}
  \int_{B_{c}(a_{i})}r_{a_{i}}\varphi_{i}^{p}\varphi_{j}d\mu_{g_{0}}
  = 
  \int_{B_{c}(a_{i})}r_{a_{i}}\varphi_{i}^{\frac{n+2}{n-2}-\frac{n+2}{2n}}\varphi_{i}^{\frac{n+2}{2n}}\varphi_{j}d\mu_{g_{0}} 
  \leq 
  C\varepsilon_{i,j}^{\frac{n+2}{2n}}
  \Vert r\varphi_{0,\lambda_{i}}^{\frac{n+2}{n-2}-\frac{n+2}{2n}}\Vert_{L_{\mu_{g_{0}}}^{(\frac{2n}{(n+2)})^{2}}}
  \end{equation*}
 with the latter norm that can be controlled by 
  \begin{equation*}
  \underset{\R^{n}}{\int} r^{(\frac{2n}{n+2})^{2}}(\frac{\lambda_{i}}{1+\lambda_{i}^{2}r^{2}})^{n}
  dx \leq 
  C\lambda_{i}^{-(\frac{2n}{n+2})^{2}}
  \left( 1 + \int^{\infty}_{1}r^{-1+n+(\frac{2n}{n+2})^{2}-2n}dr \right) 
  =
  O\bigg(\bigg(\frac{1}{\lambda_{i}}\bigg)^{(\frac{2n}{n+2})^{2}}\bigg).
  \end{equation*}
  Thus Lemma \ref{lem_interactions}, where $b_1$ is defined,  yields
  \begin{equation}\label{linearized_bubble_interaction}
  \int K\varphi_{i}^{p}\varphi_{j}d\mu_{g_{0}}
  = 
  b_{1}\frac{K_{i}}{\lambda_{i}^{\theta}}\varepsilon_{i,j}
  +
  O
  \bigg(
  \tau^{2}
  +
  \sum_{r\neq s} \frac{\vert \nabla K_{r}\vert^{2}}{\lambda_{r}^{2}}
  +
  \frac{1}{\lambda_{r}^{4}}
  +
  \frac{1}{\lambda_{r}^{2(n-2)}}
  +
  \varepsilon_{r,s}^{\frac{n+2}{n}}
  \bigg), 
  \end{equation}
  and  we arrive at
  \begin{equation}\label{def_bar_b_1}
  \begin{split}
  \int K  (\alpha^{i}\varphi_{i})^{p+1}d\mu_{g_{0}} 
  = &
  \sum_{i}\alpha_{i}^{p+1}\int K\varphi_{i}^{p+1} d\mu_{g_{0}}
  +
  (p+1)\sum_{i\neq j}\alpha_{i}^{p}\alpha_{j}
  b_{1}\frac{K_{i}}{\lambda_{i}^{\theta}}\varepsilon_{i,j} \\
  = &
  \sum_{i}\alpha_{i}^{p+1}\int K\varphi_{i}^{p+1} d\mu_{g_{0}}
  +
  \bar b_{1}\sum_{i\neq j}\alpha_{i}^{\frac{n+2}{n-2}}\alpha_{j}
  \frac{K_{i}}{\lambda_{i}^{\theta}}\varepsilon_{i,j}
  ,\quad 
  \bar b_{1}=\frac{2n}{n-2}b_{1}
  \end{split}
  \end{equation}
  up to an error  
  $
  O
  (
  \tau^{2}
  +
  \sum_{r\neq s} \frac{\vert \nabla K_{r}\vert^{2}}{\lambda_{r}^{2}}
  +
  \frac{1}{\lambda_{r}^{4}}
  +
  \frac{1}{\lambda_{r}^{2(n-2)}}
  +
  \varepsilon_{r,s}^{\frac{n+2}{n}}
  ).
  $
  Finally, recalling our notation in Section \ref{s:set-up} and denoting by $x^i$ a generic 
  polynomial of degree $i$ in the $x$-variables, we expand 
  \begin{equation}\label{single_bubble_K_integral_expansion}
  \begin{split}
  \int K  \varphi_{i}^{p+1} d\mu_{g_{0}}
  = &
  \int_{B_{c}(a_{i})} K\varphi_{i}^{p+1} d\mu_{g_{0}}
  +
  O\big(\frac{1}{\lambda_{i}^{n-\theta}}\big) \\
  = &
  K_{i}\int_{B_{c}(a_{i})} \varphi_{i}^{p+1} d\mu_{g_{0}}
  +
  \nabla K_{i}\int_{B_{c}(a_{i})} x\varphi_{i}^{p+1} d\mu_{g_{0}} \\
  & +
  \frac{\nabla^{2}}{2}K_{i}\int_{B_{c}(a_{i})} x^{2}\varphi_{i}^{p+1} d\mu_{g_{0}} 
  +
  \frac{\nabla^{3}}{6}K_{i}\int_{B_{c}(a_{i})} x^{3}\varphi_{i}^{p+1} d\mu_{g_{0}}  +
  O\bigg(\frac{1}{\lambda_{i}^{4}}+\frac{1}{\lambda_{i}^{2(n-2)}}\bigg)
  \end{split}
  \end{equation}
with an extra error of order  $ O \left( \frac{\ln \lambda}{\lambda^4} \right)$ if $n = 4$. For the first term 
on the right-hand side up to some
  $ 
  O(\tau^{2}+\frac{1}{\lambda_{i}^{4}})
  $ 
  we may pass integrating with respect to conformal normal coordinates. Indeed
  \begin{equation*}
\underset{B_{c}(a_{i})}{ \int} \varphi_{i}^{p+1}d\mu_{g_{0}}
  = 
\underset{B_{c}(a_{i})}{ \int}u_{a_{i}}^{-\tau}(\frac{\varphi_{i}}{u_{a_{i}}})^{\frac{2n}{n-2}-\theta}d\mu_{g_{a_{i}}}
  = 
\underset{B_{c}(a_{i})}{ \int}(\frac{\varphi_{i}}{u_{a_{i}}})^{\frac{2n}{n-2}-\theta}d\mu_{g_{a_{i}}}
  +
  O(\tau \int_{B_{c}(a_{i})} r_{a_{i}}^{2}(\frac{\varphi_{i}}{u_{a_{i}}})^{\frac{2n}{n-2}-\theta}d\mu_{g_{a_{i}}})
  \end{equation*}
  and the latter term is of order $O(\frac{\tau}{\lambda_{i}^{2+\theta}})$.
  From  \eqref{eq:bubbles} we find 
  \begin{equation*}
  \begin{split}
  \int  & \varphi_{i}^{p+1}  d\mu_{g_{0}} 
  = 
  \int_{B_{c}(a_{i})} (\frac{\lambda_{i}}{1+\lambda_{i}^{2}r_{a_{i}}^{2}(1+r_{a_{i}}^{n-2}H_{a_{i}})^{\frac{2}{2-n}}})^{n-\theta} d\mu_{g_{a_{i}}} \\
  = &
  \int_{B_{c}(0)} (\frac{\lambda_{i}}{1+\lambda_{i}^{2}r^{2}})^{n-\theta}
  \big(
  1
  +
  \frac{2(n-\theta)}{n-2}
  \frac{\lambda_{i}^{2}r^{n}H_{a_{i}}}{1+\lambda_{i}^{2}r^{2}}\big) dx, 
  \end{split}
  \end{equation*}
  up to some $O(\tau^{2}+\frac{1}{\lambda_{i}^{4}}+\frac{1}{\lambda_{i}^{n}})$. Clearly
  \begin{equation*}
  \begin{split}
  \int_{B_{c}(0)}  (\frac{\lambda_{i}}{1+\lambda_{i}^{2}r^{2}})^{n-\theta} dx
  = &
  \lambda_{i}^{-\theta}
  \int_{B_{c\lambda_{i}}}\frac{dx}{(1+r^{2})^{n-\theta}}
  =
  \lambda_{i}^{-\theta}
  \underset{\R^{n}}{\int}\frac{dx}{(1+r^{2})^{n-\theta}}
  +O(\lambda_{i}^{-n}) \\
  = &
  \frac{1}{\lambda^{\theta}}\underset{\R^{n}}{\int}\frac{dx}{(1+r^{2})^{n}}
  +
  \frac{\theta}{\lambda^{\theta}}\underset{\R^{n}}{\int}\frac{\ln (1+r^{2})dx}{(1+r^{2})^{n}}
  +
  O\bigg(\tau^{2}+\frac{1}{\lambda_{i}^{4}}+O\bigg(\frac{1}{\lambda_{i}^{2(n-2)}}\bigg)\bigg) \\
  = &
  \frac{\bar c_{0}}{\lambda_{i}^{\theta}}+\frac{\bar c_{1} \tau}{\lambda_{i}^{\theta}}
  +
  O\bigg(\tau^{2}+\frac{1}{\lambda_{i}^{4}}+O\bigg(\frac{1}{\lambda_{i}^{2(n-2)}}\bigg)\bigg)
  \end{split}
  \end{equation*}
letting 
\begin{equation}\label{barc0_barc1}
\bar c_{0}=\int_{\R^{n}}\frac{dx}{(1+r^{2})^{n}}
\;\text{ and } \;
\bar c_{1}=\frac{n-2}{2}\int_{\R^{n}}\frac{\ln (1+r^{2})}{(1+r^{2})^{n}}dx.
\end{equation}
  Moreover
  \begin{equation*}
  \begin{split}
  \underset{B_{c}(0)}{\int} \frac{\lambda_{i}^{n+4-\theta }r^{2n} H^{2}_{a_{i}}}{(1+\lambda_{i}^{2}r^{2})^{n+2-\theta}}dx
  \leq 
\underset{B_{c}(0)}{\int} \frac{\lambda_{i}^{n-\theta}r^{2(n-2)}H^{2}_{a_{i}}}{(1+\lambda_{i}^{2}r^{2})^{n-\theta}} dx 
  \leq 
  C\underset{B_{c}(0)}{\int}\frac{\lambda_{i}^{n-\theta} r^{2(n-2)}}{(1+\lambda_{i}^{2}r^{2})^{n-\theta}}
  \begin{pmatrix}
  1 &\text{for }\; n= 3\\
  1 &\text{for }\; n=4\\
  1 &\text{for }\; n=5\\
  \ln^{2}r  &\text{for }\; n=6 \\
  r^{2(6-n)}&\text{for }\; n\geq 7
  \end{pmatrix} dx
  \end{split}
  \end{equation*}
  up to some 
  $
O
(
  \frac{1}{\lambda_{i}^{2(n-2)}}+\frac{1}{\lambda_{i}^{4}}
)
  $
 and with an extra error of order  $ O \left( \frac{\ln \l}{\l^4} \right)$ if $n = 4$, and 
  \begin{equation*}
  \underset{B_{c}(0)}{\int}  \bigg(\frac{\lambda_{i}}{1+\lambda_{i}^{2}r^{2}}\bigg)^{n-\theta}\frac{\lambda_{i}^{2}r^{n}H_{a_{i}}}{1+\lambda_{i}^{2}r^{2}} dx
 = 
  \underset{B_{c}(0)}{\int} \bigg(\frac{\lambda_{i}}{1+\lambda_{i}^{2}r^{2}}\bigg)^{n-\theta}\frac{\lambda_{i}^{2}r^{2}}{1+\lambda_{i}^{2}r^{2}} 
  r^{n-2}
  \begin{pmatrix}
  H_{i}+\nabla H_{i}x+O(r^{2}) \\
  H_{i}+\nabla H_{i}x+O(r^{2}\ln r) \\
  H_{i}+O(r) \\
  -W_{i}\ln r +O(r\ln r) \\
  O(r^{6-n}) 
  \end{pmatrix} dx,
  \end{equation*}
  whence up to some 
  $O(\tau^{2}+\frac{1}{\lambda_{i}^{4}}+\frac{1}{\lambda_{i}^{2(n-2)}})$
  \begin{equation}\label{mass_integral_expansion}
  \begin{split}
  \int_{B_{c}(0)}  \bigg(\frac{\lambda_{i}}{1+\lambda_{i}^{2}r^{2}}\bigg)^{n-\theta}\frac{\lambda_{i}^{2}r^{n}H_{a_{i}}}{1+\lambda_{i}^{2}r^{2}} 
  dx = &
  \bar d_{1}
  \begin{pmatrix}
  \frac{H_{i}}{\lambda_{i}^{1+\theta }} \\
  \frac{H_{i}}{\lambda_{i}^{2+\theta }}+O(\frac{\ln \lambda_{i}}{\lambda_{i}^{4+\theta}})\\
  \frac{H_{i}}{\lambda_{i}^{3+\theta }} \\
  \frac{W_{i}\ln \lambda_{i}}{\lambda_{i}^{4+\theta}} \\
  0 
  \end{pmatrix}
  ,\; \bar d_{1}
  =
  \underset{\R^{n}}{\int} \frac{r^{n}dx}{(1+r^{2})^{n+1}}.
  \end{split}
  \end{equation}
  Likewise by radial symmetry and, since we may assume
  $d\mu_{g_{a_{i}}}  \equiv 1$, cf. \cite{gunther},
we find
\begin{enumerate}
 \item[(1)] \quad
 $
 \int_{B_{c}(a_{i})} x^{3}\varphi_{i}^{p+1} d\mu_{g_{a_{i}}}
  =
  O\bigg(\frac{1}{\lambda_{i}^{4}} + \frac{1}{\lambda_{i}^{2(n-2)}}\bigg);
 $
 \item[(2)] \quad
 $
 \frac{\nabla^{2}}{2}K_{i}\int_{B_{c}(a_{i})} x^{2}\varphi_{i}^{p+1} d\mu_{g_{0}}
  =
  \frac{\Delta K_{i}}{2n\lambda_{i}^{2+\theta}}\underset{\R^{n}}{\int}\frac{r^{2}dx}{(1+r^{2})^{n}}
  +
  O\bigg(\tau^{2}+\frac{1}{\lambda_{i}^{4}}+\frac{1}{\lambda_{i}^{2(n-2)}}\bigg);
 $
 \item[(3)] \quad
$
  \int_{B_{c}(a_{i})} x\varphi_{i}^{p+1} d\mu_{g_{0}}
  =
  O
  \bigg(\frac{1}{\lambda_{i}^{4}}+\frac{1}{\lambda_{i}^{2(n-2)}}\bigg)
$
\end{enumerate}
with an extra error of order  $ O \left( \frac{\ln \l}{\l^4} \right)$ if $n = 4$. Collecting all terms we arrive at
  \begin{equation}\label{single_bubble_K_integral_expansion_exact}
  \begin{split}
  \int K \varphi_{i}^{p+1}d\mu_{g_{0}}
  = &
  \frac{\bar c_{0}K_{i}}{\lambda_{i}^{\theta}}
  +
  \bar c_{1}\frac{K_{i}\tau}{\lambda_{i}^{\theta}}
  +
  \bar c_{2}\frac{\Delta K_{i}}{\lambda_{i}^{2+\theta}}  
  +
  \bar d_{1}K_{i}
  \begin{pmatrix}
  \frac{H_{i}}{\lambda_{i}^{1+\theta }} \\
  \frac{H_{i}}{\lambda_{i}^{2+\theta }}+O(\frac{\ln \lambda_{i}}{\lambda_{i}^{4+\theta}}) \\
  \frac{H_{i}}{\lambda_{i}^{3+\theta }} \\
  \frac{W_{i}\ln \lambda_{i}}{\lambda_{i}^{4+\theta}} \\
  0 
  \end{pmatrix},
  \; 
  \bar c_{2}=\frac{1}{2n}\underset{\R^{n}}{\int}\frac{r^{2}dx}{(1+r^{2})^{n}}
  \end{split}
  \end{equation} 
  up to an error  
  $O(\tau^{2}+\frac{1}{\lambda_{i}^{4}}+\frac{1}{\lambda_{i}^{2(n-2)}})$, 
  and thus obtain
  \begin{equation}\label{intergral_sum_of_bubble_nonlinear_evaluated}
  \begin{split}
  \int K  (\alpha^{i}  \varphi_{i})^{p+1}d\mu_{g_{0}} 
  = &
  \sum_{i}
  \left(
  \bar c_{0}\frac{K_{i}}{\lambda_{i}^{\theta}}\alpha_{i}^{p+1}
  +
  \bar c_{1}\frac{K_{i}}{\lambda_{i}^{\theta}}\alpha_{i}^{\frac{2n}{n-2}}\tau  
  +
  \bar c_{2}\frac{\Delta K_{i}}{\lambda_{i}^{2+\theta}} \alpha_{i}^{\frac{2n}{n-2}} 
  \right) \\
  & \quad\quad\quad +
  \bar d_{1}\sum_{i}\frac{K_{i}}{\lambda_{i}^{\theta}}\alpha_{i}^{\frac{2n}{n-2}}
  \begin{pmatrix}
  \frac{H_{i}}{\lambda_{i}} \\
  \frac{H_{i}+O(\frac{\ln \lambda_{i}}{\lambda_{i}^{2}})}{\lambda_{i}^{2 }}\\
  \frac{H_{i}}{\lambda_{i}^{3 }} \\
  \frac{W_{i}\ln \lambda_{i}}{\lambda_{i}^{4}} \\
  0 
  \end{pmatrix} 
  +
  \bar b_{1}\sum_{i\neq j}\alpha_{i}^{\frac{n+2}{n-2}}\alpha_{j}
  \frac{K_{i}}{\lambda_{i}^{\theta}}\varepsilon_{i,j}
  \end{split}
  \end{equation}
  up to some 
  $
  O
  (
  \tau^{2}
  +
  \sum_{r\neq s} \frac{\vert \nabla K_{r}\vert^{2}}{\lambda_{r}^{2}}
  +
  \frac{1}{\lambda_{r}^{4}}
  +
  \frac{1}{\lambda_{r}^{2(n-2)}}
  +
  \varepsilon_{r,s}^{\frac{n+2}{n}}
  ).
  $
  Consequently up to the same error 
  \begin{equation}\label{pure_functional_denominator_expanded}
  \begin{split}
  J_{\tau}(  \alpha^{i} & \varphi_{i})
  = 
  \frac
  {\alpha^{i}\alpha^{j}\int \varphi_{i} L_{g_{0}}\varphi_{j}d\mu_{g_{0}}}
  {(\int K(\sum_{i}\alpha_{i}\varphi_{i})^{p+1})^{\frac{2}{p+1}}} 
  = 
  \frac
  {\alpha^{i}\alpha^{j}\int \varphi_{i} L_{g_{0}} \varphi_{j}d\mu_{g_{0}}}
  {(\bar c_{0}\sum_{i}\frac{K_{i}}{\lambda_{i}^{\theta}}\alpha_{i}^{p+1})^{\frac{2}{p+1}}}
  \Bigg(
  1
  -
  \bar c_{1}\sum_{i}\frac{K_{i}}{\lambda_{i}^{\theta}}\frac{\alpha_{i}^{\frac{2n}{n-2}}}{\alpha_{K,\tau}^{\frac{2n}{n-2}}}\tau
  \\
  & -
  \bar c_{2}\sum_{i}\frac{\Delta K_{i}}{\lambda_{i}^{2+\theta}} \frac{\alpha_{i}^{\frac{2n}{n-2}} }{\alpha_{K,\tau}^{\frac{2n}{n-2}}}
  -
  \bar d_{1}\sum_{i}\frac{K_{i}}{\lambda_{i}^{\theta}}
  \begin{pmatrix}
  \frac{H_{i}}{\lambda_{i}} \\
  \frac{H_{i}+O(\frac{\ln \lambda_{i}}{\lambda_{i}^{2}})}{\lambda_{i}^{2 }} \\
  \frac{H_{i}}{\lambda_{i}^{3 }} \\
  \frac{W_{i}\ln \lambda_{i}}{\lambda_{i}^{4}} \\
  0 
  \end{pmatrix} 
  \frac{\alpha_{i}^{\frac{2n}{n-2}}}{\alpha_{K,\tau}^{\frac{2n}{n-2}}}  -
  \bar b_{1}\sum_{i\neq j}\frac{\alpha_{i}^{\frac{n+2}{n-2}}\alpha_{j}}{\alpha_{K,\tau}^{\frac{2n}{n-2}}}
  \frac{K_{i}}{\lambda_{i}^{\theta}}\varepsilon_{i,j}
  \Bigg).
  \end{split}
  \end{equation} 
  Next for $i\neq j$ using Lemma \ref{lem_emergence_of_the_regular_part} we get
  \begin{equation*}
  \int \frac{\varphi_{i} L_{g_{0}} \varphi_{j}}{4n(n-1)} d \mu_{g_{0}}
  =
  \int \varphi_{i}^{\frac{n+2}{n-2}}\varphi_{j} d \mu_{g_{0}}
  +
  O\bigg(\frac{1}{\lambda_{i}^{4}}+\frac{1}{\lambda_{i}^{2(n-2)}}+\varepsilon_{i,j}^{\frac{n+2}{n}}\bigg).
  \end{equation*}
  For example to check the error term, we may estimate 
  \begin{equation*}
  \begin{split}
  \underset{B_{c}(a_{i})}{\int} r_{a_{i}}^{n-2}\varphi_{i}^{\frac{n+2}{n-2}}\varphi_{j}d\mu_{g_{0}}
  \leq &
  \Vert r_{a_{i}}^{n-2}\varphi_{i}^{\frac{n+2}{n-2}-\frac{n+2}{2n}}\Vert_{L^{(\frac{2n}{n+2})^{2}}_{B_{c}(a_{i})}} 
  \Vert \varphi_{i}^{\frac{n+2}{2n}}\varphi_{j}\Vert_{L^{\frac{4n^{2}}{(3n+2)(n-2)}}},
  \end{split}
  \end{equation*}
  which is of order 
  $O(\frac{\varepsilon_{i,j}^{\frac{n+2}{2n}}}{\lambda_{i}^{n-2}})$
  thanks to Lemma \ref{lem_interactions}, and likewise for e.g. $n\geq 7$  
  \begin{equation*}
  \begin{split}
  \int \varphi_{i}\varphi_{j} d\mu_{g_{0}} 
  \lesssim  &
  \Vert \varphi_{i}\varphi_{j}\Vert_{L^{\frac{n}{n-2}}_{g_{0}}}
  = 
  O(\varepsilon_{i,j}\ln^{\frac{n-2}{n}}\varepsilon_{i,j}),
  \end{split}
  \end{equation*}
  whence 
  $
  \lambda_{i}^{-2}\int \varphi_{i}\varphi_{j} d\mu_{g_{0}} 
  =
  o(\frac{\varepsilon_{i,j}^{\frac{n+2}{2n}}}{\lambda_{i}^{2}})
  $.
  Thus Lemma \ref{lem_interactions} shows that 
  \begin{equation}\label{L_g_0_bubble_interaction}
  \int \varphi_{i} L_{g_{0}} \varphi_{j}d\mu_{g_{0}}
  =
  \tilde b_{1}\varepsilon_{i,j}
  +
  O(\sum_{r\neq s}\frac{1}{\lambda_{r}^{4}}+\frac{1}{\lambda_{r}^{2(n-2)}}+\varepsilon_{r,s}^{\frac{n+2}{n}}),
  \quad 
  \tilde b_{1}=4n(n-1)b_{1}. 
  \end{equation}
  Finally from  \eqref{eq:bubbles} and Lemma \ref{lem_emergence_of_the_regular_part} we find
  \begin{equation*}
  \begin{split}
  \int \frac{\varphi_{i} L_{g_{0}} \varphi_{i}}{4n(n-1)} d\mu_{g_{0}}
  = &
  \int \varphi_{i}^{\frac{2n}{n-2}}d\mu_{g_{0}} 
  -
  \frac{c_{n}}{2}
  \int_{B_{c}(0)}\frac{\lambda_{i}^{2}r^{n-2}}{(1+\lambda_{i}^{2}r^{2})^{n}}
  \begin{pmatrix}
  H_{i}+n\nabla H_{i}x  \\
  H_{i}+n\nabla H_{i}x \\
  H_{i}+n\nabla H_{i}x  \\
  -W_{i}\ln r  \\
  0 
  \end{pmatrix}
  \end{split}
  \end{equation*}
  up to some error terms of order
  $
  O
  (
  \lambda_{i}^{-3}  ,
  \lambda_{i}^{-4}\ln \lambda_{i}  ,
  \lambda_{i}^{-4}  ,
  \lambda_{i}^{-4}  ,
  \lambda_{i}^{-4} 
  ),
  $
  whence
  \begin{equation*}
  \begin{split}
  \int \frac{\varphi_{i} L_{g_{0}} \varphi_{i}}{4n(n-1)} d \mu_{g_{0}}
  = &
  \int \varphi_{i}^{\frac{2n}{n-2}}d\mu_{g_{0}}
  -
  \tilde d_{1}
  \begin{pmatrix}
  \frac{H_{i}}{\lambda_{i}} \\
  \frac{H_{i}}{\lambda_{i}^{2}} +O(\frac{\ln \lambda_{i}}{\lambda_{i}^{4}}) \\
  \frac{H_{i}}{\lambda_{i}^{3}} \\
  \frac{W_{i}\ln \lambda_{i}}{\lambda_{i}^{4}}  \\
  0 
  \end{pmatrix},\; \tilde d_{1}
  =
  \frac{c_{n}}{2}\underset{\R^{n}}{\int}\frac{r^{n-2}dx}{(1+r^{2})^{n}}, 
  \end{split}
  \end{equation*}
  up to  
  $
  O(\frac{1}{\lambda_{i}^{4}}+\frac{1}{\lambda_{i}^{2(n-2)}}).
  $
  Recalling \eqref{single_bubble_K_integral_expansion_exact}, we obtain 
  \begin{equation}\label{single_bubble_L_g_0_integral_expansion_exact}
  \begin{split}
  \int \frac{\varphi_{i} L_{g_{0}}\varphi_{i}}{4n(n-1)} d\mu_{g_{0}}
  = &
  \bar c_{0}
  +
  4n(n-1)
  (\bar d_{1}
  -
  \tilde d_{1}
  )
  \begin{pmatrix}
  \frac{H_{i}}{\lambda_{i}} \\
  \frac{H_{i}}{\lambda_{i}^{2}} +O(\frac{\ln \lambda_{i}}{\lambda_{i}^{4}}) \\
  \frac{H_{i}}{\lambda_{i}^{3}} \\
  \frac{W_{i}\ln \lambda_{i}}{\lambda_{i}^{4}} \\
  0 
  \end{pmatrix} 
  \end{split}
  \end{equation}
  up to some 
  $O(\tau^{2}+\frac{1}{\lambda_{i}^{4}}+\frac{1}{\lambda_{i}^{2(n-2)}})$.
As $\bar d_{1}=\tilde d_{1}$, cf. \eqref{mass_integral_expansion},  we simply get
  \begin{equation}\label{r_alpha_delta_expansion} 
  \begin{split}
  \alpha^{i}\alpha^{j}\int \varphi_{i} L_{g_{0}}\varphi_{j}d\mu_{g_{0}}
  = &
  4n(n-1)\bar c_{0}
  \sum_{i}
  \alpha_{i}^{2}
 +
  \tilde b_{1}\sum_{i\neq j}\alpha_{i}\alpha_{j}\varepsilon_{i,j}
  \end{split}
  \end{equation}
  up to an error of order 
$
  O\bigg(\tau^{2}+\sum_{r}\frac{1}{\lambda_{r}^{4}}+\frac{1}{\lambda_{r}^{2(n-2)}}+\sum_{r\neq s}\varepsilon_{r,s}^{\frac{n+2}{n}}\bigg).
$
  Plugging this into \eqref{pure_functional_denominator_expanded}, we obtain
  \begin{equation*}
  \begin{split}
  J_{\tau}  ( \alpha^{i}  \varphi_{i}) 
  = &
  \frac
  {4n(n-1)\bar c_{0}^{\frac{p-1}{p+1}}\sum_{i}\alpha_{i}^{2}}
  {(\sum_{i}\frac{K_{i}}{\lambda_{i}^{\theta}}\alpha_{i}^{p+1})^{\frac{2}{p+1}}}
  \Bigg(
  1
  -
  \bar c_{1}\sum_{i}\frac{K_{i}}{\lambda_{i}^{\theta}}\frac{\alpha_{i}^{\frac{2n}{n-2}}}{\alpha_{K,\tau}^{\frac{2n}{n-2}}}\tau
  -
  \bar c_{2}\sum_{i}\frac{\Delta K_{i}}{\lambda_{i}^{2+\theta}} \frac{\alpha_{i}^{\frac{2n}{n-2}} }{\alpha_{K,\tau}^{\frac{2n}{n-2}}}\\
  & \quad\quad\quad\quad\quad
  -
  \bar d_{1} \sum_{i}
  \frac{K_{i}}{\lambda_{i}^{\theta}}
  \frac{\alpha_{i}^{\frac{2n}{n-2}}}{\alpha_{K,\tau}^{\frac{2n}{n-2}}}
  \begin{pmatrix}
  \frac{H_{i}}{\lambda_{i}} \\
  \frac{H_{i}+O(\frac{\ln \lambda_{i}}{\lambda_{i}^{2}})}{\lambda_{i}^{2 }} \\
  \frac{H_{i}}{\lambda_{i}^{3}} \\
  \frac{W_{i}\ln \lambda_{i}}{\lambda_{i}^{4}} \\
  0 
  \end{pmatrix}
  -
  \sum_{i\neq j}(\bar b_{1}\frac{K_{i}}{\lambda_{i}^{\theta}}\frac{\alpha_{i}^{\frac{n+2}{n-2}}\alpha_{j}}{\alpha_{K,\tau}^{\frac{2n}{n-2}}}
  -
  \frac{\tilde b_{1}}{\bar c_{0}}\frac{\alpha_{i}\alpha_{j}}{\alpha^{2}}
  )
  \varepsilon_{i,j}
  \Bigg)
  \end{split}
  \end{equation*} 
  up to some 
  $
  O(\tau^{2}+\sum_{r}\frac{\vert \nabla K_{r}\vert^{2}}{\lambda_{r}^{2}}+\frac{1}{\lambda_{r}^{4}}+\frac{1}{\lambda_{r}^{2(n-2)}}+\sum_{r\neq s}\varepsilon_{r,s}^{\frac{n+2}{n}})
  $.
  Recalling 
  \begin{equation*}
  \bar b_{1}=\frac{2n}{n-2}b_{1}, \quad \tilde b_{1}=4n(n-1)b_{1}, \quad 
  \alpha^{2}=\sum_{i}\alpha_{i}^{2}, \quad 
  \alpha_{K,\tau}^{\frac{2n}{n-2}}
  =\frac{n}{n-2}\bar c_{0}\sum_{i}\frac{K_{i}}{\lambda_{i}^{\theta}}\alpha_{i}^{\frac{2n}{n-2}},
  \end{equation*}
  and setting 
  \begin{equation}\label{hat_constants}
  \hat c_{0}=4n(n-1)\bar c_{0}^{\frac{p-1}{p+1}},\; 
  \hat c_{1}=\frac{\bar c_{1}}{\bar c_{0}},\; \hat c_{2}=\frac{\bar c_{2}}{\bar c_{0}},\; 
  \hat d_{1}=\frac{\bar d_{1}}{\bar c_{0}},\; 
  \hat b_{1}=2\frac{ b_{1}}{\bar c_{0}}
 \end{equation}
  we may rewrite this as  
  \begin{equation*}
  \begin{split}
  J_{\tau}(u)
  =
  J_{\tau}  ( \alpha^{i}  \varphi_{i}) 
  = 
  \frac
  {\hat c_{0}\alpha^{2}}
  {(\alpha_{K,\tau}^{p+1})^{\frac{2}{p+1}}}
  \Bigg(
  1
  & -
  \hat c_{1}\sum_{i}\frac{K_{i}}{\lambda_{i}^{\theta}}\frac{\alpha_{i}^{\frac{2n}{n-2}}}{\alpha_{K,\tau}^{\frac{2n}{n-2}}}\tau
  -
  \hat c_{2}\sum_{i}\frac{\Delta K_{i}}{\lambda_{i}^{2+\theta}} \frac{\alpha_{i}^{\frac{2n}{n-2}} }{\alpha_{K,\tau}^{\frac{2n}{n-2}}}\\
  & 
  -
  \hat d_{1} \sum_{i}
  \frac{K_{i}}{\lambda_{i}^{\theta}}
  \frac{\alpha_{i}^{\frac{2n}{n-2}}}{\alpha_{K,\tau}^{\frac{2n}{n-2}}}
  \begin{pmatrix}
  \frac{H_{i}}{\lambda_{i}} \\
  \frac{H_{i}+O(\frac{\ln \lambda_{i}}{\lambda_{i}^{2}})}{\lambda_{i}^{2 }} \\
  \frac{H_{i}}{\lambda_{i}^{3}} \\
  \frac{W_{i}\ln \lambda_{i}}{\lambda_{i}^{4}} \\
  0 
  \end{pmatrix}
  -
  \hat b_{1}\sum_{i\neq j}\big(\frac{K_{i}}{\lambda_{i}^{\theta}}\frac{\alpha_{i}^{\frac{n+2}{n-2}}\alpha_{j}}{\alpha_{K,\tau}^{\frac{2n}{n-2}}}
  -
  \frac{\alpha_{i}\alpha_{j}}{2\alpha^{2}}
  \big)
  \varepsilon_{i,j}
  \Bigg). 
  \end{split}
  \end{equation*}
  Then the claim follows from Lemma \ref{lem_alpha_derivatives_at_infinity}.
  \end{proof}

  We next state three lemmas with some expansions for the derivatives of the functionals 
  with respect to the parameters involved (recall our notation from Section \ref{s:set-up}). The proofs are given in appendix  B.

  \begin{lemma} 
  \label{lem_alpha_derivatives_at_infinity} 
  For $u\in V(q,\varepsilon)$ and $\varepsilon>0$ sufficiently small the three quantities 
  $\partial J_{\tau} (u)  \phi_{1,j}$, 
  $\partial J_{\tau}(\alpha^{i}\varphi_{i})\phi_{1,j}$, 
  $\partial_{\alpha_{j}}J_{\tau}(\alpha^{i}\varphi_{i})$ can be written as 
  \begin{equation*}
  \begin{split}  
  \frac{\alpha_{j}}
  {(\alpha_{K,\tau}^{\frac{2n}{n-2}})^{\frac{n-2}{n}}}
  \bigg( 
  &
  \grave c_{0}\big(
  1
  -
  \frac{\alpha^{2}}{\alpha_{K,\tau}^{p+1}}
  \frac{K_{j}}{\lambda_{j}^{\theta}}\alpha_{j}^{p-1}
 \big)
  -
  \grave c_{2}
  \big(
  \frac{\Delta K_{j}}{K_{j}\lambda_{j}^{2}}
  -
  \sum_{k}\frac{\Delta K_{k}}{K_{k}\lambda_{k}^{2}}
  \frac{\alpha_{k}^{2}}{\alpha^{2}}
  \big)
  \\ & 
  +
  \grave b_{1} \bigg(
  \sum_{k\neq l}
  \frac{\alpha_{k}\alpha_{l}}{\alpha^{2}}
  \varepsilon_{k,l}
  -
  \sum_{j\neq i}\frac{\alpha_{i}}{\alpha_{j}}\varepsilon_{i,j}
  \bigg)
  -
  \grave d_{1}
  \begin{pmatrix}
  \frac{H_{j}}{\lambda_{j}}-\sum_{k}\frac{\alpha_{k}^{2}}{\alpha^{2}}\frac{H_{k}}{\lambda_{k}} &\text{for } n=3\\
  \frac{H_{j}}{\lambda_{j}^{2 }}-\sum_{k}\frac{\alpha_{k}^{2}}{\alpha^{2}}\frac{H_{k}}{\lambda_{k}^{2 }} 
  +
  O(\sum_{r}\frac{\ln \lambda_{r}}{\lambda_{r}^{4}})
  &\text{for } n=4\\
  \frac{H_{j}}{\lambda_{j}^{3}} -\sum_{k}\frac{\alpha_{k}^{2}}{\alpha^{2}}\frac{H_{k}}{\lambda_{k}^{3 }} &\text{for } n=5\\
  \frac{W_{j}\ln \lambda_{j}}{\lambda_{i}^{4}}-\sum_{k}\frac{\alpha_{k}^{2}}{\alpha^{2}}\frac{W_{k}\ln \lambda_{k}}{\lambda_{k}^{4}} &\text{for } n=6\\
  0 &\text{for }n\geq 7
  \end{pmatrix} 
  \bigg)
  \end{split}
  \end{equation*}
  with positive constants $\grave c_{0},\grave c_{2},\grave b_{1},\grave d_{1}$ up to an error of order 
  \begin{equation}\label{eq:error-19-8}
  O
  \big(
  \tau^{2}
  +
  \sum_{r\neq s} 
  \frac{\vert \nabla K_{r}\vert^{2}}{\lambda_{r}^{2}}
  +
  \frac{1}{\lambda_{r}^{4}}
  +
  \frac{1}{\lambda_{r}^{2(n-2)}}
  +
  \varepsilon_{r,s}^{\frac{n+2}{n}}
  +
  \vert \partial J_{\tau}(u)\vert^{2}
  \big).
  \end{equation}
  In particular for all $j$ 
  \begin{equation*}
  \frac{\alpha^{2}}{\alpha_{K,\tau}^{p+1}}\frac{K_{j}}{\lambda_{j}^{\theta}}\alpha_{j}^{p-1}
  =
  1
  +
  O 
  \big(
  \tau
  +
  \sum_{r\neq s} 
  \frac{1}{\lambda_{r}^{2}}
  +
  \frac{1}{\lambda_{r}^{n-2}}
  +
  \varepsilon_{r,s}
  +
  \vert \partial J_{\tau}(u)\vert
  \big).
  \end{equation*}  
  \end{lemma}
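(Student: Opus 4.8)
The plan is to reduce the three expressions to a single computation. Since $\phi_{1,j}=\varphi_j=\partial_{\alpha_j}(\alpha^i\varphi_i)$, the chain rule gives $\partial_{\alpha_j}J_\tau(\alpha^i\varphi_i)=\partial J_\tau(\alpha^i\varphi_i)\phi_{1,j}$ identically. For $\partial J_\tau(u)\phi_{1,j}$ versus $\partial J_\tau(\alpha^i\varphi_i)\phi_{1,j}$ I would invoke Lemma \ref{lem_v_part_interactions} with $k=1$ and $i=j$; since $\lambda_r^\theta=1+o(1)$ the $\lambda_r^{2\theta}$-factors appearing there are harmless and the resulting error is precisely \eqref{eq:error-19-8}. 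So it suffices to expand $\partial J_\tau(\alpha^i\varphi_i)\varphi_j$.

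For this I would use the explicit first-derivative formula from Section \ref{s:set-up}, $\partial J_\tau(\alpha^i\varphi_i)\varphi_j=\frac{2}{k_\tau^{\frac{2}{p+1}}}\big[\sum_i\alpha_i\int\varphi_i L_{g_0}\varphi_j\,d\mu_{g_0}-\frac{r}{k_\tau}\int K(\alpha^i\varphi_i)^p\varphi_j\,d\mu_{g_0}\big]$ with $r=r_{\alpha^i\varphi_i}$ and $k_\tau=\int K(\alpha^i\varphi_i)^{p+1}\,d\mu_{g_0}$, and substitute the expansions already established in the proof of Proposition \ref{prop_functional_at_infinity}: $\int\varphi_j L_{g_0}\varphi_j\,d\mu_{g_0}=4n(n-1)\bar c_0+O(\cdots)$ and $\int\varphi_i L_{g_0}\varphi_j\,d\mu_{g_0}=\tilde b_1\varepsilon_{i,j}+O(\cdots)$ for $i\neq j$, cf. \eqref{single_bubble_L_g_0_integral_expansion_exact}, \eqref{L_g_0_bubble_interaction}; the expansion of $r$ from \eqref{r_alpha_delta_expansion}, in which the mass contribution cancels because $\bar d_1=\tilde d_1$; that of $k_\tau$ from \eqref{intergral_sum_of_bubble_nonlinear_evaluated}; and a new but wholly analogous expansion of $\int K(\alpha^i\varphi_i)^p\varphi_j\,d\mu_{g_0}$, obtained by iteratively splitting $M$ into the regions $\{\alpha_l\varphi_l>\sum_{i>l}\alpha_i\varphi_i\}$ and their complements, which isolates $\alpha_j^p\int K\varphi_j^{p+1}\,d\mu_{g_0}$ (expanded by \eqref{single_bubble_K_integral_expansion_exact}), the cross pieces $\alpha_i^p\int K\varphi_i^p\varphi_j\,d\mu_{g_0}$ (by \eqref{linearized_bubble_interaction}), and remainders absorbed into \eqref{eq:error-19-8} through Lemma \ref{lem_interactions}, exactly as in the proofs of Lemma \ref{lem_testing_with_v} and Proposition \ref{prop_functional_at_infinity}.

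Next comes the bookkeeping. Factoring out $\frac{\alpha_j}{(\alpha_{K,\tau}^{\frac{2n}{n-2}})^{\frac{n-2}{n}}}$ and expanding $\frac{r}{k_\tau}$ as a product of the numerator and denominator expansions, the leading pairing $4n(n-1)\bar c_0\alpha_j-\frac{r}{k_\tau}\alpha_j^p\bar c_0 K_j\lambda_j^{-\theta}$ produces the factor $\grave c_0\big(1-\frac{\alpha^2}{\alpha_{K,\tau}^{p+1}}\frac{K_j}{\lambda_j^\theta}\alpha_j^{p-1}\big)$. Since the left-hand side of the whole expansion is $O(|\partial J_\tau(u)|)$ (as $\Vert\phi_{1,j}\Vert$ is bounded) and the prefactor is bounded above and below — the constraints defining $V(q,\varepsilon)$ force $\alpha_i\sim 1$, $\lambda_i^\theta\sim 1$ — solving for $1-\frac{\alpha^2}{\alpha_{K,\tau}^{p+1}}\frac{K_j}{\lambda_j^\theta}\alpha_j^{p-1}$ already yields the "in particular" estimate, the subleading quantities on the right being each controlled, up to harmless logarithms, by $\sum_{r\neq s}(\lambda_r^{-2}+\lambda_r^{-(n-2)}+\varepsilon_{r,s})$ and the square root of \eqref{eq:error-19-8} being dominated by these. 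With that relation in hand one simplifies the subleading terms of the expansion: $\frac{K_k\alpha_k^{\frac{2n}{n-2}}/\lambda_k^\theta}{\alpha_{K,\tau}^{\frac{2n}{n-2}}}=\frac{\alpha_k^2}{\alpha^2}+O(\cdots)$, so the two $\Delta K$-contributions — one from $\int K\varphi_j^{p+1}$, one from the $\bar c_2$-correction of $k_\tau$ against the leading term — combine into $-\grave c_2\big(\frac{\Delta K_j}{K_j\lambda_j^2}-\sum_k\frac{\Delta K_k}{K_k\lambda_k^2}\frac{\alpha_k^2}{\alpha^2}\big)$, the $\varepsilon_{i,j}$-terms (off-diagonal pairing, the $\bar b_1$-cross terms, the $\tilde b_1$-term in $r$) into $\grave b_1\big(\sum_{k\neq l}\frac{\alpha_k\alpha_l}{\alpha^2}\varepsilon_{k,l}-\sum_{j\neq i}\frac{\alpha_i}{\alpha_j}\varepsilon_{i,j}\big)$, and the mass terms — surviving only through $\int K\varphi_j^{p+1}$ — into $-\grave d_1(\cdots)$; positivity of $\grave c_0,\grave c_2,\grave b_1,\grave d_1$ is read off from their form as products of $\bar c_0,\bar c_2,b_1,\bar d_1,\tilde b_1$.

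I expect the genuine obstacle to be twofold. First, one must verify that the explicit $\bar c_1\tau$-contributions cancel — the one inside $\int K(\alpha^i\varphi_i)^p\varphi_j$ against the $\bar c_1$-correction of $k_\tau$ — so that no $\tau$-term survives, only an $O(\tau\cdot(\text{subleading}))\subset O(\tau^2+\cdots)$ remainder. Second, one must check that every cross-product and every first-order $\nabla K_r$-term lands inside \eqref{eq:error-19-8}: products such as $\varepsilon_{r,s}\tau$ and $\varepsilon_{r,s}\lambda_r^{-2}$ are dominated, by Young's inequality, by $\varepsilon_{r,s}^{\frac{n+2}{n}}+\lambda_r^{-4}$, products of two subleading quantities by $\tau^2+\lambda_r^{-4}$, and the higher-order cross terms of the new integral $\int K(\alpha^i\varphi_i)^p\varphi_j$ — of the type $\alpha_j^{p-1}\varphi_j^{p-1}\alpha_i\varphi_i\varphi_j$ — require the region decomposition together with the interaction estimates of Lemma \ref{lem_interactions}.
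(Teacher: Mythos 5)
Your proposal is correct and follows essentially the same route as the paper's proof: reduction of the three quantities via the chain rule and Lemma \ref{lem_v_part_interactions}, the iterative region decomposition yielding \eqref{expansion_nonlinear_with_testing_1}, substitution of the expansions \eqref{intergral_sum_of_bubble_nonlinear_evaluated}, \eqref{single_bubble_K_integral_expansion_exact}, \eqref{single_bubble_L_g_0_integral_expansion_exact}, \eqref{L_g_0_bubble_interaction} and \eqref{linearized_bubble_interaction}, the cancellation of the $\bar c_{1}\tau$-coefficient (which in the paper is observed to vanish identically by the definition \eqref{eq:akt}), and the bootstrap of the ``in particular'' estimate to absorb the remaining cross terms into \eqref{eq:error-19-8}. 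No gaps.
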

  
  \begin{lemma} 
  \label{lem_lambda_derivatives_at_infinity} 
  For $u\in V(q,\varepsilon)$ and $\varepsilon>0$ sufficiently small the three quantities 
  $\partial  J_{\tau}  (u) \phi_{2,j}$,  
  $\partial J_{\tau}(\alpha^{i}\varphi_{i})\phi_{2,j}$ and 
  $ \frac{\lambda_{j}}{\alpha_{j}}\partial_{\lambda_{j}}J_{\tau}(\alpha^{i}\varphi_{i})$
  can be written as 
  \begin{equation*}
  \begin{split}
  \frac{\alpha_{j}}{(\alpha_{K,\tau}^{\frac{2n}{n-2}})^{\frac{n-2}{n}}}
  \bigg(
  \tilde c_{1}\tau
  +
  \tilde c_{2}\frac{\Delta K_{j}}{K_{j}\lambda_{j}^{2}} 
  -
  \tilde b_{2}\sum_{j\neq i }\frac{\alpha_{i}}{\alpha_{j}}\lambda_{j}\partial_{\lambda_{j}}\varepsilon_{i,j} 
 +
  \tilde  d_{1}
  \begin{pmatrix}
  \frac{H_{j}}{\lambda_{j}} &\text{for }\; n=3\\
  \frac{H_{j}}{\lambda_{j}^{2}}+O(\frac{\ln \lambda_{j}}{\lambda_{j}^{4}})&\text{for }\; n=4\\
  \frac{H_{j}}{\lambda_{j}^{3 }} &\text{for }\; n=5\\
  \frac{W_{j}\ln \lambda_{j}}{\lambda_{j}^{4}} &\text{for }\; n=6\\
  0 &\text{for }\; n\geq 7
  \end{pmatrix} 
  \bigg),
  \end{split}
  \end{equation*}
  with positive constants $\tilde c_{1},\tilde c_{2},\tilde d_{1},\tilde b_{2}$ up to some error of the form 
  \begin{equation}\label{eq:error-19-8-2}
  O
  \big(
  \tau^{2}
  +
  \sum_{r\neq s}
  \frac{\vert \nabla K_{r}\vert^{2}}{\lambda_{r}^{2}}
  +
  \frac{1}{\lambda_{r}^{4}}
  +
  \frac{1}{\lambda_{r}^{2(n-2)}}
  +
  \varepsilon_{r,s}^{\frac{n+2}{n}}
  +
  \vert \partial J_{\tau}(u)\vert^{2}
  \big).
  \end{equation}
  \end{lemma}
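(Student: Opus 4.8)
\noindent\emph{Sketch.}\enspace The plan is to handle the three quantities in order, passing from each to the next by cheap arguments, and to do the real work only for the last one. First, since $\phi_{2,j}=-\lambda_{j}\partial_{\lambda_{j}}\varphi_{j}$, Lemma~\ref{lem_v_part_interactions} with $k=2$ shows that $\partial J_{\tau}(u)\phi_{2,j}$ and $\partial J_{\tau}(\alpha^{i}\varphi_{i})\phi_{2,j}$ differ by a term controlled by the error \eqref{eq:error-19-8-2}: indeed $\lambda_{r}^{\theta}=1+o(1)$ on $V(q,\varepsilon)$, so the factors $\lambda_{r}^{-2\theta}$ appearing in Lemma~\ref{lem_v_part_interactions} are harmless. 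On the other hand, viewing $\alpha^{i}\varphi_{i}$ as a function of the finite-dimensional parameters $(\alpha,\lambda,a)$ and applying the chain rule gives $\partial_{\lambda_{j}}\big[J_{\tau}(\alpha^{i}\varphi_{i})\big]=\alpha_{j}\,\partial J_{\tau}(\alpha^{i}\varphi_{i})\big[\partial_{\lambda_{j}}\varphi_{j}\big]$, so that $\frac{\lambda_{j}}{\alpha_{j}}\partial_{\lambda_{j}}J_{\tau}(\alpha^{i}\varphi_{i})$ and $\partial J_{\tau}(\alpha^{i}\varphi_{i})\phi_{2,j}$ agree up to the sign present in the definition of $\phi_{2,j}$, which we absorb into the constants. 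Hence everything reduces to expanding $\lambda_{j}\partial_{\lambda_{j}}J_{\tau}(\alpha^{i}\varphi_{i})$.

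For this last step I would not differentiate the final formula of Proposition~\ref{prop_functional_at_infinity}, whose error is not a priori differentiable with control, but rather apply $\lambda_{j}\partial_{\lambda_{j}}$ to the building blocks of its proof. Writing $J_{\tau}(\alpha^{i}\varphi_{i})=r/k_{\tau}^{2/(p+1)}$ with $r=\alpha^{i}\alpha^{j}\int\varphi_{i}L_{g_{0}}\varphi_{j}\,d\mu_{g_{0}}$ and $k_{\tau}=\int K(\alpha^{i}\varphi_{i})^{p+1}\,d\mu_{g_{0}}$, I would differentiate \eqref{single_bubble_K_integral_expansion_exact}, \eqref{L_g_0_bubble_interaction}, \eqref{r_alpha_delta_expansion} and \eqref{intergral_sum_of_bubble_nonlinear_evaluated}. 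This is legitimate because all the expansions in Lemma~\ref{lem_emergence_of_the_regular_part} and Lemma~\ref{lem_interactions} persist under $\lambda\partial_{\lambda}$, with $\lambda_{j}\partial_{\lambda_{j}}\varepsilon_{i,j}=O(\varepsilon_{i,j})$ by Lemma~\ref{lem_interactions}~(vii), and because the error terms there are polynomial in the $\lambda_{r}^{-1}$ and in powers of $\varepsilon_{r,s}$, hence stable up to a constant under $\lambda_{j}\partial_{\lambda_{j}}$. Reading off the leading contributions: the factor $\lambda_{j}\partial_{\lambda_{j}}\lambda_{j}^{-\theta}=-\theta\lambda_{j}^{-\theta}$, which occurs both in $k_{\tau}$ through $\alpha_{K,\tau}^{p+1}$ and in $\int K\varphi_{j}^{p+1}\,d\mu_{g_{0}}$, yields the term proportional to $\tau$; $\lambda_{j}\partial_{\lambda_{j}}\lambda_{j}^{-2}=-2\lambda_{j}^{-2}$ applied to the $\bar c_{2}\Delta K_{j}\lambda_{j}^{-2-\theta}$ contribution yields $\tilde c_{2}\Delta K_{j}/(K_{j}\lambda_{j}^{2})$; the $H_{j}$- and $W_{j}$-terms give, after $\lambda_{j}\partial_{\lambda_{j}}$, the dimension-dependent matrix; and the interaction sum gives $-\tilde b_{2}\sum_{i\neq j}\frac{\alpha_{i}}{\alpha_{j}}\lambda_{j}\partial_{\lambda_{j}}\varepsilon_{i,j}$. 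Everything else falls into \eqref{eq:error-19-8-2}.

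Finally I would rearrange the prefactors exactly as at the end of the proof of Proposition~\ref{prop_functional_at_infinity}, using $\alpha_{K,\tau}^{2n/(n-2)}=\frac{n}{n-2}\bar c_{0}\sum_{i}K_{i}\lambda_{i}^{-\theta}\alpha_{i}^{2n/(n-2)}$ together with the identity $\frac{\alpha^{2}}{\alpha_{K,\tau}^{p+1}}\frac{K_{j}}{\lambda_{j}^{\theta}}\alpha_{j}^{p-1}=1+o(1)$ from Lemma~\ref{lem_alpha_derivatives_at_infinity}, so as to bring every term under the common factor $\alpha_{j}/(\alpha_{K,\tau}^{2n/(n-2)})^{(n-2)/n}$; the positivity of $\tilde c_{1},\tilde c_{2},\tilde d_{1},\tilde b_{2}$ is then read off from the explicit integrals defining $\bar c_{0},\bar c_{1},\bar c_{2},\bar d_{1}$ in Section~\ref{s:funct-infty}. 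I expect the main obstacle to be the $\tau$-linear term: it is produced \emph{only} by differentiating the $\lambda^{-\theta}$ factors, so one must verify it does not cancel against the analogous factor sitting in the normalising denominator $\alpha_{K,\tau}^{p+1}$, and one must simultaneously check that the $\lambda_{j}\partial_{\lambda_{j}}$-derivatives of all the error terms produced along the way stay uniformly of the order \eqref{eq:error-19-8-2}.
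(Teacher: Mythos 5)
Your proposal follows the paper's skeleton — the reduction of the three quantities to one another via Lemma \ref{lem_v_part_interactions} and the chain rule is exactly the paper's first step — but it executes the core computation differently. The paper does \emph{not} differentiate the finished expansions of Proposition \ref{prop_functional_at_infinity}; it writes $\partial J_\tau(\alpha^i\varphi_i)\phi_{2,j}=2\Lambda/(\int K(\alpha^i\varphi_i)^{p+1})^{2/(p+1)}$, reduces $\Lambda$ to the single new integral $\int K\varphi_j^{p}\lambda_j\partial_{\lambda_j}\varphi_j\,d\mu_{g_0}$ plus interaction terms, and then recomputes that integral from scratch using the pointwise identity $(p+1)\varphi_j^{p}\lambda_j\partial_{\lambda_j}\varphi_j=\lambda_j\partial_{\lambda_j}\varphi_j^{p+1}$ and a fresh Taylor expansion in conformal normal coordinates. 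Your route (termwise $\lambda_j\partial_{\lambda_j}$ of \eqref{single_bubble_K_integral_expansion_exact}, \eqref{L_g_0_bubble_interaction}, \eqref{r_alpha_delta_expansion}, \eqref{intergral_sum_of_bubble_nonlinear_evaluated}) produces the same leading terms — one can check, e.g., that $\frac{(n-2)^2}{4n}\bar c_0$ equals the paper's $\tilde c_1=\frac{(n-2)^2}{4}\int\frac{1-r^2}{(1+r^2)^{n+1}}\ln\frac{1}{1+r^2}\,dx$ — and it has the pedagogical advantage that the crucial vanishing $\int_{\mathbb{R}^n}\frac{1-r^2}{(1+r^2)^{n+1}}dx=0$ (which in the direct computation is what forces the leading term of $\int\varphi_j^{p}\lambda_j\partial_{\lambda_j}\varphi_j$ down to order $\tau$) becomes automatic, since the coefficient $\bar c_0$ carries no $\lambda$-dependence besides $\lambda_j^{-\theta}$.

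The one point you must not wave away is the differentiability of the $O(\cdot)$ remainders. A bound $|R(\lambda)|\le C\lambda^{-4}$ gives no control on $\lambda R'(\lambda)$, and the error terms in \eqref{single_bubble_K_integral_expansion_exact} and \eqref{intergral_sum_of_bubble_nonlinear_evaluated} are remainders of integral estimates, not literal polynomials in $\lambda_r^{-1}$ and $\varepsilon_{r,s}$. The paper's assertions that expansions "persist under $\lambda\partial_\lambda$" are made only for Lemma \ref{lem_emergence_of_the_regular_part} and for $\varepsilon_{i,j}$ in Lemma \ref{lem_interactions}(vii), not for the assembled expansions of Section \ref{s:funct-infty}; to use your shortcut honestly you would have to re-run each integral estimate with the extra factor coming from $\lambda_j\partial_{\lambda_j}$ of the integrand, which is essentially the direct computation the paper performs. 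So your plan is sound, but the sentence claiming the errors are "stable up to a constant under $\lambda_j\partial_{\lambda_j}$" is precisely where the remaining work lives, together with the sign bookkeeping for $\phi_{2,j}=-\lambda_j\partial_{\lambda_j}\varphi_j$ that you defer to "absorbing into the constants."
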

  
  \begin{lemma} 
  \label{lem_a_derivatives_at_infinity} 
  For $u\in V(q,\varepsilon)$ and $\varepsilon>0$ sufficiently small the three quantities 
  $\partial J_{\tau}(u)\phi_{3,j} $, 
  $\partial J_{\tau}(\alpha^{i}\varphi_{i})\phi_{3,j}$ and $\frac{\nabla_{a_{j}}}{\alpha_{j}\lambda_{j}}J_{\tau}(\alpha^{i}\varphi_{i})$ can be written as 
  \begin{equation*}
  \begin{split}
  -\frac{\alpha_{j}}{(\alpha_{K,\tau}^{\frac{2n}{n-2}})^{\frac{n-2}{n}}}
  \left(
  \check{c}_{3}\frac{\nabla K_{j}}{K_{j}\lambda_{j}}
  +
  \check{c}_{4} \frac{\nabla \Delta K_j}{K_j \lambda_{j}^3}
  +
  \check{b}_{3}
  \sum_{j\neq i}
  \frac{\alpha_{i}}{\alpha_{j}}
  \frac{\nabla_{a_{j}}}{\lambda_{j}}\varepsilon_{i,j}
  \right), 
  \end{split}
  \end{equation*}
  with positive constants $\check{c}_{3}, \check{c}_{4} , \check{b}_{3}$  up to some error of the form 
  \begin{equation}\label{eq:error-19-8-3}
  O
  \big(
  \tau^{2}
  +
  \sum_{r\neq s}
  \frac{\vert \nabla K_{r}\vert^{2}}{\lambda_{r}^{2}}
  +
  \frac{1}{\lambda_{r}^{4}}
  +
  \frac{1}{\lambda_{r}^{2(n-2)}}
  +
  \varepsilon_{r,s}^{\frac{n+2}{n}}
  +
  \vert \partial J_{\tau}(u)\vert^{2}
  \big).
  \end{equation}
  \end{lemma}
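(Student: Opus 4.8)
\emph{Strategy.} The three quantities are identified, up to the asserted error, in two steps. First, Lemma \ref{lem_v_part_interactions} with $k=3$ already gives $\partial J_{\tau}(u)\phi_{3,j}=\partial J_{\tau}(\alpha^{i}\varphi_{i})\phi_{3,j}$ modulo a term of the form \eqref{eq:error-19-8-3} (the errors in Lemma \ref{lem_v_part_interactions} are bounded by those in \eqref{eq:error-19-8-3} since $\lambda_{r}^{\theta}\geq 1$). Second, among the bubbles $\varphi_{1},\dots,\varphi_{q}$ only $\varphi_{j}$ depends on $a_{j}$, so the chain rule yields the exact identity $\nabla_{a_{j}}J_{\tau}(\alpha^{i}\varphi_{i})=\alpha_{j}\lambda_{j}\,\partial J_{\tau}(\alpha^{i}\varphi_{i})\phi_{3,j}$, i.e. $\frac{\nabla_{a_{j}}}{\alpha_{j}\lambda_{j}}J_{\tau}(\alpha^{i}\varphi_{i})=\partial J_{\tau}(\alpha^{i}\varphi_{i})\phi_{3,j}$. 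Hence it suffices to expand $\frac{\nabla_{a_{j}}}{\alpha_{j}\lambda_{j}}J_{\tau}(\alpha^{i}\varphi_{i})$, and for this I would differentiate through the expansion of Proposition \ref{prop_functional_at_infinity}; this is legitimate because the expansions of Lemmata \ref{lem_emergence_of_the_regular_part} and \ref{lem_interactions} — hence those entering Proposition \ref{prop_functional_at_infinity} — persist, together with their error terms, under the operator $\frac{\nabla_{a}}{\lambda}$.

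\emph{Main computation.} In that differentiation only $K_{j}=K(a_{j})$, $\Delta K_{j}$, the interaction terms $\varepsilon_{i,j}$ and the regular parts $H_{j},W_{j}$ carry $a_{j}$-dependence. The prefactor $\hat c_{0}\alpha^{2}/(\alpha_{K,\tau}^{p+1})^{\frac{2}{p+1}}$ depends on $a_{j}$ only through $\alpha_{K,\tau}^{p+1}$, so its $a_{j}$-derivative brings down $\nabla K_{j}$ with coefficient $\frac{\alpha^{2}\alpha_{j}^{p+1}}{\lambda_{j}^{\theta}\alpha_{K,\tau}^{p+1}}$; by the algebraic identity $\frac{\alpha^{2}}{\alpha_{K,\tau}^{p+1}}\frac{K_{j}}{\lambda_{j}^{\theta}}\alpha_{j}^{p-1}=1+O(\cdots)$ of Lemma \ref{lem_alpha_derivatives_at_infinity} this equals $\frac{\alpha_{j}^{2}}{K_{j}}(1+O(\cdots))$, and dividing by $\alpha_{j}\lambda_{j}$ produces the $\check c_{3}\frac{\nabla K_{j}}{K_{j}\lambda_{j}}$ term (with the $O(\tau)$ relative discrepancy between $(\alpha_{K,\tau}^{p+1})^{\frac{2}{p+1}}$ and $(\alpha_{K,\tau}^{\frac{2n}{n-2}})^{\frac{n-2}{n}}$ absorbed into \eqref{eq:error-19-8-3} via Young). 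The term $-\hat c_{2}\sum_{i}\frac{\Delta K_{i}}{K_{i}\lambda_{i}^{2}}\frac{\alpha_{i}^{2}}{\alpha^{2}}$ contributes, at $i=j$, $-\hat c_{2}\frac{\alpha_{j}^{2}}{\alpha^{2}\lambda_{j}^{2}}\big(\frac{\nabla\Delta K_{j}}{K_{j}}-\frac{\Delta K_{j}\nabla K_{j}}{K_{j}^{2}}\big)$: the first summand gives the $\check c_{4}\frac{\nabla\Delta K_{j}}{K_{j}\lambda_{j}^{3}}$ term, while the second, of size $O(|\nabla K_{j}|/\lambda_{j}^{3})$, is swallowed by \eqref{eq:error-19-8-3} using $|\nabla K_{j}|/\lambda_{j}^{3}\lesssim|\nabla K_{j}|^{2}/\lambda_{j}^{2}+1/\lambda_{j}^{4}$. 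Differentiating $-\hat b_{1}\sum_{i\neq k}\frac{\alpha_{i}\alpha_{k}}{\alpha^{2}}\varepsilon_{i,k}$ and keeping, by symmetry of $\varepsilon$, the terms with one index equal to $j$ yields, after the prefactor and dividing by $\alpha_{j}\lambda_{j}$, the $\check b_{3}\sum_{j\neq i}\frac{\alpha_{i}}{\alpha_{j}}\frac{\nabla_{a_{j}}}{\lambda_{j}}\varepsilon_{i,j}$ term. For $n\geq5$ the regular-part terms $-\hat d_{1}\sum_{i}\frac{\alpha_{i}^{2}}{\alpha^{2}}[\cdots]$ differentiate to quantities of order $1/\lambda_{r}^{4}$ (using $H_{r},\nabla H_{r}=O(1)$ and that $\ln\lambda_{r}$ is $a_{r}$-independent), hence disappear into the error; in dimensions $n=3,4$ they would enter explicitly, consistently with the finer version carried out in Appendix~B. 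Positivity of $\check c_{3},\check c_{4},\check b_{3}$ then follows by expressing them through the explicit $\R^{n}$-integrals of the standard bubble that define $\hat c_{0},\hat c_{2},b_{1}$ (cf. \eqref{hat_constants}, \eqref{barc0_barc1}).

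\emph{Main obstacle.} The delicate point is purely the error bookkeeping: one must verify that \emph{every} by-product of the differentiation — the cross terms $|\nabla K_{j}|/\lambda_{j}^{3}$ above, the normalization discrepancy, and the products of $\varepsilon_{i,j}$ with $\frac{\nabla_{a}}{\lambda}\varepsilon_{i,j}$ — lies within \eqref{eq:error-19-8-3}, which requires repeated Young-type inequalities together with the a priori bounds built into $V(q,\varepsilon)$ (namely $\lambda_{r}^{\theta}\to1$, $\lambda_{r}^{-1},\varepsilon_{r,s}$ small, $\alpha_{r}$ bounded). An equivalent and, for the borderline dimensions, probably cleaner route — which I would fall back on — is to start directly from the first-derivative formula for $J_{\tau}$ and expand $\int L_{g_{0}}(\alpha^{i}\varphi_{i})\phi_{3,j}\,d\mu_{g_{0}}$ via Lemma \ref{lem_emergence_of_the_regular_part} (the diagonal term vanishing to leading order by parity, the off-diagonal one giving $\frac{\nabla_{a_{j}}}{\lambda_{j}}\varepsilon_{i,j}$ by Lemma \ref{lem_interactions}) and $\int K(\alpha^{i}\varphi_{i})^{p}\phi_{3,j}\,d\mu_{g_{0}}$ by Taylor-expanding $K$ at $a_{j}$ (the $\nabla K_{j}$-term surviving, the Hessian term vanishing by parity, the third-order term symmetrising to $\nabla\Delta K_{j}$ with the $\lambda_{j}^{-3}$ scaling), then normalizing with $k_{\tau}=1$ and the $\alpha$-relation of Lemma \ref{lem_alpha_derivatives_at_infinity}.
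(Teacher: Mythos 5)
Your ``fallback'' route is, in fact, the paper's proof: the authors do \emph{not} differentiate the expansion of Proposition \ref{prop_functional_at_infinity}, but compute $\partial J_{\tau}(\alpha^{i}\varphi_{i})\phi_{3,j}$ directly from the first-derivative formula, reducing to $\partial J_{\tau}(u)\phi_{3,j}$ via Lemma \ref{lem_v_part_interactions} and to $\tfrac{\nabla_{a_{j}}}{\alpha_j\lambda_{j}}J_{\tau}(\alpha^{i}\varphi_{i})$ via the chain rule, exactly as you indicate. They then split $\int K(\alpha^{i}\varphi_{i})^{p}\phi_{3,j}$ by the iterative decomposition of $M$, treat the self-interaction term through the identity $\int K\varphi_{j}^{p}\tfrac{\nabla_{a_{j}}}{\lambda_{j}}\varphi_{j}=\tfrac{1}{p+1}\tfrac{\nabla_{a_{j}}}{\lambda_{j}}\int K\eta\,\varphi_{j}^{p+1}$ in conformal normal coordinates (Taylor expansion of $K$, parity, $\nabla_{a}\exp_{g_{a}}=id+O(|x|^{2})$), obtain the cross terms from Lemma \ref{lem_interactions}, and use the $\alpha$-relation of Lemma \ref{lem_alpha_derivatives_at_infinity} to normalize. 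Your outline of this route is structurally correct; the one point you gloss over is the diagonal term $\int\varphi_{j}L_{g_{0}}\tfrac{\nabla_{a_{j}}}{\lambda_{j}}\varphi_{j}\,d\mu_{g_{0}}$, which does not simply ``vanish by parity'': for $n=4$ it produces a $\nabla H_{j}/\lambda_{j}^{3}$ contribution that survives and only disappears because it cancels exactly against a matching term $n\check d_{1}\vartheta_{j}$ coming from the derivative of the Green's-function correction in $\varphi_{j}^{p+1}$; for $n\geq 5$ it is indeed absorbed into the error.

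Your \emph{primary} route has a genuine gap. Proposition \ref{prop_functional_at_infinity} is an asymptotic identity $J_{\tau}=\text{(main terms)}+R$ with $|R|=O(\tau^{2}+\sum|\nabla K_{r}|^{2}/\lambda_{r}^{2}+\dots)$; applying $\tfrac{\nabla_{a_{j}}}{\lambda_{j}}$ to both sides requires a bound on $\tfrac{\nabla_{a_{j}}}{\lambda_{j}}R$, and differentiation does not preserve $O$-bounds. The closing remark of Lemma \ref{lem_emergence_of_the_regular_part} only asserts persistence under $\lambda\partial_{\lambda}$ and $\tfrac{\nabla_{a}}{\lambda}$ for the expansion of $L_{g_{0}}\varphi_{a,\lambda}$, not for the functional expansion, whose proof additionally relies on parity cancellations (e.g.\ $\int x\,\varphi_{i}^{p+1}=O(\lambda_{i}^{-4})$) that are destroyed by $\nabla_{a_{j}}$: the very term $\nabla K_{j}\int x\,\varphi_{j}^{p+1}$ that was discarded into the error of Proposition \ref{prop_functional_at_infinity} is the source of the leading contribution $\check c_{3}\tfrac{\nabla K_{j}}{K_{j}\lambda_{j}}$ after differentiation. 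So while your formal term-by-term differentiation happens to reproduce the correct main terms and signs (and even the correct constants, since $\check c_{3},\check c_{4}$ are the normalized versions of $\bar c_{0},\bar c_{2}$), it cannot be justified without re-examining every absorbed term --- which is precisely the direct computation of Appendix B. As written, the primary argument is not a proof; the lemma stands only on the fallback.
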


 \section{Gradient bounds}\label{s:lower-bds}
 
 Theorem \ref{lem_top_down_cascade} will give suitable lower norm-bounds on the gradient of $J_{\tau}$, 
 yielding  Theorem \ref{t:main} as a corollary. We recall that on 
 $S^{3}$ and $S^{4}$
 the result  was proved in \cite{[ACGPY]}, \cite{yy}, \cite{yy2}, \cite{[SZ]} in more generality.

 \begin{definition} 
 \label{def_none-degeneracy} 
 Let $H$ be as in \eqref{eq:exp-G}. We call a positive Morse function $K$ on $M$ {\em non-degenerate} 
 \begin{enumerate}[label=(\roman*)]
  \item  {\em  of degree} $q\in \N$ in case $n=4$, if 
  $ 
  \{\nabla K=0\}\cap \{\tilde c_{2}\frac{\Delta K}{K}  
 +
 \tilde c_{3}H=0\}=\emptyset 
  $ 
 and if for every $1\leq k\leq q$ and every subset 
 $ 
 \{x_{1},\ldots,x_{k}\}\subseteq \{\nabla K=0\}\cap \{\tilde c_{2}\frac{\Delta K}{K}  
 +
 \tilde c_{3}H<0\}
  $ 
 the matrices
 \begin{equation*}
 \resizebox{.9\hsize}{!}
 {$
 \mathcal{M}_{x_{1},\ldots,x_{k}}=
 -
 \begin{pmatrix}
 \tilde c_{2}\frac{\Delta K(x_1)}{K(x_1)^{2}}  
 +
 \tilde c_{3}\frac{H(x_1)}{K(x_1)}
 &
 \tilde c_{4}\frac{G_{0}(x_{1},x_{2})}{\gamma_{n}(K(x_1)K(x_2))^{\frac{1}{2}}}
 &
 \ldots
 & 
 \tilde c_{4}\frac{G_{0}(x_{1},x_{k})}{\gamma_{n}(K(x_1)K(x_k))^{\frac{1}{2}}}
 \\
 \tilde c_{4}\frac{G_{0}(x_{2},x_{1})}{\gamma_{n}(K(x_2)K(x_1))^{\frac{1}{2}}}  
 & \ddots 
 & 
 & \vdots 
 \\
  \vdots 
 &
 &
 &
 \vdots 
 \\
 \vdots 
 &
 & \ddots
 & 
 \tilde c_{4}\frac{G_{0}(x_{k-1},x_{k})}{\gamma_{n}(K(x_{k-1})K(x_k))^{\frac{1}{2}}} 
 \\
  \tilde c_{4}\frac{G_{0}(x_{k},x_{1})}{\gamma_{n}(K(x_k)K(x_1))^{\frac{1}{2}}}
 & \ldots 
 &
 &
 \tilde c_{2}\frac{\Delta K(x_k)}{K(x_k)^{2}} 
 +
 \tilde c_{3}\frac{H(x_k)}{K(x_k)}  
 \end{pmatrix}
 $}
 \end{equation*}
 have non-vanishing least eigenvalues, where 
$
\tilde c_{2}=\sqrt{3\omega_{4}},\;  \tilde c_{3}=24\sqrt{3\omega_{4}}= \tilde c_{4}
$.
We say that $K$ is  {\em non-degenerate}, if it is 
 non-degenerate of all degrees. 
  \item in case $n\geq 5$, if $  \{ \nabla K  = 0 \} \cap \{ \Delta K = 0 \} = \emptyset$, i.e. 
  \eqref{eq:nd} holds. 
 \end{enumerate}
 \end{definition}

 \begin{remark}\label{rem_none_degeneracy}
 Non-degeneracy in case $n=4$ implies the existence of a least eigenvalue
 \begin{equation*}
 \mathcal{M}_{x_{1},\ldots,x_{k}}{\bf{x}}_{x_{1},\ldots,x_{k}}=\lambda_{x_{1},\ldots,x_{k}}{\bf{x}}_{x_{1},\ldots,x_{k}}
 \; \quad \text{ with }\;  \quad 
 \lambda_{x_{1},\ldots,x_{k}} \neq 0
 \end{equation*}
 and such that $\lambda_{x_{1,}\ldots,x_{k}}$ is simple and has a positive eigenvector, i.e. 
 \begin{equation*}
 {\bf{x}}_{x_{1},\ldots,x_{k}}
 =
 (x^{1}_{x_{1},\ldots,x_{k}},\ldots,x^{k}_{x_{1},\ldots,x_{k}})\;
 \text{ with }\; 
 {\bf{x}}^{l}_{x_{1},\ldots,x_{k}}>0\; \text{ for all } 1\leq l \leq k.
 \end{equation*}
 \end{remark}
 
 \medskip

 \begin{thm}
 \label{lem_top_down_cascade} 
 Let $\mathcal{M}_{x_{1},\ldots,x_{k}}$ be as in Definition \ref{def_none-degeneracy}, and 
 suppose that 
 \begin{equation*}
  \left\{
  \begin{matrix*}[l]
  K \; \text{ is non-degenerate of degree }\; q & 
  \; \text{ for }\; n=4
  \\
  K\; \text{ is non-degenerate } &
  \; \text{ for }\; n\geq 5
  \end{matrix*}
  \right\}.
  \end{equation*} 
 Then for $\varepsilon>0$ sufficiently small there exists $c>0$ such that for any 
 $ 
 u\in V(q,\varepsilon)$  with  $\; k_{\tau}=1
 $ 
 there holds 
 \begin{equation*}
 \vert \partial J_{\tau}(u)\vert
 \geq
 c\big( 
 \tau +\sum_{r\neq s}\frac{\vert \nabla K_{r}\vert}{\lambda_{r}}+\frac{1}{\lambda_{r}^{2}}
 +
 \big\vert 1
 -
 \frac{\alpha^{2}}{\alpha_{K,\tau}^{p+1}}
 \frac{K_{r}}{\lambda_{r}^{\theta}}\alpha_{r}^{p-1} \big\vert
 +\varepsilon_{r,s}\big),
 \end{equation*}
cf. \eqref{eq:akt},  unless 
 there is a violation of at least one of the four conditions 
 \begin{enumerate}[label=(\roman*)]
  \item \quad 
 $\tau>0$; 
  \item \quad 
 there exists 
 $
 x_{i}\neq x_{j}
 \in 
 \left\{ 
 \begin{matrix*}[l]
 \{\nabla K=0\}\cap \{\tilde c_{2}\frac{\Delta K}{K}  
 +
 \tilde c_{3}H<0\} &\text{for }\;n = 4,
 \\
  \{\nabla K=0\}\cap \{\Delta K<0\} &\text{for }\; n\geq 5
 \end{matrix*}
 \right\}
 $
 and
 $d(a_{i},x_{i})=O(\frac{1}{\lambda_{i}})$;
  \item \quad 
\resizebox{.9\hsize}{!}
{$
\left\{
\begin{matrix*}[l]
\alpha_{j}=
\Theta\,\cdot
\left(
\frac{\lambda_{j}^{\theta}}{K_{j}}
\biggr(
1 
+
\frac{1}{8}
\big(
\frac{\Delta K_{j}}{K_{j}\lambda_{j}^{2}}
-
60
\frac{H_{j}}{\lambda_{j}^{2 }}
-
\frac
{
\sum_{k}
(\frac{\Delta K_{k}}{K_{k}^{2}\lambda_{k}^{2}}
-
60
\frac{H_{k}}{K_{k}\lambda_{k}^{2 }}
)
}
{
\sum_{k}\frac{1}{K_{k}}
}
\big)
\biggr)
\right)^{\frac{1}{p-1}}
+
o(\frac{1}{\lambda_{j}^{2}})
&
\text{for } \;n=4,
\\
\alpha_{j}=\Theta\,\cdot(\frac{\lambda_{j}^{\theta}}{K_{j}})^{\frac{1}{p-1}}+o(\frac{1}{\lambda_{j}^{2}})
&\text{for }\; n\geq 5
\end{matrix*}
\right\};
$}

  \item \quad 
 $
 \left\{ 
 \begin{matrix*}[l]
 \mathcal{M}_{x_{1},\ldots, x_{q}}>0 \;\text{ and } \; \lambda_{j}=\frac{\sigma_{j}+o(1)}{\sqrt{\tau}}
 &
 \text{for }\; n=4, 
 \\
 \tilde c_{1}\tau
 =
 -
 \tilde c_{2}\frac{\Delta K_{k}}{K_{k}\lambda_{k}^{2}} 
 +
 o(\frac{1}{\lambda_{k}^{2}})
 & 
 \text{for }\; n\geq 5 
 \end{matrix*}
 \right\}
 $
 \end{enumerate}
 for all $j\neq i,j=1,\ldots,q$, where $ \sigma=(\sigma_{1},\ldots,\sigma_{q})$ in case $n=4$ is the  unique solution of 
 \begin{equation*}
\tilde  c_{1}
 \begin{pmatrix}
 \frac{ \sigma_{1} }{K(x_{1})} 
 \\ 
 \vdots \\
 \frac{ \sigma_{q}}{K(x_{q})}
 \end{pmatrix}
 =
 \mathcal{M}_{x_{1},\ldots,x_{q}}
 \begin{pmatrix}
 \frac{1}{\sigma_{1}}\\
 \vdots \\
 \frac{1}{\sigma_{q}}
 \end{pmatrix}
 \;\text{ with }\, \sigma_{j}>0,
 \end{equation*}
while 
 $\Theta$ is given in Remark \ref{r:precision}.
In the latter case there holds \,$\lambda_{1}\simeq \ldots \lambda_{q}\simeq \lambda=\frac{1}{\sqrt{\tau}}$\, and 
setting 
$$a_{j}=\exp_{g_{x_{j}}}(\bar a_{j})$$
we still have up to an error $o(\frac{1}{\lambda^{3}})$ the lower bound 
\begin{equation*}
\begin{split}
\vert \partial J  (u) \vert 
\gtrsim &
 \sum_{j}
 ( \vert
\tau
+
\frac{1}{2}\frac{\Delta K(x_{j})}{K(x_{j})\lambda_{j}^{2}} 
+
12
[
\frac{H(x_{j})}{\lambda_{j}^{2}}
+
\sum_{j\neq i }\sqrt{\frac{K(x_{j})}{K(x_{i})}} \frac{G_{g_{0}}(x_{i},x_{j})}{\gamma_{n}\lambda_{i}\lambda_{j}}
]
 \vert )
 \\
& 
+
\sum_{j}
(\vert 
\frac{\bar a_{j}}{\lambda_{j}}
+
\frac{1}{3}
(\nabla^{2}K(x_{j}))^{-1}
\frac{\nabla \Delta K(x_{j})}{\lambda_{j}^3}
+
8
  \sum_{j\neq i}
\sqrt{  \frac{K^{3}(x_{j})}{K(x_{i})} }
(\nabla^{2}K(x_{j}))^{-1}
  \frac{\nabla_{x_{j}}G_{g_{0}}(x_{i},x_{j})}{\gamma_{n}\lambda_{i}\lambda_{j}^{2}}
  \vert )
 \\
 & +
\sum_{j} 
\vert 
\alpha_{j}
-
\Theta \cdot
\sqrt[p-1]
{
\frac{\lambda_{j}^{\theta}}{K(a_{j})}
(1
+
\frac{1}{8}
\left(
\frac{\Delta K(x_{j})}{K(x_{j})\lambda_{j}^{2}}
-
60
\frac{H(x_{j})}{\lambda_{j}^{2 }}
-
\frac
{
\sum_{k}
(\frac{\Delta K(x_{k})}{K(x_{k})^{2}\lambda_{k}^{2}}
-
60
\frac{H(x_{k})}{K(x_{k})\lambda_{k}^{2 }}
)
}
{
\sum_{k}\frac{1}{K(x_{k})}
}
\right)
)
}
\vert 
  \end{split}
 \end{equation*}
in case $n=4$ and 
\begin{equation*}
\begin{split}
\vert \partial J  (u) \vert 
\gtrsim &
\sum_{j}
\vert
\tau
  +
\frac{2}{9}\frac{\Delta K(x_{j})}{K(x_{j})\lambda_{j}^{2}} 
+
\frac{512}{9\pi}
[
\frac{H(x_{j})}{\lambda_{j}^{3}}
+
\sum_{j\neq i }\sqrt{\frac{K(x_{j})}{K(x_{i})}} \frac{G_{g_{0}}(x_{i},x_{j})}{\gamma_{n}(\lambda_{i}\lambda_{j})^{\frac{3}{2}}}
] 
 \vert
  \\
& 
+
\sum_{j}
\vert 
\frac{\bar a_{j}}{\lambda_{j}}
+
\frac{\check c_{4}}{\check c_{3}}
(\nabla^{2}K(x_{j}))^{-1}
\frac{\nabla \Delta K(x_{j})}{\lambda_{j}^3} \vert 
\\
 & +
\sum_{j}
\vert 
\alpha_{j}
-
\Theta \cdot
\sqrt[p-1]
{
\frac{\lambda_{j}^{\theta}}{K(a_{j})}
(
1
-
\frac{1}{90}
\left(
\frac{\Delta K(x_{j})}{K(x_{j})\lambda_{j}^{2}}
+
\frac{2816}{\pi}
\frac{H(x_{j})}{\lambda_{j}^{3}}
-
\frac
{
\sum_{k}
(\frac{\Delta K(x_{k})}{K(x_{k})^{2}\lambda_{k}^{2}}
+
\frac{2816}{\pi}
\frac{H(x_{k})}{K(x_{k})\lambda_{k}^{3}}
)
}
{
\sum_{k}\frac{1}{K(x_{k})}
}
\right)
)
}
\vert
  \end{split}
 \end{equation*}
 in case $n=5$ and 
\begin{equation*}
\vert \partial J_{\tau}(u)\vert
\gtrsim 
\sum_{j}
(
\vert \tau+\frac{\tilde c_{2}}{\tilde c_{1}}\frac{\Delta K(x_{j})}{K(x_{j})\lambda_{j}^{2}}\vert
+
 \vert
\frac{ \bar a_{j}}{\lambda_{j}}
  +
\frac{  \check{c}_{4}}{\check{c}_{3}} (\nabla^{2}K(x_{j}))^{-1}\frac{\nabla \Delta K(x_{j})}{\lambda_{j}^3}
\vert
+
\vert 
\alpha_{j}
-
\Theta \cdot
\sqrt[p-1]
{
\frac{\lambda_{j}^{\theta}}{K(a_{j})}}
\vert
)
\end{equation*}
in case $n\geq 6$. 
The constants appearing above are defined by
$\bar c_{0}=\int_{\R^{n}}\frac{dx}{(1+r^{2})^n}$,  
\begin{equation*}
\tilde c_{1}=\frac{n(n-1)(n-2)^{2}}{\bar c_{0}^{\frac{n-2}{n}}}\underset{\R^{n}}{\int}\frac{1-r^{2}}{(1+r^{2})^{n+1}}\ln\frac{1}{1+r^{2}}dx
, \quad 
\tilde c_{2}=-\frac{(n-1)(n-2)}{\bar c_{0}^{\frac{n-2}{n}}}\underset{\R^{n}}{\int}\frac{r^{2}(1-r^{2})}{(1+r^{2})^{n+1}}dx
\end{equation*}
and 
\begin{equation*}
\check c_{3}=\int_{\R^{n}}\frac{4(n-1)(n-2)}{(1+r^{2})^{n}}dx
,\quad
\check c_{4}=\int_{\R^{n}}\frac{2(n-1)r^{2}}{(1+r^{2})^{n}}dx.
\end{equation*}
 \end{thm}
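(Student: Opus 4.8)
The strategy is a top-down cascade: we successively control the finite-dimensional variables $\alpha_j,\lambda_j,a_j$, each level of the cascade feeding improved error estimates into the next. Throughout, we work on $V(q,\varepsilon)$ with $k_\tau=1$ and argue by contradiction: we assume all four conditions (i)--(iv) hold but the claimed lower bound on $\vert\partial J_\tau(u)\vert$ fails, and derive that the configuration must collapse, contradicting $u\in V(q,\varepsilon)$ (in particular that $\lambda_i\to\infty$ and $\varepsilon_{i,j}\to 0$).

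\medskip

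\emph{Step 1: eliminate the $\alpha$-variables.} Testing $\partial J_\tau(u)$ against $\phi_{1,j}$ and invoking Lemma \ref{lem_alpha_derivatives_at_infinity}, the leading relation $\frac{\alpha^2}{\alpha_{K,\tau}^{p+1}}\frac{K_j}{\lambda_j^\theta}\alpha_j^{p-1}=1+O(\cdots)$ pins down each $\alpha_j$ in terms of $K_j,\lambda_j$ and the other parameters, up to errors controlled by $\tau$, $\lambda_r^{-2}$, $\lambda_r^{-(n-2)}$, $\varepsilon_{r,s}$ and $\vert\partial J_\tau(u)\vert$. Substituting the refined first-order expansion of Lemma \ref{lem_alpha_derivatives_at_infinity} back and using that the difference $\frac{\Delta K_j}{K_j\lambda_j^2}-\sum_k\frac{\Delta K_k}{K_k\lambda_k^2}\frac{\alpha_k^2}{\alpha^2}$ (and its $n=4$ analogue with $H$) appears, one gets exactly the formula in condition (iii) with the correct combinatorial constant $\tfrac18$ (resp. $-\tfrac{1}{90}$), the value $\Theta$ being fixed as in Remark \ref{r:precision}. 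So once $\lambda_j$ and $a_j$ are known to the required precision, (iii) is forced; conversely the contribution of $\partial J_\tau(u)\phi_{1,j}$ bounds $\vert 1-\frac{\alpha^2}{\alpha_{K,\tau}^{p+1}}\frac{K_r}{\lambda_r^\theta}\alpha_r^{p-1}\vert$ from below.

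\medskip

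\emph{Step 2: eliminate the $a$-variables.} Testing against $\phi_{3,j}$ and using Lemma \ref{lem_a_derivatives_at_infinity}, the leading term is $\check c_3\frac{\nabla K_j}{K_j\lambda_j}$, which immediately gives $\vert\nabla K_r\vert/\lambda_r\lesssim\vert\partial J_\tau(u)\vert+(\text{errors})$; combined with the Morse and non-degeneracy hypotheses this forces $a_j$ to lie within $O(\lambda_j^{-1})$ of a critical point $x_j$ of $K$ (the set $\{\nabla K=0\}\cap\{\Delta K<0\}$ for $n\ge5$, resp. the matrix condition for $n=4$), which is condition (ii). Writing $a_j=\exp_{g_{x_j}}(\bar a_j)$ and Taylor-expanding $\nabla K$ at $x_j$, the next-order balance in $\phi_{3,j}$ reads $\check c_3\frac{\nabla^2K(x_j)\bar a_j}{K_j\lambda_j}+\check c_4\frac{\nabla\Delta K(x_j)}{K_j\lambda_j^3}+\check b_3\sum_{i\ne j}\frac{\alpha_i}{\alpha_j}\frac{\nabla_{a_j}}{\lambda_j}\varepsilon_{i,j}=O(\cdots)$, which after inverting $\nabla^2K(x_j)$ gives the stated expression for $\bar a_j/\lambda_j$ (the bubble-interaction term $\sum_{i\ne j}\sqrt{K^3(x_j)/K(x_i)}(\nabla^2K(x_j))^{-1}\nabla_{x_j}G_{g_0}(x_i,x_j)/(\gamma_n\lambda_i\lambda_j^2)$ appearing explicitly in dimensions $n=4,5$ where $\varepsilon_{i,j}$ is not lower order, and being absorbed for $n\ge6$).

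\medskip

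\emph{Step 3: eliminate the $\lambda$-variables and close the loop.} Testing against $\phi_{2,j}$ and using Lemma \ref{lem_lambda_derivatives_at_infinity}, the master relation is $\tilde c_1\tau+\tilde c_2\frac{\Delta K_j}{K_j\lambda_j^2}-\tilde b_2\sum_{i\ne j}\frac{\alpha_i}{\alpha_j}\lambda_j\partial_{\lambda_j}\varepsilon_{i,j}+\tilde d_1(\text{mass term})=O(\cdots)$. For $n\ge 6$ the mass and interaction terms are of lower order, so this directly gives $\tau=-\frac{\tilde c_2}{\tilde c_1}\frac{\Delta K_j}{K_j\lambda_j^2}+o(\lambda_j^{-2})$ — condition (iv) — and combined with $\lambda_j^\tau\to1$ forces $\lambda_1\simeq\dots\simeq\lambda_q$; the contribution of $\partial J_\tau(u)\phi_{2,j}$ then bounds $\tau+\lambda_r^{-2}$ from below. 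For $n=5$ one keeps the $H/\lambda^3$ term; for $n=4$ the term $\lambda_j\partial_{\lambda_j}\varepsilon_{i,j}\simeq-\tfrac{n-2}{2}\varepsilon_{i,j}\simeq-\varepsilon_{i,j}$ and the mass term $H_j/\lambda_j^2$ are of the \emph{same} order as $\tau$ and $\Delta K_j/(K_j\lambda_j^2)$, producing a genuine \emph{linear system}: writing $\lambda_j=\sigma_j/\sqrt\tau$ and $\varepsilon_{i,j}\simeq(\gamma_n\lambda_i\lambda_j G_{g_0}(x_i,x_j))^{(2-n)/2}\sim\tau\,G_{g_0}(x_i,x_j)^{-1}/(\gamma_n\sigma_i\sigma_j)$ at the leading order, together with $\alpha_i/\alpha_j\to\sqrt{K(x_j)/K(x_i)}$ from Step 1, the $q$ equations become precisely $\tilde c_1\bigl(\sigma_j/K(x_j)\bigr)=\bigl(\mathcal M_{x_1,\dots,x_q}(1/\sigma_\bullet)\bigr)_j$. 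The hypothesis that $\mathcal M_{x_1,\dots,x_q}$ has a simple, positive-eigenvector least eigenvalue (Remark \ref{rem_none_degeneracy}) is exactly what guarantees a unique positive solution $\sigma=(\sigma_1,\dots,\sigma_q)$, and also that $\mathcal M_{x_1,\dots,x_q}>0$ is the alternative in (iv); off that solution, the $\phi_{2,j}$-component of the gradient is bounded below, linearly, by the distance of $(\lambda_j)$ from the prescribed collapse, which after rescaling gives the $\tau+\Delta K/(K\lambda^2)+H/\lambda^2+\varepsilon_{i,j}$ lower bound in the $n=4$ display. Finally, assembling: if none of the gradient components is bounded below by the RHS, all of (i)--(iv) hold with the indicated precision, and one reads off the last displayed refined lower bounds for $\vert\partial J(u)\vert$ by writing each of the three packets (the $\lambda$-packet, the $a$-packet, the $\alpha$-packet) as a constant times the corresponding gradient component plus $o(\lambda^{-3})$.

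\medskip

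\emph{Main obstacle.} The delicate point is the $n=4$ (and borderline $n=5$) bookkeeping in Step 3, where $\tau$, $\Delta K/(K\lambda^2)$, the mass term $H/\lambda^2$, and the bubble interactions $\varepsilon_{i,j}$ are all comparable: one must track all of them simultaneously through Lemmata \ref{lem_alpha_derivatives_at_infinity}--\ref{lem_a_derivatives_at_infinity}, verify that the cross-terms coming from $\lambda_j\partial_{\lambda_j}\varepsilon_{i,j}$ and $\frac{\nabla_{a_j}}{\lambda_j}\varepsilon_{i,j}$ reorganize into exactly the matrix $\mathcal M_{x_1,\dots,x_q}$ with the stated constants $\tilde c_2,\tilde c_3,\tilde c_4$, and check that all remainders are $o(\tau)$ (equivalently $o(\lambda^{-2})$, resp. $o(\lambda^{-3})$ for the $a$-packet) so the linear algebra is not corrupted. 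In higher dimensions $n\ge6$ this collapses to the simple scalar identity and is comparatively routine; the uniform-in-$\tau$ character of all constants (needed for \cite{MM2}) must be monitored throughout, but follows since every expansion in Section \ref{s:funct-infty} is uniform as $\tau\to0$.
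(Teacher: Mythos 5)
Your three-packet outline ($\alpha$, $a$, $\lambda$) correctly identifies the ingredients from Lemmata \ref{lem_alpha_derivatives_at_infinity}--\ref{lem_a_derivatives_at_infinity}, the role of $\mathcal M_{x_1,\ldots,x_q}$ and of Remark \ref{rem_none_degeneracy} for $n=4$, and the convexity argument for the uniqueness of $\sigma$. But there is a genuine gap: you nowhere justify treating the interaction terms $\varepsilon_{i,j}$ (and the terms $\lambda_j\partial_{\lambda_j}\varepsilon_{i,j}$, $\frac{\nabla_{a_j}}{\lambda_j}\varepsilon_{i,j}$) as lower order, yet this is the crux of the theorem. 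Membership in $V(q,\varepsilon)$ only gives $\varepsilon_{i,j}<\varepsilon$, not $\varepsilon_{i,j}=o(\lambda^{-2})$: if two bubbles tower at the same point or accumulate, $\varepsilon_{i,j}$ can dominate $\tau+\lambda^{-2}$, and then your ``master relations'' in Steps 2 and 3 do not decouple into the scalar conditions (ii) and (iv). The paper's proof spends most of its length on precisely this point: after reducing to \eqref{cascade_alpha_constraint} and \eqref{cascade_balancing}, it orders $\lambda_1\geq\ldots\geq\lambda_q$, builds vector fields $A_k,\Lambda_k$ and the combinations $X_k=\Lambda_k+\epsilon A_k$, $\sum_j\epsilon^jB_j$ with a small fixed $\epsilon$ chosen so that the coefficient of $\varepsilon_{j,i}$ stays positive, and runs an induction over the pair $(q-1,q)$, the triple $(q-2,q-1,q)$, etc. At each stage one shows: either the claimed bound already holds, or $\lambda_k\simeq\lambda_l$ for the indices considered; and then $|\nabla K_k|\lesssim\lambda_k^{-1}$ together with the Morse property forces $d(a_k,a_l)>c$ (otherwise $\varepsilon_{k,l}\simeq1$, contradicting Proposition \ref{blow_up_analysis}). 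Only after this cascade does one know $\varepsilon_{k,l}=o(\lambda^{-2})$ for $n\geq5$, resp.\ $\varepsilon_{k,l}=\frac{G_{g_0}(a_k,a_l)}{\gamma_n\lambda_k\lambda_l}+O(\lambda^{-4})$ for $n=4$, which is what makes your Step 3 legitimate. This iterative exclusion of towering and accumulating bubbles is the main new content of the paper and cannot be waved away as ``errors''.

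A secondary point: your contradiction setup is logically inverted. The proof does not assume (i)--(iv) hold and the bound fails; it assumes the bound fails and \emph{derives} (i)--(iv) (together with the comparability $\lambda_1\simeq\ldots\simeq\lambda_q\simeq\tau^{-1/2}$), after which the refined $o(\lambda^{-3})$ lower bounds are read off from the three packets. With (i)--(iv) all in force and the bound failing there is no contradiction to reach --- that regime is exactly the critical point at infinity.
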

 
 \medskip 
 
 The differences in the above expressions for $n = 5$ and $n \geq 6$ is caused by a 
different decay of bubble functions causing
stronger mutual interactions  in lower dimension.

 \begin{remark}\label{r:precision}
 Under non-degeneracy conditions, Theorem \ref{lem_top_down_cascade} has the following immediate implications.  
 \begin{enumerate}
  \item In case $\tau=0$ there are no solutions of $\partial J(u)=\partial J_{0}(u)=0$ in $V(q,\varepsilon)$, cf.  Theorem 1.4 in \cite{cl2}.
  \item In case $\tau>0$  every solution $\partial J_{\tau}(u)=0$ in $V(q,\varepsilon)$ satisfies
 \begin{equation*}
 \lambda_{1}\simeq \ldots \simeq \lambda_{q}\simeq \frac{1}{\sqrt{\tau}}
  \end{equation*}
 and has isolated simple blow-ups occurring close to
 \begin{equation*}
 \Bigg\{
 \begin{matrix*}[l]
 &\{\nabla K=0\}\cap \{ \tilde c_{2}\frac{\Delta K}{K}+\tilde c_{3}H<0\} 
 &\text{for $n=4$}
 \vspace{8pt}_{}
 \\ 
 &\{\nabla K=0\}\cap \{\Delta K<0\} &\text{for $n\geq 5$}.
 \end{matrix*}
 \end{equation*}
 \item The  $\alpha_{j},\lambda_{j}$ and $a_{j}$'s are determined to a precision  $o(\tau^{\frac{3}{2}})+O(\vert \partial J_{\tau}(u)\vert)$.
Indeed, for e.g. $n=6$
 $$\vert \tau+\frac{\tilde c_{2}}{\tilde c_{1}}\frac{\Delta K(x_{j})}{K(x_{j})\lambda_{j}^{2}}\vert$$
determines $\lambda_{j}$ up to the latter error 
 from  $\tau$ and $x_{j}$, whence $a_{j}$ is determined as well by 
$$ \vert
\frac{ \bar a_{j}}{\lambda_{j}}
  +
\frac{  \check{c}_{4}}{\check{c}_{3}} (\nabla^{2}K(x_{j}))^{-1}\frac{\nabla \Delta K(x_{j})}{\lambda_{j}^3}
\vert
$$ 
from $\lambda_{j}$ and $x_{j}$, and finally up to the multiplicative constant
$\Theta$
also $\alpha_{j}$ is determined by 
$$
\vert 
\alpha_{j}
-
\Theta \cdot
\sqrt[p-1]
{
\frac{\lambda_{j}^{\theta}}{K(a_{j})}}
\vert
$$
from $\lambda_{j}, a_{j}$ and $\tau$, recalling $\theta=\frac{n-2}{2}\tau$ and $p=\frac{n+2}{n-2}-\tau$.
As for the multiplicative constant we have
\begin{equation*}
1
=
k_{\tau}
=
\int K(\alpha^{i}\varphi_{i}+v)^{p+1}d\mu_{g_{0}}
=
\int K(\alpha^{i}\varphi_{i})^{p+1}
=
  \sum_{i}
\frac{K(a_{i})}{\lambda_{i}^{\theta}} \alpha_{i}^{p+1}
  \left(
  \bar c_{0}
  +
  \bar c_{1} \tau  
  +
\bar c_{2}\frac{\Delta K(x_{i})}{K(x_{i})\lambda_{i}^{2}} 
  ) 
  \right)
\end{equation*}
up to some $o(\tau^{\frac{3}{2}})$, cf. \eqref{non_linear_v_part_interaction}, Lemma \ref{lem_v_part_gradient}, Lemma \ref{lem_interactions} and \eqref{intergral_sum_of_bubble_nonlinear_evaluated}, whence  
\begin{equation*}
1
=
\Theta^{p-1}  \sum_{i}
\alpha_{i}^{2}
  \left(
  \bar c_{0}
  +
  \bar c_{1} \tau  
  +
\bar c_{2}\frac{\Delta K(x_{i})}{K(x_{i})\lambda_{i}^{2}} 
  ) 
  \right)
  =
\Theta^{p+1}
 \sum_{i}
\left(\frac{\lambda_{j}^{\theta}}{K(a_{j})}\right)^{\frac{2}{p-1}}
  \left(
  \bar c_{0}
  +
  \bar c_{1} \tau  
  +
\bar c_{2}\frac{\Delta K(x_{i})}{K(x_{i})\lambda_{i}^{2}} 
  ) 
  \right)
\end{equation*}
up to the same error and so the multiplicative constant $\Theta$ is determined as well. 
 \end{enumerate}
 \end{remark}
 
 \begin{proof}[Proof of Theorem \ref{lem_top_down_cascade}]
 First we note that $k_{\tau}=1$ implies, that all the $\alpha_{i}$ do not tend to infinity and least one of them does not 
 approach zero. Hence by definition of $V(q,\varepsilon)$  all the $\alpha_{i}$  are uniformly bounded away from zero and infinity. 
 Secondly,  if  for some index $j=1,\ldots,q$ we have 
 \begin{equation*}
 \big\vert 1- \frac{\alpha^{2}}{\alpha_{K,\tau}^{p+1}}\frac{K_{j}}{\lambda_{j}^{\theta}}\alpha_{j}^{p-1}
 \big \vert
 \gg
 \tau
 +
 \sum_{r\neq s} 
 \frac{\vert \nabla K_{r}\vert}{\lambda_{r}}
 +
 \frac{1}{\lambda_{r}^{2}}
 +
 \varepsilon_{r,s},
 \end{equation*}
then the claim follows from  Lemma \ref{lem_alpha_derivatives_at_infinity}, whence we may henceforth assume
 that for all $j = 1, \dots, q$
 \begin{equation}\label{cascade_alpha_constraint}
 \frac{\alpha^{2}}{\alpha_{K,\tau}^{p+1}}\frac{K_{j}}{\lambda_{j}^{\theta}}\alpha_{j}^{p-1}
 =
 1
 +
 O
 \big(
 \tau
 +
 \sum_{r\neq s} 
 \frac{\vert \nabla K_{r}\vert}{\lambda_{r}}
 +
 \frac{1}{\lambda_{r}^{2}}
 +
 \varepsilon_{r,s}
 \big).
 \end{equation}
 Thus we have to show 
 \begin{equation}\label{cascade_what_is_to_show}
 \vert \partial J_{\tau}(u)\vert
 \gtrsim
 \tau +\sum_{j=1}^{q}\frac{\vert \nabla K_{j}\vert}{\lambda_{j}}+\frac{1}{\lambda_{j}^{2}}+\sum_{r\neq s}\varepsilon_{r,s}
 \end{equation}
 and arguing by contradiction we may assume that  
 \begin{equation*}
 \vert \partial J_{\tau}(u)\vert
 \lesssim
 \tau +\sum_{j=1}^{q}\frac{\vert \nabla K_{j}\vert}{\lambda_{j}}+\frac{1}{\lambda_{j}^{2}}+\sum_{r\neq s}\varepsilon_{r,s}.
 \end{equation*}
 Then by Lemmata \ref{lem_lambda_derivatives_at_infinity} and \ref{lem_a_derivatives_at_infinity} we have
 \begin{flalign*}
 \text{($a$)}\quad 
 \partial J_{\tau}(u)\phi_{3,j}
 = &
 \frac{-\alpha_{j}}{(\alpha_{K,\tau}^{\frac{2n}{n-2}})^{\frac{n-2}{n}}}
 (
 \check{c}_{3}\frac{\nabla K_{j}}{K_{j}\lambda_{j}}
 +
 \check{b}_{3}
 \sum_{j\neq i}
 \frac{\alpha_{i}}{\alpha_{j}}
 \frac{\nabla_{a_{j}}}{\lambda_{j}}\varepsilon_{i,j}
 );
 \MoveEqLeft[1]
 \end{flalign*}
 \begin{flalign*}
 \text{($\lambda$)} \quad 
 \partial  J  (u) \phi_{2,j}
 = &
 \frac{\alpha_{j}}{(\alpha_{K,\tau}^{\frac{2n}{n-2}})^{\frac{n-2}{n}}}
 \bigg(
 \tilde c_{1}\tau
 +
 \tilde c_{2}\frac{\Delta K_{j}}{K_{j}\lambda_{j}^{2}} 
 -
 \tilde b_{2}\sum_{j\neq i }\frac{\alpha_{i}}{\alpha_{j}}\lambda_{j}\partial_{\lambda_{j}}\varepsilon_{i,j} 
 \bigg)
 \MoveEqLeft[1]
 \end{flalign*}
 up to some errors of the form 
 $ 
 O(\frac{1}{\lambda_{j}^{3}})
 +
 O
 (
 \tau^{2}
 +
 \sum_{r\neq s} 
 \frac{\vert \nabla K_{r}\vert^{2}}{\lambda_{r}^{2}}
 +
 \frac{1}{\lambda_{r}^{4}}
 +
 \varepsilon_{r,s}^{\frac{n+2}{n}}
 ),
  $ 
 where we have to add for ($\lambda$) $\tilde d_{1}\frac{H_{i}}{\lambda_{i}^{2}}$ to $\tilde c_{2}\frac{\Delta K_{i}}{K_{i}\lambda_{i^{2}}}$ 
  in case $n=4$. 
 Ordering indices so that 
 $
 \lambda_{1}\geq \ldots \geq \lambda_{q}
 \Longleftrightarrow
 \frac{1}{\lambda_{1}}\leq \ldots \leq \frac{1}{\lambda_{q}}
 $
 and recalling \eqref{eq:eij}, we have
 \begin{equation*}
 -\lambda_{j}\partial_{\lambda_{j}}\varepsilon_{i,j}
 =
 \frac{n-2}{2}
 \frac
 {
 \frac{\lambda_{j}}{\lambda_{i}}
 -
 \frac{\lambda_{i}}{\lambda_{j}}
 +
 \lambda_{i}\lambda_{j}\gamma_{n}G_{g_{0}}^{\frac{2}{2-n}}(a_{i},a_{j})
 }
 {
 (
 \frac{\lambda_{j}}{\lambda_{i}}
 +
 \frac{\lambda_{i}}{\lambda_{j}}
 +
 \lambda_{i}\lambda_{j}\gamma_{n}G_{g_{0}}^{\frac{2}{2-n}}(a_{i},a_{j})
 )^{\frac{n}{2}}
 }
 \end{equation*}
 and therefore 
 \begin{equation}\label{lambda_interaction_derivative_lower_bound}
 \lambda_{j}\partial_{\lambda_{j}}\varepsilon_{i,j}
 =
 \frac{2-n}{2}
 \varepsilon_{i,j}
 +
 O(\frac{1}{\lambda_{j}^{4}}+\varepsilon_{i,j}^{\frac{n+2}{n}})
 \; \quad \text{ in case }\;
 j<i\; \text{ or }\; 
 d_{g_{0}}(a_{i},a_{j})
 \neq 
 o(1).
 \end{equation}
 From ($a$) and ($\lambda$) above  we  find uniformly 
  bounded vector fields $A_{1},\Lambda_{1}$  on
 $V(q,\varepsilon)$  such that
 \begin{flalign*}
 (\mathbf{A_{1}}) \quad 
 \partial J_{\tau}(u)A_{1}
 \gtrsim &
 \frac{\vert \nabla K_{1}\vert }{\lambda_{1}} 
 + 
 O(\frac{1 }{\lambda_{1}^{3}}+\sum_{1\neq i}\varepsilon_{1,i}) 
  +
 O
 \big(
 \tau^{2}
 +
 \sum_{r\neq s} 
 \frac{\vert \nabla K_{r}\vert^{2}}{\lambda_{r}^{2}}
 +
 \frac{1}{\lambda_{r}^{4}}
 +
 \varepsilon_{r,s}^{\frac{n+2}{n}}
 \big);
 \MoveEqLeft[1]
 \end{flalign*}
 \begin{flalign*}
 \mathbf{(\Lambda_{1}}) \quad 
 \partial J_{\tau}(u)\Lambda_{1}
 \simeq &
 \tilde c_{1}\tau+\tilde c_{2}\frac{\Delta K_{1}}{K_{1}\lambda_{1}^{2}}
 +
 \tilde c_{4}\sum_{1\neq j}
 \frac{\alpha_{i}}{\alpha_{1}}\varepsilon_{1,i} 
 +
 O(\frac{1}{\lambda_{1}^{3}})  
  +
 O
 \big(
 \tau^{2}
 +
 \sum_{r\neq s} 
 \frac{\vert \nabla K_{r}\vert^{2}}{\lambda_{r}^{2}}
 +
 \frac{1}{\lambda_{r}^{4}}
 +
 \varepsilon_{r,s}^{\frac{n+2}{n}}
 \big)
 \MoveEqLeft[1]
 \end{flalign*}
 with $\tilde{c}_4 = \frac{n-2}{2} \tilde{b}_2$, and combining $X_{1}=\Lambda_{1}+\epsilon A_{1}$ with some $\epsilon>0$ small and  fixed such that we keep a positive coefficient in front of $\varepsilon_{1,i}$, we get 
 \begin{equation*}
 (\mathbf{C_{1}})
 \quad 
 B_{1}
 =
 \partial J_{\tau}(u)X_{1}
 \gtrsim 
 \big(\tilde c_{1}\tau+\tilde c_{2}\frac{\Delta K_{1}}{K_{1}\lambda_{1}^{2}}\big) 
 +
 \epsilon
 \big(
 \frac{\vert \nabla K_{1}\vert}{\lambda_{1}}
 +
 \sum_{1\neq i}\varepsilon_{1,i}
 \big)
 +
 O(\frac{1}{\lambda_{1}^{3}})
  +
 O
 (
 \tau^{2}
 +
 \sum_{\underset{r,s>1}{r\neq s}}
 \frac{\vert \nabla K_{r}\vert^{2}}{\lambda_{r}^{2}}
 +
 \frac{1}{\lambda_{r}^{4}}
 +
 \varepsilon_{r,s}^{\frac{n+2}{n}}
 ).
 \end{equation*}
 Likewise from ($a$) and ($\lambda$) we find uniformly bounded vector fields $A_{2},\Lambda_{2}$ defined on 
 $V(q,\varepsilon)$ such that 
 \begin{flalign*}
 (\mathbf{A_{2}}) \quad 
 \partial J_{\tau}(u)A_{2}
 \gtrsim &
 \frac{\vert \nabla K_{2}\vert }{\lambda_{2}} 
 + 
 O\big(\frac{1 }{\lambda_{2}^{3}}+\sum_{2\neq i}\varepsilon_{1,i}\big) 
  +
 O
 \big(
 \tau^{2}
 +
 \sum_{r\neq s} 
 \frac{\vert \nabla K_{r}\vert^{2}}{\lambda_{r}^{2}}
 +
 \frac{1}{\lambda_{r}^{4}}
 +
 \varepsilon_{r,s}^{\frac{n+2}{n}}
 \big); 
 \MoveEqLeft[1]
 \end{flalign*}
 \begin{flalign*}
 \mathbf{(\Lambda_{2}}) \quad
 \partial J_{\tau}(u)\Lambda_{2}
 \simeq &
 \tilde c_{1}\tau+\tilde c_{2}\frac{\Delta K_{2}}{K_{2}\lambda_{2}^{2}}
 +
 \tilde c_{4}\sum_{2<i}\frac{\alpha_{i}}{\alpha_{2}}\varepsilon_{1,i} 
 +
 O\big(\frac{1}{\lambda_{2}^{3}}+\varepsilon_{1,2}\big)
  +
 O
 \big(
 \tau^{2}
 +
 \sum_{r\neq s} 
 \frac{\vert \nabla K_{r}\vert^{2}}{\lambda_{r}^{2}}
 +
 \frac{1}{\lambda_{r}^{4}}
 +
 \varepsilon_{r,s}^{\frac{n+2}{n}}
 \big)
 \MoveEqLeft[1]
 \end{flalign*}
 and combining them as $X_{2}=\Lambda_{2}+\epsilon A_{2}$ with $\epsilon>0$ small  we obtain 
 \begin{equation*}
 \begin{split}
 B_{2}=\partial J_{\tau}(u)X_{2}
 \gtrsim &
 \big(\tilde c_{1}\tau+\tilde c_{2}\frac{\Delta K_{2}}{K_{2}\lambda_{2}^{2}}\big) 
 +
 \epsilon
 \big(
 \frac{\vert \nabla K_{2}\vert}{\lambda_{2}}
 +
 \sum_{2<i}\varepsilon_{2,i}
 \big)
 +
 O
 \big(
 \frac{1}{\lambda_{2}^{3}}
 +
 \varepsilon_{1,2}
 \big)
  +
 O
 \big(
 \tau^{2}
 +
 \sum_{r\neq s}
 \frac{\vert \nabla K_{r}\vert^{2}}{\lambda_{r}^{2}}
 +
 \frac{1}{\lambda_{r}^{4}}
 +
 \varepsilon_{r,s}^{\frac{n+2}{n}}
 \big). 
 \end{split}
 \end{equation*}
  Therefore combining $B_1$ and $B_2$ so that the coefficient of $\varepsilon_{i,j}$ is positive
 \begin{flalign*}
 (\mathbf{C_{2}}) \quad 
 B_{1}+\epsilon B_{2}
 \gtrsim &
 \sum_{j=1}^{2}
 \big[
 \epsilon^{j}\big(\tilde c_{1}\tau+\tilde c_{2}\frac{\Delta K_{j}}{K_{j}\lambda_{j}^{2}}\big) 
 +
 \epsilon^{j+1}
 \big(
 \frac{\vert \nabla K_{j}\vert}{\lambda_{j}}
 +
 \sum_{j\neq i}\varepsilon_{j,i}
 \big)
 \big]
 +
 O\big(\frac{1}{\lambda_{2}^{3}}\big)
 +
 O
 \big(
 \tau^{2}
 +
 \sum_{\underset{r,s>2}{r\neq s}}
 \frac{\vert \nabla K_{r}\vert^{2}}{\lambda_{r}^{2}}
 +
 \frac{1}{\lambda_{r}^{4}}
 +
 \varepsilon_{r,s}^{\frac{n+2}{n}}
 \big). 
 \MoveEqLeft[1]
 \end{flalign*}
  Iteratively, for all $k=1,\ldots,q$ we can find uniformly bounded vector fields $A_k, \Lambda_{k}$ such that 
 \begin{flalign*}
 (\mathbf{A_{k}}) \quad 
 \partial J_{\tau}(u)A_{k}
 \gtrsim &
 \frac{\vert \nabla K_{k}\vert }{\lambda_{k}} 
 + 
 O\big(\frac{1 }{\lambda_{k}^{3}}+\sum_{k\neq i}\varepsilon_{k,i}\big) 
  +
 O
 \big(
 \tau^{2}
 +
 \sum_{r\neq s} 
 \frac{\vert \nabla K_{r}\vert^{2}}{\lambda_{r}^{2}}
 +
 \frac{1}{\lambda_{r}^{4}}
 +
 \varepsilon_{r,s}^{\frac{n+2}{n}}
 \big);
 \MoveEqLeft[1]
 \end{flalign*}
 \begin{flalign*}
 (\mathbf{\Lambda_k}) \quad
 \partial J_{\tau}(u)\Lambda_{k}
 \simeq &
 \tilde c_{1}\tau+\tilde c_{2}\frac{\Delta K_{k}}{K_{k}\lambda_{k}^{2}}
 +
 \tilde c_{4}\sum_{k<i}\frac{\alpha_{i}}{\alpha_{k}}\varepsilon_{k,i} +O\big(\frac{1}{\lambda_{k}^{3}}+\sum_{k>i}\varepsilon_{k,i}\big) 
  +
 O
 \big(
 \tau^{2}
 +
 \sum_{r\neq s} 
 \frac{\vert \nabla K_{r}\vert^{2}}{\lambda_{r}^{2}}
 +
 \frac{1}{\lambda_{r}^{4}}
 +
 \varepsilon_{r,s}^{\frac{n+2}{n}}
 \big);
 \MoveEqLeft[1]
 \end{flalign*}
 \begin{flalign*}
 (\mathbf{C_{k}}) \quad 
 \sum_{j=1}^{k}\epsilon^{j}B_{j}
 \gtrsim &
 \sum_{j=1}^{k}
 \big[
 \epsilon^{j}
 \big(\tilde c_{1}\tau+\tilde c_{2}\frac{\Delta K_{j}}{K_{j}\lambda_{j}^{2}}\big) 
 +
 \epsilon^{j+1}
 \big(
 \frac{\vert \nabla K_{j}\vert}{\lambda_{j}}
 +
 \sum_{j\neq i}\varepsilon_{j,i}
 \big)
 \big]
 +
 O\big(\frac{1}{\lambda_{k}^{3}}\big)
  +
 O
 \big(
 \tau^{2}
 +
 \sum_{r\neq s}
 \frac{\vert \nabla K_{r}\vert^{2}}{\lambda_{r}^{2}}
 +
 \frac{1}{\lambda_{r}^{4}}
 +
 \varepsilon_{r,s}^{\frac{n+2}{n}}
 \big),
 \MoveEqLeft[1]
 \end{flalign*}
 where we have to add $\tilde c_{3}\frac{H_{j}}{\lambda_{j}^{2}}$ to  $\tilde c_{2}\frac{\Delta K_{j}}{K_{j}\lambda_{j}^{2}}$  in case $n=4$, where 
\begin{equation}\label{def_tilde_c3}
 \tilde c_{3}=\tilde d_{1}
\end{equation}
 In particular 
 \begin{flalign*}
 (\mathbf{C_{q}}) \quad 
 \sum_{j=1}^{k}\epsilon^{j}B_{j}
 \gtrsim &
 \sum_{j=1}^{k}
 \big[
 \epsilon^{j}
 \big(\tilde c_{1}\tau+\tilde c_{2}\frac{\Delta K_{j}}{K_{j}\lambda_{j}^{2}}\big) 
 +
 \epsilon^{j+1}
 \big(
 \frac{\vert \nabla K_{j}\vert}{\lambda_{j}}
 +
 \sum_{j\neq i}\varepsilon_{j,i}
 \big)
 \big]  
 +
 O
 \big(
 \tau^{2}
 +
 \sum_{r\neq s}
 \frac{\vert \nabla K_{r}\vert^{2}}{\lambda_{r}^{2}}
 +
 \frac{1}{\lambda_{r}^{4}}
 +
 \varepsilon_{r,s}^{\frac{n+2}{n}}
 \big)
 .
 \MoveEqLeft[1]
 \end{flalign*}
 Then, if either 
 \begin{equation*}
 \begin{split}
 \frac{1}{\lambda_{q}^{2}}
 \ll &
 \tau 
 +
 \sum_{j=1}^{q} 
 \frac{\vert \nabla K_{j}\vert}{\lambda_{j}}
 +
 \sum_{r\neq s}\varepsilon_{r,s}
 \quad \; \quad \text{ or  }\; \quad \quad 
 \frac{1}{\lambda_{q}^{2}}
 \gg
 \tau 
 +
 \sum_{j=1}^{q} 
 \frac{\vert \nabla K_{j}\vert}{\lambda_{j}}
 +
 \sum_{r\neq s}\varepsilon_{r, s}, 
 \end{split}
 \end{equation*}
 we obviously have \eqref{cascade_what_is_to_show} from $(\mathbf{C_{q}})$. 
  Thus we may assume
 \begin{equation}\label{cascade_balancing}
 \frac{1}{\lambda_{q}^{2}}
 \simeq 
 \tau 
 +
 \sum_{j=1}^{q} 
 \frac{\vert \nabla K_{j}\vert}{\lambda_{j}}
 +
 \sum_{r\neq s}\varepsilon_{r,s},
 \end{equation}
 whence we may simplify the above formulas to
 \begin{flalign*}
 (\mathbf{A_{k}}) \quad 
 \partial J_{\tau}(u)A_{k}
 \gtrsim &
 \frac{\vert \nabla K_{k}\vert }{\lambda_{k}} 
 + 
 O\big(\sum_{k\neq i}\varepsilon_{k,i}\big)
 +
 o\big(\frac{1}{\lambda_{q}^{2}}\big);
 \MoveEqLeft[1]
 \end{flalign*}
 \begin{flalign*}
 (\mathbf{\Lambda_k}) \quad
 \partial J_{\tau}(u)\Lambda_{k}
 \simeq &
 \tilde c_{1}\tau+\tilde c_{2}\frac{\Delta K_{k}}{K_{k}\lambda_{k}^{2}}
 +
 \tilde c_{4}\sum_{k<i}\frac{\alpha_{i}}{\alpha_{k}}\varepsilon_{k,i} 
 +
 O\big(\sum_{k>i}\varepsilon_{k,i}\big) 
 +
 o\big(\frac{1}{\lambda_{q}^{2}}\big);
 \MoveEqLeft[1]
 \end{flalign*}
 \begin{flalign*}
 (\mathbf{C_{k}}) \quad 
 \sum_{j=1}^{k}\epsilon^{j}B_{j}
 \gtrsim &
 \sum_{j=1}^{k}
 \big[
 \epsilon^{j}
 \big(\tilde c_{1}\tau+\tilde c_{2}\frac{\Delta K_{j}}{K_{j}\lambda_{j}^{2}}\big) 
 +
 \epsilon^{j+1}
 \big(
 \frac{\vert \nabla K_{j}\vert}{\lambda_{j}}
 +
 \sum_{j\neq i}\varepsilon_{j,i}
  \big )
 \big]
 +
 o\big(\frac{1}{\lambda_{q}^{2}}\big), 
 \MoveEqLeft[1]
 \end{flalign*}
 adding $\tilde c_{3}\frac{H_{j}}{\lambda_{j}^{2}}$ to  $\tilde c_{2}\frac{\Delta K_{j}}{K_{j}\lambda_{j}^{2}}$  for $n=4$. 
 We first consider the pair $(q-1,q)$. 
 Suppose
 \begin{equation*}
 \frac{1}{\lambda_{q-1}^{2}}=o(\frac{1}{\lambda_{q}^{2}}).
 \end{equation*}
 To prove \eqref{cascade_what_is_to_show} we then may assume from $(\mathbf{C_{q-1}})$ and \eqref{cascade_balancing} that also 
 $
 \tau + \sum_{r\neq s}\varepsilon_{r,s}=o(\frac{1}{\lambda_{q}^{2}}),
 $
 since 
 \begin{equation*}
 \sum_{j=1}^{q-1}\sum_{j\neq i}\varepsilon_{i,j}=\sum_{q-1\geq r\neq s}\varepsilon_{r,s} 
 =
 \sum_{r\neq s}\varepsilon_{r,s}. 
 \end{equation*}
 As the coefficient of $\lambda_{q}^{-2}$ in $(\mathbf{\Lambda_{q}})$ is non zero 
 by non-degeneracy, 
  \eqref{cascade_what_is_to_show} follows. So we may assume
 \begin{equation*}
  \frac{1}{\lambda_{q-1}}\simeq \frac{1}{\lambda_{q}},
 \end{equation*}
 and therefore, still by \eqref{cascade_balancing},
 \begin{equation*}
 \vert \nabla K_{q-1}\vert\lesssim\frac{1}{\lambda_{q-1}},\quad \vert \nabla K_{q}\vert\lesssim
 \frac{1}{\lambda_{q}}. 
 \end{equation*}
 So, if $a_{q-1}$ is close to $a_{q}$, these points are close to the same critical point of $K$, which, as
 $K$ is Morse,  implies $d(a_{q-1},a_{q})) \lesssim \frac{1}{\lambda_{q}} \simeq 
 \frac{1}{\lambda_{q-1}}$. This however contradicts the fact that by Proposition \ref{blow_up_analysis} 
 \begin{equation*}
 \varepsilon_{q-1,q}
 \simeq 
 \frac{1}{(\lambda_{q-1}\lambda_{q}d^{2}(a_{q-1},a_{q}))^{\frac{n-2}{2}}}
 \longrightarrow 0. 
 \end{equation*}
 Therefore for the pair $(q-1,q)$ we may assume 
 \begin{equation*}
 \vert \nabla K_{q-1}\vert,\vert \nabla K_{q}\vert \lesssim \frac{1}{\lambda_{q-1}}\simeq \frac{1}{\lambda_{q}}, 
 \quad 
 \hbox{ and } \quad  
 d(a_{q-1},a_{q})>c. 
 \end{equation*}
 In particular in case $n\geq 5$ we have 
$
\varepsilon_{q-1,q}\simeq \frac{1}{\lambda_{q}^{n-2}}=o(\frac{1}{\lambda_{q}^{2}}), 
$
 whereas in case $n=4$
 \begin{equation*}
 \varepsilon_{q-1,q}=\frac{G_{g_{0}}(a_{q-1},a_{q})}{\gamma_{n}\lambda_{q-1}\lambda_{q}}
 +
 O(\frac{1}{\lambda_{q}^{4}}).
 \end{equation*}
 We turn to consider  the triple $(q-2,q-1,q)$. Suppose that 
$ 
 \frac{1}{\lambda_{q-2}^{2}}=o(\frac{1}{\lambda_{q-1}^{2}}).
$ 
 To get \eqref{cascade_what_is_to_show} we then may assume from $(\mathbf{C_{q-2}})$ and \eqref{cascade_balancing} that 
 \begin{equation*}
 \tau + \sum_{q-2\geq r\neq s}\varepsilon_{r,s}=o(\frac{1}{\lambda_{q}^{2}})
 \end{equation*}
 as well. But then clearly in case $n\geq 5$ we obtain \eqref{cascade_what_is_to_show} from 
 $(\mathbf{\Lambda_{q-1}})$ or $(\mathbf{\Lambda_{q}})$, since
$
 \varepsilon_{q-1,q}
 =
 o(\lambda_{q}^{-2})
$
 is already known. In case $n=4$ we have to argue more subtly. From ($\lambda$) we find 
\begin{equation*}
\partial  J  (u) \phi_{2,q-1}
= 
\frac{\alpha_{q-1}}{(\alpha_{K,\tau}^{\frac{2n}{n-2}})^{\frac{n-2}{n}}}
\bigg(
\tilde c_{2}\frac{\Delta K_{q-1}}{K_{q-1}\lambda_{q-1}^{2}} 
+
\tilde c_{3}\frac{H_{q-1}}{\lambda_{q-1}^{2}}
+
\tilde  c_{4}\frac{\alpha_{q}}{\alpha_{q-1}}\frac{G_{g_{0}}(a_{q-1},a_{q})}
{\gamma_{n}\lambda_{q-1}\lambda_{q}} 
\bigg)
\end{equation*}
and
\begin{equation*}
\partial  J  (u) \phi_{2,q}
 = 
\frac{\alpha_{q}}{(\alpha_{K,\tau}^{\frac{2n}{n-2}})^{\frac{n-2}{n}}}
 \bigg(
\tilde  c_{2}\frac{\Delta K_{q}}{K_{q}\lambda_{q}^{2}} 
 +
\tilde  c_{3}\frac{H_{q}}{\lambda_{q}^{2}}
 +
\tilde  c_{4}\frac{\alpha_{q-1}}{\alpha_{q}}\frac{G_{g_{0}}(a_{q-1},a_{q})}
 {\gamma_{n}\lambda_{q-1}\lambda_{q}} 
 \bigg)  \end{equation*}
 up to an error of order $o(\frac{1}{\lambda_{q}^{2}})$, cf. \eqref{lambda_interaction_derivative_lower_bound}. Obviously \eqref{cascade_what_is_to_show} then follows if either 
 \begin{equation*}
 \tilde c_{2}\frac{\Delta K_{q}}{K_{q}\lambda_{q}^{2}} 
 +
\tilde  c_{3}\frac{H_{q}}{\lambda_{q}^{2}}
 >0
 \quad \; \text{ or }\;  \quad 
  \tilde  c_{2}\frac{\Delta K_{q-1}}{K_{q-1}\lambda_{q-1}^{2}} 
 +
\tilde  c_{3}\frac{H_{q-1}}{\lambda_{q-1}^{2}}>0.
 \end{equation*}
 We may thus assume both summands to be negative. 
 Recalling \eqref{cascade_alpha_constraint}, we then obtain
 \begin{equation*}
 \resizebox{.9\hsize}{!}
 {$
 \partial J_{\tau}(u)
 \begin{pmatrix}
 \beta_{q-1}\phi_{2,q-1}\\
 \beta_{q}\phi_{2,q}
 \end{pmatrix}
 =
 \begin{pmatrix}
 \frac{1}{\lambda_{q-1}} & 0 \\
 0 & \frac{1}{\lambda_{q}}
 \end{pmatrix}
 \begin{pmatrix}
\tilde  c_{2}\frac{\Delta K_{q-1}}{K_{q-1}^{2}}  
 +
\tilde  c_{3}\frac{H_{q-1}}{K_{q-1}}
 &
\tilde  c_{4}\frac{G_{0}(a_{q-1},a_{q})}{\gamma_{n}(K_{q-1}K_{q})^{\frac{1}{2}}}
 \\
\tilde  c_{4}\frac{G_{0}(a_{q-1},a_{q})}{\gamma_{n}(K_{q-1}K_{q})^{\frac{1}{2}}}  
 &
\tilde  c_{2}\frac{\Delta K_{q}}{K_{q}^{2}} 
 +
\tilde  c_{3}\frac{H_{q}}{K_{q}}  
 \end{pmatrix}
 \begin{pmatrix}
 \frac{1}{\lambda_{q-1}} \\
 \frac{1}{\lambda_{q}}
 \end{pmatrix}
 $}
 \end{equation*}
 up to an error $o(\frac{1}{\lambda_{q}^{2}})$
 letting 
\begin{equation*}
K_{j}\alpha_{j}\beta_{j}=(\alpha_{K,\tau}^{\frac{2n}{n-2}})^{\frac{n-2}{n}}\;\text{ for }\; j=q-1,q,
\end{equation*}
and thus
$
 \vert \partial J_{\tau}(u)\vert 
 \gtrsim \lambda_{q}^{-2},
 $
 since otherwise  $a_{q-1},a_{q}$ close to $x_{q-1},x_{q}\in \{\nabla K=0\}\cap \{\tilde c_{2}\frac{\Delta K}{K}+\tilde c_{3}H <0\}$
 and
 \begin{equation*}
 \mathcal{M}_{q-1,q}
 =
 \begin{pmatrix}
 \tilde c_{2}\frac{\Delta K(x_{q-1})}{K(x_{q-1})^{2}}  
 +
\tilde  c_{3}\frac{H(x_{q-1})}{K(x_{q-1})}
 &
\tilde  c_{4}\frac{G_{0}(x_{q-1},x_{q})}{\gamma_{n}(K(x_{q-1})K(x_{q}))^{\frac{1}{2}}}
 \\
\tilde  c_{4}\frac{G_{0}(x_{q-1},x_{q})}{\gamma_{n}(K(x_{q-1})K(x_{q}))^{\frac{1}{2}}}  
 &
\tilde  c_{2}\frac{\Delta K(x_{q})}{K(x_{q})^{2}} 
 +
\tilde  c_{3}\frac{H(x_{q})}{K(x_{q})}  
 \end{pmatrix}
 \end{equation*}
 would have after a blow-up for $\tau\longrightarrow 0$ a vanishing eigenvalue with strictly positive eigenvector, which by Remark \ref{rem_none_degeneracy} is impossible. So \eqref{cascade_what_is_to_show} again follows. We may thus assume 
 \begin{equation*}
 \frac{1}{\lambda_{q-2}}\simeq  \frac{1}{\lambda_{q-1}}\simeq \frac{1}{\lambda_{q}}
 \end{equation*}
 and therefore by \eqref{cascade_balancing}
 \begin{equation*}
 \vert \nabla K_{q-2}\vert\lesssim\frac{1}{\lambda_{q-2}}, \quad 
 \vert \nabla K_{q-1}\vert\lesssim\frac{1}{\lambda_{q-1}}, \quad \vert \nabla K_{q}\vert\lesssim
 \frac{1}{\lambda_{q}}.
 \end{equation*}
 So, if $a_{q-2}$ is close to either $a_{q-1}$ or $a_{q}$,  these points are close to the same critical point of $K$, whence 
 \begin{equation*}
 \varepsilon_{q-2,q-1} \simeq 1\; \text{ or }\; \varepsilon_{q-2,1}\simeq 1
 \end{equation*}
 as before, contradicting Proposition \ref{blow_up_analysis}. Thus for
 $(q-2,q-1,q)$ we may assume  
 \begin{equation*}
 \vert \nabla K_{q-2},\vert, \vert \nabla K_{q-1}\vert,\vert \nabla K_{q}\vert\lesssim \frac{1}{\lambda_{q-2}}\simeq \frac{1}{\lambda_{q-1}}\simeq \frac{1}{\lambda_{q}}
 \end{equation*}
 and
 \begin{equation*}
 d(a_{q-2},a_{q-1}),d(a_{q-2},a_{q}),d(a_{q-1},a_{q})>c
 \end{equation*}
 analogously to the previous case of the pair $(q-1,1)$.
 In particular in case $n\geq 5$
 \begin{equation*}
 \varepsilon_{q-2,q-1},\varepsilon_{q-2,q},\varepsilon_{q-1,q}\simeq \frac{1}{\lambda_{q}^{n-2}}=o\big(\frac{1}{\lambda_{q}^{2}}\big), 
 \end{equation*}
 whereas in case $n=4$ up to an error $O(\frac{1}{\lambda_{q}^{4}})$
 \begin{equation*}
 \varepsilon_{q-2,q-1}=\frac{G_{g_{0}}(a_{q-2},a_{q-1})}{\gamma_{n}\lambda_{q-2}\lambda_{q-1}},
 \varepsilon_{q-2,q}=\frac{G_{g_{0}}(a_{q-2},a_{q})}{\gamma_{n}\lambda_{q-2}\lambda_{q}},
 \varepsilon_{q-1,q}=\frac{G_{g_{0}}(a_{q-1},a_{q})}{\gamma_{n}\lambda_{q-1}\lambda_{q}}.
 \end{equation*}
 Iteratively, we then may assume for all $k\neq l=1,\ldots,q$
 \begin{equation*}
 \vert \nabla K_{k}\vert \lesssim \frac{1}{\lambda_{k}}\simeq \frac{1}{\lambda_{l}}\;
 \; \text{ and }\;
 d(a_{k},a_{l})>c.
 \end{equation*}
 In particular $\varepsilon_{k,l}=o(\frac{1}{\lambda_{q}^{2}})$
 for $n\geq 5$ and
 $
 \varepsilon_{k,l}=\frac{G_{g_{0}}(a_{k},a_{l})}{\lambda_{k}\lambda_{l}}
 $
 for $n=4$. But then
 \begin{flalign*}
 (\mathbf{\Lambda_k}) \quad
 \partial J_{\tau}(u)\Lambda_{k}
 \simeq &
 (\tilde c_{1}\tau+\tilde c_{2}\frac{\Delta K_{k}}{K_{k}\lambda_{k}^{2}}) 
 +
 o\big(\frac{1}{\lambda_{q}^{2}}\big)
 \MoveEqLeft[1]
 \end{flalign*}
 in case $n\geq 5$ and thus
 \begin{equation*}
 \vert \partial J_{\tau}(u)\vert \gtrsim 
 \left|
 \tilde c_{1}\tau+\tilde c_{2}\frac{\Delta K_{k}}{K_{k}\lambda_{k}^{2}}
 \right|
 \end{equation*}
 up to some 
 $
 o(\frac{1}{\lambda_{q}^{2}})$.
 Therefore \eqref{cascade_what_is_to_show} holds unless 
 $
 \tilde c_{1}\tau+\tilde c_{2}\frac{\Delta K_{k}}{K_{k}\lambda_{k}^{2}} 
 =
 o(\frac{1}{\lambda_{q}^{2}})$, 
 while now  for $n=4$
 \begin{flalign*}
 \partial  J  (u) \phi_{2,j}
 = &
 \frac{\alpha_{j}}{(\alpha_{K,\tau}^{\frac{2n}{n-2}})^{\frac{n-2}{n}}}
 \bigg(
\tilde c_{1}\tau
 +
\tilde c_{2}\frac{\Delta K_{j}}{K_{j}\lambda_{j}^{2}} 
 +
 \tilde  c_{3}\frac{H_{j}}{\lambda_{j}^{2}}
 +
\tilde c_{4} \sum_{j\neq i }\frac{\alpha_{i}}{\alpha_{j}}
 \frac{G_{g_{0}}(a_{i},a_{j})}{\gamma_{n}\lambda_{i}\lambda_{j}}
 \bigg)
 \end{flalign*}
 up to some $o(\frac{1}{\lambda_{q}^{2}})$, cf.  \eqref{lambda_interaction_derivative_lower_bound},  for all $j=1,\ldots,q$. 
 Obviously \eqref{cascade_what_is_to_show} then follows, if for some $j=1,\ldots,q$
 \begin{equation*}
\tilde c_{2}\frac{\Delta K_{j}}{K_{j}\lambda_{j}^{2}} 
 +
\tilde c_{3}\frac{H_{j}}{\lambda_{j}^{2}}
 >0,
 \end{equation*}
 whence we may assume all these summands to be negative, proving (ii).
 From ($\lambda$) and \eqref{cascade_alpha_constraint} we then have
 \begin{equation*}
 \begin{split}
 \partial  J  (u) (\beta_{j}\phi_{2,j})
 = &
\tilde c_{1}\frac{\tau}{K_{j}}
 +
\tilde c_{2}\frac{\Delta K_{j}}{K_{j}^{2}\lambda_{j}^{2}} 
 +
\tilde c_{3}\frac{H_{j}}{K_{j}\lambda_{j}^{2}}
 +
\tilde c_{4} \sum_{j\neq i }
 \frac{G_{g_{0}}(a_{i},a_{j})}{\gamma_{n}\sqrt{K_{i}K_{j}}\lambda_{i}\lambda_{j}}
 \end{split}
 \end{equation*}
 up to some $o(\frac{1}{\lambda_{q}^{2}})$ letting as before  $\beta_{j}=\frac{(\alpha_{K,\tau}^{\frac{2n}{n-2}})^{\frac{n-2}{n}}}{K_{j}\alpha_{j}}$. Therefore 
 \begin{equation*}
 \vert\partial  J  (u) \vert
 \gtrsim 
 \left|
 \begin{pmatrix}
 \frac{ \tilde c_{1}\tau }{K_{1}} 
 \\ 
 \vdots \\
 \frac{\tilde c_{1}\tau }{K_{q}}
 \end{pmatrix}
 -
 diag(\frac{1}{\lambda_{1}},\ldots,\frac{1}{\lambda_{q}})
 \mathcal{M}_{a_{1},\ldots,a_{q}}
 \begin{pmatrix}
 \frac{1}{\lambda_{1}}\\
 \vdots \\
 \frac{1}{\lambda_{q}}
 \end{pmatrix}
 \right|
 \end{equation*}
 up to the same error. This implies that \eqref{cascade_what_is_to_show} holds true, unless we can solve
\begin{equation}\label{cascade_matrix_condition} \begin{pmatrix}
 \frac{\tilde c_{1}\tau \lambda_{1} }{K_{1}} 
 \\ 
 \vdots \\
 \frac{\tilde c_{1}\tau \lambda_{q}}{K_{q}}
 \end{pmatrix}
 =
 \mathcal{M}_{a_{1},\ldots,a_{q}}
 \begin{pmatrix}
 \frac{1}{\lambda_{1}}\\
 \vdots \\
 \frac{1}{\lambda_{q}}
 \end{pmatrix}
 +
 o\big(\frac{1}{\lambda_{q}}\big)
 \end{equation}
 and we may already assume, by (ii), that  $a_{j}$ is close to 
 $$x_{j}\in \{\nabla K=0\}\cap \{\tilde c_{2}\frac{\Delta K}{K}+\tilde c_{3}H<0\}.$$  
 In particular \eqref{cascade_what_is_to_show} follows in case $\tau=0$ by the non-degeneracy condition on $K$, proving (i).  In case $\tau>0$, 
 writing $\sigma_{j}=\sqrt\tau\lambda_{j}$, we find passing to the limit $\tau \longrightarrow 0$, that there has to exist a solution to 
 \begin{equation}\label{cascade_show_existence_n=4}
 \tilde c_{1}
 \begin{pmatrix}
 \frac{ \sigma_{1} }{K(x_{1})} 
 \\ 
 \vdots \\
 \frac{ \sigma_{q}}{K(x_{q})}
 \end{pmatrix}
 =
 \mathcal{M}_{x_{1},\ldots,x_{q}}
 \begin{pmatrix}
 \frac{1}{\sigma_{1}}\\
 \vdots \\
 \frac{1}{\sigma_{q}}
 \end{pmatrix}.
 \end{equation}
 In particular, testing the above relation with ${\bf{x}}={\bf{x}}_{x_{1},\ldots,x_{q}}$, cf. Remark \ref{rem_none_degeneracy}, we find
 \begin{equation*}
 0
 \leq
\tilde c_{1}\sum_{j}\frac{x_{j}\sigma_{j}}{K_{j}}
 =
 \lambda
 \sum_{j}\frac{x_{j}}{\sigma_{j}}, 
 \end{equation*}
 where $\lambda=\lambda_{x_{1},\ldots,x_{q}}$ is the least eigenvalue of 
 $\mathcal{M}_{x_{1},\ldots,x_{q}}$. Thus necessarily $ \mathcal{M}_{x_{1},\ldots,x_{q}}>0$.  Since 
 \begin{equation*}
 F(\sigma) 
 =
 \mathcal{M}_{x_{1},\ldots,x_{q}}
 \begin{pmatrix}
 \frac{1}{\sigma_{1}}\\
 \vdots \\
 \frac{1}{\sigma_{q}}
 \end{pmatrix}
 \begin{pmatrix}
 \frac{1}{\sigma_{1}}\\
 \vdots \\
 \frac{1}{\sigma_{q}}
 \end{pmatrix}
 +
 2\tilde c_{1}\sum \frac{\sigma_{i}}{K_{i}}
 \end{equation*}
 is  a sum of convex functions,  there exists a unique critical point of $F$ satisfying 
 \eqref{cascade_show_existence_n=4}.
 Hence we have comparability 
$
\lambda_{1}\simeq \ldots\simeq \lambda_{1}\simeq \sfrac{1}{\sqrt{\tau}} \simeq \lambda
$
like in case $n\geq 5$. Thus (iv) follows upon checking constants  for $n=4$, i.e. 
$\bar c_{0}=\underset{\R^{n}}{\int}(\frac{1}{1+r^{2}})^{n}= \frac{\omega_{4}}{12}$ and 
\begin{equation}\label{def_tilde_c3_tildec_4}
\begin{matrix}[ll]
\mathbf{1}. & \tilde c_{1}
=
\frac{n(n-1)(n-2)^{2}}{\bar c_{0}^{\frac{n-2}{n}}}\underset{\R^{n}}{\int}\frac{1-r^{2}}{(1+r^{2})^{n+1}}\ln\frac{1}{1+r^{2}}dx=2\sqrt{3\omega_{4}} ;
\\
\mathbf{2.} &
 \tilde c_{2}
 =
 -\frac{(n-1)(n-2)}{\bar c_{0}^{\frac{n-2}{n}}}\underset{\R^{n}}{\int}\frac{r^{2}(1-r^{2})}{(1+r^{2})^{n+1}}dx=\sqrt{3\omega_{4}} ;
 \\
\mathbf{3.} &
  \tilde c_{3}
  =
  \tilde d_{1}
  =-\frac{4n(n-1)}{\bar c_{0}^{\frac{n-2}{n}}}\underset{\R^{n}}{\int}\frac{r^{n}(n+2-nr^{2})}{(1+r^{2})^{n+2}}
  =
  24\sqrt{3\omega_{4}} ;
  \\
  \mathbf{4.} &
  \tilde c_{4}
  =
  \frac{n-2}{2}\tilde b_{2}
  =
 \frac{2n(n-1)(n-2)}{\bar c_{0}^{\frac{n-2}{n}}}\underset{\R^{n}}{\int}\frac{1}{(1+r^{2})^{\frac{n+2}{2}}}
 =
  24\sqrt{3\omega_{4}},
\end{matrix}
\end{equation}
cf. \eqref{def_const_lambda_derivatives} from the corresponding Lemma 
\ref{lem_lambda_derivatives_at_infinity}.
We turn next to (iii). In case $n\geq 5$  we may now assume
\begin{equation*}
\tilde c_{1}\tau+\tilde c_{2}\frac{\Delta K_{k}}{K_{k}\lambda_{k}^{2}}
=
o(\frac{1}{\lambda^{2}})
\; \text{ and }\; 
\varepsilon_{k,l}=o(\frac{1}{\lambda^{2}})
\; \text{ for }\; \lambda_{k}\simeq \lambda_{l} \simeq \lambda,
\end{equation*}
 which by Lemma \ref{lem_alpha_derivatives_at_infinity}  implies
\begin{equation*}
\vert \partial J_{\tau}(u)\vert
\gtrsim
\left|
1-\frac{\alpha^{2}}{\alpha_{K,\tau}^{p+1}}
\frac{K_{j}}{\lambda_{j}^{\theta}}\alpha_{j}^{p-1}
\right|
+o(\frac{1}{\lambda^{2}})
.
\end{equation*}
Note that $\alpha_{j}^{p-1}=\Theta^{p-1} \cdot \frac{\lambda_{j}^{\theta}}{K_{j}}$ is modulo scaling the unique and non-degenerate  maximum of 
\begin{equation*}
\alpha=(\alpha_{1},\ldots,\alpha_{q})
\longrightarrow 
\frac{\alpha^{2}}{(\alpha_{K,\tau}^{p+1})^{\frac{2}{p+1}}}
=
\frac{\sum \alpha_{i}^{2}}{(\sum \frac{K_{i}}{\lambda_{i}^{\theta}}\alpha_{i}^{p+1})^{\frac{2}{p+1}}}. 
\end{equation*}
Now \eqref{cascade_what_is_to_show} follows, unless 
$
\alpha_{j}^{p-1}=\Theta^{p-1}\cdot \frac{\lambda_{j}^{\theta}}{K_{j}}+o(\frac{1}{\lambda^{2}})
$
and there holds
\begin{equation*}
\vert \partial J_{\tau}(u)\vert
\gtrsim
\left|
\alpha_{j}-\Theta \cdot \sqrt[p-1]{\frac{\lambda_{j}^{\theta}}{K_{j}}}
\right|
+o(\frac{1}{\lambda^{2}})
.
\end{equation*}
In case $n=4$ we may  rewrite Lemma \ref{lem_alpha_derivatives_at_infinity} up to some $o(\frac{1}{\lambda^{2}})$ with constant given below as 
\begin{equation}\label{to_cancel_out}
\resizebox{.9\hsize}{!}
{$
\begin{split}
\partial J_{\tau}(u)\phi_{1,j}
= 
\frac{\alpha_{j}}
{(\alpha_{K,\tau}^{\frac{2n}{n-2}})^{\frac{n-2}{n}}}
\bigg(
&
\grave c_{0}(
1
-
\frac{\alpha^{2}}{\alpha_{K,\tau}^{p+1}}
\frac{K_{j}}{\lambda_{j}^{\theta}}\alpha_{j}^{p-1}
) \\
& 
-
K_{j}
(
\grave c_{2}
\frac{\Delta K_{j}}{K_{j}^{2}\lambda_{j}^{2}}
+
\grave d_{1}
\frac{H_{j}}{K_{j}\lambda_{j}^{2 }}
+
\grave b_{1}
\sum_{j\neq i}\frac{G_{g_{0}}(a_{i},a_{j})}{\gamma_{n}\sqrt{K_{i}K_{j}}\lambda_{i}\lambda_{j}}
)
\\
& 
+
\frac{\alpha_{K}^{\frac{2n}{n-2}}}{(\alpha^{2})^{2}}
(
\grave c_{2}
\sum_{k}\frac{\Delta K_{k}}{K_{k}^{2}\lambda_{k}^{2}}
+
\grave d_{1}
\sum_{k}\frac{H_{k}}{K_{k}\lambda_{k}^{2 }} 
+
\grave b_{1} 
\sum_{k\neq l}
\frac{G_{g_{0}}(a_{k},a_{l})}{\gamma_{n}\sqrt{K_{k}K_{l}}\lambda_{k}\lambda_{l}}
\bigg)
\end{split}
$}
\end{equation}
using \eqref{cascade_alpha_constraint} and $\lambda_{j}^{\theta}\simeq (\frac{1}{\sqrt{\tau}})^{\frac{n-2}{2}\tau}=1+O(\frac{\ln \lambda}{\lambda^{2}})$. Moreover, up to an error $o(1)$ there holds 
\begin{equation*}
\frac{(\alpha^{2})^{2}}{\alpha_{K}^{\frac{2n}{n-2}}}
=
\frac{\alpha^{2}\sum_{i}\alpha_{i}^{2}}{\alpha_{K}^{\frac{2n}{n-2}}}
=
\frac{\alpha^{2}\sum_{i}\frac{\alpha_{K}^{\frac{2n}{n-2}}}{\alpha^{2}}\frac{1}{K_{i}}}{\alpha_{K}^{\frac{2n}{n-2}}}
=
\sum_{i}\frac{1}{K_{i}}, 
\end{equation*}
and due to \eqref{cascade_matrix_condition} 
\begin{equation*}
\tilde  c_{2}
\sum_{k}\frac{\Delta K_{k}}{K_{k}^{2}\lambda_{k}^{2}}
+
\tilde c_{3}
\sum_{k}\frac{H_{k}}{K_{k}\lambda_{k}^{2 }} 
+
\tilde c_{4}
\sum_{k\neq l}
\frac{G_{g_{0}}(a_{k},a_{l})}{\gamma_{n}\sqrt{K_{k}K_{l}}\lambda_{k}\lambda_{l}}
=
\mathcal{M}_{a_{1},\ldots,a_{q}}
\begin{pmatrix}
\frac{1}{\lambda_{1}}\\
\vdots \\
\frac{1}{\lambda_{q}}
\end{pmatrix}
\begin{pmatrix}
\frac{1}{\lambda_{1}}\\
\vdots \\
\frac{1}{\lambda_{q}}
\end{pmatrix}
=
 \tilde c_{1} \sum_{i}\frac{\tau}{K_{i}}
\end{equation*}
and 
\begin{equation*}
\tilde c_{2}
\frac{\Delta K_{j}}{K_{j}^{2}\lambda_{j}^{2}}
+
\tilde c_{3}
\frac{H_{j}}{K_{j}\lambda_{j}^{2 }}
+
\tilde c_{4}
\sum_{j\neq i}\frac{G_{g_{0}}(a_{i},a_{j})}{\gamma_{n}\sqrt{K_{i}K_{j}}\lambda_{i}\lambda_{j}}
=
\mathcal{M}_{a_{1},\ldots,a_{q}}
\begin{pmatrix}
\frac{1}{\lambda_{1}}
 \\ \vdots \\
\frac{1}{\lambda_{q}}
\end{pmatrix}
\frac{e_{j}}{\lambda_{j}}
=
 \tilde c_{1}\frac{\tau}{K_{j}}
\end{equation*}
up to some $o(\frac{1}{\lambda^{2}})$. 
We may therefore cancel out the interaction terms in \eqref{to_cancel_out} and obtain 
\begin{equation}\label{cascade_for_analogy_alpha_space}
\resizebox{.9\hsize}{!}
{$
\begin{split}
\partial J_{\tau}(u)\phi_{1,j}
= 
\frac{\alpha_{j}}
{(\alpha_{K,\tau}^{\frac{2n}{n-2}})^{\frac{n-2}{n}}}
\bigg(
&
\grave c_{0}(
1
-
\frac{\alpha^{2}}{\alpha_{K,\tau}^{p+1}}
\frac{K_{j}}{\lambda_{j}^{\theta}}\alpha_{j}^{p-1}
) 
-
K_{j}
(
(\grave c_{2}-\frac{\grave b_{1}}{\tilde c_{4}}\tilde c_{2})
\frac{\Delta K_{j}}{K_{j}^{2}\lambda_{j}^{2}}
+
(\grave d_{1}-\frac{\grave b_{1}}{\tilde c_{4}}\tilde c_{3})
\frac{H_{j}}{K_{j}\lambda_{j}^{2 }}
)
\\
& 
+
\frac{1}{\sum_{k}\frac{1}{K_{k}}}
(
(\grave c_{2}-\frac{\grave b_{1}}{\tilde c_{4}}\tilde c_{2})
\sum_{k}\frac{\Delta K_{k}}{K_{k}^{2}\lambda_{k}^{2}}
+
(\grave d_{1}-\frac{\grave b_{1}}{\tilde c_{4}}\tilde c_{3})
\sum_{k}\frac{H_{k}}{K_{k}\lambda_{k}^{2 }} 
\bigg).
\end{split}
$}
\end{equation}
Checking constants for $n=4$, i.e. with $\bar c_{0}=\int_{\R^{n}}\frac{dx}{(1+r^2)^n}=\frac{\omega_{4}}{12}$
\begin{equation*}
\begin{matrix}[ll]
\mathbf{1.}  &
\grave c_{0}=8n(n-1)(\int_{\R^{n}}\frac{dx}{(1+r^2)^n})^{\frac{2}{n}}=16\sqrt{3\omega_{4}}
,\quad 
 \grave c_{2}=\frac{{8n}(n-1)}{\bar c_{0}^{\frac{n-2}{n}}}\frac{1}{2n}\underset{\R^{n}}{\int}\frac{r^{2}}{(1+r^{2})^{n}}
  =  4\sqrt{3\omega_{4}};
\\
\mathbf{2.}  &
 \grave d_{1}=\frac{{8n}(n-1)}{\bar c_{0}^{\frac{n-2}{n}}}\underset{\R^{n}}{\int}\frac{r^{n}}{(1+r^{2})^{n+1}}
  =
  24\sqrt{3\omega_{4}}
 ,\quad
 \grave b_{1}=\frac{8n(n-1)(n+2)}{\bar c_{0}^{\frac{n-2}{n}}(n-2)}\underset{\R^{n}}{\int}\frac{1}{(1+r^{2})^{\frac{n+2}{2}}}
 =
 144\sqrt{3\omega_{4}},
\end{matrix}
\end{equation*}
cf. \eqref{constants_alpha_derivative} from the corresponding  Lemma \ref{lem_alpha_derivatives_at_infinity}, we then find
\begin{equation*}
\vert \partial J_{\tau}(u)\vert
\gtrsim 
\left|
1 
-
\frac{\alpha^{2}K_{j}\alpha_{j}^{p-1}}{\alpha_{K,\tau}^{p+1}\lambda_{j}^{\theta}}
+
\frac{1}{8}
\left(
\frac{\Delta K_{j}}{K_{j}\lambda_{j}^{2}}
-
60\frac{H_{j}}{\lambda_{j}^{2 }}
-
\frac
{
\sum_{k}
(\frac{\Delta K_{k}}{K_{k}^{2}\lambda_{k}^{2}}
-
60
\frac{H_{k}}{K_{k}\lambda_{k}^{2 }}
)
}
{
\sum_{k}\frac{1}{K_{k}}
}
\right)
\right|
+
o(\frac{1}{\lambda^{2}}).
\end{equation*}
Note that setting 
\begin{equation*}
E_{j}=
\frac{1}{8}
\left(
\frac{\Delta K_{j}}{K_{j}\lambda_{j}^{2}}
-
60
\frac{H_{j}}{\lambda_{j}^{2 }}
-
\frac
{
\sum_{k}
(\frac{\Delta K_{k}}{K_{k}^{2}\lambda_{k}^{2}}
-
60
\frac{H_{k}}{K_{k}\lambda_{k}^{2 }}
)
}
{
\sum_{k}\frac{1}{K_{k}}
}
\right), 
\end{equation*}
there holds $E_{j}=O(\frac{1}{\lambda^{2}})$,  
$\sum_{j}\frac{E_{j}} {K_{j}}=0$, and  $\alpha_{j}^{p-1}=\Theta^{p-1}\frac{\lambda_{j}^{\theta}}{K_{j}}(1+E_{j})$ is modulo scaling the unique and non-degenerate maximum of 
\begin{equation*}
\alpha=(\alpha_{1},\ldots,\alpha_{q})
\longrightarrow 
\frac{\alpha^{2}}{(\alpha_{\frac{K}{1+E},\tau}^{p+1})^{\frac{2}{p+1}}}
=
\frac{\sum \alpha_{i}}
{(\sum \frac{K_{i}}{\lambda_{i}^{\theta}(1+E_{i})}\alpha_{i}^{p+1})^{\frac{2}{p+1}}}, 
\end{equation*}
and satisfies
\begin{equation*}
\begin{split}
\frac{\alpha^{2}}{\alpha_{K,\tau}^{p+1}}\frac{K_{j}}{\lambda_{j}^{\theta}}\alpha_{j}^{p-1}
= &
\Theta^{p-1}\cdot \frac{\alpha^{2}}{\alpha_{K,\tau}^{p+1}}(1+E_{j})
=
\frac
{
\sum [\frac{\lambda_{i}^{\theta}}{K_{i}}(1+E_{i})]^{\frac{2}{p-1}}
}
{
\sum \frac{K_{i}}{\lambda_{i}^{\theta}}[\frac{\lambda_{i}^{\theta}}{K_{i}}(1+E_{i})]^{\frac{p+1}{p-1}}
}
(1+E_{j}) \\
= &
\frac
{
\sum (\frac{\lambda_{i}^{\theta}}{K_{i}})^{\frac{2}{p-1}}
+
\frac{2}{p-1}\sum (\frac{\lambda_{i}^{\theta}}{K_{i}})^{\frac{2}{p-1}}E_{i}
}
{
\sum (\frac{\lambda_{i}^{\theta}}{K_{i}})^{\frac{2}{p-1}}
+
\frac{p+1}{p-1}
\sum (\frac{\lambda_{i}^{\theta}}{K_{i}})^{\frac{2}{p-1}}E_{i}
}
(1+E_{j})
=
1+E_{j}
+
o(\frac{1}{\lambda^{3}})
\end{split}
\end{equation*}
due to $(\frac{\lambda_{i}^{\theta}}{K_{i}})^{\frac{2}{p-1}}=\frac{1}{K_{i}}+O(\frac{\ln \lambda}{\lambda^{2}})$.  Thus \eqref{cascade_what_is_to_show} follows unless, 
up to some $o(\frac{1}{\lambda^{2}})$, 
\begin{equation}\label{cascade_gradient_lower_bound_lambda_alpha_n=4}
\vert \partial J_{\tau}(u)\vert
\gtrsim 
\left|
\alpha_{j}
-
\Theta
\sqrt[p-1]
{
\frac{\lambda_{j}^{\theta}}{K_{j}}
\left(
1
+
\frac{1}{8}
(
\frac{\Delta K_{j}}{K_{j}\lambda_{j}^{2}}
-
60
\frac{H_{j}}{\lambda_{j}^{2 }}
-
\frac
{
\sum_{k}
(\frac{\Delta K_{k}}{K_{k}^{2}\lambda_{k}^{2}}
-
60
\frac{H_{k}}{K_{k}\lambda_{k}^{2 }}
)
}
{
\sum_{k}\frac{1}{K_{k}}
}
)
\right)
}
\right|
.
\end{equation}
We have therefore proved (i)-(iv), which will be used for showing 
the second statement of the proposition.  
In this case the error terms in Lemmata \ref{lem_alpha_derivatives_at_infinity}, \ref{lem_lambda_derivatives_at_infinity} and \ref{lem_a_derivatives_at_infinity} are of type
$
o(\lambda^{-3})+O(\vert \partial J_{\tau}(u)\vert^{2}).
$
This follows immediately in case $n\geq 5$, while the terms 
$\varepsilon_{r,s}^{\frac{n+2}{n}}\simeq \lambda^{-3}$ in case $n=4$, for which however the underlying estimates  can be improved to derive a quadratic error in $\varepsilon_{r,s}$, cf. \cite{may-cv}. 
Let us first treat the lower bounds arising from Lemma \ref{lem_a_derivatives_at_infinity}. In case $n\geq 5$ we  find from the latter lemma
\begin{equation*}
\begin{split}
\vert \partial J_{\tau}(u)\vert 
 \gtrsim &
 \vert 
 \check{c}_{3}\frac{\nabla K_{j}}{K_{j}\lambda_{j}}
  +
  \check{c}_{4} \frac{\nabla \Delta K_j}{K_j \lambda_{j}^3}
\vert
\gtrsim
 \vert
 \check{c}_{3}\frac{\nabla K(a_{j})}{\lambda_{j}}
  +
  \check{c}_{4} \frac{\nabla \Delta K(x_{j})}{\lambda_{j}^3}
\vert
\end{split}
\end{equation*}
up to some $o(\lambda^{-3})$
and therefore, writing $a_{j}=\exp_{g_{x_{j}}}(\bar a_{j})$, that 
\begin{equation*}
\begin{split}
\vert \partial J_{\tau}(u)\vert 
\gtrsim &
 \vert
\frac{ \bar a_{j}}{\lambda_{j}}
  +
\frac{  \check{c}_{4}}{\check{c}_{3}} (\nabla^{2}K(x_{j}))^{-1}\frac{\nabla \Delta K(x_{j})}{\lambda_{j}^3}
\vert
+
o(\frac{1}{\lambda^{3}}).
\end{split}
\end{equation*}
Similarly in case $n=4$ we find up to some $o(\lambda^{-3})$
\begin{equation*}
 \vert \partial J_{\tau}(u)\vert 
 \gtrsim 
\vert  \check{c}_{3}\frac{\nabla K_{j}}{K_{j}\lambda_{j}}
  +
  \check{c}_{4} \frac{\nabla \Delta K_j}{K_j \lambda_{j}^3}
  +
  \check{b}_{3}
  \sum_{j\neq i}
  \frac{\alpha_{i}}{\alpha_{j}}
  \frac{\nabla_{a_{j}}G_{g_{0}}(a_{i},a_{j})}{\gamma_{n}\lambda_{i}\lambda_{j}^{2}}
  \vert. 
\end{equation*}
From (iii) we  have $\alpha_{i}=\Theta(\frac{\lambda_{i}^{\theta}}{K_{i}})^{\frac{1}{p-1}}+O(\frac{1}{\lambda^{2}})$, which by $\theta=\frac{n-2}{2}\tau$ and $\lambda_{i}\simeq \tau^{-\frac{1}{2}}$ due to (iv) becomes 
$ 
\alpha_{i}=\frac{\Theta}{\sqrt{K_{i}}}+O(\frac{\ln \lambda}{\lambda^{2}}).
$ 
Thus, still up to some $o(\lambda^{-3})$
\begin{equation*}
 \begin{split}
\vert \partial J_{\tau}(u)\vert 
 \gtrsim &
 \vert \frac{\nabla K(a_{j})}{\lambda_{j}}
  +
  \frac{\check{c}_{4}}{ \check{c}_{3}} \frac{\nabla \Delta K(x_{j})}{\lambda_{j}^3}
  +
\frac{  \check{b}_{3}}{ \check{c}_{3}}
  \sum_{j\neq i}
\sqrt{  \frac{K^{3}(x_{j})}{K(x_{i})} }
  \frac{\nabla_{x_{j}}G_{g_{0}}(x_{i},x_{j})}{\gamma_{n}\lambda_{i}\lambda_{j}^{2}}
  \vert \\
\gtrsim &
\vert 
\frac{\bar a_{j}}{\lambda_{j}}
+
\frac{\check{c}_{4}}{ \check{c}_{3}}
(\nabla^{2}K(x_{j}))^{-1}
\frac{\nabla \Delta K(x_{j})}{\lambda_{j}^3}
+
\frac{  \check{b}_{3}}{ \check{c}_{3}}
  \sum_{j\neq i}
\sqrt{  \frac{K^{3}(x_{j})}{K(x_{i})} }
(\nabla^{2}K(x_{j}))^{-1}
  \frac{\nabla_{x_{j}}G_{g_{0}}(x_{i},x_{j})}{\gamma_{n}\lambda_{i}\lambda_{j}^{2}}
  \vert, 
\end{split}
\end{equation*}
and checking constants from Lemma \ref{lem_a_derivatives_at_infinity}, cf.  \eqref{def_a_constants},  we have 
\begin{equation*}
\check{c}_{3}=\int_{\R^{n}}\frac{4(n-1)(n-2)dx}{(1+r^{2})^n}=3\omega_{4}
,\;
\check{c}_{4}=\underset{\R^{n}}{\int}\frac{2(n-1)r^{2}dx}{(1+r^{2})^{n}}=\omega_{4}
,\; 
\check{b}_{3}=\underset{\R^{n}}{\int}\frac{8n(n-1)dx}{(1+r^{2})^{\frac{n+2}{2}}}
=
24\omega_{4}. 
\end{equation*}
We conclude that, up to some $o(\frac{1}{\lambda^{3}})$
\begin{equation}\label{lower_bound_a_space}
\resizebox{.9\hsize}{!}
{$
\vert \partial J_{\tau}(u) \vert
\gtrsim
\left(
\begin{matrix}
\vert \frac{\bar a_{j}}{\lambda_{j}}
+
\frac{1}{3}
(\nabla^{2}K(x_{j}))^{-1}
\frac{\nabla \Delta K(x_{j})}{\lambda_{j}^3}
+
8
\underset{{j\neq i}}{\sum}
\sqrt{  \frac{K^{3}(x_{j})}{K(x_{i})} }
(\nabla^{2}K(x_{j}))^{-1}
  \frac{\nabla_{x_{j}}G_{g_{0}}(x_{i},x_{j})}{\gamma_{n}\lambda_{i}\lambda_{j}^{2}}
  \vert
& \text{for }\; n=4 \\
 \vert
\frac{ \bar a_{j}}{\lambda_{j}}
  +
\frac{\check c_{4}}{\check c_{3}}
(\nabla^{2}K(x_{j}))^{-1}\frac{\nabla \Delta K(x_{j})}{\lambda_{j}^3}
\vert
& \text{for }\; n\geq 5
\end{matrix}
\right)
$}
\end{equation}
By this, i.e. $\bar a_{j}=O(\frac{1}{\lambda^{2}})$,  and $\alpha_{i}=\frac{\Theta}{\sqrt{K_{i}}}+O(\frac{\ln \lambda}{\lambda^{2}})$ we then infer  from Lemma \ref{lem_lambda_derivatives_at_infinity} that up to some $o(\frac{1}{\lambda^{3}})$ 
  \begin{equation*}
  \begin{split}
  \vert \partial J_{\tau}(u)\vert
  \gtrsim 
  \vert
  \tilde c_{1}\tau
  +
  \tilde c_{2}\frac{\Delta K(x_{j})}{K(x_{j})\lambda_{j}^{2}} 
  +
\frac{n-2}{2}  \tilde b_{2}\sum_{j\neq i }\sqrt{\frac{K(x_{j})}{K(x_{i})}} \frac{G_{g_{0}}(x_{i},x_{j})}{\gamma_{n}(\lambda_{i}\lambda_{j})^{\frac{n-2}{2}}}
 +
  \tilde  d_{1}\frac{H(x_{j})}{\lambda_{j}^{n-2}}
 \vert
  \end{split}
  \end{equation*}
  with constants, cf. above, given for $n=4,5$ respectively by 
\begin{equation*}
\begin{matrix}[ll]
\mathbf{1.} &
\frac{\tilde c_{2}}{\tilde c_{1}}
=
-
\frac
{
\int_{\R^{n}}\frac{r^{2}(1-r^{2})}{(1+r^{2})^{n+1}}dx
}
{n(n-2)\int_{\R^{n}}\frac{1-r^{2}}{(1+r^{2})^{n+1}}\ln(\frac{1}{1+r^{2}})dx}
=
\frac{1}{2},\frac{2}{9};
\\
\mathbf{2.} &
\frac{\tilde c_{3}}{\tilde c_{1}}
=
\frac{\tilde d_{1}}{\tilde c_{1}}
=
\frac{4}{(n-2)^{2}}
\frac
{\int_{\R^{n}}\frac{r^{n}(n+2)-nr^{2}}{(1+r^{2})^{n+2}}dx
}
{
\int_{\R^{n}}\frac{1-r^{2}}{(1+r^{2})^{n+1}}\ln(\frac{1}{1+r^{2}})dx
}
=
12,\frac{512}{9\pi};
\\
\mathbf{3.} &
\frac{n-2}{2}\frac{\tilde b_{2}}{\tilde c_{1}}
=
\frac{\tilde c_{4}}{\tilde c_{1}} =
\frac{2}{n-2}
\frac
{
\int_{\R^{n}}\frac{dx}{(1+r^{2})^{\frac{n+2}{2}}}
}
{
\int_{\R^{n}}\frac{1-r^{2}}{(1+r^{2})^{n+1}}\ln(\frac{1}{1+r^{2}})dx
}=
12,\frac{512}{9\pi}, 
\end{matrix}
\end{equation*}
we conclude
\begin{equation}\label{lower_bound_lambda_space}
\vert \partial J_{\tau}(u) \vert
\gtrsim 
\left(
\begin{matrix}
  \vert
\tau
+
\frac{1}{2}\frac{\Delta K(x_{j})}{K(x_{j})\lambda_{j}^{2}} 
+
12
[
\frac{H(x_{j})}{\lambda_{j}^{2}}
+
\sum_{j\neq i }\sqrt{\frac{K(x_{j})}{K(x_{i})}} \frac{G_{g_{0}}(x_{i},x_{j})}{\gamma_{n}\lambda_{i}\lambda_{j}}
]
 \vert
&\text{for }\; n=4 \\
  \vert
\tau
  +
\frac{2}{9}\frac{\Delta K(x_{j})}{K(x_{j})\lambda_{j}^{2}} 
+
\frac{512}{9\pi}
[
\frac{H(x_{j})}{\lambda_{j}^{3}}
+
\sum_{j\neq i }\sqrt{\frac{K(x_{j})}{K(x_{i})}} \frac{G_{g_{0}}(x_{i},x_{j})}{\gamma_{n}(\lambda_{i}\lambda_{j})^{\frac{3}{2}}}
] 
 \vert
&\text{for }\; n=5 \\
  \vert
\tau
  +
  \frac{\tilde c_{2}}{\tilde c_{1}}\frac{\Delta K_{j}}{K_{j}\lambda_{j}^{2}} 
 \vert&\text{for }\; n \geq 6
\end{matrix}
\right).
\end{equation}
By similar reasoning, using 
$
\bar a_{j}=O(\frac{1}{\lambda^{2}})
\; \text{ and }\;
\alpha_{i}=\frac{\Theta}{\sqrt{K_{i}}}+O(\frac{\ln \lambda}{\lambda^{2}})
$
we finally have, up to some $o(\frac{1}{\lambda^{3}})$
\begin{equation*}
\vert \partial J_{\tau}(u)\vert
\gtrsim 
\left(
\begin{matrix}
\vert 
1 
-
\frac{\alpha^{2}K_{j}\alpha_{j}^{p-1}}{\alpha_{K,\tau}^{p+1}\lambda_{j}^{\theta}}
+
\frac{1}{8}
(
\frac{\Delta K_{j}}{K_{j}\lambda_{j}^{2}}
-
60\frac{H_{j}}{\lambda_{j}^{2 }}
-
\frac
{
\sum_{k}
(\frac{\Delta K_{k}}{K_{k}^{2}\lambda_{k}^{2}}
-
60
\frac{H_{k}}{K_{k}\lambda_{k}^{2 }}
)
}
{
\sum_{k}\frac{1}{K_{k}}
}
)
\vert 
& \text{ for } \; n=4
\\
\vert 
1 
-
\frac{\alpha^{2}K_{j}\alpha_{j}^{p-1}}{\alpha_{K,\tau}^{p+1}\lambda_{j}^{\theta}}
-
\frac{1}{90}
(
\frac{\Delta K_{j}}{K_{j}\lambda_{j}^{2}}
+
\frac{2816}{\pi}\frac{H_{j}}{\lambda_{j}^{3 }}
-
\frac
{
\sum_{k}
(\frac{\Delta K_{k}}{K_{k}^{2}\lambda_{k}^{2}}
+
\frac{2816}{\pi}
\frac{H_{k}}{K_{k}\lambda_{k}^{3 }}
)
}
{
\sum_{k}\frac{1}{K_{k}}
}
)
\vert 
& \text{ for } \; n=5
\\
\vert 
1 
-
\frac{\alpha^{2}K_{j}\alpha_{j}^{p-1}}{\alpha_{K,\tau}^{p+1}\lambda_{j}^{\theta}}
\vert 
& \text{ for } \; n \geq 6
\\
\end{matrix}
\right).
\end{equation*}
This follows in case $n\geq 6$ immediately from Lemma \ref{lem_alpha_derivatives_at_infinity}
and for $n=4$ by repeating the arguments leading to \eqref{to_cancel_out} and  \eqref{cascade_for_analogy_alpha_space}, while the case $n=5$  follows by arguing as in case $n=4$ using 
\eqref{lower_bound_lambda_space} to cancel out the interaction terms  when passing from \eqref{to_cancel_out} to \eqref{cascade_for_analogy_alpha_space}. 
Then arguing as for the passage from \eqref{cascade_for_analogy_alpha_space} to \eqref{cascade_gradient_lower_bound_lambda_alpha_n=4} we finally obtain that up to some 
$o(\frac{1}{\lambda^{3}})$
\begin{equation}\label{lower_bound_alpha_space}
\vert \partial J_{\tau}(u)\vert
\gtrsim 
\left(
\begin{matrix}
\vert 
\alpha_{j}
-
\Theta
\sqrt[p-1]{
\frac{\lambda_{j}^{\theta}}{K_{j}}(
1
+
\frac{1}{8}
(
\frac{\Delta K_{j}}{K_{j}\lambda_{j}^{2}}
-
60\frac{H_{j}}{\lambda_{j}^{2 }}
-
\frac
{
\sum_{k}
(\frac{\Delta K_{k}}{K_{k}^{2}\lambda_{k}^{2}}
-
60
\frac{H_{k}}{K_{k}\lambda_{k}^{2 }}
)
}
{
\sum_{k}\frac{1}{K_{k}}
}
)
)
}
\vert 
& \text{ for } \; n=4
\\
\vert 
\alpha_{j}
-
\Theta
\sqrt[p-1]{
\frac{\lambda_{j}^{\theta}}{K_{j}}(
1
-
\frac{1}{90}
(
\frac{\Delta K_{j}}{K_{j}\lambda_{j}^{2}}
+
\frac{2816}{\pi}\frac{H_{j}}{\lambda_{j}^{3 }}
-
\frac
{
\sum_{k}
(\frac{\Delta K_{k}}{K_{k}^{2}\lambda_{k}^{2}}
+
\frac{2816}{\pi}
\frac{H_{k}}{K_{k}\lambda_{k}^{3 }}
)
}
{
\sum_{k}\frac{1}{K_{k}}
}
)
)
}
\vert 
& \text{ for } \; n=5
\\
\vert 
\alpha_{j}
-
\Theta
\sqrt[p-1]{
\frac{\lambda_{j}^{\theta}}{K_{j}}
}
\vert 
& \text{ for } \; n \geq 6
\\
\end{matrix}
\right).
\end{equation}
Thus the second statement of the theorem follows from combining
\eqref{lower_bound_a_space}, \eqref{lower_bound_lambda_space} and \eqref{lower_bound_alpha_space}.
\end{proof}
 
\noindent In \cite{MM2} the next result will be needed. 
 
 \begin{lemma} 
 \label{lem_upper_bound} 
  For every $u\in V(q,\varepsilon)$ there holds
  \begin{equation*}
 \vert \partial J_{\tau}(u)\vert \
 \lesssim 
 \tau
 +
 \sum_{r\neq s}\frac{\vert \nabla K_{r}\vert}{\lambda_{r}}
 +
 \frac{1}{\lambda_{r}^{2}}
 +
 \frac{1}{\lambda_{r}^{n-2}}
 +
 \vert 1
 -
 \frac{\alpha^{2}}{\alpha_{K,\tau}^{p+1}}
 \frac{K_{r}}{\lambda_{r}^{\theta}}\alpha_{r}^{p-1} \vert
 +
 \varepsilon_{r,s}^{\frac{n+2}{2n}}
 +
 \Vert v \Vert. 
  \end{equation*}
 \end{lemma}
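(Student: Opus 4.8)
The plan is to estimate $\partial J_\tau(u)w$ for arbitrary $w\in W^{1,2}(M,g_0)$ with $\|w\|=1$ via the $\langle\cdot,\cdot\rangle_{L_{g_0}}$-orthogonal splitting $W^{1,2}(M,g_0)=V_u\oplus H_u(q,\varepsilon)$, where $V_u=\mathrm{span}\{\varphi_i,\lambda_i\partial_{\lambda_i}\varphi_i,\tfrac1{\lambda_i}\nabla_{a_i}\varphi_i\}$ and $H_u(q,\varepsilon)=V_u^{\perp_{L_{g_0}}}$, cf.\ \eqref{eq:Hu}. As in the other lemmata of this section we may assume $k_\tau=1$. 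Writing $u=\alpha^i\varphi_i+v$ as in \eqref{eq:v} (so $v\in H_u(q,\varepsilon)$, and by definition of $V(q,\varepsilon)$ the $\alpha_i$ stay bounded away from $0$ and $\infty$ and $u$ in a bounded subset of $W^{1,2}$), the mean value theorem and the uniform boundedness of $\partial^2 J_\tau$ on that bounded set give, for every $\nu\in W^{1,2}(M,g_0)$,
\begin{equation*}
\partial J_\tau(u)\nu=\partial J_\tau(\alpha^i\varphi_i)\nu+\int_0^1\partial^2 J_\tau(\alpha^i\varphi_i+tv)(v,\nu)\,dt=\partial J_\tau(\alpha^i\varphi_i)\nu+O(\|v\|\,\|\nu\|),
\end{equation*}
and the $O(\|v\|)$ here is already among the terms allowed on the right-hand side. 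Since $H_u(q,\varepsilon)=V_u^{\perp_{L_{g_0}}}$, the dual norm splits as $|\partial J_\tau(u)|^2=\sup_{w\in H_u(q,\varepsilon),\,\|w\|=1}|\partial J_\tau(u)w|^2+\sup_{w\in V_u,\,\|w\|=1}|\partial J_\tau(u)w|^2$, so it is enough to bound the two suprema.

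For the $H_u$-component, combining the identity above with Lemma \ref{lem_testing_with_v} yields, for $w\in H_u(q,\varepsilon)$ with $\|w\|=1$,
\begin{equation*}
\partial J_\tau(u)w=O\Big(\sum_r\tfrac{\tau}{\lambda_r^\theta}+\sum_r\tfrac{|\nabla K_r|}{\lambda_r^{1+\theta}}+\sum_r\tfrac1{\lambda_r^{2+\theta}}+\sum_r\tfrac1{\lambda_r^{n-2+\theta}}+\sum_{r\neq s}\tfrac{\varepsilon_{r,s}^{(n+2)/(2n)}}{\lambda_r^\theta}+\|v\|\Big),
\end{equation*}
and since $\lambda_r^\theta=\lambda_r^{\frac{n-2}{2}\tau}\in[1,(1+\varepsilon)^{(n-2)/2}]$ on $V(q,\varepsilon)$, every summand is $\lesssim\tau+\sum_{r\neq s}\tfrac{|\nabla K_r|}{\lambda_r}+\tfrac1{\lambda_r^2}+\tfrac1{\lambda_r^{n-2}}+\varepsilon_{r,s}^{(n+2)/(2n)}+\|v\|$.

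For the $V_u$-component I first note that, by Lemma \ref{lem_interactions}, the Gram matrix of $\{\varphi_i,\lambda_i\partial_{\lambda_i}\varphi_i,\tfrac1{\lambda_i}\nabla_{a_i}\varphi_i\}$ in $\langle\cdot,\cdot\rangle_{L_{g_0}}$ is, for $\varepsilon$ small, a uniformly small perturbation of a positive-definite diagonal matrix (diagonal entries bounded below, off-diagonal ones $O(\varepsilon_{i,j}\ln\varepsilon_{i,j})$ for $i\neq j$ and $O(\lambda_i^{-2}+\lambda_i^{-(n-2)})$ for $i=j$, $k\neq l$), hence uniformly invertible, so that $\sup_{w\in V_u,\,\|w\|=1}|\partial J_\tau(u)w|\lesssim\max_{k,i}|\partial J_\tau(u)\phi_{k,i}|$. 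Each $|\partial J_\tau(u)\phi_{k,i}|$ is then controlled by Lemmata \ref{lem_alpha_derivatives_at_infinity}, \ref{lem_lambda_derivatives_at_infinity}, \ref{lem_a_derivatives_at_infinity}, and a term-by-term inspection shows that every explicit contribution there — $|1-\tfrac{\alpha^2}{\alpha_{K,\tau}^{p+1}}\tfrac{K_j}{\lambda_j^\theta}\alpha_j^{p-1}|$, $\tfrac{|\nabla K_j|}{\lambda_j}$, $\tfrac{|\Delta K_j|}{K_j\lambda_j^2}$, $\tfrac{|\nabla\Delta K_j|}{K_j\lambda_j^3}$, the $H_j$- resp.\ $W_j$-terms (which for every $n\ge3$ amount to $O(\lambda^{-\min(2,\,n-2)})$), the $\varepsilon_{i,j}$-interactions — as well as every error term ($\tau^2$, $\tfrac{|\nabla K_r|^2}{\lambda_r^2}$, $\tfrac1{\lambda_r^4}$, $\tfrac1{\lambda_r^{2(n-2)}}$, $\varepsilon_{r,s}^{(n+2)/n}$) is $\lesssim\tau+\sum_{r\neq s}\tfrac{|\nabla K_r|}{\lambda_r}+\tfrac1{\lambda_r^2}+\tfrac1{\lambda_r^{n-2}}+\varepsilon_{r,s}^{(n+2)/(2n)}$, using only that $|\nabla K|,|\Delta K|,|\nabla\Delta K|$ are bounded, $\lambda_r$ large and $\varepsilon_{r,s}\le1$.

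Collecting these two bounds one obtains $|\partial J_\tau(u)|\le CR+C|\partial J_\tau(u)|^2$ with $R$ the right-hand side of the assertion. The one delicate point is the self-referential error $|\partial J_\tau(u)|^2$ inherited from the derivative lemmata; the plan is to absorb it, noting that on $V(q,\varepsilon)$ one has $|\partial J_\tau(u)|=o_\varepsilon(1)$ — a direct consequence of the defining conditions of $V(q,\varepsilon)$, which make $u$ an approximate solution of \eqref{eq:scin-tau}, cf.\ Proposition \ref{blow_up_analysis} — so that for $\varepsilon$ small $C|\partial J_\tau(u)|^2\le\tfrac12|\partial J_\tau(u)|$ and hence $|\partial J_\tau(u)|\le2CR$. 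Alternatively the quadratic term is avoided altogether by estimating $\partial J_\tau(\alpha^j\varphi_j)\phi_{k,i}$, whose expansion carries no $\partial J_\tau(u)$, in place of $\partial J_\tau(u)\phi_{k,i}$, the difference being the $O(\|v\|)$ term already accounted for.
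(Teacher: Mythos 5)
Your proof is correct and follows essentially the same route as the paper's: one bounds $\vert \partial J_{\tau}(u)\vert$ by its components along the $\phi_{k,i}$ (controlled via Lemmata \ref{lem_alpha_derivatives_at_infinity}, \ref{lem_lambda_derivatives_at_infinity} and \ref{lem_a_derivatives_at_infinity}) and along $H_{u}(q,\varepsilon)$ (controlled via Lemma \ref{lem_testing_with_v} together with the $O(\Vert v \Vert)$ correction from the mean value theorem). Your explicit absorption of the self-referential $\vert \partial J_{\tau}(u)\vert^{2}$ error term is a point the paper leaves implicit, and you handle it correctly.
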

 \begin{proof}
 Recalling \eqref{eq:Hu} we can find $\vert \beta_{k.i}\vert,\vert \beta\vert =O(1)$
 and
 $ 
 \nu \in H_{u}(p,\varepsilon),\| \nu \| =1
 $ 
 such that 
 \begin{equation*}
 \vert \partial J_{\tau}(u)\vert
 \lesssim
 \vert \beta^{k,i}\vert \vert \partial J_{\tau}(u)\phi_{k,i}\vert 
 +
 \vert \beta \vert \vert \partial J_{\tau}(u) \nu \vert
 \lesssim
 \sum_{k,i}\vert \partial J_{\tau}(u)\phi_{k,i}\vert + \vert \partial J_{\tau}(u) \nu \vert.
 \end{equation*}
 From Lemmata \ref{lem_alpha_derivatives_at_infinity}, \ref{lem_lambda_derivatives_at_infinity} 
 and \ref{lem_a_derivatives_at_infinity} we then find 
 \begin{equation*}
 \sum_{k,i}\vert \partial J_{\tau}(u)\phi_{k,i}\vert
 \lesssim 
 \tau +\sum_{j=1}^{q}\frac{\vert \nabla K_{j}\vert}{\lambda_{j}}+\frac{1}{\lambda_{j}^{2}}
 +
 \frac{1}{\lambda_{j}^{n-2}} 
  +
 \vert 1
 -
 \frac{\alpha^{2}}{\alpha_{K,\tau}^{p+1}}
 \frac{K_{j}}{\lambda_{j}^{\theta}}\alpha_{j}^{p-1} \vert
 +
 \sum_{r\neq s}\varepsilon_{r,s}
 +
 \vert \partial J_{\tau}(u)\vert^{2},
 \end{equation*}
 whereas from Lemma \ref{lem_testing_with_v} we have
 \begin{equation*}
  \partial J_{\tau}(u)\nu 
 = 
  \partial J_{\tau}(\alpha^{i}\varphi_{i}) \nu +O(\Vert v \Vert) 
 = 
 O
 (
 \tau
 +
 \sum_{r}\frac{\vert \nabla K_{r}\vert}{\lambda_{r}}
 +
 \frac{1}{\lambda_{r}^{2}}
 +
 \frac{1}{\lambda_{r}^{n-2}}
 +
 \sum_{r\neq s}\varepsilon_{r,s}^{\frac{n+2}{2n}}
 +
 \Vert v \Vert).
 \end{equation*} 
 From this the claim follows.
 \end{proof}

 \section{Appendix} \subsection{Interactions}
 \begin{proof}[Proof of Lemma \ref{lem_interactions}] 
\begin{enumerate} [label=(\roman*)]
 \item follows using straightforwardly the expression of $\phi_{k,i}$.  
\item
\begin{enumerate}
 \item[$(\alpha)$] \quad\quad  {\bf Case $k=1$.} 
We have $\phi_{k,i}=\varphi_{i}$ for $k=1$, and thus for $c>0$ small
\begin{equation*}\begin{split}
\int \varphi_{i}^{\frac{2n}{n-2}-\tau}d\mu_{g_{0}}
= &
\int_{B_{c}(a_{i})}u_{a_{i}}^{-\tau}\bigg(\frac{\lambda_{i}}{1+\lambda_{i}^{2}\gamma_{n}G^{\frac{2}{2-n}}_{a_{i}}}\bigg)^{n-\theta}d\mu_{g_{a_{i}}}
+
O\bigg(\frac{1}{\lambda_{i}^{n-\theta}}\bigg).
\end{split}\end{equation*}
\quad \quad \;\;
On $B_{c}(a_{i})$ one has  $u_{a_{i}}^{-\tau}=1+O(\tau \vert x -a_{i}\vert^{2})$,  and by  \eqref{eq:bubbles} 
\begin{equation*}
\gamma_{n}G_{a_{i}}^{\frac{2}{2-n}}
=
r^{2}+
O
\begin{pmatrix}
r^{3} & \text{ for } n= 3\\
r^{4} & \text{ for } n= 4\\
r^{5} & \text{ for } n= 5\\
r^6 \ln r  & \text{ for } n=6 \\
r^{6} & \text{ for } n\geq 7
\end{pmatrix},
\end{equation*}
\quad \quad \;\;
whence passing to normal coordinates at $a_i$ 
\begin{equation*}\begin{split}
\int \varphi_{i}^{\frac{2n}{n-2}- \tau}d\mu_{g_{0}}
= &
\underset{B_{c\lambda_{i}}(0)}{\int}
\frac{\lambda_{i}^{-\theta}dx}{(1+r^{2})^{n-\theta}} +
O
\begin{pmatrix}
\frac{1}{\lambda^{1+\theta}} & \text{for } n= 3\\
\frac{1}{\lambda^{2+\theta}} & \text{for } n= 4\\
\frac{1}{\lambda^{3+\theta}} & \text{for } n= 5\\
\frac{\ln \lambda}{ \lambda ^{4+\theta}} & \text{for } n=6 \\
\frac{1}{\lambda^{4+\theta}} & \text{for } n\geq 7
\end{pmatrix}
\end{split}\end{equation*}
\quad \quad \;\;
up to some error 
$O(\frac{\tau}{\lambda_{i}^{2+\theta}})$,
whence the claim follows with $c_{1}=\underset{\R^{n}}{\int} \frac{dx}{(1+r^{2})^{n}}$. 
 \item[$(\beta)$] \quad\quad  {\bf Case $k=2$.} 
The proof works analogously to the one of case $k=1$ above.
 \item[$(\gamma)$] \quad\quad  {\bf Case $k=3$.} 
We have
$
\phi_{k,i}
= 
\frac{2-n}{2} u_{a_{i}}
\frac
{\lambda_{i}\gamma_{n}\nabla_{a_{i}}G^{\frac{2}{2-n}}_{a_{i}}}
{1+\lambda_{i}^{2}\gamma_{n}G^{\frac{2}{2-n}}_{a_{i}}}\varphi_{i}
+
\frac{\nabla_{a_{i}}u_{a_{i}}}{\lambda_{{i}}}\varphi_{i},
$ 
 whence 
\begin{equation*}
\gamma_{n}(\nabla_{a_{i}}G^{\frac{2}{2-n}}_{a_{i}})(x)
=
-2x
+
O
(
r^{2},
r^{3},
r^{4},
r^5 \ln r,
r^{5}
)
\; \text{ for }\; 
n=3,\ldots,6 \; \text{ and }\; n\geq 7.
\end{equation*}
\quad \quad \;\;Moreover $u_{a_{i}}=1+O(r_{a_{i}}^{2})$, implies $\nabla_{a_{i}}u_{a_{i}}=O(r_{a_{i}})$. Thus 
\begin{equation*}\begin{split}
& \int \varphi_{i}^{\frac{4}{n-2}-\tau}\vert \phi_{k,i}\vert^{2}d\mu_{g_{0}}
= 
\frac{(n-2)^{2}}{n}\underset{\R^{n}}{\int} \frac{\lambda_{i}^{-\theta }r^{2}dx}{(1+r^{2})^{n+2-\theta}} 
+
O\big(\frac{1}{\lambda_{i}^{2+\theta}}\big)
+
O
\begin{pmatrix}
\lambda^{-1-\theta} & \text{for } n=3\\
\lambda^{-2-\theta} & \text{for } n=4\\
\lambda^{-3-\theta} & \text{for } n=5\\
\frac{\ln \lambda}{\lambda^{4+\theta}}  & \text{for } n=6 \\
\lambda^{-4-\theta} & \text{for } n\geq 7
\end{pmatrix}.
\end{split}\end{equation*}
\quad \quad \;\;
From this the claim follows. 
\end{enumerate}
 \item  
We just prove the case $k=2$ and start showing that 
\begin{equation}\begin{split}\label{non_linear_symmetry}
-\lambda_{i}^{\theta}\lambda_{j}\int \varphi_{i}^{\frac{n+2}{n-2}-\tau}\partial_{ \lambda_{j}}\varphi_{j}d\mu_{g_{0}}
= &
-\lambda_{i}^{\theta}\lambda_{j}\int \varphi_{i}^{1-\tau}\partial_{\lambda_{j}} \varphi_{j}^{\frac{n+2}{n-2}}d\mu_{g_{0}}
\end{split}\end{equation}
up to some 
$
O\big(\tau^{2} + \sum_{i\neq j}\big(\frac{1}{\lambda_{i}^{4}}+\frac{1}{\lambda_{i}^{2(n-2)}}+\varepsilon_{i,j}^{\frac{n+2}{n}}\big)\big), 
$ 
  so we may evaluate either of these integrals. Clearly
\begin{equation*}
\begin{split}
-\lambda_{i}^{\theta}\lambda_{j}\int \varphi_{i}^{\frac{n+2}{n-2}-\tau}\partial_{ \lambda_{j}}\varphi_{j}d\mu_{g_{0}}
= &
-\lambda_{i}^{\theta}\lambda_{j}\int_{B_{c}(a_{i})} \varphi_{i}^{\frac{n+2}{n-2}-\tau}\partial_{ \lambda_{j}}\varphi_{j}d\mu_{g_{0}}
\end{split}
\end{equation*}
up to an error $O(  \frac{1}{\lambda_{i}^{\frac{n+2}{2}}}  \frac{1}{\lambda_{j}^{\frac{n-2}{2}}})$,
whence using Lemma \ref{lem_emergence_of_the_regular_part} we find
\begin{equation*}
\begin{split}
-\lambda_{i}^{\theta}\lambda_{j}\int \varphi_{i}^{\frac{n+2}{n-2}-\tau}\partial_{ \lambda_{j}}\varphi_{j}d\mu_{g_{0}}
= &
-\lambda_{i}^{\theta}\lambda_{j}\int_{B_{c}(a_{i})} \varphi_{i}^{-\tau}\partial_{ \lambda_{j}}\varphi_{j} \frac{L_{g_{0}}\varphi_{i}}{4n(n-1)}d\mu_{g_{0}}
\end{split}
\end{equation*}
up to  $O(\lambda_{i}^{-2}+\lambda_{i}^{-(n-2)})\varepsilon_{i,j}^{\frac{n+2}{2n}})$. Indeed we clearly have 
$ 
\lambda_{i}^{-\frac{n+2}{2}}\lambda_{j}^{-\frac{n-2}{2}}=O\big(\lambda_{i}^{-1}\varepsilon_{i,j}^{\frac{n+2}{2n}}\big), 
$ 
and the difference from $L_{g_{0}}\varphi_{i}$ to $4n(n-1)\varphi_{i}^{\frac{n+2}{n-2}}$ can be estimated by Lemma \ref{lem_emergence_of_the_regular_part} via 
quantities of the type 
\begin{equation*}
\underset{B_{c}(a_{i})}{\int} r_{a_{i}}^{\alpha}\varphi_{i}^{\beta}\varphi_{j}d\mu_{g_{0}}
= 
\underset{B_{c}(a_{i})}{\int}  r_{a_{i}}^{\alpha}\varphi_{i}^{\beta-\frac{n+2}{2n}}\varphi_{i}^{\frac{n+2}{2n}}\varphi_{j}d\mu_{g_{0}} 
= 
O
\big(
\varepsilon_{i,j}^{\frac{n+2}{2n}}\Vert r^{\alpha}\varphi_{0,\lambda}^{\beta-\frac{n+2}{2n}}\Vert_{L^{(\frac{2n}{n+2})^{2}}}
\big), 
\end{equation*}
thanks to case (v). Passing back to integrating on the whole manifold $M$ we find , estimating also 
mixed products of gradients of $\varphi_{i}$ and $\varphi_{j}$,
\begin{equation*}
\begin{split}
-\lambda_{i}^{\theta}\lambda_{j}\int \varphi_{i} & ^{\frac{n+2}{n-2}-\tau}\partial_{ \lambda_{j}}\varphi_{j}d\mu_{g_{0}} 
= 
-(1+O(\tau))\lambda_{i}^{\theta}\lambda_{j}\int_{B_{c}(a_{i})} \varphi_{i}^{1-\tau}\partial_{ \lambda_{j}}\frac{L_{g_{0}}\varphi_{j}}{4n(n-1)}d\mu_{g_{0}} \\
& +
O
\big(\lambda_{i}^{\theta}\int \varphi_{i}\Delta_{g_{0}}\varphi_{i}^{-\tau}\varphi_{j}d\mu_{g_{0}}\big)
+
O\big(\big(\frac{1}{\lambda_{i}^{2}}+\frac{1}{\lambda_{i}^{n-2}}\big)\varepsilon_{i,j}^{\frac{n+2}{2n}}\big).
\end{split}
\end{equation*}
By direct calculation $\Delta_{g_{0}}\varphi_{i}^{-\tau}=O(\tau \varphi_{i}^{\frac{4}{n-2}-\tau})$, whence
\begin{equation*}
\begin{split}
-\lambda_{i}^{\theta}\lambda_{j}\int \varphi_{i}^{\frac{n+2}{n-2}-\tau}\partial_{ \lambda_{j}}\varphi_{j}d\mu_{g_{0}}
= &
-\lambda_{i}^{\theta}\lambda_{j}\int\varphi_{i}^{1-\tau}\partial_{ \lambda_{j}}\frac{L_{g_{0} \varphi_j}}{4n(n-1)}  d\mu_{g_{0}}
+
O((\tau + \frac{1}{\lambda_{i}^{2}}+\frac{1}{\lambda_{i}^{n-2}})\varepsilon_{i,j}^{\frac{n+2}{2n}}).
\end{split}
\end{equation*}
Now applying Lemma \ref{lem_emergence_of_the_regular_part} as before, but
in differentiated form, \eqref{non_linear_symmetry}  follows. Let  
\begin{equation*}
R_{i,j}
=
O\big(\tau^{2}+\sum_{i\neq j}\big(\frac{1}{\lambda_{i}^{4}}+\frac{1}{\lambda_{i}^{2(n-2)}}+\varepsilon_{i,j}^{\frac{n+2}{n}}\big)\big)
\end{equation*}
denote a quantity such order. We now assume the non-exclusive alternative
\begin{equation}\label{lambda_d_lambda_interaction_discernation_1}
\begin{split}
\varepsilon_{i,j}^{\frac{2}{2-n}}\sim \frac{\lambda_{i}}{\lambda_{j}}
\quad \vee \quad 
\varepsilon_{i,j}^{\frac{2}{2-n}}\sim \lambda_{i}\lambda_{j}d^{2}(a_{i},a_{j})\gg \frac{\lambda_{j}}{\lambda_{i}}.
\end{split}
\end{equation} 
For $c>0$ small and fixed we have by the expression in \eqref{eq:bubbles}
\begin{equation*}\begin{split}
&-\lambda_{i}^{\theta}\lambda_{j}\int  \varphi_{i}^{\frac{n+2}{n-2}-\tau}\partial_{\lambda_{j}}\varphi_{j}d\mu_{g_{0}}
\\ & = 
\frac{n-2}{2}\lambda_{i}^{\theta}
\underset{B_{c}( a _{i})}{\int}
\big(\frac{ \lambda_{i} }{ 1+\lambda_{i}^{2}\gamma_{n}G_{ a _{i}}^{\frac{2}{2-n}}}\big)^{\frac{n+2}{2}-\theta}\frac{u_{ a _{j}}}{u_{ a _{i}}^{1+\tau}} 
\big(\frac{ \lambda_{j} }{ 1+\lambda_{j}^{2}\gamma_{n}G_{ a _{j}}^{\frac{2}{2-n}}}\big)^{\frac{n-2}{2}}
\frac{\lambda_{j}^{2}\gamma_{n}G_{a_{j}}^{\frac{2}{2-n}}-1}{\lambda_{j}^{2}\gamma_{n}G_{a_{j}}^{\frac{2}{2-n}}+1}
d\mu_{g_{ a _{i}}}
+
R_{i,j},
\end{split}\end{equation*}
whence passing to $g_{a_{i}}$-normal coordinates and recalling  \eqref{eq:bubbles} we find
\begin{equation}\begin{split}\label{expanding_lambdaipartiallambdaiepsij}
-\lambda_{i}^{\theta}\lambda_{j}\int \varphi_{i}^{\frac{n+2}{n-2}-\tau}\partial_{\lambda_{j}}\varphi_{j}d\mu_{g_{0}}
=  &
\frac{n-2}{2}
\underset{B_{ c\lambda_{i} }(0)}{\int}
\frac{u_{ a _{j}}( a _{i})}{(1+r^{2})^{\frac{n+2}{2}-\theta}} 
\frac{\lambda_{j}^{2}\gamma_{n}G_{a_{j}}^{\frac{2}{2-n}}(\exp_{g_{ a _{i}}}\frac{x}{\lambda_{i}})-1}
{\lambda_{j}^{2}\gamma_{n}G_{a_{j}}^{\frac{2}{2-n}}(\exp_{g_{ a _{i}}} \frac{x}{\lambda_{i}})+1}
\\
& \quad\quad\quad\quad\quad\;\,
\bigg(\frac{1}{\frac{\lambda_{i} }{ \lambda_{j} }+\lambda_{i}\lambda_{j}\gamma_{n}G_{ a _{j}}^{\frac{2}{2-n}}
(\exp_{g_{ a _{i}}} \frac{x}{\lambda_{i}})}\bigg)^{\frac{n-2}{2}} d\mu_{g_{a_{i}}}
\end{split}\end{equation}
up to the error 
$R_{i,j}$. 
Indeed for e.g. $n\geq 7$ \eqref{eq:bubbles} tells us that on $B_{c}(0)$
\begin{equation*}
\big(\frac{ \lambda_{i} }{ 1+\lambda_{i}^{2}\gamma_{n}G_{ a _{i}}^{\frac{2}{2-n}}}\big)^{\frac{n+2}{2}-\theta}
= 
\big(\frac{\lambda_{i}}{1+\lambda_{i}^{2}r^{2}}\big)^{\frac{n+2}{2}-\theta}\big(1+O\big(\frac{\lambda^{2}r^{4}}{1+\lambda^{2}r^{2}}\big)\big) 
= 
\big(\frac{\lambda_{i}}{1+\lambda_{i}^{2}r^{2}}\big)^{\frac{n+2}{2}-\theta}(1+r^{2}) 
\end{equation*}
in conformal normal coordinates, whence by H\"older's inequality and Lemma \ref{lem_interactions}
\begin{equation*}
\int_{B_{c}(a_{i})} r_{a_{i}}^{2}\varphi_{i}^{\frac{n+2}{n-2}-\tau}\varphi_{j}d\mu_{g_{a_{i}}}
\leq 
\Vert  r^{2}\varphi_{0,\lambda}^{\frac{n+2}{n-2}-\frac{n+2}{2n}-\tau}\Vert_{L^{(\frac{2n}{n+2})^{2}}}\varepsilon_{i,j}^{\frac{n+2}{2n}}
=
O\big(\frac{\varepsilon_{i,j}^{\frac{n+2}{2n}}}{\lambda_{i}^{2+\theta}}\big).
\end{equation*}
Due to \eqref{lambda_d_lambda_interaction_discernation_1} we have that either 
\begin{equation*}\begin{split}
\varepsilon_{i,j}^{\frac{2}{2-n}}
\sim 
\lambda_{i}\lambda_{j}\gamma_{n}G^{\frac{2}{2-n}}( a _{i}, a _{j})
 \quad \text{ or } \quad
\varepsilon_{i,j}^{\frac{2}{2-n}}
\sim
\frac{\lambda_{i}}{\lambda_{j}}, 
\end{split}\end{equation*}
and for $\epsilon > 0$ sufficiently small may expand on 
\begin{equation*}\begin{split}
\mathcal{A}
= &
\left\{ \big\vert  \frac{x}{\lambda_{i}}\big\vert \leq \epsilon\sqrt{\gamma_{n}G^{\frac{2}{2-n}}_{ a _{j}}( a _{i})}\right\} 
\cup
\left\{ \big\vert  \frac{x}{\lambda_{i}}\big\vert \leq \epsilon  \frac{1}{\lambda_{j}} \right\} 
\subset 
B_{c\lambda_{i}}(0)
\end{split}\end{equation*} 
the integrand in \eqref{expanding_lambdaipartiallambdaiepsij} as
\begin{equation*}\begin{split}
& 
\frac{1}
{
(\frac{\lambda_{i} }{ \lambda_{j} }
+
\lambda_{i}\lambda_{j}\gamma_{n}G_{ a _{j}}^{\frac{2}{2-n}}(\exp_{g_{ a _{i}}} \frac{x}{\lambda_{i}}))^{\frac{n-2}{2}} }
\frac{\lambda_{j}^{2}\gamma_{n}G_{a_{j}}^{\frac{2}{2-n}}(\exp_{g_{ a _{i}}} \frac{x}{\lambda_{i}})-1}
{\lambda_{j}^{2}\gamma_{n}G_{a_{j}}^{\frac{2}{2-n}}(\exp_{g_{ a _{i}}} \frac{x}{\lambda_{i}})+1}
\\
& \quad = 
(
\frac{\lambda_{i} }{ \lambda_{j} }
+
\lambda_{i}\lambda_{j}
\gamma_{n}G_{ a _{j}}^{\frac{2}{2-n}}( a _{i})
)^{\frac{2-n}{2}} 
\frac
{\lambda_{j}^{2}\gamma_{n}G_{a_{j}}^{\frac{2}{2-n}}(a_{i})-1}
{\lambda_{j}^{2}\gamma_{n}G_{a_{j}}^{\frac{2}{2-n}}(a_{i})+1}\\
& \quad \; +
\frac{2-n}{2}
\frac
{
\gamma_{n}\nabla G^{\frac{2}{2-n}}_{ a _{j}}( a _{i})\lambda_{j}x
}
{
(
\frac{\lambda_{i} }{ \lambda_{j} }
+
\lambda_{i}\lambda_{j}\gamma_{n}G_{ a _{j}}^{\frac{2}{2-n}}( a _{i})
)^{\frac{n}{2}}
}
\frac
{\lambda_{j}^{2}\gamma_{n}G_{a_{j}}^{\frac{2}{2-n}}(a_{i})-1}
{\lambda_{j}^{2}\gamma_{n}G_{a_{j}}^{\frac{2}{2-n}}(a_{i})+1}
\\
& \quad \;+
\frac
{
2
}
{
(
\frac{\lambda_{i} }{ \lambda_{j} }
+
\lambda_{i}\lambda_{j}\gamma_{n}G_{ a _{j}}^{\frac{2}{2-n}}( a _{i})
)^{\frac{n}{2}}
}
\frac
{\gamma_{n}\nabla G_{a_{j}}^{\frac{2}{2-n}}(a_{i})\lambda_{j}x}
{1+\lambda_{j}^{2}\gamma_{n}G_{a_{j}}^{\frac{2}{2-n}}(a_{i})}
 +
\frac
{
O(\frac{ \lambda_{j} }{ \lambda_{i} }\vert x\vert^{2})
}
{
(
\frac{\lambda_{i} }{ \lambda_{j} }
+
\lambda_{i}\lambda_{j}\gamma_{n}G_{ a _{j}}^{\frac{2}{2-n}}( a _{i})
)^{\frac{n}{2}}
}.
\end{split}\end{equation*}
Using radial symmetry we then get, with $\tilde b_{2}=\frac{n-2}{2}\underset{\R^{n}}{\int} \frac{dx}{(1+r^{2})^{\frac{n+2}{2}}}=\frac{n-2}{2}b_{1}$, 
\begin{equation*}\begin{split}
-\lambda_{i}^{\theta}\lambda_{j}\int  \varphi_{i}^{\frac{n+2}{n-2}}\varphi_{j} d\mu_{g_{0}}
= &
\frac
{\tilde b_{2}u_{ a _{j}}( a _{i})}
{(\frac{\lambda_{i} }{ \lambda_{j} }
+
\lambda_{i}\lambda_{j}
\gamma_{n}G_{ a _{j}}^{\frac{2}{2-n}}( a _{i})
)^{\frac{n-2}{2}}
}
\frac
{\lambda_{j}^{2}\gamma_{n}G_{a_{j}}^{\frac{2}{2-n}}(a_{i})-1}
{\lambda_{j}^{2}\gamma_{n}G_{a_{j}}^{\frac{2}{2-n}}(a_{i})+1} 
\end{split}\end{equation*}
up to errors of the form  $R_{i,j}$ and $I_{\mathcal{A}^{c}}$, where
\begin{equation*}
\begin{split}
I_{\mathcal{A}^{c}}
\simeq 
\underset{\mathcal{A}^{c}}{\int}
\frac{1}{(1+r^{2})^{\frac{n+2}{2}-\theta}} 
\big(\frac{1}{\frac{\lambda_{i} }{ \lambda_{j} }+\lambda_{i}\lambda_{j}\gamma_{n}G_{ a _{j}}^{\frac{2}{2-n}}
(\exp_{g_{ a _{i}}} \frac{x}{\lambda_{i}})}\big)^{\frac{n-2}{2}}d\mu_{g_{a_{i}}}.
\end{split}
\end{equation*}
In case $\varepsilon_{i,j}^{\frac{2}{2-n}}\sim \frac{\lambda_{i}}{\lambda_{j}}$, we obviously have
\begin{equation*}
\begin{split}
I_{\mathcal{A}^{c}}
\leq 
C\big(\frac{\lambda_{j}}{\lambda_{i}}\big)^{\frac{n+2}{2}-2\theta}
=
o(\varepsilon_{i,j}^{\frac{n+2}{n}}).
\end{split}
\end{equation*} 
Otherwise we may assume $\mathcal{A}^{c}\neq \emptyset$, thus $d(a_{i},a_{j})\ll 1$, and write 
$
\mathcal{A}^{c}
\subseteq 
\, \mathcal{B}_{1}\cup \mathcal{B}_{2},
$
where
\begin{equation*}
\begin{split}
\mathcal{B}_{1}
= &
\left\{ \epsilon \sqrt{\gamma_{n}G^{\frac{2}{2-n}}_{a_{j}}(a_{i})}
\leq 
\big\vert \frac{x}{\lambda_{i}}\big\vert 
\leq 
E\sqrt{\gamma_{n}G^{\frac{2}{2-n}}_{a_{j}}(a_{i})}\right\} 
\; \text{ and } \;
\mathcal{B}_{2}
= 
\left\{ 
E\sqrt{\gamma_{n}G^{\frac{2}{2-n}}_{a_{j}}(a_{i})}
\leq 
\big\vert \frac{x}{\lambda_{i}}\big\vert 
\leq c
\right\}
\end{split}
\end{equation*} 
 for a sufficiently large constant $E>0$.
We then may estimate
\begin{equation*}
\begin{split}
I_{\mathcal{B}_{1}}
= &
\underset{\mathcal{B}_{1}}{\int} \frac{1}{(1+r^{2})^{\frac{n+2}{2}-\theta}}
 \big(\frac{1}{\frac{\lambda_{i} }{ \lambda_{j} }+\lambda_{i}\lambda_{j}\gamma_{n}G_{ a _{j}}^{\frac{2}{2-n}}(\exp_{g_{ a _{i}}} \frac{x}{\lambda_{i}})}\big)^{\frac{n-2}{2}}d\mu_{g_{a_{i}}} \\
\leq &
\frac{C(\frac{\lambda_{i}}{\lambda_{j}})^{\frac{n+2}{2}}}{(1+\lambda_{i}^{2}\gamma_{n}G^{\frac{2}{2-n}}_{a_{j}}(a_{i}))^{\frac{n+2}{2}-\theta}} 
\underset{\left\{ \vert \frac{x}{\lambda_{j}}\vert \leq E\sqrt{\gamma_{n}G^{\frac{2}{2-n}}_{a_{j}}(a_{i})}\right\}}{\int} 
 \big(\frac{1}{1+\lambda_{j}^{2}\gamma_{n}G_{ a _{j}}^{\frac{2}{2-n}}(\exp_{g_{ a _{i}}} \frac{x}{\lambda_{j}})}\big)^{\frac{n-2}{2}}
d\mu_{g_{a_{i}}}.
\end{split}
\end{equation*} 
Changing coordinates via $d_{i,j}=\exp_{g_{a_{i}}}^{-1}\exp_{g_{a_{j}}}$, we get
\begin{equation*}
\begin{split}
I_{\mathcal{B}_{1}}
\leq &
\frac{C}{(\frac{\lambda_{j}}{\lambda_{i}}+\lambda_{i}\lambda_{j}G^{\frac{2}{2-n}}_{a_{j}}(a_{i}))^{\frac{n+2}{2}-\theta}}
\underset{\left\{ \vert \frac{x}{\lambda_{j}}\vert \leq \tilde  E d(a_{i},a_{j}) \right\}}{\int} 
 \big(\frac{1}{1+r^{2}}\big)^{\frac{n-2}{2}} dx, 
\end{split}
\end{equation*} 
and thus $I_{\mathcal{B}_{1}}=O(\varepsilon_{i,j}^{\frac{n}{n-2}-\tau})=o(\varepsilon_{i,j}^{\frac{n+2}{n}})$ using, \eqref{lambda_d_lambda_interaction_discernation_1}. Moreover
\begin{equation*}
\begin{split}
I_{\mathcal{B}_{2}}
= &
\underset{\mathcal{B}_{2}}{\int} \frac{ 1}{(1+r^{2})^{\frac{n+2}{2}-\theta}}
 (\frac{1}{\frac{\lambda_{i} }{ \lambda_{j} }+\lambda_{i}\lambda_{j}\gamma_{n}G_{ a _{j}}^{\frac{2}{2-n}}(\exp_{g_{ a _{i}}} \frac{x}{\lambda_{i}})})^{\frac{n-2}{2}}d\mu_{g_{a_{i}}} \\
 \leq &
\frac{C}{(\frac{\lambda_{i} }{ \lambda_{j} }+\lambda_{i}\lambda_{j}\gamma_{n}G_{ a _{j}}^{\frac{2}{2-n}}(a_{i}))^{\frac{n-2}{2}}} 
\underset{\left\{\vert x \vert \geq \sqrt{\lambda_{i}^{2}\gamma_{n}G^{\frac{2}{2-n}}_{a_{j}}(a_{i})}\right\}}{\int}
\frac{dx}{(1+r^{2})^{\frac{n+2}{2}}}.
\end{split}
\end{equation*} 
This shows
$
I_{\mathcal{A}^{c}}\lesssim I_{\mathcal{B}_{1}}+I_{\mathcal{B}_{2}}=o(\varepsilon_{i,j}^{\frac{n+2}{n}})
$, 
and we arrive at
\begin{equation*}\begin{split}
-&\lambda_{i}^{\theta}\lambda_{j}\int  \varphi_{i}^{\frac{n+2}{n-2}}\varphi_{j} d\mu_{g_{0}}
=
\frac
{\tilde b_{2}u_{ a _{j}}( a _{i})}
{(\frac{\lambda_{i} }{ \lambda_{j} }
+
\lambda_{i}\lambda_{j}
\gamma_{n}G_{ a _{j}}^{\frac{2}{2-n}}( a _{i})
)^{\frac{n-2}{2}}
}
\frac
{\lambda_{j}^{2}\gamma_{n}G_{a_{j}}^{\frac{2}{2-n}}(a_{i})-1}
{\lambda_{j}^{2}\gamma_{n}G_{a_{j}}^{\frac{2}{2-n}}(a_{i})+1} 
\end{split}\end{equation*}
up to some error of the form $R_{i,j}$. Due to conformal covariance, there holds
\begin{equation*}\begin{split}
G_{a_{j}}(a_{j},a_{i})=u_{a_{j}}^{-1}(a_{i})u_{a_{j}}^{-1}(a_{j})G_{g_{0}}(a_{i},a_{j})
\end{split}\end{equation*}
and we therefore conclude
\begin{equation}\label{didj_first_two_cases}
-\lambda_{i}^{\theta}\lambda_{j}\int \varphi_{i}^{\frac{n+2}{n-2}}\partial_{\lambda_{j}}\varphi_{j}d\mu_{g_{0}}
= 
b_{2}
\frac
{
\lambda_{i}\lambda_{j}\gamma_{n}G_{g_{0}}^{\frac{2}{2-n}}(a_{i},a_{j})-\frac{\lambda_{i}}{\lambda_{j}}
}
{(\frac{\lambda_{i} }{ \lambda_{j} }
+
\lambda_{i}\lambda_{j}
\gamma_{n}G^{\frac{2}{2-n}}_{g_{0}}( a _{i}, a _{j})
)^{\frac{n}{2}}
} 
+
R_{i,j}.
\end{equation}
We turn to the case left by \eqref{lambda_d_lambda_interaction_discernation_1}, i.e. 
\begin{equation}\label{lambda_d_lambda_interaction_discernation_2}
\varepsilon_{i,j}^{\frac{2}{2-n}}\sim \frac{\lambda_{j}}{\lambda_{i}} 
\end{equation}
and, recalling \eqref{non_linear_symmetry}, estimate  for $c>0$ small
\begin{equation*}
\begin{split}
&-\lambda_{i}^{\theta}\lambda_{j}\int 
\varphi_{i}^{1-\tau} \partial_{\lambda_{j}}\varphi_{j}^{\frac{n+2}{n-2}}d\mu_{g_{0}}
\\ & = 
\frac{n+2}{2}\lambda_{i}^{\theta}\underset{B_{c}( a _{j})}{\int}
\big(\frac{ \lambda_{i} }{ 1+\lambda_{i}^{2}\gamma_{n}G_{ a _{i}}^{\frac{2}{2-n}}}\big)^{\frac{n-2}{2}-\theta} 
\frac{u_{ a _{i}}^{1-\tau}}{u_{ a _{j}}}\big(\frac{ \lambda_{j} }{ 1+\lambda_{j}^{2}\gamma_{n}G_{ a _{j}}^{\frac{2}{2-n}}}\big)^{\frac{n+2}{2}}
\frac{\lambda_{j}^{2}\gamma_{n}G_{a_{j}}^{\frac{2}{2-n}}-1}{\lambda_{j}^{2}\gamma_{n}G_{a_{j}}^{\frac{2}{2-n}}+1}
d\mu_{g_{ a _{j}}}
\end{split}
\end{equation*}
up to some error $R_{i,j}$, whence up to the same error
\begin{equation*}\begin{split}
-\lambda_{i}^{\theta}\lambda_{j}\int  \varphi_{i} & \partial_{\lambda_{j}}\varphi_{j}^{\frac{n+2}{n-2}} d\mu_{g_{0}} 
= 
\frac{n+2}{2}
\underset{B_{ c\lambda_{j} }(0)}{\int}
\frac
{r^{2}-1}
{r^{2}+1}
(\frac{1}{1+r^{2}})^{\frac{n+2}{2}}
\frac
{
u_{ a _{i}}^{1-\tau}( a _{j})(\frac{\lambda_{i}}{\lambda_{j}})^{\theta}d\mu_{g_{a_{i}}}
}
{
(\frac{\lambda_{j}}{\lambda_{i}}+\lambda_{i}\lambda_{j}\gamma_{n}G^{\frac{2}{2-n}}_{a_{i}}(\exp_{g_{a_{j}}}\frac{x}{\lambda_{j}}))^{\frac{n-2}{2}-\theta}
}.
\end{split}\end{equation*}
 On  
$
\mathcal{A}
= 
\left\{ 
\big\vert  \frac{x}{\lambda_{j}} \big\vert \leq \varepsilon \sqrt{\gamma_{n}G^{\frac{2}{2-n}}_{a_{i}}(a_{j})}
\right\} 
\cup
\left\{ \big\vert  \frac{x}{\lambda_{j}}\big\vert \leq \epsilon  \frac{1}{\lambda_{i}} \right\} 
$
we may expand for $\epsilon>0$ sufficiently small
\begin{equation*}\begin{split}
\big(
\frac{\lambda_{j} }{ \lambda_{i} }
+
\lambda_{i}\lambda_{j}\gamma_{n}  G_{ a_{i}}^{\frac{2}{2-n}}(\exp_{g_{ a_{j}}} \frac{x}{\lambda_{i}})\big)^{\frac{2-n}{2}+\theta} 
= &
(
\frac{\lambda_{j} }{ \lambda_{i} }
+
\lambda_{i}\lambda_{j}
\gamma_{n}G_{ a_{i}}^{\frac{2}{2-n}}( a_{j})
)^{\frac{2-n}{2}+\theta} 
\\ & +
(\frac{2-n}{2}+\theta)
\frac
{
\gamma_{n}\nabla G^{\frac{2}{2-n}}_{ a_{i}}( a_{j})\lambda_{i}x
+
O\big(\frac{ \lambda_{i} }{ \lambda_{j} }\vert x\vert^{2}\big)
}
{
\big(
\frac{\lambda_{j} }{ \lambda_{i} }
+
\lambda_{i}\lambda_{j}\gamma_{n}G_{ a_{i}}^{\frac{2}{2-n}}( a_{j})
\big)^{\frac{n}{2}-\theta}
}.
\end{split}\end{equation*}
With analogous estimates as in the previous case we derive 
\begin{equation*} 
-\lambda_{i}^{\theta}\lambda_{j}\int \varphi_{i}^{\frac{n+2}{n-2}}\partial_{\lambda_{j}}\varphi_{j}d\mu_{g_{0}}
= 
 \bar b_{2}
\frac
{u_{ a_{i}}^{1-\tau}( a_{j})(\frac{\lambda_{i}}{\lambda_{j}})^{\theta}}
{(\frac{\lambda_{j} }{ \lambda_{i} }
+
\lambda_{i}\lambda_{j}
\gamma_{n}G_{ a_{i}}^{\frac{2}{2-n}}( a_{j})
)^{\frac{n-2}{2}-\theta}
} 
+
R_{i,j}
\end{equation*}
with 
\begin{equation}\label{def_bar_b2}
\bar b_{2}=\frac{n+2}{2}\underset{\R^{n}}{\int} \frac{r^{2}-1}{r^{2}+1}(\frac{1}{1+r^{2}})^{\frac{n+2}{2}}dx 
\end{equation}
and  indeed $\bar b_{2}=\tilde b_{2} = \frac{n-2}{2n} \omega_{n}$  
whence, using conformal covariance, as before \eqref{lambda_d_lambda_interaction_discernation_2} implies
\begin{equation}\begin{split}\label{didj_third_case}
-\lambda_{j}\int \varphi_{i}^{\frac{n+2}{n-2}}\partial_{\lambda_{j}}\varphi_{j}d\mu_{g_{0}}
= &
\frac
{
\bar b_{2}
}
{(\frac{\lambda_{i} }{ \lambda_{j} }
+
\lambda_{i}\lambda_{j}
\gamma_{n}G_{g_{0}}^{\frac{2}{2-n}}( a _{i}, a _{j})
)^{\frac{n-2}{2}}
}  +
R_{i,j}.
\end{split}\end{equation}
Now the claim follows comparing \eqref{didj_first_two_cases}  under \eqref{lambda_d_lambda_interaction_discernation_1}  
and \eqref{didj_third_case} under \eqref{lambda_d_lambda_interaction_discernation_2}.

 \item The first claim, i.e. that for $k\neq l$
\begin{equation*}
\int \varphi_{i}^{\frac{4}{n-2}-\tau} \phi_{k,i}\phi_{l,i}d\mu_{g_{0}}
= 
O\big(\frac{1}{\lambda_{i}^{n-2+\theta}}+\frac{1}{\lambda_{i}^{2+\theta}}\big)
\end{equation*}
follows like in case (ii), just with vanishing leading terms. The second one is proved analogously to (ii), cf. case ($\alpha$) in the proof.

\item The case $\tau=0$ is known, cf.  e.g. \cite{may-cv}, Lemma 3.4. By Lemma \ref{lem_interactions} we therefore have
\begin{equation*}
\int \varphi_{i}^{\alpha-\tau}\varphi_{j}^{\beta}d\mu_{g_{0}}
=
\int (\varphi_{i}^{\alpha-\tau}-\frac{1}{\lambda_{i}^{\theta}}\varphi_{i}^{\alpha})\varphi_{j}^{\beta}d\mu_{g_{0}}
+
O(\lambda_{i}^{-\theta}\varepsilon_{i,j}^{\beta}).
\end{equation*}
To estimate the integral in the above right-hand side, we write 
\begin{equation*}
\begin{split}
\int \varphi_{i}^{\alpha-\tau} &\big\vert 1-\frac{1}{\lambda_{i}^{\theta}}\varphi_{i}^{\tau}\big\vert \varphi_{j}^{\beta}d\mu_{g_{0}}
\leq 
\int^{1}_{0} d \sigma \int_{B_{c}(a_{i})}
\varphi_{i}^{\alpha-\tau}\big\vert \partial_{\sigma}(\frac{1}{1+\lambda_{i}^{2}r_{a_{i}}^{2}})^{\sigma \theta}\big\vert \varphi_{j}^{\beta}d\mu_{g_{0}}
\\
\leq &
\theta
\int_{B_{c}(a_{i})}
\varphi_{i}^{\alpha-\tau}\big\vert \ln \frac{1}{1+\lambda_{i}^{2}r_{a_{i}}^{2}}\big\vert \varphi_{j}^{\beta}d\mu_{g_{0}} \\
\leq &
\theta  
\Vert \varphi_{i}^{\alpha-\beta-\varepsilon-\tau}\ln\frac{1}{1+\lambda_{i}^{2}r_{a_{i}}^{2}}\Vert_{L^{\frac{2n}{2n-(n-2)(2\beta+\varepsilon)}}_{B_{c}(a_{i}),\mu_{g_{0}}}}
\Vert \varphi_{i}^{\beta+\varepsilon}\varphi_{j}^{\beta}\Vert_{L^{\frac{2n}{(n-2)(2\beta+\varepsilon)}}_{\mu_{g_{0}}}}.
\end{split}
\end{equation*}
From the case $\tau = 0$ and $\alpha+\beta=\frac{2n}{n-2}$ we then get
\begin{equation*}
\int  \varphi_{i}^{\alpha-\tau} \big\vert 1-\frac{1}{\lambda_{i}^{\theta}}\varphi_{i}^{\tau}\big\vert \varphi_{j}^{\beta},\mu_{g_{0}} 
 \leq 
C\theta \varepsilon_{i,j}^{\beta}
\big\Vert (\frac{\lambda_{i}}{1+\lambda_{i}^{2}r^{2}})^{(n-2)\alpha - n -\frac{n-2}{2}(\varepsilon+\tau)}\ln\frac{1}{1+\lambda_{i}^{2}r^{2}}
\big\Vert_{L^{\frac{n}{(n-2)\alpha -n - \frac{n-2}{2}\varepsilon}}_{B_{c}(0)}}.
\end{equation*}
By direct evaluation  the latter norm is of order $\lambda_{i}^{-\theta}$ and the claim follows. 
 \item also follows from the same above reference in \cite{may-cv}, while  (vii)  is  a straightforward computation. 
\end{enumerate}
\vspace{-12pt}
\end{proof}

\subsection{Derivatives}

In this appendix we give the remaining proofs from Section \ref{s:funct-infty}.

\begin{proof}[Proof of Lemma \ref{lem_alpha_derivatives_at_infinity}]
First note that the equalities up to the error in \eqref{eq:error-19-8}
\begin{equation*}
\partial J_{\tau}(u) \phi_{1,j}
= 
\partial J_{\tau}(\alpha^{j}\varphi_{j})\phi_{1,i}
=
\partial_{\alpha_{j}}J_{\tau}(\alpha^{i}\varphi_{i}) 
\end{equation*}
follow from Lemma
\ref{lem_v_part_interactions} and the chain rule of differentiation.
So we evaluate
\begin{equation*}
\partial J_{\tau}(\alpha^{i}\varphi_{i})\varphi_{j}
= 
\frac{2}{(\int K(\alpha^{i}\varphi_{i})^{p+1}d\mu_{g_{0}})^{\frac{2}{p+1}}} 
 \bigg(
\int
\alpha^{i} \varphi_{i} L_{g_{0}}\varphi_{j} d \mu_{g_{0}}
 -
\frac{\int (\alpha^{i}\varphi_{i}) L_{g_{0}}(\alpha^{k}\varphi_{k})d\mu_{g_{0}}}{\int K(\alpha^{i}\varphi_{i})^{p+1}d\mu_{g_{0}}}
 K(\alpha^{i}\varphi_{i})^{p}\varphi_{j}d\mu_{g_{0}}
\bigg)
\end{equation*}
and start expanding 
\begin{equation*}
\begin{split}
\int  K &( \alpha^{i} \varphi_{i})^{p}\varphi_{j}d\mu_{g_{0}}
= 
\underset{\{\alpha_{j}\varphi_{j}>\underset{j\neq i}{\sum}\alpha_{i}\varphi_{i}\}}{\int}
K\alpha_{j}^{p}\varphi_{j}^{p+1}
+
p\sum_{j\neq i}K\alpha_{j}^{p-1}\alpha_{i}\varphi_{j}^{p}\varphi_{i}
d\mu_{g_{0}}\\
& +
\underset{\{\alpha_{j}\varphi_{j}\leq \sum_{j\neq i}\alpha_{i}\varphi_{i}\}}{\int}
K(\sum_{j\neq i} \alpha_{i}\varphi_{i})^{p}\varphi_{j}d\mu_{g_{0}}  
+
O\big(\sum_{r\neq s}\int_{\{\varphi_{r}\geq \varphi_{s}\}}\varphi_{r}^{p-1}\varphi_{s}^{2}d\mu_{g_{0}}\big) 
d\mu_{g_{0}}.
\end{split}
\end{equation*}
The above error term  is of order $O\big(\sum_{r\neq s}\varepsilon_{r,s}^{\frac{n+2}{n}}\big)$ by Lemma \ref{lem_interactions}, whence
\begin{equation*}
\int  K(\alpha^{i}\varphi_{i})^{p}\varphi_{j}d\mu_{g_{0}}
= 
\int
K\alpha_{j}^{p}\varphi_{j}^{p+1}
+
p\sum_{j\neq i}K\alpha_{j}^{p-1}\alpha_{i}\varphi_{j}^{p}\varphi_{i}
d\mu_{g_{0}}
 +
\underset{\{\alpha_{j}\varphi_{j}\leq \sum_{i\neq j}\alpha_{i}\varphi_{i}\}}{\int}
K(\sum_{i\neq j} \alpha_{i}\varphi_{i})^{p}\varphi_{j}d\mu_{g_{0}}, 
\end{equation*}
up to an error of order $O(\sum_{r\neq s}\varepsilon_{r,s}^{\frac{n+2}{n}})$. Similarly 
\begin{equation*}
\underset{\{\alpha_{j}\varphi_{j}\leq \underset{j\neq i}{\sum}\alpha_{i}\varphi_{i}\}}{\int}
\hspace{-10pt}
K(\sum_{j\neq i} \alpha_{i}\varphi_{i})^{p}\varphi_{j}d\mu_{g_{0}}  = 
\underset
{
 \begin{matrix}
\scriptstyle\{\alpha_{j}\varphi_{j}\leq \sum_{j\neq i}\alpha_{i}\varphi_{i}\} \\
\scriptstyle \cap \; \; \{j\neq 1\} \; \cap \\
\scriptstyle\{\alpha_{1}\varphi_{1}>\sum_{j,1\neq i}\alpha_{i}\varphi_{i}\}
\end{matrix}
}
{\int}
\hspace{-10pt}
K\alpha_{1}^{p}\varphi_{1}^{p}\varphi_{j} d\mu_{g_{0}}
+
\underset
{
 \begin{matrix}
\scriptstyle\{\alpha_{j}\varphi_{j}\leq \underset{j\neq i}{\sum}\alpha_{i}\varphi_{i}\} \\
\scriptstyle\cap \; \; \{j\neq 1\} \; \cap \\
\scriptstyle\{\alpha_{1}\varphi_{1}\leq \underset{j,1\neq i}{\sum}\alpha_{i}\varphi_{i}\}
\end{matrix}
}
{\int}
\hspace{-10pt}
K(\underset{j,1\neq i}{\sum}\alpha_{i}\varphi_{i})^{p}\varphi_{j}d\mu_{g_{0}}
\end{equation*}
up to an error $O(\sum_{r\neq s}\varepsilon_{r,s}^{\frac{n+2}{n}})$, and thus
\begin{equation*}
\begin{split}
\underset{\{\alpha_{j}\varphi_{j}\leq \sum_{j\neq i}\alpha_{i}\varphi_{i}\}}{\int}
K(\sum_{j\neq i} \alpha_{i}\varphi_{i})^{p}\varphi_{j} d \mu_{g_{0}}
= & 
\chi_{\{1\neq j\}}
\int 
K\alpha_{1}^{p}\varphi_{1}^{p}\varphi_{j} d \mu_{g_{0}} \\
& +
\chi_{\{1\neq j\}}
\underset
{
\{\alpha_{j}\varphi_{j}\leq \sum_{j\neq i}\alpha_{i}\varphi_{i}\} 
}
{\int}
K(\sum_{j,1\neq i}\alpha_{i}\varphi_{i})^{p}\varphi_{j} d \mu_{g_{0}}.
\end{split}
\end{equation*}
Iteratively we  obtain 
$
\underset{\{\alpha_{j}\varphi_{j}\leq \sum_{j\neq i}\alpha_{i}\varphi_{i}\}}{\int}
K(\sum_{j\neq i} \alpha_{i}\varphi_{i})^{p}\varphi_{j} d \mu_{g_{0}}
= 
\sum_{j\neq i}\int K\alpha_{i}^{p}\varphi_{i}^{p}\varphi_{j} d \mu_{g_{0}}
$ 
and conclude
\begin{equation}\label{expansion_nonlinear_with_testing_1}
\begin{split}
\int  K(\alpha^{i}\varphi_{i})^{p}\varphi_{j}d\mu_{g_{0}}
= &
\sum_{i}\alpha_{i}^{p}\int K \varphi_{i}^{p}\varphi_{j}d\mu_{g_{0}}
+
p\sum_{j\neq i}K\alpha_{j}^{p-1}\alpha_{i}\varphi_{j}^{p}\varphi_{i}
d\mu_{g_{0}}
\end{split}
\end{equation}
up to an error of order $O(\sum_{r\neq s}\varepsilon_{r,s}^{\frac{n+2}{n}}).$
From this, we obviously have
\begin{equation*}
\begin{split}
\partial J_{\tau}(\alpha^{i}\varphi_{i})\varphi_{j}
= &
\frac{2}{(\int K(\alpha^{i}\varphi_{i})^{p+1}d\mu_{g_{0}})^{\frac{2}{p+1}}} 
\bigg(
\int
\alpha_{j} \varphi_{j} L_{g_{0}}\varphi_{j} d \mu_{g_{0}}
 -
\frac{\int (\alpha^{i}\varphi_{i}) L_{g_{0}}(\alpha^{k}\varphi_{k})d\mu_{g_{0}}}{\int K(\alpha^{i}\varphi_{i})^{p+1}d\mu_{g_{0}}}
\alpha_{j}^{p} K\varphi_{j}^{p+1}d\mu_{g_{0}}
\bigg) \\
& +
\frac{2}{(\sum_{k}\alpha_{k}^{p+1}\int K\varphi_{k}^{p+1}d\mu_{g_{0}})^{\frac{2}{p+1}}} \\
& \quad 
\sum_{j\neq i}\bigg(
\int
\alpha_{i}L_{g_{0}}\varphi_{j} d \mu_{g_{0}}
 -
\frac{\sum_{k}\alpha_{k}^{2}\int \varphi_{k} L_{g_{0}}\varphi_{k}d\mu_{g_{0}}}
{\sum_{k}\alpha_{k}^{p+1}\int K\varphi_{k}^{p+1}d\mu_{g_{0}}}
\alpha_{i}^{p}K\varphi_{i}^{p}\varphi_{j}d\mu_{g_{0}}
\bigg)
\\
& -
\frac{2p\sum_{k}\alpha_{k}^{2}\int \varphi_{k} L_{g_{0}}\varphi_{k}d\mu_{g_{0}}}{(\sum_{k}\alpha_{k}^{p+1}\int K\varphi_{k}^{p+1}d\mu_{g_{0}})^{\frac{2}{p+1}+1}} 
\sum_{j\neq i}\alpha_{j}^{p-1}\alpha_{i}\int K\varphi_{j}^{p}\varphi_{i}d\mu_{g_{0}}
\end{split}
\end{equation*}
up to some 
$ 
O(\sum_{r\neq s}\varepsilon_{r,s}^{\frac{n+2}{n}}).
$ 
Then \eqref{intergral_sum_of_bubble_nonlinear_evaluated} and \eqref{single_bubble_L_g_0_integral_expansion_exact}
applied to the second and third summands above show 
\begin{equation*}
\begin{split}
&\partial J_{\tau}(\alpha^{i}\varphi_{i})\varphi_{j}
= 
\frac{2}{(\int K(\alpha^{i}\varphi_{i})^{p+1}d\mu_{g_{0}})^{\frac{2}{p+1}}}  \bigg(
\int
\alpha_{j}\varphi_{j}L_{g_{0}}\varphi_{j} d \mu_{g_{0}}
 -
\frac{\int (\alpha^{i}\varphi_{i}) L_{g_{0}}(\alpha^{k}\varphi_{k})d\mu_{g_{0}}}{\int K(\alpha^{i}\varphi_{i})^{p+1}d\mu_{g_{0}}}
\alpha_{j}^{p} K\varphi_{j}^{p+1}d\mu_{g_{0}}
\bigg) \\
& +
\frac{8n(n-1)\bar c_{0}^{\frac{p-1}{p+1}}b_{1}}{(\alpha_{K,\tau}^{p+1})^{\frac{2}{p+1}}} 
\sum_{j\neq i}
\alpha_{i}\big(
1
-
\frac{\alpha^{2}}
{\alpha_{K,\tau}^{p+1}}
\frac{K_{i}}{\lambda_{i}^{\theta}}\alpha_{i}^{p-1}
\big)\varepsilon_{i,j}
 -
\frac{8 p n(n-1)\bar c_{0}^{-\frac{2}{p+1}}b_{1}\alpha^{2}}{(\alpha_{K,\tau}^{p+1})^{\frac{2}{p+1}+1}} 
\sum_{j\neq i}\alpha_{i}\frac{K_{j}}{\lambda_{j}^{\theta}}\alpha_{j}^{p-1}\varepsilon_{i,j}
\end{split}
\end{equation*}
up to an
$ 
O
(
\tau^{2}
+
\sum_{r\neq s}
\frac{\vert \nabla K_{r}\vert^{2}}{\lambda_{r}^{2}}
+
\frac{1}{\lambda_{r}^{4}}
+
\frac{1}{\lambda_{r}^{2(n-2)}}
+
\varepsilon_{r,s}^{\frac{n+2}{n}}
).
$ 
Then applying  \eqref{single_bubble_L_g_0_integral_expansion_exact} as well as \eqref{L_g_0_bubble_interaction} and 
Lemma \ref{lem_interactions} to the first summand above we find
\begin{equation*}
\begin{split}
& \partial J_{\tau}(\alpha^{i}\varphi_{i})\varphi_{j}
= 
\frac{8n(n-1)}{(\int K(\alpha^{i}\varphi_{i})^{p+1}d\mu_{g_{0}})^{\frac{2}{p+1}}}  \bigg(
\bar c_{0}
\alpha_{j}
 -
\frac{\bar c_{0}\alpha^{2}+b_{1}\sum_{k\neq l}\alpha_{k}\alpha_{l}\varepsilon_{k,l}}{\int K(\alpha^{i}\varphi_{i})^{p+1}d\mu_{g_{0}}}
\alpha_{j}^{p} \int K\varphi_{j}^{p+1}d\mu_{g_{0}}
\bigg) 
\\
& +
\frac{8n(n-1)\bar c_{0}^{\frac{p-1}{p+1}}b_{1}}{(\alpha_{K,\tau}^{p+1})^{\frac{2}{p+1}}} 
\sum_{j\neq i}
\alpha_{i}\big(
1
-
\frac{\alpha^{2}}
{\alpha_{K,\tau}^{p+1}}
\frac{K_{i}}{\lambda_{i}^{\theta}}\alpha_{i}^{p-1}
\big)\varepsilon_{i,j}
 -
\frac{8 p n(n-1)\bar c_{0}^{-\frac{2}{p+1}}b_{1}\alpha^{2}}{(\alpha_{K,\tau}^{p+1})^{\frac{2}{p+1}+1}} 
\sum_{j\neq i}\alpha_{i}\frac{K_{j}}{\lambda_{j}^{\theta}}\alpha_{j}^{p-1}\varepsilon_{i,j}.
\end{split}
\end{equation*}
Using \eqref{intergral_sum_of_bubble_nonlinear_evaluated} for the first term in the right-hand side, we then get
\begin{equation*}
\begin{split}
\partial J_{\tau}(\alpha^{i}\varphi_{i})\varphi_{j}
= &
\frac{8n(n-1)\bar c_{0}
\alpha_{j}}{(\int K(\alpha^{i}\varphi_{i})^{p+1}d\mu_{g_{0}})^{\frac{2}{p+1}}} \bigg(
1
 -
\frac{\alpha^{2}\alpha_{j}^{p-1} }{\int K(\alpha^{i}\varphi_{i})^{p+1}d\mu_{g_{0}}}
\int K\varphi_{j}^{p+1}d\mu_{g_{0}}
\bigg) \\
& -
\frac{8n(n-1)\bar c_{0}^{-\frac{2}{p+1}}b_{1}}{(\alpha_{K,\tau}^{p+1})^{\frac{2}{p+1}}} 
\frac{\sum_{k\neq l}\alpha_{k}\alpha_{l}\varepsilon_{k,l}}{\sum_{k}\frac{K_{k}}{\lambda_{k}^{\theta}}\alpha_{k}^{p+1}}
\alpha_{j}^{p}\frac{K_{j}}{\lambda_{j}^{\theta}}
 -
\frac{8 p n(n-1)\bar c_{0}^{-\frac{2}{p+1}}b_{1}\alpha^{2}}{(\alpha_{K,\tau}^{p+1})^{\frac{2}{p+1}+1}} 
\sum_{j\neq i}\alpha_{i}\frac{K_{j}}{\lambda_{j}^{\theta}}\alpha_{j}^{p-1}\varepsilon_{i,j}
\end{split}
\end{equation*}
up to an error of order 
\begin{equation*}
O
\big(
\tau^{2}
+
\sum_{r\neq s} 
\big\vert 1-\frac{\alpha^{2}}{\alpha_{K,\tau}^{\frac{2n}{n-2}}}\frac{K_{r}}{\lambda_{r}^{\theta}}\alpha_{r}^{\frac{4}{n-2}}\big\vert^{2}
+
\frac{\vert \nabla K_{r}\vert^{2}}{\lambda_{r}^{2}}
+
\frac{1}{\lambda_{r}^{4}}
+
\frac{1}{\lambda_{r}^{2(n-2)}}
+
\varepsilon_{r,s}^{\frac{n+2}{n}}
\big).
\end{equation*}
Applying now \eqref{intergral_sum_of_bubble_nonlinear_evaluated} to the first coefficient above we find
\begin{equation*}
\begin{split}
& \partial J_{\tau}(\alpha^{i}  \varphi_{i})\varphi_{j}
= 
\frac{8n(n-1)\bar c_{0}^{\frac{p-1}{p+1}}\alpha_{j}}{(\alpha_{K,\tau}^{p+1})^{\frac{2}{p+1}}}  \bigg(
1
-
\frac{\alpha^{2}}{\int K(\alpha^{i}\varphi_{i})^{p+1}d\mu_{g_{0}}}
\alpha_{j}^{p-1}\int K\varphi_{j}^{p+1}d\mu_{g_{0}}
\bigg) \\
& -
\frac{16n(n-1)}{p+1}
\frac{\bar c_{0}^{-\frac{2}{p+1}}}{(\alpha_{K,\tau}^{p+1})^{\frac{2}{p+1}+1}} 
\alpha_{j}\bigg(
1
 -
\frac{\alpha^{2}}{\alpha_{K,\tau}^{p+1}}
 \frac{K_{j}}{\alpha_{j}^{\theta}}\alpha_{j}^{p-1}
\bigg) \\
& \quad 
\bigg(
\bar c_{1}\sum_{i}\frac{K_{i}}{\lambda_{i}^{\theta}}\alpha_{i}^{\frac{2n}{n-2}}\tau 
+
\bar c_{2}\sum_{i}\frac{\Delta K_{i}}{\lambda_{i}^{2+\theta}} \alpha_{i}^{\frac{2n}{n-2}} 
+
\bar d_{1}\sum_{i}\frac{K_{i}}{\lambda_{i}^{\theta}}\alpha_{i}^{\frac{2n}{n-2}}
\begin{pmatrix}
\frac{H_{i}}{\lambda_{i}} \\
\frac{H_{i}+O(\frac{\ln \lambda_{i}}{\lambda_{i}^{2}})}{\lambda_{i}^{2 }} \\
\frac{H_{i}}{\lambda_{i}^{3 }} \\
\frac{W_{i}\ln \lambda_{i}}{\lambda_{i}^{4}} \\
0 
\end{pmatrix}  +
\bar b_{1}\sum_{i\neq j}\alpha_{i}^{\frac{n+2}{n-2}}\alpha_{j}
\frac{K_{i}}{\lambda_{i}^{\theta}}\varepsilon_{i,j}
\bigg)
\\
& -
\frac{8n(n-1)\bar c_{0}^{-\frac{2}{p+1}}b_{1}\alpha^{2}}{(\alpha_{K,\tau}^{p+1})^{\frac{2}{p+1}+1}}
\frac{K_{j}}{\lambda_{j}^{\theta}}\alpha_{j}^{p-1}
\bigg(
\sum_{k\neq l}\alpha_{j}\frac{\alpha_{k}\alpha_{l}}{\alpha^{2}}\varepsilon_{k,l}
+
p
\sum_{j\neq i}\alpha_{i}\varepsilon_{i,j}
\bigg),
\end{split}
\end{equation*}
and obviously the second summand is of order of the previous error term. Thus
\begin{equation*}
\begin{split}
\partial J_{\tau}(\alpha^{i}\varphi_{i})\varphi_{j}
= &
\frac{8n(n-1)\bar c_{0}^{\frac{p-1}{p+1}}}{(\alpha_{K,\tau}^{p+1})^{\frac{2}{p+1}}}
\alpha_{j}\bigg(
1
-
\frac{\alpha^{2}}{\int K(\alpha^{i}\varphi_{i})^{p+1}d\mu_{g_{0}}}
\alpha_{j}^{p-1} \int K\varphi_{j}^{p+1}d\mu_{g_{0}}
\bigg)
\\
& -
\frac{8n(n-1)\bar c_{0}^{-\frac{2}{p+1}}b_{1}\alpha^{2}}{(\alpha_{K,\tau}^{p+1})^{\frac{2}{p+1}+1}}
\frac{K_{j}}{\lambda_{j}^{\theta}}\alpha_{j}^{p-1}
\bigg(
\sum_{k\neq l}\alpha_{j}\frac{\alpha_{k}\alpha_{l}}{\alpha^{2}}\varepsilon_{k,l}
+
p
\sum_{j\neq i}\alpha_{i}\varepsilon_{i,j}
\bigg)
\end{split}
\end{equation*}
up to the same error, and applying finally \eqref{single_bubble_K_integral_expansion_exact} and \eqref{intergral_sum_of_bubble_nonlinear_evaluated}
we arrive at
\begin{equation*}
\begin{split}
\partial  J_{\tau} (  \alpha^{i} \varphi_{i})\varphi_{j}
= &
\frac{8n(n-1)\bar c_{0}^{\frac{p-1}{p+1}}}{(\alpha_{K,\tau}^{p+1})^{\frac{2}{p+1}}}
\alpha_{j}\bigg(
1
-
\frac{\alpha^{2}}{\alpha_{K,\tau}^{p+1}}
\frac{K_{j}}{\lambda_{j}^{\theta}}\alpha_{j}^{p-1}
\bigg) \\
& -
\frac{8n(n-1)\bar c_{0}^{-\frac{2}{p+1}}\alpha^{2}\alpha_{j}^{p}}{(\alpha_{K,\tau}^{p+1})^{\frac{2}{p+1}+1}}
\bigg(
\bar c_{1}\frac{K_{j}\tau}{\lambda_{j}^{\theta}}
+
\bar c_{2}\frac{\Delta K_{j}}{\lambda_{j}^{2+\theta}} 
+
\bar d_{1}K_{j}
\begin{pmatrix}
\frac{H_{j}}{\lambda_{j}^{1+\theta }} \\
\frac{H_{j}}{\lambda_{j}^{2+\theta }}+O(\frac{\ln \lambda_{i}}{\lambda_{i}^{4+\theta}}) \\
\frac{H_{j}}{\lambda_{j}^{3+\theta }} \\
\frac{W_{j}\ln \lambda_{j}}{\lambda_{i}^{4+\theta}} \\
0 
\end{pmatrix} 
\bigg)
\\
& + 
\frac{8n(n-1)\bar c_{0}^{-\frac{2}{p+1}}\alpha^{2}\alpha_{j}^{p}}{(\alpha_{K,\tau}^{p+1})^{\frac{2}{p+1}+2}}
\frac{K_{j}}{\lambda_{j}^{\theta}}
\quad 
\bigg(
\bar c_{1}\sum_{k}\frac{K_{k}}{\lambda_{k}^{\theta}}\alpha_{k}^{\frac{2n}{n-2}}\tau  
+
\bar c_{2}\sum_{k}\frac{\Delta K_{k}}{\lambda_{k}^{2+\theta}} \alpha_{k}^{\frac{2n}{n-2}} 
\\
& \quad \quad\quad\quad\quad\quad\;  +
\bar d_{1}\sum_{k}\frac{K_{k}}{\lambda_{k}^{\theta}}\alpha_{k}^{\frac{2n}{n-2}}
\begin{pmatrix}
\frac{H_{k}}{\lambda_{k}} \\
\frac{H_{k}+O(\frac{\ln \lambda_{i}}{\lambda_{k}^{2}})}{\lambda_{k}^{2 }} \\
\frac{H_{k}}{\lambda_{k}^{3 }} \\
\frac{W_{k}\ln \lambda_{k}}{\lambda_{k}^{4}} \\
0 
\end{pmatrix} 
+
\bar b_{1}\sum_{k\neq l}\alpha_{k}^{\frac{n+2}{n-2}}\alpha_{l}
\frac{K_{k}}{\lambda_{k}^{\theta}}\varepsilon_{k,l}
\bigg)
\\
& -
\frac{8n(n-1)\bar c_{0}^{-\frac{2}{p+1}}b_{1}\alpha^{2}}{(\alpha_{K,\tau}^{p+1})^{\frac{2}{p+1}+1}}
\frac{K_{j}}{\lambda_{j}^{\theta}}\alpha_{j}^{p-1}
\bigg(
\sum_{k\neq l}\alpha_{j}\frac{\alpha_{k}\alpha_{l}}{\alpha^{2}}\varepsilon_{k,l}
+
p
\sum_{j\neq i}\alpha_{i}\varepsilon_{i,j}
\bigg),
\end{split}
\end{equation*}
again up to the same error term. 
Recalling that $\bar b_{1}=\frac{2n}{n-2}b_{1}$, we can rewrite this as 
\begin{equation*}
\begin{split}
 \partial J_{\tau} & (\alpha^{i}\varphi_{i})\varphi_{j}
= 
\frac{8n(n-1)\bar c_{0}^{\frac{p-1}{p+1}}}{(\alpha_{K,\tau}^{p+1})^{\frac{2}{p+1}}}
\alpha_{j}\bigg(
1
-
\frac{\alpha^{2}}{\alpha_{K,\tau}^{p+1}}
\frac{K_{j}}{\lambda_{j}^{\theta}}\alpha_{j}^{p-1}
\bigg) 
 -
\frac{8n(n-1)\bar c_{0}^{-\frac{n-2}{n}}\alpha^{2}\alpha_{j}^{\frac{n+2}{n-2}}}
{(\alpha_{K,\tau}^{\frac{2n}{n-2}})^{\frac{n-2}{n}+1}}
\frac{K_{j}}{\lambda_{j}^{\theta}} \\
& \quad 
\bigg(
\bar c_{1}
\big(1-\sum_{k}\frac{K_{k}}{\lambda_{k}^{\theta}}\frac{\alpha_{k}^{\frac{2n}{n-2}}}{\alpha_{K,\tau}^{\frac{2n}{n-2}}}\big) \tau
+
\bar c_{2}
\big(
\frac{\Delta K_{j}}{K_{j}\lambda_{j}^{2}}
-
\sum_{k}\frac{\Delta K_{k}}{K_{k}\lambda_{k}^{2}}\frac{\frac{K_{k}}{\lambda_{k}^{\theta}}\alpha_{k}^{\frac{2n}{n-2}}}{\alpha_{K,\tau}^{\frac{2n}{n-2}}}
\big)
\\
&  
\quad \quad +
\bar d_{1}
\begin{pmatrix}
\frac{H_{j}}{\lambda_{j}}-\sum_{k}\frac{\frac{K_{k}}{\lambda_{k}^{\theta}}\alpha_{k}^{\frac{2n}{n-2}}}{\alpha_{K,\tau}^{\frac{2n}{n-2}}}\frac{H_{k}}{\lambda_{k}} \\
\frac{H_{j}}{\lambda_{j}^{2 }}-\sum_{k}\frac{\frac{K_{k}}{\lambda_{k}^{\theta}}\alpha_{k}^{\frac{2n}{n-2}}}{\alpha_{K,\tau}^{\frac{2n}{n-2}}}\frac{H_{k}+O(\sum_{r}\frac{\ln \lambda_{r}}{\lambda_{r}^{2}})}{\lambda_{k}^{2 }} \\
\frac{H_{j}}{\lambda_{j}^{3}} -\sum_{k}\frac{\frac{K_{k}}{\lambda_{k}^{\theta}}\alpha_{k}^{\frac{2n}{n-2}}}{\alpha_{K,\tau}^{\frac{2n}{n-2}}}\frac{H_{k}}{\lambda_{k}^{3 }} \\
\frac{W_{j}\ln \lambda_{j}}{\lambda_{i}^{4}}-\sum_{k}\frac{\frac{K_{k}}{\lambda_{k}^{\theta}}\alpha_{k}^{\frac{2n}{n-2}}}{\alpha_{K,\tau}^{\frac{2n}{n-2}}}\frac{W_{k}\ln \lambda_{k}}{\lambda_{k}^{4}} \\
0 
\end{pmatrix} 
\bigg)
\\
& -
\frac{8n(n-1)\bar c_{0}^{-\frac{n-2}{n}}b_{1}\alpha^{2}}{(\alpha_{K,\tau}^{\frac{2n}{n-2}})^{\frac{n-2}{n}+1}}
\frac{K_{j}}{\lambda_{j}^{\theta}}\alpha_{j}^{\frac{4}{n-2}} 
 \bigg(
\sum_{k\neq l}
\alpha_{j}
\big(\frac{\alpha_{k}\alpha_{l}}{\alpha^{2}}
- 
\frac{}{}
\frac{2n}{n-2}
\frac{K_{k}}{\lambda_{k}^{\theta}}
\frac{\alpha_{k}^{\frac{n+2}{n-2}}\alpha_{l}}{\alpha_{K,\tau}^{\frac{2n}{n-2}}}
\big)
\varepsilon_{k,l}
+
\frac{n+2}{n-2}
\sum_{j\neq i}\alpha_{i}\varepsilon_{i,j}
\bigg)
\end{split}
\end{equation*} 
up to an error of the form 
\begin{equation*}
O
\big(
\tau^{2}
+
\sum_{r\neq s} 
\big\vert 1-\frac{\alpha^{2}}{\alpha_{K,\tau}^{\frac{2n}{n-2}}}\frac{K_{r}}{\lambda_{r}^{\theta}}\alpha_{r}^{\frac{4}{n-2}}\big\vert^{2}
+
\frac{\vert \nabla K_{r}\vert^{2}}{\lambda_{r}^{2}}
+
\frac{1}{\lambda_{r}^{4}}
+
\frac{1}{\lambda_{r}^{2(n-2)}}
+
\varepsilon_{r,s}^{\frac{n+2}{n}}
\big).
\end{equation*}
Note that by \eqref{eq:akt} the coefficient of $\bar c_{1}$ in the above term  vanishes. This then tells us in a first step, that 
\begin{equation*}
\forall\; i\;:\; 1-\frac{\alpha^{2}}{\alpha_{K,\tau}^{p+1}}\frac{K_{j}}{\lambda_{j}^{\theta}}\alpha_{j}^{p-1}
=
O
\left(
\tau^{2}
+
\sum_{r}\frac{1}{\lambda_{r}^{2}}+\frac{1}{\lambda_{r}^{n-2}}+\sum_{r\neq s}\varepsilon_{r,s}+\vert \partial J_{\tau}(u)\vert
\right)
\end{equation*}
and therefore
\begin{equation*}
\forall\; i\;:\; 1-\frac{\alpha^{2}}{\alpha_{K,\tau}^{\frac{2n}{n-2}}}\frac{K_{j}}{\lambda_{j}^{\theta}}\alpha_{j}^{\frac{4}{n-2}}
=
O
\left(
\tau +
\sum_{r}\frac{1}{\lambda_{r}^{2}}+\frac{1}{\lambda_{r}^{n-2}}+\sum_{r\neq s}\varepsilon_{r,s}+\vert \partial J_{\tau}(u)\vert
\right). 
\end{equation*}
Using this we derive  
up to an error of the form 
$
O
\big(
\tau^{2}
+
\sum_{r\neq s} 
\frac{\vert \nabla K_{r}\vert^{2}}{\lambda_{r}^{2}}
+
\frac{1}{\lambda_{r}^{4}}
+
\frac{1}{\lambda_{r}^{2(n-2)}}
+
\varepsilon_{r,s}^{\frac{n+2}{n}}
+
\vert \partial J_{\tau}(u)\vert^{2}
\big)
$
\begin{equation*}
\begin{split}
& \partial J_{\tau}(\alpha^{i}\varphi_{i})\varphi_{j}
= 
\frac{8n(n-1)\bar c_{0}^{\frac{2}{n}}}{(\alpha_{K,\tau}^{\frac{2n}{n-2}})^{\frac{n-2}{n}}}
\alpha_{j}\bigg(
1
-
\frac{\alpha^{2}}{\alpha_{K,\tau}^{p+1}}
\frac{K_{j}}{\lambda_{j}^{\theta}}\alpha_{j}^{p-1}
\bigg) \\
& -
\frac{8n(n-1)\bar c_{0}^{-\frac{n-2}{n}}\alpha_{j}}
{(\alpha_{K,\tau}^{\frac{2n}{n-2}})^{\frac{n-2}{n}}}
\bigg(
\bar c_{2}
(
\frac{\Delta K_{j}}{K_{j}\lambda_{j}^{2}}
-
\sum_{k}\frac{\Delta K_{k}}{K_{k}\lambda_{k}^{2}}
\frac{\alpha_{k}^{2}}{\alpha^{2}}
)
+
\bar d_{1}
\begin{pmatrix}
\frac{H_{j}}{\lambda_{j}}-\sum_{k}\frac{\alpha_{k}^{2}}{\alpha^{2}}\frac{H_{k}}{\lambda_{k}} \\
\frac{H_{j}}{\lambda_{j}^{2 }}-\sum_{k}\frac{\alpha_{k}^{2}}{\alpha^{2}}\frac{H_{k}+O(\sum_{r}\frac{\ln \lambda_{r}}{\lambda_{r}^{2}})}{\lambda_{k}^{2 }} \\
\frac{H_{j}}{\lambda_{j}^{3}} -\sum_{k}\frac{\alpha_{k}^{2}}{\alpha^{2}}\frac{H_{k}}{\lambda_{k}^{3 }} \\
\frac{W_{j}\ln \lambda_{j}}{\lambda_{i}^{4}}-\sum_{k}\frac{\alpha_{k}^{2}}{\alpha^{2}}\frac{W_{k}\ln \lambda_{k}}{\lambda_{k}^{4}} \\
0 
\end{pmatrix} 
\bigg)
\\
& -
\frac{8n(n-1)\bar c_{0}^{-\frac{n-2}{n}}b_{1}}{(\alpha_{K,\tau}^{\frac{2n}{n-2}})^{\frac{n-2}{n}}}
 \quad \bigg(
\sum_{k\neq l}
\alpha_{j}
(\frac{\alpha_{k}\alpha_{l}}{\alpha^{2}}
- 
\frac{2n}{n-2}
\frac{\alpha_{k}\alpha_{l}}{\alpha^{2}}
)
\varepsilon_{k,l}
+
\frac{n+2}{n-2}
\sum_{j\neq i}\alpha_{i}\varepsilon_{i,j}
\bigg).
\end{split}
\end{equation*}
Finally note that the last summand can be simplified to 
\begin{equation*}
\begin{split}
\frac{n+2}{n-2}\frac{8n(n-1)\bar c_{0}^{-\frac{n-2}{n}}b_{1}}{(\alpha_{K,\tau}^{\frac{2n}{n-2}})^{\frac{n-2}{n}}}
 \bigg(
\sum_{k\neq l}
\alpha_{j}
\frac{\alpha_{k}\alpha_{l}}{\alpha^{2}}
\varepsilon_{k,l}
-
\sum_{j\neq i}\alpha_{i}\varepsilon_{i,j}
\bigg).
\end{split}
\end{equation*}
From this the lemma follows setting  
\begin{equation}\label{constants_alpha_derivative}
\grave b_{1}
=
\frac{8n(n-1)(n+2)}{\bar c_{0}^{\frac{n-2}{n}}(n-2)}b_{1}
,  \quad 
\grave c_{2}
=
\frac{{8n}(n-1)}{\bar c_{0}^{\frac{n-2}{n}}}
\bar c_{2}
,  \quad 
\grave d_{1}
=
\frac{{8n}(n-1)}{\bar c_{0}^{\frac{n-2}{n}}}
\bar d_{1}
,  \quad 
\grave c_{0}= 8n(n-1)\bar c_{0}^{\frac{2}{n}},
\end{equation}
cf. \eqref{mass_integral_expansion}, \eqref{single_bubble_K_integral_expansion_exact} and Lemma \ref{lem_interactions}.
\end{proof}

\begin{proof}[Proof of Lemma \ref{lem_lambda_derivatives_at_infinity}]
From Lemma \ref{lem_v_part_interactions} and the chain rule of differentiation we obtain
\begin{equation*}
\partial J_{\tau}(u)\phi_{2,j}
= 
\partial J_{\tau}(\alpha^{i}\varphi_{i})\phi_{2,j}=\lambda_{j}\partial_{\lambda_{j}}J_{\tau}(\alpha^{i}\varphi_{i}),
\end{equation*}
up to the error in \eqref{eq:error-19-8-2}, and evaluate
$
\partial  J_{\tau}(\alpha^{i}  \varphi_{i})\phi_{2,j}
= 
\frac{2\Lambda}{(\int K(\alpha^{i}\varphi_{i})^{p+1}d\mu_{g_{0}})^{\frac{2}{p+1}}}
$
with
\begin{equation*}
\begin{split}
\Lambda
= &
\int
\alpha^{i}\varphi_{i}L_{g_{0}}\lambda_{j}\partial_{\lambda_{j}}\varphi_{j} d \mu_{g_{0}}
 -
\frac{\int (\alpha^{i}\varphi_{i}) L_{g_{0}}(\alpha^{k}\varphi_{k})d\mu_{g_{0}}}{\int K(\alpha^{i}\varphi_{i})^{p+1}d\mu_{g_{0}}}
K(\alpha^{i}\varphi_{i})^{p}\lambda_{j}\partial_{\lambda_{j}}\varphi_{j}d\mu_{g_{0}}. 
\end{split}
\end{equation*}
Arguing as for \eqref{expansion_nonlinear_with_testing_1}, we find 
\begin{equation*}
\begin{split}
\Lambda
= &
\alpha_{j}\int \varphi_{j} L_{g_{0}}\lambda_{j}\partial_{\lambda_{j}}\varphi_{j} d \mu_{g_{0}}
 -
\frac{\int (\alpha^{i}\varphi_{i}) L_{g_{0}}(\alpha^{k}\varphi_{k})d\mu_{g_{0}}}{\int K(\alpha^{i}\varphi_{i})^{p+1}d\mu_{g_{0}}}
K\alpha_{j}^{p}\varphi_{j}^{p}\lambda_{j}\partial_{\lambda_{j}}\varphi_{j}d\mu_{g_{0}} \\
& +
\sum_{j\neq i}
\alpha_{i}\int \varphi_{i}
L_{g_{0}}\lambda_{j}\partial_{\lambda_{j}}\varphi_{j} d \mu_{g_{0}}
-
\frac{\int (\alpha^{i}\varphi_{i}) L_{g_{0}}(\alpha^{k}\varphi_{k})d\mu_{g_{0}}}{\int K(\alpha^{i}\varphi_{i})^{p+1}d\mu_{g_{0}}}
K\alpha_{i}^{p}\varphi_{i}^{p}\lambda_{j}\partial_{\lambda_{j}}\varphi_{j}d\mu_{g_{0}} \\
& -
p\frac{\int (\alpha^{i}\varphi_{i}) L_{g_{0}}(\alpha^{k}\varphi_{k})d\mu_{g_{0}}}{\int K(\alpha^{i}\varphi_{i})^{p+1}d\mu_{g_{0}}}
\sum_{j\neq i}\int K\alpha_{j}^{p-1}\alpha_{i}\varphi_{j}^{p-2}\varphi_{i}\lambda_{j}\partial_{\lambda_{j}}\varphi_{j}d\mu_{g_{0}} 
\end{split}
\end{equation*}
and arguing as for  \eqref{linearized_bubble_interaction}
\eqref{L_g_0_bubble_interaction}, \eqref{single_bubble_L_g_0_integral_expansion_exact} we see  that
\begin{equation*}
\int K \varphi_{i}^{p}\lambda_{j}\partial_{\lambda_{j}}\varphi_{j}d\mu_{g_{0}}
=
b_{2}\frac{K_{i}}{\lambda_{i}^{\theta}}\lambda_{j}\partial_{\lambda_{j}}\varepsilon_{i,j}
+
O
\big(
\tau^{2}
+
\sum_{r\neq s}
\frac{\vert \nabla K_{r}\vert^{2}}{\lambda_{r}^{2}}
+
\frac{1}{\lambda_{r}^{4}}
+
\frac{1}{\lambda_{r}^{2(n-2)}}
+
\varepsilon_{r,s}^{\frac{n+2}{n}}
\big),
\end{equation*}
and
\begin{equation}\label{def_tilde_b2}
\int \varphi_{i} L_{g_{0}}\lambda_{j}\partial_{\lambda_{j}}\varphi_{j} d \mu_{g_{0}}
=
\tilde b_{2}\lambda_{j}\partial_{\lambda_{j}}\varepsilon_{i,j}
+
O\big(\sum_{r\neq s}\frac{1}{\lambda_{r}^{4}}+\frac{1}{\lambda_{r}^{2(n-2)}}+\varepsilon_{r,s}^{\frac{n+2}{n}}\big),\; \tilde b_{2}=4n(n-1)b_{2} 
\end{equation}
as well as
$
\int \varphi_{j} L_{g_{0}}\lambda_{j}\partial_{\lambda_{j}}\varphi_{j}d\mu_{g_{0}}
=
O\big(\tau^{2}+\frac{1}{\lambda_{j}^{4}}+\frac{1}{\lambda_{j}^{2(n-2)}}\big).
$
 Using these, we arrive at
\begin{equation*}
\begin{split}
\Lambda
= &
 -
\frac{\int (\alpha^{i}\varphi_{i}) L_{g_{0}}(\alpha^{k}\varphi_{k})d\mu_{g_{0}}}{\int K(\alpha^{i}\varphi_{i})^{p+1}d\mu_{g_{0}}}
\int K\alpha_{j}^{p}\varphi_{j}^{p}\lambda_{j}\partial_{\lambda_{j}}\varphi_{j}d\mu_{g_{0}} \\
& +
4n(n-1)b_{2}\sum_{j\neq i}
\alpha_{i}\partial_{\lambda_{j}}\varepsilon_{i,j}
-
\frac{\int (\alpha^{i}\varphi_{i}) L_{g_{0}}(\alpha^{k}\varphi_{k})d\mu_{g_{0}}}{\int K(\alpha^{i}\varphi_{i})^{p+1}d\mu_{g_{0}}}
b_{2}\frac{K_{i}}{\lambda_{i}^{\theta}}\alpha_{i}^{p}\lambda_{j}\partial_{\lambda_{j}}\varepsilon_{i,j} \\
& -
p\frac{\int (\alpha^{i}\varphi_{i}) L_{g_{0}}(\alpha^{k}\varphi_{k})d\mu_{g_{0}}}{\int K(\alpha^{i}\varphi_{i})^{p+1}d\mu_{g_{0}}}
\sum_{j\neq i }\alpha_{j}^{p-1}\alpha_{i}\int K\varphi_{j}^{p-1}\varphi_{i}\lambda_{j}\partial_{\lambda_{j}}\varphi_{j}d\mu_{g_{0}} \\
& +
O
\big(
\tau^{2}
+
\sum_{r\neq s}
\frac{\vert \nabla K_{r}\vert^{2}}{\lambda_{r}^{2}}
+
\frac{1}{\lambda_{r}^{4}}
+
\frac{1}{\lambda_{r}^{2(n-2)}}
+
\varepsilon_{r,s}^{\frac{n+2}{n}}
\big)
\end{split}
\end{equation*}
Moreover, still arguing as for \eqref{L_g_0_bubble_interaction} and using Lemma \ref{lem_interactions}, we have up to the same error as above
\begin{equation*}
\int K\varphi_{j}^{p-1}\varphi_{i}\lambda_{j}\partial_{\lambda_{j}}\varphi_{j}d\mu_{g_{0}} 
= 
\frac{b_{2}}{p}\frac{K_{j}}{\lambda_{j}^{\theta}}\lambda_{j}\partial_{\lambda_{j}}\varepsilon_{i,j}.
\end{equation*}
Combining this with 
\eqref{intergral_sum_of_bubble_nonlinear_evaluated}, \eqref{L_g_0_bubble_interaction} and \eqref{single_bubble_L_g_0_integral_expansion_exact}
we get with the same precision 
\begin{equation*}
\begin{split}
\Lambda
= & 
-
4n(n-1)\frac{\alpha^{2}+\sum_{k\neq l}\alpha_{k}\alpha_{l}\varepsilon_{k,l}}{\int K(\alpha^{i}\varphi_{i})^{p+1}d\mu_{g_{0}}}
\int K\alpha_{j}^{p}\varphi_{j}^{p}\lambda_{j}\partial_{\lambda_{j}}\varphi_{j}d\mu_{g_{0}}  +
4n(n-1)b_{2}\sum_{j\neq i}
\alpha_{i}\partial_{\lambda_{j}}\varepsilon_{i,j}
\\& 
- 
\frac{4n(n-1)\alpha^{2}}{\alpha_{K,\tau}^{p+1}}b_{2}
\frac{K_{i}}{\lambda_{i}^{\theta}}\alpha_{i}^{p}\lambda_{j}\partial_{\lambda_{j}}\varepsilon_{i,j} \
 -
\frac{4n(n-1)b_{2}\alpha^{2}}{\alpha_{K,\tau}^{p+1}}
\sum_{j\neq i }\alpha_{i}\frac{K_{j}}{\lambda_{j}^{\theta}}\alpha_{j}^{p-1}\lambda_{j}\partial_{\lambda_{j}}\varepsilon_{i,j}.
\end{split}
\end{equation*}
Using Lemma \ref{lem_alpha_derivatives_at_infinity} we find by cancellation
\begin{equation*}
\Lambda
= 
-
4n(n-1)\frac{\alpha^{2}+\sum_{k\neq l}\alpha_{k}\alpha_{l}\varepsilon_{k,l}}{\int K(\alpha^{i}\varphi_{i})^{p+1}d\mu_{g_{0}}}
\int K\alpha_{j}^{p}\varphi_{j}^{p}\lambda_{j}\partial_{\lambda_{j}}\varphi_{j}d\mu_{g_{0}} 
 -
4n(n-1)b_{2}
\sum_{j\neq i }\alpha_{i}\lambda_{j}\partial_{\lambda_{j}}\varepsilon_{i,j},
\end{equation*}
up to some 
$
O
\big(
\tau^{2}
+
\sum_{r\neq s}
\frac{\vert \nabla K_{r}\vert^{2}}{\lambda_{r}^{2}}
+
\frac{1}{\lambda_{r}^{4}}
+
\frac{1}{\lambda_{r}^{2(n-2)}}
+
\varepsilon_{r,s}^{\frac{n+2}{n}}
+
\vert \partial J_{\tau}(u)\vert^{2}
\big).
$
Moreover from Lemma \ref{lem_interactions} we have
\begin{equation*}
\begin{split}
\int K\varphi_{j}^{p}\lambda_{j}\partial_{\lambda_{j}}\varphi_{j}d\mu_{g_{0}} 
= &
K_{j}\int \alpha_{j}^{p}\varphi_{j}^{p}\lambda_{j}\partial_{\lambda_{j}}\varphi_{j}d\mu_{g_{0}}
+
O\big(\frac{\vert \nabla K_{j}\vert }{\lambda_{j}^{1+\theta}}+O\big(\frac{1}{\lambda_{j}^{2}}\big)\big) \\
= &
O\big( 
\frac{\tau}{\lambda_{j}^{\theta}}+\frac{1}{\lambda_{j}^{n-2+\theta}}
+
\frac{\vert \nabla K_{j}\vert }{\lambda_{j}^{1+\theta}}+O\big(\frac{1}{\lambda_{j}^{2+\theta}}\big)\big),
\end{split}
\end{equation*}
whence recalling \eqref{intergral_sum_of_bubble_nonlinear_evaluated} we get 
\begin{equation*}
\Lambda
= 
-
4n(n-1)\frac{\alpha^{2}}{\alpha_{K,\tau}^{\frac{2n}{n-2}}}
\int K\alpha_{j}^{p}\varphi_{j}^{p}\lambda_{j}\partial_{\lambda_{j}}\varphi_{j}d\mu_{g_{0}}  -
4n(n-1)b_{2}
\sum_{j\neq i }\alpha_{i}\lambda_{j}\partial_{\lambda_{j}}\varepsilon_{i,j}
\end{equation*}
up to some 
$
O
\big(
\tau^{2}
+
\sum_{r\neq s}
\frac{\vert \nabla K_{r}\vert^{2}}{\lambda_{r}^{2}}
+
\frac{1}{\lambda_{r}^{4}}
+
\frac{1}{\lambda_{r}^{2(n-2)}}
+
\varepsilon_{r,s}^{\frac{n+2}{n}}
+
\vert \partial J_{\tau}(u)\vert^{2}
\big).
$
 Therefore  
\begin{equation}\label{lambda_testing_after_interaction}
\begin{split}
& \partial  J_{\tau}(\alpha^{i} \varphi_{i})\phi_{2,j}
= 
\frac{2\Lambda}{(\int K(\alpha^{i}\varphi_{i})^{p+1}d\mu_{g_{0}})^{\frac{2}{p+1}}} \\
= &
-
\frac{4n(n-1)\bar c_{0}^{-\frac{n-2}{n}}\alpha^{2}}{(\alpha_{K,\tau}^{\frac{2n}{n-2}})^{\frac{n-2}{n}+1}}\alpha_{j}^{p}
\int K\varphi_{j}^{p}\lambda_{j}\partial_{\lambda_{j}}\varphi_{j}d\mu_{g_{0}}  -
\frac{4n(n-1)\bar c_{0}^{-\frac{n-2}{n}}b_{2}}{(\alpha_{K,\tau}^{\frac{2n}{n-2}})^{\frac{n-2}{n}}}
\sum_{j\neq i }\alpha_{i}\lambda_{j}\partial_{\lambda_{j}}\varepsilon_{i,j}
\end{split}
\end{equation}
up to the same error. Thus we are left with analysing 
\begin{equation*}
\begin{split}
\int K\varphi_{j}^{p} & \lambda_{j}\partial_{\lambda_{j}}\varphi_{j}d\mu_{g_{0}}
= 
\int_{B_{c}(a_{j})} K\varphi_{j}^{p}\partial_{\lambda_{j}}\varphi_{j} d\mu_{g_{0}}
+
O\big(\frac{1}{\lambda_{j}^{n-\theta}}\big) \\
= &
K_{j}\int_{B_{c}(a_{j})} \varphi_{j}^{p} \partial_{\lambda_{j}}\varphi_{j}d\mu_{g_{0}}
+
\nabla K_{j}\int_{B_{c}(a_{j})} x\varphi_{j}^{p}\partial_{\lambda_{j}}\varphi_{j} d\mu_{g_{0}} \\
& +
\frac{\nabla^{2}}{2}K_{j}\int_{B_{c}(a_{j})} x^{2}\varphi_{j}^{p}\partial_{\lambda_{j}}\varphi_{j} d\mu_{g_{0}}
+
\frac{\nabla^{3}}{6}K_{j}\int_{B_{c}(a_{j})} x^{3}\varphi_{j}^{p}\partial_{\lambda_{j}}\varphi_{j} d\mu_{g_{0}} 
 +
O\big(\frac{1}{\lambda_{j}^{4}}+\frac{1}{\lambda_{j}^{2(n-2)}}\big). 
\end{split}
\end{equation*}
Expanding the bubble $\varphi_{j}$ and its derivative $\lambda_{j}\partial_{\lambda_{j}}\varphi_{j}$ in conformal normal coordinates, i.e.
\begin{equation*}
\begin{split}
(p+1)\varphi_{j}^{p} & \partial_{j}\varphi_{j}
= 
\lambda_{j}\partial_{j}\varphi_{j}^{\frac{2n}{n-2}-\tau}
= 
u_{a_{j}}^{\frac{2n}{n-2}-\tau}
\lambda_{j}
\partial_{\lambda_{j}}\big(\frac{\lambda_{j}}{1+\lambda_{j}^{2}r_{a_{j}}^{2}(1+r_{a_{j}}^{n-2}H_{a_{j}})^{\frac{2}{2-n}}}\big)^{n-\theta} \\
= &
(n-\theta)
\big(\frac{\lambda_{j}}{1+\lambda_{j}^{2}r_{a_{j}}^{2}(1+r_{a_{j}}^{n-2}H_{a_{j}})^{\frac{2}{2-n}}}\big)^{n-\theta}
\frac{1-\lambda_{j}^{2}r_{a_{j}}^{2}(1+r_{a_{j}}^{n-2}H_{a_{j}})^{\frac{2}{2-n}}}{1+\lambda_{j}^{2}r_{a_{j}}^{2}(1+r_{a_{j}}^{n-2}H_{a_{j}})^{\frac{2}{2-n}}} \\
= &
(n-\theta)
\big(\frac{\lambda_{j}}{1+\lambda_{j}^{2}r_{a_{j}}^{2}}\big)^{n-\theta}
\frac{1-\lambda_{j}^{2}r_{a_{j}}^{2}}{1+\lambda_{j}^{2}r_{a_{j}}^{2}} 
+
\frac{2(n-\theta)^{2}}{n-2}
\big(\frac{\lambda_{j}}{1+\lambda_{j}^{2}r_{a_{j}}^{2}}\big)^{n-\theta}
\frac{\lambda_{j}^{2}r_{a_{j}}^{n}H_{a_{j}}}{1+\lambda_{j}^{2}r_{a_{j}}^{2}}
\frac{\frac{n+2-\theta}{n-2\theta}\lambda_{j}^{2}r_{a_{j}}^{2}-1}{1+\lambda_{j}^{2}r_{a_{j}}^{2}}
\\
& +
O
\big(
\big(\frac{\lambda_{j}}{1+\lambda_{j}^{2}r_{a_{j}}^{2}}\big)^{n-\theta}
\frac{\lambda_{j}^{4}r_{a_{j}}^{2n}H_{a_{j}}^{2}}{(1+\lambda_{j}^{2}r_{a_{j}}^{2})^{2}}
\big)
\end{split}
\end{equation*}
and arguing as for \eqref{single_bubble_K_integral_expansion} we find using radial symmetry
\begin{enumerate}
 \item[(1)] \quad 
 $
 \int_{B_{c}(a_{j})} x\varphi_{j}^{p}\partial_{\lambda_{j}}\varphi_{j} d\mu_{g_{0}} 
\; , \; 
\int_{B_{c}(a_{j})} x^{3}\varphi_{j}^{p}\partial_{\lambda_{j}}\varphi_{j} d\mu_{g_{0}} 
=
O\big(\tau^{2}+\frac{1}{\lambda_{j}^{4}}+\frac{1}{\lambda_{j}^{2(n-2)}}\big);
 $
 \item[(2)] \quad 
 $
 \frac{\nabla^{2}}{2}K_{j}\int_{B_{c}(a_{j})} x^{2}\varphi_{j}^{p}\partial_{\lambda_{j}}\varphi_{j} d\mu_{g_{0}} 
=
\frac{n-2}{4n}\frac{\Delta K_{j}}{\lambda_{j}^{2+\theta}}\underset{\R^{n}}{\int}\frac{r^{2}(1-r^{2})}{(1+r^{2})^{n+1}}dx
+
O(\tau^{2}+\frac{1}{\lambda_{j}^{4}}+\frac{1}{\lambda_{j}^{2(n-2)}})
 $,
\end{enumerate}
Finally we have 
\begin{equation*}
\begin{split}
\int_{B_{c}(a_{j})}\varphi_{j}^{p}\lambda_{j}\partial_{\lambda_{j}}\varphi_{j}d\mu_{g_{0}} 
= &
\frac{n-2}{2}
\int_{B_{c}(0)}
\big(\frac{\lambda_{j}}{1+\lambda_{j}^{2}r^{2}}\big)^{n-\theta}
\frac{1-\lambda_{j}^{2}r^{2}}{1+\lambda_{j}^{2}r^{2}} dx \\
& +
\int_{B_{c}(0)}
\big(\frac{\lambda_{j}}{1+\lambda_{j}^{2}r^{2}}\big)^{n-\theta}
\frac{\lambda_{j}^{2}r^{n}H_{a_{j}}}{1+\lambda_{j}^{2}r^{2}}
\frac{n+2-n\lambda_{j}^{2}r^{2}}{1+\lambda_{j}^{2}r^{2}} dx
\end{split}
\end{equation*}
up to some $O(\tau^{2}+\frac{1}{\lambda_{j}^{4}}+\frac{1}{\lambda_{j}^{2(n-2)}})$, and see that for the first summand above there holds
\begin{equation*}
\begin{split}
\frac{n-2}{2}\int_{B_{c}(0)}
\big(\frac{\lambda_{j}}{1+\lambda_{j}^{2}r^{2}}\big)^{n-\theta}
\frac{1-\lambda_{j}^{2}r^{2}}{1+\lambda_{j}^{2}r^{2}}dx
=
-\frac{n-2}{2}\frac{\theta}{\lambda_{j}^{\theta}}\underset{\R^{n}}{\int}\big(\frac{1}{1+r^{2}}\big)^{n}\frac{1-r^{2}}{1+r^{2}}\ln\frac{1}{1+r^{2}}dx, 
\end{split}
\end{equation*}
up to the same error. Defining 
\begin{equation}\label{def_tilde_c1_tilde_c2}
\tilde c_{1}=\frac{(n-2)^{2}}{4}\underset{\R^{n}}{\int}\frac{1-r^{2}}{(1+r^{2})^{n+1}}\ln\frac{1}{1+r^{2}}dx, \qquad 
\tilde c_{2}=-\frac{n-2}{4n}\underset{\R^{n}}{\int}\frac{r^{2}(1-r^{2})}{(1+r^{2})^{n+1}}dx, 
\end{equation}
it can be shown,  that 
\begin{equation*}
\tilde c_1 = \frac{(n-2)^{2}}{48n} \omega_{n} \frac{\Gamma(n/2)^2}{\Gamma(n)} > 0
\; \text{ and }\;
\tilde c_2 = \frac{n-2}{4n} \omega_{n}  \frac{\Gamma \left(\frac{n}{2}+1\right) \Gamma \left(\frac{n}{2}\right)+\Gamma \left(\frac{n}{2}-1\right) \Gamma \left(\frac{n}{2}+2\right)}{2 \Gamma (n+1)} > 0,
\end{equation*}
so we arrive at
\begin{equation*}
\begin{split}
\int K\varphi_{j}^{p} \lambda_{j}\partial_{\lambda_{j}}\varphi_{j}d\mu_{g_{0}}
= &
-
\tilde c_{1}\frac{K_{j}}{\lambda_{j}^{\theta}}\tau
-
\tilde c_{2}\frac{\Delta K_{j}}{\lambda_{j}^{2+\theta}} 
+
K_{j}\int_{B_{c}(0)}
\big(\frac{\lambda_{j}}{1+\lambda_{j}^{2}r^{2}}\big)^{n-\theta}
\frac{\lambda_{j}^{2}r^{n}H_{a_{j}}}{1+\lambda_{j}^{2}r^{2}}
\frac{n+2-n\lambda_{j}^{2}r^{2}}{1+\lambda_{j}^{2}r^{2}}dx
\end{split}
\end{equation*}
up to some $O(\tau^{2}+\frac{1}{\lambda_{j}^{4}}+\frac{1}{\lambda_{j}^{2(n-2)}})$ and arguing as for \eqref{mass_integral_expansion} we find
\begin{equation}\label{def_tilde_d1}
\begin{split}
& \int_{B_{c}(0)}  (\frac{\lambda_{j}}{1+\lambda_{j}^{2}r^{2}})^{n-\theta}
\frac{n+2-n\lambda_{j}^{2}r^{2}}{1+\lambda_{j}^{2}r^{2}}
\frac{\lambda_{j}^{2}r^{n}H_{a_{j}}}{1+\lambda_{j}^{2}r^{2}} dx \\
& = 
\frac{1}{\lambda_{j}^{n-2+\theta}}\int_{B_{c\lambda_{j}}(0)} 
\frac{n+2-nr^{2}}{(1+r^{2})^{n+2-\theta}}r^{n}
\begin{pmatrix}
H_{j}+\nabla H_{j}\frac{x}{\lambda_{j}}+O(\frac{r^{2}}{\lambda_{j}^{2}}) \\
H_{j}+\nabla H_{j}\frac{x}{\lambda_{j}}+O(\frac{r^{2}}{\lambda_{j}^{2}}\ln \frac{r}{\lambda_{j}}) \\
H_{j}+O(\frac{r}{\lambda_{j}}) \\
-W_{j}\ln \frac{r}{\lambda_{j}} +O(\frac{r}{\lambda_{j}}\ln \frac{r}{\lambda_{j}}) \\
O(\frac{r^{6-n}}{\lambda_{j}^{6-n}}) 
\end{pmatrix} dx
\\
& = 
- 
\tilde d_{1}
\frac
{\vartheta_{j}}
{\lambda_{j}^{\theta}}
+
O\big(\tau^{2}+\frac{1}{\lambda_{j}^{4}}+\frac{1}{\lambda_{j}^{2(n-2)}}\big),
\;
\vartheta_{j}
=
\begin{pmatrix}
\frac{H_{j}}{\lambda_{j}^{1+\theta }} \\
\frac{H_{j}}{\lambda_{j}^{2+\theta }}+O(\frac{\ln \lambda_{j}}{\lambda_{j}^{4+\theta}})\\
\frac{H_{j}}{\lambda_{j}^{3+\theta }} \\
\frac{W_{j}\ln \lambda_{j}}{\lambda_{j}^{4+\theta}} \\
0 
\end{pmatrix} 
,
\;
 \tilde d_{1}
= 
- \underset{\R^{n}}{\int}\frac{r^{n}(n+2-nr^{2})}{(1+r^{2})^{n+2}}dx.
\end{split}
\end{equation}
We conclude that 
\begin{equation*}
\begin{split}
\int K\varphi_{j}^{p} \lambda_{j}\partial_{\lambda_{j}}\varphi_{j}d\mu_{g_{0}}
= &
-
\tilde c_{1}\frac{K_{j}}{\lambda_{j}^{\theta}}\tau
-
\tilde c_{2}\frac{\Delta K_{j}}{\lambda_{j}^{2+\theta}} 
-
\tilde  d_{1} K_{j}
\tilde d_{1}
\frac
{\vartheta_{j}}
{\lambda_{j}^{\theta}}
+
O(\tau^{2}+\frac{1}{\lambda_{j}^{4}}+\frac{1}{\lambda_{j}^{2(n-2)}}).
\end{split}
\end{equation*}
Plugging this into 
\eqref{lambda_testing_after_interaction}, we then have
\begin{equation*}
\begin{split}
\partial J & _{\tau}(u)\phi_{2,j}
= 
\frac{4n(n-1)\bar c_{0}^{-\frac{n-2}{n}}\alpha^{2}}{(\alpha_{K,\tau}^{\frac{2n}{n-2}})^{\frac{n-2}{n}+1}}
\frac{K_{j}}{\lambda_{j}^{\theta}}\alpha_{j}^{p}
\big(
\tilde c_{1}\tau
+
\tilde c_{2}\frac{\Delta K_{j}}{K_{j}\lambda_{j}^{2}} 
+
\tilde  d_{1}\vartheta_{j}
\big) \\
&  -
\frac{4n(n-1)\bar c_{0}^{-\frac{n-2}{n}}b_{2}}{(\alpha_{K,\tau}^{\frac{2n}{n-2}})^{\frac{n-2}{n}}}
\sum_{j\neq i }\alpha_{i}\lambda_{j}\partial_{\lambda_{j}}\varepsilon_{i,j}
+
O
(
\tau^{2}
+
\sum_{r\neq s}
\frac{\vert \nabla K_{r}\vert^{2}}{\lambda_{r}^{2}}
+
\frac{1}{\lambda_{r}^{4}}
+
\frac{1}{\lambda_{r}^{2(n-2)}}
+
\varepsilon_{r,s}^{\frac{n+2}{n}}
+
\vert \partial J_{\tau}(u)\vert^{2}
).
\end{split}
\end{equation*}
Now the claim follows from Lemma \ref{lem_alpha_derivatives_at_infinity}  by replacing the constants as follows 
\begin{equation}\label{def_const_lambda_derivatives}
(\tilde c_{1},\tilde c_{2},\tilde d_{1},\tilde b_{2})
\rightsquigarrow
\frac{4n(n-1)}{\bar c_{0}^{\frac{n-2}{n}}}(\tilde c_{1},\tilde c_{2},\tilde d_{1},b_{2}),
\end{equation}
cf. \eqref{def_tilde_b2}, \eqref{def_tilde_c1_tilde_c2} and \eqref{def_tilde_d1} as well as Lemma \ref{lem_interactions}.
\end{proof}

\begin{proof}[Proof of Lemma \ref{lem_a_derivatives_at_infinity}] 
From Lemma \ref{lem_v_part_interactions} and the chain rule  we obtain up to the 
error in \eqref{eq:error-19-8-3}
\begin{equation*}
\partial J_{\tau}(u)\phi_{3,j}
= 
\partial J_{\tau}(\alpha^{i}\varphi_{i})\phi_{3,j}=\frac{\nabla_{a_{j}}}{\lambda_{j}}J_{\tau}(\alpha^{i}\varphi_{i})
\end{equation*}
and write
\begin{equation}\label{a_expansion_A_term}
\partial  J_{\tau}(\alpha^{i}  \varphi_{i})\phi_{2,j}
= 
\frac{2A}{(\int K(\alpha^{i}\varphi_{i})^{p+1}d\mu_{g_{0}})^{\frac{2}{p+1}}}
\end{equation}
with
\begin{equation*}
\begin{split}
A
= &
\int
\alpha^{i}L_{g_{0}}\varphi_{i}\frac{\nabla_{a_{j}}}{\lambda_{j}}\varphi_{j}
 -
\frac{\int L_{g_{0}}(\alpha^{i}\varphi_{i})(\alpha^{k}\varphi_{k})d\mu_{g_{0}}}{\int K(\alpha^{i}\varphi_{i})^{p+1}d\mu_{g_{0}}}
K(\alpha^{i}\varphi_{i})^{p}\frac{\nabla_{a_{j}}}{\lambda_{j}}\varphi_{j}d\mu_{g_{0}}. 
\end{split}
\end{equation*}
Arguing as for \eqref{expansion_nonlinear_with_testing_1}, we find 
\begin{equation*}
\begin{split}
A
= &
\alpha_{j}\int \varphi_{j}  L_{g_{0}}\frac{\nabla_{a_{j}}}{\lambda_{j}}\varphi_{j} d \mu_{g_{0}}
 -
\frac{\int (\alpha^{i}\varphi_{i}) L_{g_{0}}(\alpha^{k}\varphi_{k})d\mu_{g_{0}}}{\int K(\alpha^{i}\varphi_{i})^{p+1}d\mu_{g_{0}}}
K\alpha_{j}^{p}\varphi_{j}^{p}\frac{\nabla_{a_{j}}}{\lambda_{j}}\varphi_{j}d\mu_{g_{0}} \\
& +
\sum_{j\neq i}
\alpha_{i}\int
\varphi_{i} L_{g_{0}}\frac{\nabla_{a_{j}}}{\lambda_{j}}\varphi_{j} d \mu_{g_{0}}
-
\frac{\int (\alpha^{i}\varphi_{i}) L_{g_{0}}(\alpha^{k}\varphi_{k})d\mu_{g_{0}}}{\int K(\alpha^{i}\varphi_{i})^{p+1}d\mu_{g_{0}}}
K\alpha_{i}^{p}\varphi_{i}^{p}\frac{\nabla_{a_{j}}}{\lambda_{j}}\varphi_{j}d\mu_{g_{0}} \\
& -
p\frac{\int (\alpha^{i}\varphi_{i}) L_{g_{0}}(\alpha^{k}\varphi_{k})d\mu_{g_{0}}}{\int K(\alpha^{i}\varphi_{i})^{p+1}d\mu_{g_{0}}}
\sum_{j\neq i}\int K\alpha_{j}^{p-1}\alpha_{i}\varphi_{j}^{p-2}\varphi_{i}\frac{\nabla_{a_{j}}}{\lambda_{j}}\varphi_{j}d\mu_{g_{0}} 
\end{split}
\end{equation*}
and arguing as for  \eqref{linearized_bubble_interaction} and \eqref{L_g_0_bubble_interaction}, in particular using Lemma \ref{lem_interactions}, we obtain 
\begin{equation*}
\begin{split}
A
= &
\alpha_{j}\int \varphi_{j} L_{g_{0}}\frac{\nabla_{a_{j}}}{\lambda_{j}}\varphi_{j} d \mu_{g_{0}}
 -
\frac{\int (\alpha^{i}\varphi_{i}) L_{g_{0}}(\alpha^{k}\varphi_{k})d\mu_{g_{0}}}{\int K(\alpha^{i}\varphi_{i})^{p+1}d\mu_{g_{0}}}
K\alpha_{j}^{p}\varphi_{j}^{p}\frac{\nabla_{a_{j}}}{\lambda_{j}}\varphi_{j}d\mu_{g_{0}} \\
& -
4n(n-1)b_{3}
\sum_{j\neq i}
\bigg(
\alpha_{i}
-
\frac{\alpha^{2}}{\alpha_{K,\tau}^{p+1}}
\frac{K_{i}}{\lambda_{i}^{\theta}}\alpha_{i}^{p}  
-
p\frac{\alpha^{2}}{\alpha_{K,\tau}^{p+1}}
\frac{K_{j}}{\lambda_{j}^{\theta}}\alpha_{j}^{p-1}\alpha_{i}
\bigg)
\frac{\nabla_{a_{j}}}{\lambda_{j}}\varepsilon_{i,j} \\
= &
\alpha_{j}\int \varphi_{j} L_{g_{0}}\frac{\nabla_{a_{j}}}{\lambda_{j}}\varphi_{j} d \mu_{g_{0}}
 -
\frac{\int (\alpha^{i}\varphi_{i}) L_{g_{0}}(\alpha^{k}\varphi_{k})d\mu_{g_{0}}}{\int K(\alpha^{i}\varphi_{i})^{p+1}d\mu_{g_{0}}}
K\alpha_{j}^{p}\varphi_{j}^{p}\frac{\nabla_{a_{j}}}{\lambda_{j}}\varphi_{j}d\mu_{g_{0}}  -
4n(n-1)b_{3}
\sum_{j\neq i}
\alpha_{i}
\frac{\nabla_{a_{j}}}{\lambda_{j}}\varepsilon_{i,j} 
\end{split}
\end{equation*}
up to some 
\begin{equation*}
O
\big(
\tau^{2}
+
\sum_{r\neq s}
\frac{\vert \nabla K_{r}\vert^{2}}{\lambda_{r}^{2}}
+
\frac{1}{\lambda_{r}^{4}}
+
\frac{1}{\lambda_{r}^{2(n-2)}}
+
\varepsilon_{r,s}^{\frac{n+2}{n}}
+
\vert \partial J_{\tau}(u)\vert^{2}
\big),
\end{equation*}
using Lemma \ref{lem_alpha_derivatives_at_infinity} for the last step. Consider a cut-off function $\eta$ such that 
\begin{equation*}
\eta\in C^{\infty}(M,[0,1]),\;\eta=1 \; \text{ on } \;B_{c}(a)\; \text{ and }\;\eta=0 \; \text{ on }\; B_{2c}^{c}(a), 
\end{equation*}
with $c>0$ sufficiently small and some $a\in M$ sufficiently close to $a_{j}$. Then
\begin{equation*}
\int K\varphi_{j}^{p}\frac{\nabla_{a_{j}}}{\lambda_{j}}\varphi_{j}d\mu_{g_{0}} 
= 
\int K\eta\varphi_{j}^{p}\frac{\nabla_{a_{j}}}{\lambda_{j}}\varphi_{j}d\mu_{g_{0}} +O\big(\frac{1}{\lambda_{j}^{n-\theta}}\big) 
= 
\frac{1}{p+1}\frac{\nabla_{a_{j}}}{\lambda_{j}}\int K\eta \varphi_{j}^{p+1}d\mu_{g_{0}}
+O\big(\frac{1}{\lambda_{j}^{n-\theta}}\big)
\end{equation*}
and passing to conformal normal coordinates around $a_{j}$ we have
\begin{equation*}
\begin{split}
& \frac{\nabla_{a_{j}}}{\lambda_{j}} \int K\eta  \varphi_{j}^{p+1}d\mu_{g_{0}}
= 
\frac{\nabla_{a_{j}}}{\lambda_{j}}
\int 
(K\eta)\circ \exp_{g_{a_{j}}}
\big(
\frac
{\lambda_{j}}
{
1+\lambda_{j}^{2}r^{2}(1+r^{n-2}H_{a_{j}})^{\frac{2}{2-n}}
}
\big)^{n-\theta} dx \\
= &
\int \left( \frac{\nabla_{a_{j}}\exp_{g_{a_{j}}}}{\lambda_{j}} (\eta\; K)\circ\exp_{g_{a_{j}}} \right) 
\big(
\frac
{\lambda_{j}}
{
1+\lambda_{j}^{2}r^{2}(1+r^{n-2}H_{a_{j}})^{\frac{2}{2-n}}
}
\big)^{n-\theta} dx \\
& -
(n-\theta)
\int 
(K\eta)\circ \exp_{g_{a_{j}}}
(
\frac
{\lambda_{j}}
{
1+\lambda_{j}^{2}r^{2}(1+r^{n-2}H_{a_{j}})^{\frac{2}{2-n}}
}
)^{n-\theta} 
\frac
{\lambda_{j}^{2}r^{2}\frac{\nabla_{a_{j}}}{\lambda_{j}}(1+r^{n-2}H_{a_{j}})^{\frac{2}{2-n}}}
{
1+\lambda_{j}^{2}r^{2}(1+r^{n-2}H_{a_{j}})^{\frac{2}{2-n}}
}dx
+
O\big(\frac{1}{\lambda_{j}^{n-\theta}}\big)
\\
= &
\Gamma-(n-\theta)\mathfrak{M}+O\big(\frac{1}{\lambda_{j}^{n-\theta}}\big), 
\end{split}
\end{equation*}
where 
\begin{equation*}
\begin{split}
\mathfrak{M}
= &
\frac{2}{2-n}
\int_{B_{c}(a_{j})}
K(\exp_{g_{a_{j}}})
\big(
\frac
{\lambda_{j}}
{
1+\lambda_{j}^{2}r^{2}
}
\big)^{n-\theta} 
\frac
{\lambda_{j}^{2}r^{n}\frac{\nabla_{a_{j}}}{\lambda_{j}}H_{a_{j}}(1+O(r^{n-2}H_{a_{j}}))}
{
1+\lambda_{j}^{2}r^{2}
} dx
\end{split}
\end{equation*}
up to some $O(\frac{1}{\lambda_{j}^{n-\theta}})$.
From \eqref{conformal_normal_coordinates_expansion} anda\eqref{eq:bubbles}  and using radial symmetry we obtain
\begin{equation}\label{check_c_3_c_4}   
\Gamma
=
\check{c}_{3}\frac{\nabla K_{j}}{\lambda_{j}^{1+\theta}}
+
 \check{c}_{4}\frac{\nabla \Delta K_{j}}{\lambda_{j}^{3+\theta}}
\; \text{ with }\; 
\;
\check c_{3}=\underset{\R^{n}}{\int} \frac{dx}{(1+r^{2})^{n}}
\; \text{ and } \; 
\check c_{4}= \frac{1}{2n} \underset{\R^{n}}{\int} \frac{ r^2 dx}{(1+r^{2})^{n}}.
\end{equation}
up to some $
O
\big(
\tau^{2}
+
\frac{\vert \nabla K_{j}\vert^{2}}{\lambda_{j}^{2}}
+
\frac{1}{\lambda_{j}^{4}}
+
\frac{1}{\lambda_{j}^{2(n-2)}}
\big)$
.
By  \eqref{eq:bubbles} we have
$\nabla H_{a_{j}}H_{a_{j}}=O(1)$ for $n=3,4,5$ and 
\begin{equation*}
\nabla H_{a_{j}}H_{a_{j}}
=
O
\begin{pmatrix}
\ln^{2} r & \text{for }\; n=6 \\
r^{12-2n} & \text{for }\; n\geq 7 \\
\end{pmatrix},
\end{equation*}
whence up to some $O\big(\frac{1}{\lambda_{j}^{4}}+\frac{1}{\lambda_{j}^{2(n-2)}}\big)$
\begin{equation*}
\begin{split}
\mathfrak{M}
= &
\frac{2}{2-n}
\int_{B_{c}(a_{j})}
K(\exp_{g_{a_{j}}})
(
\frac
{\lambda_{j}}
{
1+\lambda_{j}^{2}r^{2}
}
)^{n-\theta} 
\frac
{\lambda_{j}^{2}r^{n}\frac{\nabla_{a_{j}}}{\lambda_{j}}H_{a_{j}}}
{
1+\lambda_{j}^{2}r^{2}
}  d \mu_{g_{0}}
\\
= &
\frac{2}{2-n}
\int_{B_{c}(a_{j})}
(K_{j}+\nabla K_{j}x+O(r^{2})) 
(
\frac
{\lambda_{j}}
{
1+\lambda_{j}^{2}r^{2}
}
)^{n-\theta} 
\frac
{\lambda_{j}^{2}r^{n}}
{
1+\lambda_{j}^{2}r^{2}
}
\begin{pmatrix}
\frac{\nabla_{a_{j}}H_{j}}{\lambda_{j}} + \frac{\nabla_{a_{j}}\nabla H_{j}x}{\lambda_{j}} + O(\frac{r^{2}}{\lambda_{j}}) \\
\frac{\nabla_{a_{j}}H_{j}}{\lambda_{j}} + \frac{\nabla_{a_{j}}\nabla H_{j}x}{\lambda_{j}} + O(\frac{r^{2}\ln r}{\lambda_{j}}) \\
\frac{\nabla_{a_{j}}H_{j}}{\lambda_{j}} +  O(\frac{r}{\lambda_{j}}) \\
-\frac{\nabla_{a_{j}}W_{j}}{\lambda_{j}}\ln r + O(\frac{1}{\lambda_{j}}) \\
O(\frac{r^{6-n}}{\lambda_{j}}),
\end{pmatrix} dx,
\end{split}
\end{equation*}
and we obtain
\begin{equation*}
\begin{split}
\mathfrak{M}
= &
\frac{2K_{j}}{2-n}
\int_{B_{c}(a_{j})}
(
\frac
{\lambda_{j}}
{
1+\lambda_{j}^{2}r^{2}
}
)^{n-\theta} 
\frac
{\lambda_{j}^{2}r^{n}}
{
1+\lambda_{j}^{2}r^{2}
}
\begin{pmatrix}
\frac{\nabla_{a_{j}}H_{j}}{\lambda_{j}} + O(\frac{r^{2}}{\lambda_{j}}) \\
\frac{\nabla_{a_{j}}H_{j}}{\lambda_{j}}  + O(\frac{r^{2}\ln r}{\lambda_{j}}) \\
\frac{\nabla_{a_{j}}H_{j}}{\lambda_{j}} +  O(\frac{r}{\lambda_{j}}) \\
-\frac{\nabla_{a_{j}}W_{j}}{\lambda_{j}}\ln r + O(\frac{1}{\lambda_{j}}) \\
O(\frac{r^{6-n}}{\lambda_{j}})
\end{pmatrix} dx
+
O\big(\frac{1}{\lambda_{j}^{4}}\big) \\
= &
\check{d}_{1}
K_{j}
\frac{\vartheta_{j}}{\lambda_{j}^{\theta}}
+
O\big(\tau^{2}+\frac{1}{\lambda_{j}^{4}}\big),
\; \quad \check{d}_{1}=\frac{2}{2-n}\underset{\R^{n}}{\int}\frac{r^{n}}{(1+r^{2})^{n+1}}dx
, \quad \vartheta_{j}
=
\begin{pmatrix}
0 \\
\frac{\nabla_{a_{j}}H_{j}}{\lambda_{j}^{3}}  \\
0\\0\\0
\end{pmatrix}
\end{split}
\end{equation*}
up to some $O(\frac{1}{\lambda_{j}^{4}}+\frac{1}{\lambda_{j}^{2(n-2)}})$. Collecting terms we arrive at
\begin{equation}\label{non_linear_interaction_testing_3}
\int K\varphi_{j}^{p}\frac{\nabla_{a_{j}}}{\lambda_{j}}\varphi_{j}d\mu_{g_{0}}
= 
\frac{\Gamma-(n-\theta)\mathfrak{M}}{p+1} 
= 
\frac{n-2}{2n}
\big(
\check{c}_{3}\frac{\nabla K_{j}}{\lambda_{j}^{1+\theta}}
+
\check{c}_{4}\frac{\nabla \Delta K_{j}}{\lambda_{j}^{3+\theta}}
+
n\check{d}_{1}
K_{j}
\frac{\vartheta_{j}}{\lambda_{j}^{\theta}}
\big)
\end{equation}
up to some
$
O\big(\tau^{2}+\frac{\vert \nabla K_{j}\vert^{2}}{\lambda_{j}^{2}}+\frac{1}{\lambda_{j}^{4}}+\frac{1}{\lambda_{j}^{2(n-2)}}\big)
$ 
and conclude
\begin{equation*}
\begin{split}
A
= &
\alpha_{j}\int \varphi_{j} L_{g_{0}}\frac{\nabla_{a_{j}}}{\lambda_{j}}\varphi_{j}d\mu_{g_{0}} 
-
\frac{4n(n-1)(n+2)b_{3}}{n-2}
\sum_{j\neq i}
\alpha_{i}
\frac{\nabla_{a_{j}}}{\lambda_{j}}\varepsilon_{i,j} \\
& -
\frac{2(n-1)(n-2)\alpha^{2}}{\alpha_{K,\tau}^{\frac{2n}{n-2}}}
K_{j}\alpha_{j}^{p}
\big(
\check{c}_{3}\frac{\nabla K_{j}}{K_{j}\lambda_{j}^{1+\theta}}
+
\check{c}_{4}\frac{\nabla \Delta K_{j}}{\lambda_{j}^{3+\theta}}
+
n\check{d}_{1}\frac{\vartheta_{j}}{\lambda_{j}^{\theta}}
\big)
\end{split}
\end{equation*}
up to some 
$
O
\big(
\tau^{2}
+
\sum_{r\neq s}
\frac{\vert \nabla K_{r}\vert^{2}}{\lambda_{r}^{2}}
+
\frac{1}{\lambda_{r}^{4}}
+
\frac{1}{\lambda_{r}^{2(n-2)}}
+
\varepsilon_{r,s}^{\frac{n+2}{n}}
+
\vert \partial J_{\tau}(u)\vert^{2}
\big).
$ 
  Applying Lemma \ref{lem_alpha_derivatives_at_infinity} we find 
\begin{equation}\label{Testing_3_up_to_L_g_0_interaction} 
\begin{split}
A
= &
\alpha_{j}\int \varphi_{j} L_{g_{0}}\frac{\nabla_{a_{j}}}{\lambda_{j}}\varphi_{j}d\mu_{g_{0}} 
-
4n(n-1)b_{3}
\sum_{j\neq i}
\alpha_{i}
\frac{\nabla_{a_{j}}}{\lambda_{j}}\varepsilon_{i,j} \\
& -
2(n-1)(n-2)\alpha_{j}
\big(
\check{c}_{3}\frac{\nabla K_{j}}{K_{j}\lambda_{j}}
+
\check{c}_{4}\frac{\nabla \Delta K_{j}}{K_j \lambda_{j}^{3}}
+
n\check{d}_{1}
\vartheta_{j}
\big)
\end{split}
\end{equation}
up to the same error. We are left with estimating 
\begin{equation*}
\begin{split}
\int \varphi_{j} L_{g_{0}}\frac{\nabla_{a_{j}}}{\lambda_{j}}\varphi_{j}d\mu_{g_{0}}
= &
\int_{B_{c}(a_{j})} \varphi_{j} L_{g_{0}}\frac{\nabla_{a_{j}}}{\lambda_{j}}\varphi_{j}d\mu_{g_{0}}
+
O\big(\frac{1}{\lambda_{j}^{n-\theta}}\big).
\end{split}
\end{equation*}
Then from Lemma \ref{lem_emergence_of_the_regular_part} we see that in case $n=4,5$ 
\begin{equation*}
\begin{split}
\int \frac{\varphi_{j} L_{g_{0}} \frac{\nabla_{a_{j}}}{\lambda_{j}}\varphi_{j}}{4n(n-1)} & d\mu_{g_{0}}
= 
\int_{B_{c}(a_{j})} \varphi_{j}^{\frac{n+2}{n-2}}\frac{\nabla_{a_{j}}}{\lambda_{j}}\varphi_{j}d\mu_{g_{0}}  -
\frac{c_{n}}{2}\int_{B_{c}(a_{j})} 
r_{a_{j}}^{n-2}(H_{j}+n\nabla H_{j}x)\varphi_{j}^{\frac{n+2}{n-2}}\frac{\nabla_{a_{j}}}{\lambda_{j}}\varphi_{j}d\mu_{g_{0}} \\
\end{split}
\end{equation*}
up to some $O
(
\frac{1}{\lambda_{j}^{4}}+\frac{1}{\lambda_{j}^{2(n-2)}}
)$,
and thus due to \eqref{non_linear_interaction_testing_3}
\begin{equation*}
\begin{split}
\int \frac{ \varphi_{j} }{4n(n-1)} & L_{g_{0}} \frac{\nabla_{a_{j}}}{\lambda_{j}}\varphi_{j}d\mu_{g_{0}}
= 
\frac{n-2}{2}
\check{d}_{1}
\vartheta_{j}
  -
\frac{c_{n}}{2}\int_{B_{c}(a_{j})} 
r_{a_{j}}^{n-2}(H_{j}+n\nabla H_{j}x_{a_{j}})\varphi_{j}^{\frac{n+2}{n-2}}\frac{\nabla_{a_{j}}}{\lambda_{j}}\varphi_{j}d\mu_{g_{0}} 
\end{split}
\end{equation*}
up to some 
$
O\big(\tau^{2}+\frac{\vert \nabla K_{j}\vert^{2}}{\lambda_{j}^{2}}+\frac{1}{\lambda_{j}^{4}}+\frac{1}{\lambda_{j}^{2(n-2)}}\big)
$.
Finally we observe that 
\begin{equation*}
\frac{\nabla_{a_{j}}}{\lambda_{j}}\varphi_{j}
= 
\frac{2-n}{2}u_{a_{j}}\big(\frac{\lambda_{j}}{1+\lambda_{j}r_{a_{j}}^{2}(1+r_{a_{j}}^{n-2}H_{a_{j}})^{\frac{2}{2-n}}}\big)^{\frac{n-2}{2}} 
\frac{\lambda_{j}\nabla_{a_{j}}(r_{a_{j}}^{2}(1+r_{a_{j}}^{n-2}H_{a_{j}})^{\frac{2}{2-n}})}{1+\lambda_{j}^{2}r_{a_{j}}^{2}(1+r_{a_{j}}^{n-2}H_{a_{j}})^{\frac{2}{2-n}}}
+
O\big(\frac{r_{a_{j}}}{\lambda_{j}}\varphi_{j}\big),
\end{equation*}
and using the smoothness of conformal normal coordinates with respect to $a_j$ we find 
\begin{equation*}
\begin{split}
\frac{\nabla_{a_{j}}}{\lambda_{j}}\varphi_{j}
= &
\frac{2-n}{2}u_{a_{j}}\big(\frac{\lambda_{j}}{1+\lambda_{j}r_{a_{j}}^{2}(1+r_{a_{j}}^{n-2}H_{a_{j}})^{\frac{2}{2-n}}}\big)^{\frac{n-2}{2}} 
\frac{2\lambda_{j}x_{a_{j}}}{1+\lambda_{j}^{2}r_{a_{j}}^{2}(1+r_{a_{j}}^{n-2}H_{a_{j}})^{\frac{2}{2-n}}} \\
& +
O\big(
\big( \frac{\lambda_{j}r_{a_{j}}^{4}}{1+\lambda_{j}^{2}r_{a_{j}}^{2}}
+
 \frac{\lambda_{j}r_{a_{j}}^{n-1}}{1+\lambda_{j}^{2}r_{a_{j}}^{2}}
+
\frac{r_{a_{j}}}{\lambda_{j}}
\big)
\varphi_{j}
\big).
\end{split}
\end{equation*}
This gives
\begin{equation*}
\begin{split}
\int \frac{\varphi_{j} }{4n(n-1)} L_{g_{0}} \frac{\nabla_{a_{j}}}{\lambda_{j}}\varphi_{j}d\mu_{g_{0}}
= 
\frac{n-2}{2}
\check{d}_{1}
\vartheta_{j}
  +
\frac{(n-2)c_{n}}{2}\underset{B_{c}(a_{j})}{\int}  
\frac{\lambda_{j}x_{a_{j}}r_{a_{j}}^{n-2}(H_{j}+n\nabla H_{j}x)\varphi_{j}^{\frac{2n}{n-2}} }{1+\lambda_{j}^{2}r_{a_{j}}^{2}(1+r_{a_{j}}^{n-2}H_{a_{j}})^{\frac{2}{2-n}}}
d\mu_{g_{0}} 
\end{split}
\end{equation*}
up to some 
$
O\big(\tau^{2}+\frac{\vert \nabla K_{j}\vert^{2}}{\lambda_{j}^{2}}+\frac{1}{\lambda_{j}^{4}}+\frac{1}{\lambda_{j}^{2(n-2)}}\big).
$
Passing to conformal normal coordinates around $a_{j}$, we find
\begin{equation*}
\begin{split}
\int_{B_{c}(a_{j})}  &
\frac{\lambda_{j}x_{a_{j}}r_{a_{j}}^{n-2}(H_{j}+n\nabla H_{j}x)\varphi_{j}^{\frac{2n}{n-2}} }{1+\lambda_{j}^{2}r_{a_{j}}^{2}(1+r_{a_{j}}^{n-2}H_{a_{j}})^{\frac{2}{2-n}}} 
d\mu_{g_{0}} 
= 
\int_{B_{c}(0)}  
\frac{\lambda_{j}^{n+1}xr^{n-2}H_{j} }{(1+\lambda_{j}^{2}r^{2})^{n+1}} dx
=
0
\end{split}
\end{equation*}
up to some $O(\frac{1}{\lambda_{j}^{4}}+\frac{1}{\lambda_{j}^{2(n-2)}}))$.
We therefore conclude 
\begin{equation*}
\begin{split}
\int  \varphi_{j}  L_{g_{0}}& \frac{\nabla_{a_{j}}}{\lambda_{j}}\varphi_{j}d\mu_{g_{0}}
= 
2n(n-1)(n-2)
\check{d}_{1}
\vartheta_{j}
\end{split}
\end{equation*}
up to some 
$
O\big(\tau^{2}+\frac{\vert \nabla K_{j}\vert^{2}}{\lambda_{j}^{2}}+\frac{1}{\lambda_{j}^{4}}+\frac{1}{\lambda_{j}^{2(n-2)}}\big).
$
Plugging into \eqref{Testing_3_up_to_L_g_0_interaction} we arrive at
\begin{equation}\label{a_derivative_final_expansion}
\begin{split}
A
= &
-
2(n-1)(n-2)\alpha_{j} \left( 
\check{c}_{3}\frac{\nabla K_{j}}{K_{j}\lambda_{j}}
+
\check{c}_{4}\frac{\nabla \Delta K_{j}}{K_j \lambda_{j}^{3}} \right) 
-
4n(n-1)b_{3}
\sum_{j\neq i}
\alpha_{i}
\frac{\nabla_{a_{j}}}{\lambda_{j}}\varepsilon_{i,j}, 
\end{split}
\end{equation}
up to some 
$$
O
\big(
\tau^{2}
+
\sum_{r\neq s}
\frac{\vert \nabla K_{r}\vert^{2}}{\lambda_{r}^{2}}
+
\frac{1}{\lambda_{r}^{4}}
+
\frac{1}{\lambda_{r}^{2(n-2)}}
+
\varepsilon_{r,s}^{\frac{n+2}{n}}
+ 
\vert \partial J_{\tau}(u)\vert^{2}
\big).
$$
Recalling \eqref{a_expansion_A_term} the claim follows by setting or replacing 
\begin{equation}\label{def_a_constants}
(\check c_{3},\check c_{4},\check b_{3})
\rightsquigarrow
4(n-1)(n-2)(\check c_{3},\check c_{4},\frac{2n}{n-2}b_{3}),
\end{equation} 
cf. \ref{check_c_3_c_4} and Lemma \ref{lem_interactions}.
\end{proof}

\subsection{List of constants}
We give here a list of constants, referring to where they can be found. 
\begin{equation*}
\begin{array}{c||c|c|c|c|c|c}
      &  & \bar{\quad}  & \hat{\quad} & \grave{\quad} & \tilde{\quad}&  \check{\quad} \\ \hline
c_{0} &  & \eqref{barc0_barc1} & \eqref{hat_constants}& \eqref{constants_alpha_derivative} &  & \\ \hline
c_{1}  & \text{Lemma}\; \ref{lem_interactions} & \eqref{barc0_barc1}  & \eqref{hat_constants}&  & \eqref{def_const_lambda_derivatives} & \\ \hline 
c_{2} & \text{Lemma}\; \ref{lem_interactions} &  \eqref{single_bubble_K_integral_expansion_exact} & \eqref{hat_constants} & \eqref{constants_alpha_derivative} & \eqref{def_const_lambda_derivatives} & \\ \hline 
c_{3} & \text{Lemma}\; \ref{lem_interactions} & &  &  &  \eqref{def_tilde_c3}& \eqref{def_a_constants}\\ \hline 
c_{4} &  & &  & & \eqref{def_tilde_c3_tildec_4} &\eqref{def_a_constants}  \\ \hline 
d_{1} &  & \eqref{mass_integral_expansion} & \eqref{hat_constants} & \eqref{constants_alpha_derivative} & \eqref{def_const_lambda_derivatives} & \\ \hline  
b_{1} & \text{Lemma}\; \ref{lem_interactions} &  \eqref{def_bar_b_1} & \eqref{hat_constants} & \eqref{constants_alpha_derivative}  & \eqref{L_g_0_bubble_interaction} & \\ \hline 
b_{2} & \text{Lemma}\; \ref{lem_interactions} & \eqref{def_bar_b2}  &  & & \eqref{def_const_lambda_derivatives} &  \\ \hline 
b_{3} & \text{Lemma}\; \ref{lem_interactions} &  &  &  &  &  \eqref{def_a_constants} 
\end{array}
\end{equation*}
For instance,  
$c_{2}$ is found in Lemma \ref{lem_interactions}, $\bar c_{2}$ in equation \eqref{single_bubble_K_integral_expansion_exact} and $\hat d_{1}$ in equation \eqref{hat_constants}.
For the empty fields the corresponding combination of accent and symbol is non-existent. As a caveat please note that we have within some proofs redefined constants for the sake for normalization, hence we point to the final definition, from which upwards mentioned constants can be easily recovered. Finally we recall that $c_{n}$ is the normalizing constants in the definition of the conformal laplacian
\begin{equation*}
 L_{g}=-c_{n}\Delta_{g}+R_{g},\; \qquad c_{n}=\frac{4(n-1)}{n-2}.
\end{equation*}

\end{document}